\newif\ifnomentry
\renewcommand\nomgroup[1]{%
	\nomentryfalse
	\ifstrequal{#1}{O}{\item[Other]}{%
		\ifstrequal{#1}{F}{\item[Functional spaces]}{%
			{}}}%
	\nomentrytrue
}
\newtheorem{thm}{Theorem}
\newtheorem{prop}{Proposition}[section]
\newtheorem{defi}[prop]{Definition}
\newtheorem{coro}[prop]{Corollary}
\newtheorem{lem}[prop]{Lemma}
\newtheorem{rmk}[prop]{Remark}
\numberwithin{equation}{section}
\newcommand{\p}{\partial}
\newcommand{\gl}{\Lambda}
\newcommand{\gk}[1]{\gl_{#1}}
\newcommand{\gnot}{\gl_{0}}
\newcommand{\N}{\mathbb{N}}
\newcommand{\R}{\mathbb{R}}
\newcommand{\fu}{\mathbbm{u}}
\newcommand{\fv}{\mathbbm{v}}
\newcommand{\eps}{\varepsilon}
\newcommand{\Om}{\Omega}
\newcommand{\dd}{\:\mathrm{d}}
\newcommand{\Z}{\mathbb{Z}}
\newcommand{\cA}{\mathcal{A}}
\newcommand{\cB}{\mathcal{B}}
\newcommand{\cE}{\mathcal{E}}
\newcommand{\cH}{\mathcal{H}}
\newcommand{\cM}{\mathcal{M}}
\newcommand{\cX}{\mathcal{X}}
\newcommand{\cZ}{\mathcal{Z}}
\newcommand{\bu}{\bar{u}}
\newcommand{\reg}{\mathrm{reg}}
\newcommand{\sing}{\mathrm{sing}}
\newcommand{\ureg}{u_\mathrm{reg}}
\newcommand{\busing}{\bu_{\sing}}
\newcommand{\thr}{{\tau^{\alpha}}}
\newcommand{\bfi}{\overline{f_i}}
\newcommand{\bfzero}{\overline{f_0}}
\newcommand{\bfun}{\overline{f_1}}
\newcommand{\bfj}{\overline{f_j}}
\newcommand{\ufs}{\fu_{\mathrm{P}}}
\newcommand{\vfs}{\fv_{\mathrm{P}}}
\newcommand{\gfs}{\gamma_{\mathrm{P}}}
\newcommand{\Yfs}{{\mathbbm{Y}}_{\mathrm{P}}}
\newcommand{\tY}{\widetilde{Y}}
\newcommand{\tz}{\zeta}
\newcommand{\qone}{{Q^1}}
\newcommand{\supp}{\operatorname{supp}}
\newcommand{\cLin}{\cH} 
\newcommand{\hBPerp}{\cX^\perp_{B,\operatorname{sg}}}
\newcommand{\keta}{\widetilde{Y}}
\newcommand{\BCP}{\Upsilon_{\mathrm{P}}}
\newcounter{stepcount}
\newcommand{\step}[1]{\medskip
	\noindent\refstepcounter{stepcount}\emph{\textbf{Step~\arabic{stepcount}.} #1}}
\title{Linear and nonlinear parabolic forward-backward problems}
\author{Anne-Laure Dalibard\texorpdfstring{\thanks{Sorbonne Université, Université Paris-Diderot SPC, CNRS, Laboratoire Jacques-Louis Lions, LJLL, Paris}}{}~\texorpdfstring{\thanks{École Normale Supérieure, Université PSL, Département de Mathématiques et applications, Paris}}{}, 
Frédéric Marbach\texorpdfstring{\thanks{Univ Rennes, CNRS, IRMAR - UMR 6625, Rennes}}{}, 
Jean Rax\texorpdfstring{\footnotemark[1]}{}}
\begin{document}
	
\maketitle
	
\begin{abstract}
    The purpose of this paper is to investigate the well-posedness of several linear and nonlinear equations with a parabolic forward-backward structure, and to highlight the similarities and differences between them.
    The epitomal linear example will be the stationary Kolmogorov equation  $y\p_x u -\p_{yy} u=f$, which we investigate in a rectangle $(x_0,x_1)\times(-1,1)$, supplemented with boundary conditions on the ``parabolic boundary'' of the domain: the top and lower boundaries $\{y=\pm 1\}$, and the lateral boundaries $\{x_0\}\times (0,1)$ and $\{x_1\}\times (-1,0)$.
    We first prove that this equation admits a finite number of singular solutions associated with regular data.
    These singular solutions, of which we provide an explicit construction, are localized in the vicinity of the points $(x_0,0)$ and $(x_1,0)$.
    Hence, the solutions to the Kolmogorov equation associated with a smooth source term $f$ are regular if and only if $f$ satisfies a finite number of orthogonality conditions. 
    This is similar to well-known phenomena in elliptic problems in polygonal domains.
    
    We then extend this theory to the Vlasov--Poisson--Fokker--Planck system $y\p_x u + E[u] \p_y u - \p_{yy} u=f$, and to two quasilinear equations: the Burgers type equation $u \p_x u - \p_{yy} u = f$ in the vicinity of the linear shear flow, and the Prandtl system in the vicinity of a recirculating solution, close to the curve where the horizontal velocity changes sign. 
    We therefore revisit part of a recent work by Iyer and Masmoudi \cite{IM2023,IM2022}.
    For the two latter quasilinear equations, we introduce a geometric change of variables which simplifies the analysis.
    In these new variables, the linear differential operator is very close to the Kolmogorov operator $y\p_x -\p_{yy}$. 
    Stepping on the linear theory, we prove existence and uniqueness of regular solutions for data within a manifold of finite codimension, corresponding to some nonlinear orthogonality conditions.
    
    Treating these three nonlinear problems in a unified way also allows us to compare their structures. 
    In particular, we show that the vorticity formulation of the Prandtl system, in an adequate set of variables, is very similar to the Burgers equation. 
    As a consequence, solutions of the Prandtl system are actually smoother than the ones of Burgers, which allows us to have a theory of weak solutions of the Prandtl system close to the recirculation zone.
\end{abstract}
    
\newpage 

\setcounter{tocdepth}{2}
\tableofcontents

\newpage
\section{Introduction}
\label{sec:intro}

This manuscript is devoted to the well-posedness of linear and nonlinear equations having a parabolic forward-backward structure. 
In the linear case, our main example will be the Kolmogorov equation $y\p_x u - \p_{yy} u=f$ 
in the rectangular domain $\Omega := (x_0, x_1) \times (-1,1)$ where $x_0 < x_1$ and $f$ is an external source term.
\nomenclature[OAZ]{$\Omega$}{Physical rectangular domain $(x_0,x_1)\times(-1,1)$, see \cref{fig:omega}}
We will also consider nonlinear perturbations of this linear setting.
The easiest nonlinear perturbation we consider is a Vlasov--Poisson--Fokker--Planck type system of the form $y\p_x u + E[u]\p_y u -\p_{yy} u=f$, where $E[u]=\p_x^{-1} \int u\dd y$.
In this case the nonlinearity does not perturb the geometry of the problem, which remains forward parabolic in the region $y>0$, and backward parabolic in the region $y<0$.

We will also  investigate the existence and uniqueness of sign-changing solutions to the Burgers type equation
\begin{equation} \label{eq:eq0-uux}
    u \p_x u - \p_{yy} u = f
\end{equation}
and to the Prandtl system
\begin{equation}
\label{Prandtl-small-domain}
\begin{aligned}
u \p_x u + v \p_y u - \p_{yy} u& = - \p_x p, \\
\p_x u + \p_y v &=0.
\end{aligned}
\end{equation}
A natural solution to \eqref{eq:eq0-uux} with a null source term $\mathbbm{f} = 0$ is the linear shear flow $\fu(x,y) := y$, which changes sign across the horizontal line $\{ y = 0 \}$.
In a similar way, semi-explicit solutions $(\ufs,\vfs)$ of the Prandtl system \eqref{Prandtl-small-domain} such that $\ufs$ changes sign have been exhibited, see the discussion in \cref{sec:motivation} below.
We are interested in strong solutions to~\eqref{eq:eq0-uux} (resp.\ \eqref{Prandtl-small-domain}) which are close in an appropriate norm to this linear shear flow $\fu$ (resp.\ to the reference solution $(\ufs, \vfs)$).
Our purpose is to construct such solutions by perturbing the lateral boundary data or the source term.
\nomenclature[OAS]{$\Sigma_0$}{Left inflow boundary $\{x_0\}\times(0,1)$, see \cref{fig:omega}}%
\nomenclature[OAS]{$\Sigma_1$}{Right inflow boundary $\{x_1\}\times(-1,0)$, see \cref{fig:omega}}

\paragraph{Forward-backward nature}
Since solutions to \eqref{eq:eq0-uux} and \eqref{Prandtl-small-domain} will change sign across a curve $\{ u = 0 \}$ lying within $\Omega$, a key feature of this work these problems must be seen as quasilinear forward-backward parabolic equations in the horizontal direction.
Thus, to ensure the existence of a solution, one must be particularly careful as to how one enforces the lateral perturbations.
More precisely, the problem is forward parabolic in the  domain above the curve $\{u=0\}$, in which $u>0$, and therefore we shall  prescribe a boundary condition on $\Sigma_0 := \{x=x_0\}\cap\{u>0\}$; and backward parabolic in the domain below the curve $\{u=0\}$, and we shall prescribe a boundary condition on $\Sigma_1 := \{x=x_1\}\cap \{u<0\}$.

\begin{figure}[ht]
    \centering
    \includegraphics{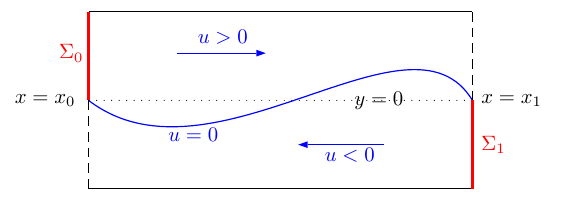}
    \caption{Fluid domain $\Omega$ and inflow boundaries $\Sigma_0 \cup \Sigma_1$}
    \label{fig:omega}
\end{figure}

We will construct solutions to these problems thanks to an abstract implicit function theorem taking into account the geometry of the problem.
More precisely, we will first straighten the free boundary $\{u=0\}$ by introducing as a new vertical variable $z=u(x,y)$.
A suitable change of unknown function will then transform \eqref{eq:eq0-uux} and \eqref{Prandtl-small-domain} into quasilinear equations with an easier-to-handle nonlinearity (see \cref{rk:semilinear}).

\paragraph{Orthogonality conditions}
Because of the nonlinearity, we need to work in a high enough regularity space in order to have a suitable control of the derivatives.
However, one key difficulty of our work lies in the fact that, even when the source term $f$ is smooth, say in $C^\infty_0(\Om)$, \emph{solutions to \eqref{eq:eq0-uux} and \eqref{Prandtl-small-domain} have  singularities in general}.
Actually, this feature is already present at the linear level, i.e.\ for the equation $y\p_x u -\p_{xx}u=f$.
We prove that if $f$ is smooth, the associated weak solution to the linear system inherits the regularity of $f$ if and only if $f$ satisfies \emph{orthogonality conditions} (i.e.\ the scalar products of $f$ with some identified profiles must vanish). We also describe the singularities that appear when these orthogonality conditions are not satisfied.
At the nonlinear level, these orthogonality conditions become a finiteness assumption on the codimension of the data manifold.

All the features described above (orthogonality conditions for linear forward-backward equations, description of the potential singularities, handling of orthogonality conditions  for quasilinear systems) appear to be new. 
We believe that the strategy we use could be extended to other nonlinear settings in which orthogonality conditions appear (elliptic equations in domains with corners, problems in which the linearized operator is Fredholm with negative index, ...)

\subsection{Statement of the main results}
\label{sec:statements}

\subsubsection{Linear theory}

Due to the forward-backward nature of the problem, we must choose the lateral perturbations and the source term in a particular product space.
We therefore introduce the vector space
\begin{equation} \label{eq:def-XB-intro}
    \begin{split}
        \cX_B := \Big\{ (f,\delta_0,\delta_1) \in H^1_x H^2_y & \times H^5(0,1) \times H^5(-1,0); \enskip f\vert_{\Sigma_0\cup\Sigma_1}=0 \\
        & \text{ and } \delta_i(0) = \delta_i((-1)^i) = \delta_i''(0) = \delta_i''((-1)^i) = 0 \Big\},
    \end{split}
\end{equation}
where $\Sigma_0=\{x_0\}\times (0,1)$ and $\Sigma_1=\{x_1\}\times (-1,0)$ are the lateral boundaries on which we prescribe boundary conditions. 
We endow $\cX_B$ with its canonical norm
\nomenclature[OAD1]{$\delta_i$}{Boundary data at the inflow boundary $\Sigma_i$}%
\nomenclature[FXB]{$\cX_B$}{Hilbert space with norm \eqref{eq:XB-norm} of data triplets $(f,\delta_0,\delta_1)$ for nonlinear Burgers}%
\begin{equation} \label{eq:XB-norm}
    \| (f,\delta_0,\delta_1) \|_{\cX_B}
    := \|  f \|_{H^1_x H^2_y} + \| \delta_0 \|_{H^5} + \| \delta_1 \|_{H^5}.
\end{equation}
We establish existence and uniqueness of solutions in the following anisotropic Sobolev space\nomenclature[FQ1]{$\qone$}{Solution space $H^{5/3}_x L^2_y \cap L^2_x H^2_y$}
\begin{equation}
    \label{eq:def-Q1-intro}
    \qone := H^{5/3}((x_0,x_1);L^2(-1,1))\cap H^1((x_0,x_1);H^2(-1,1)).
\end{equation}
In particular, for solutions with such regularity, equation \eqref{eq:eq0-uux} or its linear version $y\p_x u - \p_{yy} u = f$ hold in a strong sense, almost everywhere and the various boundary conditions hold in the usual sense of traces.
We first state a result concerning the well-posedness in $\qone$ of the stationary Kolmogorov equation (see \eqref{eq:shear} below), \emph{up to two orthogonality conditions} (see comments below). 
Although equation~\eqref{eq:shear}  has been thoroughly investigated, as we recall in \cref{sec:comments}, we could not find this statement in the existing literature. 

\begin{thm}[Orthogonality conditions for linear forward-backward parabolic equations] \label{thm:shear-Q1}
    There exists a vector subspace $\hBPerp \subset \cX_B$ of codimension 2 such that, for each $(f,\delta_0, \delta_1) \in \cX_B$, there exists a solution $u \in \qone$ to the problem
    \begin{equation}
        \label{eq:shear}
        \begin{cases}
            y \p_x u - \p_{yy} u = f,\\
            u_{\rvert \Sigma_i} = \delta_i, \\
            u_{\rvert y = \pm 1} = 0,
        \end{cases}
    \end{equation}
    if and only if $(f,\delta_0,\delta_1) \in \hBPerp$.
    Such a solution is unique and satisfies
    \begin{equation} \label{eq:u-qone}
        \|u\|_{\qone} \lesssim \| (f,\delta_0, \delta_1) \|_{\cX_B}.
    \end{equation}
\end{thm}

We emphasize that this result implies that there exist triplets $(f,\delta_0,\delta_1)$ that can be chosen arbitrarily smooth and compactly supported, and for which there are no $\qone$ solutions to \eqref{eq:shear}.
Furthermore, the vector space $\hBPerp$ can be fully characterized: classically, $\hBPerp = \ker \overline{\ell^0} \cap \ker \overline{\ell^1}$, where $\overline{\ell^0}$ and $\overline{\ell^1}$ are two linear forms on $\cX_B$ which we shall write explicitly.
If the data do not belong to $\hBPerp$, the solution has singularities, which we can describe completely.

\begin{thm}[Decomposition of solutions as a sum of singular profiles and a smooth remainder]
    \label{thm:shear-decomp}
    Let $(f,\delta_0, \delta_1) \in \cX_B$. 
    There exists a unique solution $u\in H^{2/3}_xL^2_y\cap L^2_xH^2_y$ to equation \eqref{eq:shear}. 
    Furthermore, this solution admits the following decomposition: there exists $c_0,c_1\in \R$, and $\ureg \in \qone$, such that
    \begin{equation*}
    u=c_0\busing^0 + c_1 \busing^1 + \ureg.
    \end{equation*}
    Each profile $\busing^i$ is supported in the vicinity of $(x_i,0)$ and is smooth on $\overline{\Om} \setminus \{ (x_i,0) \}$. 
    Furthermore, for $|x-x_i|\ll 1$ and $|y|\ll 1$,
    \begin{equation*}
    \busing^i(x,y)=\left(|y|^2 + |x-x_i|^{\frac 23}\right)^{\frac 14} \gnot\left((-1)^i\frac{y}{|x-x_i|^{\frac 13}}\right),
    \end{equation*}
    where $\gnot \in C^\infty(\R)$ is such that $\gnot(-\infty)=1$ and $\gnot(+\infty)=0$ (see \cref{fig:G0} page \pageref{fig:G0}).
\end{thm}

The existence of a weak solution was already known, see in particular \cite{Pagani1,Pagani2,MR0111931}. The novelty of the above theorem lies in the identification of the singular profiles $\busing^i$, and in the decomposition of any weak solution.
The function $\gnot$ is in fact the solution to an ODE, and can be characterized in terms of special functions (namely confluent hypergeometric functions of the second kind, or Tricomi's functions).

The assumptions on the data $(f,\delta_0,\delta_1)$ are not optimal and can be weakened: in particular, we merely need $H^1_x L^2_y$ regularity on the source term $f$, together with compatibility conditions in the corners. 
We will state optimal versions   of \cref{thm:shear-Q1} and \cref{thm:shear-decomp} in  \cref{sec:shear}, relying on the functional space $\cH_K$ defined in \eqref{eq:def-HK} (see respectively  \cref{thm:shear-Z1} and \cref{coro:decomposition-profile}).
In fact, in the sequel, we will need these sharper versions to prove our nonlinear results.
In this introduction, we stick with the above versions in order to avoid defining too many functional spaces.

\subsubsection{A nonlinear toy model from kinetic theory}

As a corollary to \cref{thm:shear-Q1}, we obtain a similar statement for a (nonlinear) Vlasov--Poisson--Fokker--Planck system in an interval. 
In order not to burden the introduction, we refer to \cref{sec:Fokker-Planck} for the presentation of the system and to \cref{prop:VPFP} for the full statement.
The proof of \cref{prop:VPFP} is rather straightforward since the geometry of the considered problem remains the same as for~\eqref{eq:shear}, and the nonlinearity is very weak.
We nevertheless use this example to set up our nonlinear scheme in \cref{sec:VPFP-constr-Z1} and a general abstract nonlinear existence result in \cref{sec:abstract}.

\subsubsection{The viscous Burgers system}

We then turn towards the nonlinear problem \eqref{eq:eq0-uux}.
One of the main results of this paper is the following nonlinear generalization of \cref{thm:shear-Q1} for small enough perturbations. More precisely, the norm of the perturbation must be smaller than some constant $\eta$ depending only on the size of the domain.

\begin{thm}[Existence and uniqueness of strong solutions to \eqref{eq:eq0-uux} under orthogonality conditions] \label{thm:burgers}
    There exists a Lipschitz submanifold $\cM_B$ of $\cX_B$ of codimension 2, containing $0$ and included in a ball of radius $\eta\ll 1$ in $\cX_B$, such that, for every $(f,\delta_0, \delta_1) \in \cM_B$, there exists a strong solution $u \in \qone$ to
    \begin{equation} \label{eq:yuuxuyy}
        \begin{cases}
            u \p_x u - \p_{yy} u = f,\\
            u_{\rvert \Sigma_i} = y + \delta_i(y), \\
            u_{\rvert y = \pm 1} = \pm 1.
        \end{cases}
    \end{equation}
    More precisely, $\cM_B$ is modeled on $\hBPerp$ and tangent to it at $0$.
    Such solutions are unique in a small neighborhood of $\fu(x,y) = y$ in $\qone$ and satisfy the estimate
    \begin{equation*}
        \| u - \fu \|_{\qone} \lesssim \| (f,\delta_0,\delta_1) \|_{\cX_B}.
    \end{equation*}
\end{thm}

In the statement above, the condition that the data $(f,\delta_0,\delta_1)$ belong to the manifold $\cM_B$ is the nonlinear equivalent of the orthogonality conditions from \cref{thm:shear-Q1}. 
We emphasize that this is by no means a technical restriction which could be lifted, but actually a necessary condition to solve the equation with smooth solutions, as we state in \cref{p:necessity-ortho} below.
A key difficulty lies in the fact that these orthogonality conditions \emph{depend on the solution itself}.

\begin{prop}[Necessity of the orthogonality conditions]
    \label{p:necessity-ortho}
    There exists $\eta > 0$ such that the following result holds.
    Let $(f,\delta_0,\delta_1) \in \cX_B$ with $\|(f,\delta_0,\delta_1)\|_{\cX_B} \leq \eta$.
    Let $u \in \qone$ be a solution to~\eqref{eq:yuuxuyy} such that $\|u-\fu\|_{\qone} \leq \eta$.
    Then $(f,\delta_0, \delta_1) \in \cM_B$.
\end{prop}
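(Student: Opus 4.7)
The strategy is to identify $\mathcal M$, locally near $0$, with the image of the nonlinear data-to-equation map
\[
\Phi(u) := \bigl((y+u)\partial_x u - \partial_{yy}u,\; u_{\rvert \Sigma_0},\; u_{\rvert \Sigma_1}\bigr).
\]
Once this identification is established, the proposition follows at once: $u$ being a solution to~\eqref{eq:yuuxuyy} with data $(f,\delta_0,\delta_1)$ is exactly the equality $\Phi(u) = (f,\delta_0,\delta_1)$, and the data then lies in $\Phi$ of a small $\qone$-ball, which we will identify with $\mathcal M$.

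\textbf{Step 1: regularity of $\Phi$.} First I would check that $\Phi$ defines a Lipschitz (indeed $C^1$) map from a small $\qone$-ball into $\mathcal{H}$. The linear piece $v \mapsto -\partial_{yy}v + y \partial_x v$ and the trace pieces are bounded by the very definition of the norms. The quadratic term $u \partial_x u$ is controlled in each component of $\|\cdot\|_{\mathcal H}$ --- $H^1_x H^1_y$, $L^2_x H^3_y$, and the boundary components --- by product/algebra estimates exploiting that $\qone$ embeds into $L^\infty$ with several derivatives. The differential at $0$ equals the shear-flow operator of \cref{thm:existence-shear-X1}, whose range is exactly $\hperp$ and which is a linear isomorphism from the subspace of $\qone$ with vanishing traces at $\{y=\pm 1\}$ onto $\hperp$.

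\textbf{Step 2: $\mathcal M$ equals the local image of $\Phi$.} The manifold $\mathcal M$ is built, in the proof of \cref{thm:main}, as the graph over $\hperp$ of a Lipschitz map $h \mapsto w(h) \in W$, where $W$ is a fixed two-dimensional complement of $\hperp$ in $\mathcal H$; simultaneously, the fixed-point scheme delivers the unique small $u(h) \in \qone$ whose data are $h + w(h)$. In particular $\mathcal M \subset \Phi(B_\eta^{\qone})$. Conversely, given $u \in B_\eta^{\qone}$, decompose $\Phi(u) = h_u + w_u$ with $h_u \in \hperp$ and $w_u \in W$. Running the scheme on $h_u$ yields $u(h_u)$ with data $h_u + w(h_u) \in \mathcal M$. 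A continuation argument in the size of $u$, combined with the nonlinear uniqueness clause of \cref{thm:main}, then forces $w_u = w(h_u)$ and $u = u(h_u)$, so that $\Phi(u) \in \mathcal M$. Under the hypotheses of the proposition, $u$ is a small $\qone$-solution with data $(f,\delta_0,\delta_1)$, meaning $\Phi(u) = (f,\delta_0,\delta_1)$, and the previous step yields $(f,\delta_0,\delta_1) \in \mathcal M$.

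\textbf{Main obstacle.} The non-trivial content is concentrated in Step 2: one has to rule out solutions with data transverse to $\mathcal M$. The natural approach is to test~\eqref{eq:yuuxuyy} against the two singular dual profiles attached to the \emph{perturbed} base flow $y + u$ rather than the shear flow $y$, so that the conditions obtained are genuinely the ones satisfied by points of $\mathcal M$. The necessity then follows provided one controls how these dual profiles, and the resulting linear forms, depend on the base flow $y + u$ --- which is precisely the stability-of-orthogonality-conditions analysis that is one of the central technical achievements of the paper, singled out in \cref{sec:intro-sketch}. With that stability at hand, the component $w_u$ of $\Phi(u)$ is rigidly determined by $h_u$ and $u$, and the identification $w_u = w(h_u)$ becomes unambiguous.
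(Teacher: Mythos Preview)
There is a genuine gap in Step~1: the map $\Phi$ does not send a $\qone$-ball into $\mathcal H$, so the inverse-function-theorem picture you sketch cannot be set up. The $\mathcal H$-norm \eqref{eq:H-norm} demands $f\in H^1_xH^1_y$, $\p_y^3 f\in L^2$, and $\delta_i\in H^5$. For generic $u\in\qone=L^2_xH^5_y\cap H^{5/3}_xL^2_y$, the embeddings of \cref{lem:Q0121-embed} give $\qone\hookrightarrow H^\sigma_xH^{\sigma'}_y$ only for $3\sigma+\sigma'\le 5$, so neither $u\in H^1_xH^3_y$ nor $u\in H^2_xL^2_y$ holds, and the lateral trace $u_{|\Sigma_i}$ lies only in $H^{5/2}$. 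Hence $y\p_x u-\p_{yy}u\notin H^1_xH^1_y$ in general, contradicting your claim that the linear piece is ``bounded by the very definition of the norms''. Without a well-defined $\Phi:\qone\to\mathcal H$ there is no differential to invert, and Step~2's ``continuation argument'' (which in any case only restates what is to be proved and invokes a uniqueness clause that concerns a \emph{fixed} datum) has no foundation.

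The paper's proof in \cref{sec:necessity} is direct and avoids this regularity trap. It compares the given $u$ to the reference $\widetilde u:=\mathcal U_\perp(\Xi^\perp)$ built on $\mathcal M$, sets $w:=\widetilde u-u\in\qone$ and $\mu^k:=\nu^k(\Xi^\perp)-\langle\Xi^k;\Xi\rangle$, and observes that $w$ solves a linearized problem at $y+\widetilde u$ with data $(-w\,\p_x u+\mu^0 f^0+\mu^1 f^1,\ \mu^0\delta^0_i+\mu^1\delta^1_i)$. Since $w\in\qone$, \cref{p:wp-Q-scale} forces $\ell^j_{y+\widetilde u}$ of these data to vanish. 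The $\qhalf$ estimate of \cref{prop:q12} gives $\|w\|_{\qhalf}\lesssim|\mu^0|+|\mu^1|$; the stability \cref{cor:ell-lip} gives $\ell^j_{y+\widetilde u}(\Xi^k)=\mathbf 1_{j=k}+O(\eta)$; and \cref{rem:extension_ellj} gives $|\ell^j_{y+\widetilde u}(w\p_x u,0,0)|\lesssim\eta\|w\|_{\qhalf}$. Combining, $|\mu^0|+|\mu^1|\lesssim\eta(|\mu^0|+|\mu^1|)$, hence $\mu^k=0$ and $\Xi\in\mathcal M$. Your ``Main obstacle'' paragraph does isolate the right key ingredient, but the actual proof uses it quantitatively on the difference $w$ rather than through an abstract image characterization of $\mathcal M$.
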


\begin{rmk}
    By commodity, the above results are stated using the full triplet $(f,\delta_0,\delta_1)$, and so is the remainder of this paper.
    Nevertheless, it is possible to obtain similar results either by fixing $\delta_0 = \delta_1 = 0$ and constructing a submanifold of source terms $f$ yielding regular solutions, or by fixing $f = 0$ and constructing a submanifold of boundary data $(\delta_0,\delta_1)$.
    This stems from the independence of the orthogonality conditions, which can be obtained either by \cref{free:f} or by \cref{cor:free_ellj-sansf}.
\end{rmk}

\subsubsection{The Prandtl system}
\label{sec:intro-main-Prandtl}

We also prove analogous results for the Prandtl system, revisiting the work of Iyer and Masmoudi in \cite{IM2022,IM2023} (we will comment more thoroughly on the differences between our results in the next sections). Let us now present our mathematical setting; we will provide physical motivation and background for this system in \cref{sec:motivation}.
\nomenclature[OLuFS]{$\ufs$}{Reference recirculating flow for the Prandtl system}
We consider a reference flow $(\ufs,\vfs)\in C^k([x_0,x_1]\times (0, +\infty))$ for some sufficiently large $k$ (say $k=4$), satisfying the Prandtl system
\begin{equation*}
    \begin{aligned}
    \ufs \p_x \ufs + \vfs \p_y \vfs - \p_{yy}\vfs & = - \p_x p,\\
    \p_x \ufs + \p_y \vfs &=0
    \end{aligned}
\end{equation*}
in the whole domain $(x_0, x_1)\times (0, + \infty)$, where $p$ is the trace of the pressure of some outer Euler flow on the boundary $\{y=0\}$.
We assume that there exists a curve $\overline{\Gamma}:=\{y=\gfs (x)\}$, with $\gfs $ smooth and such that $\inf_{[x_0, x_1]}\gfs >0$,  such that 
$\ufs$ changes sign on the curve $\overline{\Gamma}$:
$\ufs(x,\gfs(x))=0$, and $\ufs(x,y)<0$ (resp.\ $\ufs(x,y)>0$) for $y<\gfs(x)$ (resp.\ $y>\gfs(x)$). 
Our purpose is to construct a solution to the Prandtl system close to $(\ufs,\vfs)$ and in the vicinity of the curve $\overline{\Gamma}$, by perturbing either the inflow/outflow on the lateral boundaries or the source term.

\nomenclature[OAB2]{$\overline{\gamma_b}, \overline{\gamma_t}$}{Level sets of the function $\ufs$}
To that end, we consider $z_b<0<z_t$ such that there exist smooth functions $\overline{\gamma_b}, \overline{\gamma_t}$, with $0<\overline{\gamma_b}(x)<\gfs(x) <\overline{\gamma_t}(x) $ for all $x\in [x_0,x_1]$, and such that $\ufs(x,\overline{\gamma_j}(x))=z_j$ for $j\in \{b,t\}$.
We set $\overline{\Gamma_j}= \{y=\overline{\gamma_j}(x)\}$ for $j\in \{b,t\}$.
We consider the Prandtl system \eqref{Prandtl-small-domain} in a domain $\Omega_P$, which is defined by
 \nomenclature[OAB1]{${\gamma_b}, {\gamma_t}$}{Level sets of the unknown solution $u$ of the Prandtl system}
 \nomenclature[OAZP]{$\Om_P$}{Physical domain for the resolution of the Prandtl system}
\begin{equation*}
\Om_P := \{(x,y)\in (x_0,x_1)\times \R_+;\ \gamma_b(x)<y<\gamma_t(x)\},
\end{equation*}
where $\gamma_b, \gamma_t$ are smooth functions, which will actually be free boundaries, corresponding to the level sets $z_b$ and $z_t$ of the function $u$. 
We expect these functions (which are unknowns of the problem) to be smooth functions located in the vicinity of $\overline{\gamma_b}, \overline{\gamma_t}$.
We endow system \eqref{Prandtl-small-domain} with the following boundary conditions, which we will discuss and comment in \cref{sec:comments}.
See \cref{fig:omega-prandtl} (page \pageref{fig:omega-prandtl}) for a sketch of the geometry of the domain.
\nomenclature[OAD2]{$\delta_b,\delta_t$}{Boundary data for the vorticity on the bottom and top boundaries of $\Om_P$}
\nomenclature[OLv]{$v_b$}{Boundary datum for the vertical velocity on the bottom boundary of $\Om_P$}
\begin{enumerate}
    \item \emph{Boundary conditions on the top and bottom free boundaries:}
    On the bottom boundary $\Gamma_b=\{y=\gamma_b(x)\}$, we enforce
    \begin{equation}\label{CL-Prandtl-bottom}
    \begin{aligned} 
    u\vert_{\Gamma_b}&= \ufs\vert_{\overline{\Gamma_b}}=z_b,\\
    \p_y u\vert_{\Gamma_b}&= \p_y \ufs\vert_{\overline{\Gamma_b}} + \delta_b,\\
    v\vert_{\Gamma_b}&=\vfs\vert_{\overline{\Gamma_b}} + v_b,
    \end{aligned} 
    \end{equation}
    where $\delta_b$, $v_b$ are small, smooth perturbations. 
    
    Similarly, on the  top boundary  $\Gamma_t=\{y=\gamma_t(x)\}$, we enforce
    \begin{equation}\label{CL-Prandtl-top}
    \begin{aligned} 
    u\vert_{\Gamma_t}&= z_t,\\
    \p_y u\vert_{\Gamma_t}&= \p_y \ufs\vert_{\overline{\Gamma_t} }+ \delta_t,
    \end{aligned} 
    \end{equation}
    where $\delta_t$ is, again, a small smooth perturbation.
    
    \begin{rmk}
        Note that on the top and bottom boundary, we prescribe the trace of $u$ and of its normal derivative. Of course it would be impossible to prescribe simultaneously these two boundary conditions if the boundaries $\Gamma_b,\Gamma_t$ were fixed.
        This is only made possible by the fact that these two boundaries are free.
    \end{rmk}
    
    \item  \emph{Lateral boundary conditions:} 
    As in the case of \eqref{eq:shear} and \eqref{eq:yuuxuyy}, we enforce lateral boundary conditions on $u$, namely
   \nomenclature[OASP]{$\Sigma^P_i$}{Lateral inflow boundaries for the Prandtl system}
    \begin{equation}\label{CL-Prandtl-lateral}
    u_{|\Sigma^P_i}= \ufs + \delta_i,\qquad i\in \{0,1\},
    \end{equation}
    where $\Sigma^P_0 = \{ x_0 \} \times (\gfs(x_0), \overline{\gamma_t}(x_0))$ and $\Sigma^P_1 = \{ x_1 \} \times (\overline{\gamma_b}(x_1), \gfs(x_1))$ (see also \cref{fig:omega-prandtl} page \pageref{fig:omega-prandtl}).
    In particular $\ufs >0$ on $\Sigma^P_0$ and $\ufs < 0$ on $\Sigma^P_1$.
    For simplicity, we assume that $\delta_0(\gfs(x_0))=\delta_1(\gfs(x_1))=\delta_0(\overline{\gamma_t}(x_0))=\delta_1(\overline{\gamma_b}(x_1))=0$, and we recall that $\delta_0, \delta_1$ are assumed to be small in some sufficiently strong Sobolev norm. 
    Therefore, provided that $\p_y \ufs\vert_{\overline{\Gamma}}>0$, the signs of $\ufs (x_i,\cdot ) + \delta_i$ and of $\ufs(x_i, \cdot)$ are identical on $\Sigma^P_i$.
    
\end{enumerate}

We will in fact state two different results: one in ``low regularity'', under merely one orthogonality condition, and another one in higher regularity, under three orthogonality conditions. 
For the sake of readability, we have stated them under the same regularity and compatibility assumptions on the data, although the assumptions are not optimal in the low regularity case, and the compatibility conditions could be generalized in both cases.
We will state a more general result in \cref{sec:Prandtl} (see \cref{prop-Prandtl-Y}).
Therefore, we take our  data in the functional space 
\begin{equation*}\begin{aligned}
\cX_P:=\Big\{ (\delta_0, \delta_1, \delta_t, \delta_b, v_b) \in &\; H^6(\Sigma^P_0)\times H^6(\Sigma^P_1)\times H^2_0(x_0, x_1) \times H^2_0(x_0,x_1)\times H^1(x_0, x_1),\\& \delta_i \in H^4_0(\Sigma_i^P)\text{ for }i\in \{0,1\}\Big\},
\end{aligned}
\end{equation*}
which we endow with its natural norm.
\begin{thm}
    \label{thm:prandtl}
    There exist numbers $\eta>0$, $z_0>0$, depending only on the underlying flow $(\ufs, \vfs)$, such that if $|z_b|, z_t\leq z_0$, the following results hold.
    
    \begin{enumerate}
        \item There exists a manifold $\cM_0$ of codimension 1 in $\cX_P$ and included in a ball of radius $\eta$ in $\cX_P$, such that for all $(\delta_0, \delta_1, \delta_t, \delta_b, v_b) \in \cM_0$, equation \eqref{Prandtl-small-domain} endowed with the boundary conditions \eqref{CL-Prandtl-bottom}-\eqref{CL-Prandtl-top}-\eqref{CL-Prandtl-lateral} has a unique continuous solution $u$ such that $u\in L^2_x H^3_y (\Omega_P)$, $(x-x_0)(x-x_1) u\in H^1_x H^3_y (\Omega_P)$, $\p_y u \in L^\infty(\Omega_P)$, $u\p_x \p_y u\in L^2(\Omega_P)$, and
        \begin{equation*}
            \begin{split}
                \| u-\ufs\|_{L^2_x H^3_y} + \| \p_y (u-\ufs) \|_{L^\infty} & + \| (x-x_0)(x-x_1) (u-\ufs)\|_{ H^1_x H^3_y}\\
       & \lesssim \| (\delta_0, \delta_1, \delta_t, \delta_b, v_b)\|_{\cX_P}.
            \end{split}
        \end{equation*}
        
        \item There exists a manifold $\cM_1$ of codimension 3 in $\cX_P$ and included in a ball of radius $\eta$ in $\cX_P$, such that for all $(\delta_0, \delta_1, \delta_t, \delta_b, v_b) \in \cM_1$, equation \eqref{Prandtl-small-domain} endowed with the boundary conditions \eqref{CL-Prandtl-bottom}-\eqref{CL-Prandtl-top}-\eqref{CL-Prandtl-lateral} has a unique solution $u$ such that $u\in H^{5/3}_x H^1_y\cap H^1_x H^3_y(\Omega^P)$, and
        \begin{equation*}
        \| u - \ufs\|_{H^{5/3}_x H^1_y} + \| u - \ufs\|_{H^1_x H^3_y}\lesssim  \| (\delta_0, \delta_1, \delta_t, \delta_b, v_b)\|_{\cX_P}.
        \end{equation*}
    \end{enumerate}
\end{thm}
	
\subsection{Comments and previous results}
\label{sec:comments}

We start with a few comments on our main results and recall related known results.

\bigskip

Problem \eqref{eq:shear}, involving the operator $y \p_x - \p_{yy}$, can be seen as a particular case of the class of ``degenerate second-order elliptic-parabolic linear equations'', also referred to as ``second-order equations with nonnegative characteristic form'' (as opposed to positive definite ones), ``forward-backward'' or ``mixed type'' problems.
They date back at least to Gevrey~\cite{zbMATH02616722}.

Problem \eqref{eq:shear} itself, as well as these wide classes of equations, has received a lot of attention and has been investigated under different aspects: with variable coefficients or other geometries \cite{MR0111931,zbMATH07000095,Pagani2}, higher-order operators \cite[Ch.\ 3, 2.6]{lions1969quelques}, abstract operators \cite{beals1979abstract,paronetto2004existence}, explicit representation formulas \cite{fleming1963problem,gor1975formula} or with a focus on numerical analysis \cite{aziz1999origins}.

\paragraph{On weak solutions for the linear problem.}
It is well-known since the work of Fichera \cite{MR0111931} that weak solutions to~\eqref{eq:shear} with $L^2_x H^1_y$ regularity exist.
For general boundary-value problems for elliptic-parabolic second-order equations, one owes to Fichera the systematic separation of the boundary of the domain into three parts: a ``noncharacteristic'' part, where one sets either Dirichlet or Neumann boundary conditions (here $y = \pm 1$), an ``inflow'' part, where one sets a Dirichlet boundary condition (here $\Sigma_0\cup\Sigma_1$) and an ``outflow'' part, where one cannot set a boundary condition (here, the two sets $\{ x_0 \} \times (-1,0)$ and $\{ x_1 \} \times (0,1)$).

Baouendi and Grisvard \cite{BG} proved the uniqueness of weak solutions to \eqref{eq:shear} with $L^2_x H^1_y$ regularity, by means of a trace theorem and a Green identity (see \cref{sec:proof-uniqueness}).

\paragraph{On strong solutions for the linear problem.}
There is an extensive literature on the regularity of solutions to degenerate elliptic-parabolic linear equations, and whether weak solutions are strong.
We refer the reader in particular to the book \cite{MR0457908} by Ole\u{\i}nik and Radkevi\v{c}.
Generally speaking, depending on the exact setting considered, it is quite often possible to prove that the solutions to such equations are regular far from the boundaries of the domain and/or from the regions where the characteristic form is not positive definite.
A nice example is Kohn and Nirenberg's work \cite{kohn1967degenerate}, which proves a very general regularity result.
A key assumption of their work is that the ``outflow'' part of the boundary does not meet the ``noncharacteristic'' and ``inflow'' parts (i.e.\ they are in disjoint connected components of $\partial \Omega$).
Hence, it does not apply to~\eqref{eq:shear}, and hints towards a difficulty near the points $(x_0,0)$ and $(x_1,0)$.

In a series of papers \cite{zbMATH03431167,Pagani1,Pagani2}, Pagani proved the existence of strong solutions to \eqref{eq:shear} (and related equations).
More precisely, Pagani proved the existence of solutions such that $y \p_x u$ and $\p_{yy} u$ belong to $L^2(\Omega)$.
Moreover, he determined the exact regularity of the various traces of such solutions (trace of $u$ at $x = x_i$, at $y = \pm 1$ or $y = 0$, and trace of $\p_y u$ at $y = 0$).
These maximal regularity results play a key role in our analysis and motivate the functional spaces we introduce in \cref{sec:spaces}.

\paragraph{On orthogonality conditions for higher regularity.}
As noted by Pyatkov in \cite{pyatkov2019some}, for such forward-backward problems: ``\emph{as a rule, there is no existence theorems for smooth solutions without some additional orthogonality-type conditions on the problem data}''.
Even for the linear problem~\eqref{eq:shear}, there have been very few works concerning higher regularity (than the one given by Pagani's framework) in the whole domain.
Most of the works focused on higher regularity (such as \cite{pyatkov2019some}) involve weighted estimates which entail regularity within the domain but not near the critical points $(x_i,0)$.
An attempt for global regularity is Goldstein and Mazumdar's work \cite[Theorem 4.2]{AG}; however the proof seems incomplete (see \cref{p:shear-WP-Z1} below and its proof for more details).

A misleading aspect is that it is quite easy, assuming the existence of a smooth solution, to prove \emph{a priori} estimates at any order.
Such phenomena are usual in the theory of elliptic problems in domains with corners or mixed Dirichlet-Neumann boundary conditions (see for instance \cite{MR775683}).
Let us give an illustration of such a phenomenon in a close context.
For a source term $f \in C^\infty_c(\Omega)$, consider the elliptic problem
\begin{equation} \label{eq:elliptic-u}
    \begin{cases}
        - \Delta u = f & \text{in } \Omega, \\
        u(x_i, y) = 0 & \text{for } (-1)^i y > 0, \\
        \p_x u(x_i,y) = 0 & \text{for } (-1)^i y < 0, \\
        u(x, \pm 1) = 0 & \text{for } x \in (x_0,x_1).
    \end{cases}
\end{equation}
It is classical that such a system has a unique weak solution $u \in H^1(\Omega)$.
Moreover, assuming that $u$ is smooth enough, $v := \p_x u$ satisfies
\begin{equation} \label{eq:elliptic-v}
    \begin{cases}
        - \Delta v = \p_x f & \text{in } \Omega, \\
        \p_x v(x_i, y) = 0 & \text{for } (-1)^i y > 0, \\
        v(x_i,y) = 0 & \text{for } (-1)^i y < 0, \\
        v(x, \pm 1) = 0 & \text{for } x \in (x_0,x_1).
    \end{cases}
\end{equation}
For such systems, one has $\| v \|_{H^1} \lesssim \| \p_x f \|_{L^2}$.
Hence $\| \p_{xx} u \| \lesssim \| \p_x f \|_{L^2}$, and, using the equation, $\|u\|_{H^2} \lesssim \| f \|_{H^1}$.
So one has an \emph{a priori} estimate.
However, it is known that there exist source terms for which the unique weak solution $u \in H^1$ does not enjoy $H^2$ regularity (see \cite[Chapter~4]{MR775683} and \cref{sec:radial}).
The key point is that, when reconstructing $u$ from the solution $v$ to \eqref{eq:elliptic-v}, say by setting $u(x,y) := \int_{x_0}^x v(x',y) \dd x'$ for $y > 0$ and $u(x,y) := \int_{x_1}^x v(x',y) \dd x'$ for $y < 0$, there might be a discontinuity of $u$ or $\p_y u$ across the line $y = 0$.
Such discontinuities prevent $u$ from solving \eqref{eq:elliptic-u}.
Preventing these discontinuities requires that the source term satisfies appropriate orthogonality conditions.

Let us also emphasize that if one wishes to construct solutions of \eqref{eq:eq0-uux} with even stronger regularity, say $u\in H^k_xH^1_y$ with $k\geq 1$, then generically, one needs to ensure that $2k $ orthogonality conditions are satisfied by the source terms (see \cref{lem:H2k-regularity}).
This situation occurs (at a nonlinear level) in \cite{IM2023}.

\paragraph{On orthogonality conditions for nonlinear problems.} 
Of course, such orthogonality conditions make it very difficult to obtain results at a nonlinear level. 
Generally, one tries to avoid such difficulties when considering nonlinear problems.
For instance, for elliptic problems in polygonal domains, the classical textbook \cite[Section 8.1]{MR775683} focuses on a nonlinear case where there is no orthogonality condition at the linear level.

Nevertheless, some results are known in the \emph{semilinear} case.
For example, for semilinear Fredholm operators with negative index, a theoretical toolbox is known (see e.g.\ \cite[Chapter 11, Section 4.2]{Volpert-Vol1}) and has been implemented for some reaction-diffusion semilinear systems (see e.g.\ \cite[Chapter 7, Section 2.2]{Volpert-Vol2}, based on \cite{ducrot2008reaction}).

Outside of the semilinear setting, we are not aware of nonlinear results obtained despite the presence of orthogonality conditions at the linear level prior to our present work (we discuss the  recent preprint \cite{IM2023} by Sameer Iyer and Nader Masmoudi in \cref{sec:motivation}).

Problem \eqref{eq:eq0-uux} is only \emph{quasilinear}, and this makes the analysis harder. 
In an earlier version of this paper, we introduced  a  nonlinear scheme in which the orthogonality conditions  changed at every step. 
Tracking the  evolution of these orthogonality conditions was then a major difficulty. 

Following a very helpful remark by several colleagues\footnote{Felix Otto, Yann Brenier, and an anonymous referee, whom we warmly thank.}, we have changed our strategy.
We first perform a change of unknown which allows us  to keep the same linear operator throughout the scheme, and to treat the nonlinearity perturbatively.
This greatly simplifies the proof. In turn, this change of variables allows us to revisit the work of Iyer and Masmoudi \cite{IM2022,IM2023} on the analysis of the Prandtl system in the vicinity of a recirculating flow, since the equation for the vorticity after the change of variables has a very simple structure, see \cref{sec:intro-sketch}.
Let us also recall that at the nonlinear level, the orthogonality conditions are translated in \cref{thm:burgers} (resp.\ \cref{thm:prandtl}) as the fact that the data must lie within the manifold $\cM$ (resp.\ $\cM_1$), which can be pictured as a perturbation of the linear subspace $\hBPerp$ of data satisfying the orthogonality conditions for the linear problem.

The proof of both of our main theorems (on Burgers and Prandtl) relies on the same abstract result (see \cref{thm:abstract} in \cref{sec:abstract}) concerning quasilinear equations in a perturbative regime.

\paragraph{On entropy solutions.} 
An entirely different approach to solve \eqref{eq:eq0-uux} is to look directly for weak solutions to the nonlinear problem, for example using an entropy formulation.
The regularity for such solutions is $u \in L^\infty_{x,y} \cap L^2_x H^1_y$ and they are typically obtained as limits of solutions $u^\varepsilon$ to regularized versions of \eqref{eq:eq0-uux}, e.g.\ $u^\varepsilon \p_x u^\varepsilon - \p_{yy} u^\varepsilon - \varepsilon \p_{xx} u^\varepsilon = 0$.
Such solutions satisfy both the equation and the lateral boundary conditions only in the weak sense of appropriate inequalities linked with ``entropy pairs''.
Given $\delta_0, \delta_1 \in L^\infty(-1,1)$, the existence of an entropy solution to 
\begin{equation} \label{eq:entropy}
    \begin{cases}
        u \p_x u - \p_{yy} u = 0, \\
        u_{\rvert x = x_i} = \delta_i, \\
        u_{\rvert y = \pm 1} = 0
    \end{cases}
\end{equation}
was first proved in \cite{bocharov1978first}.
More recently, Kuznetsov proved in \cite{kuznetsov2005entropy} the uniqueness of the entropy solution to \eqref{eq:entropy}, determined in which sense the lateral boundary conditions were satisfied and proved a stability estimate of the form
\begin{equation*}
    \| u - \tilde{u} \|_{L^1(\Omega)}
    \lesssim \| \delta_0 - \tilde{\delta}_0 \|_{L^1(-1,1)} + \| \delta_1 - \tilde{\delta}_1 \|_{L^1(-1,1)}.
\end{equation*}
In particular, this stability estimate guarantees that one can construct sign-changing solutions in the vicinity of the linear shear flow.

However, an important drawback of the entropy formulation is that the boundary conditions are only satisfied in a very weak sense. 
Although functions in $L^\infty_{x,y} \cap L^2_x H^1_y$ do not have classical traces at $x = x_i$, one can give a weak sense to the traces using the equation (see \cite{kuznetsov2015traces} for more details).
Unfortunately, it is expected that these weak traces do not coincide with the supplied boundary data on sets of positive measure. 

In contrast, since the solutions we construct in this work have (at least) $H^1_x L^2_y$ regularity, they have usual traces $u_{\rvert \Sigma_i} \in L^2(\Sigma_i)$ and the equalities $u_{\rvert\Sigma_i} = \delta_i$ hold in $L^2(\Sigma_i)$, so almost everywhere.

\paragraph{On the choice of the linear shear flow for equation \eqref{eq:eq0-uux}.}
We choose to study the well-posedness of \eqref{eq:eq0-uux} in the vicinity of the linear shear flow to lighten the computations.
Nonetheless, we expect that our results and proofs can be extended to study the well-posedness of \eqref{eq:eq0-uux} in the vicinity of any sufficiently regular reference flow $\fu$ changing sign across a single curve $\{ \fu = 0 \}$, satisfying $\fu_y \geq c_0 > 0$ in~$\Omega$ (so that \eqref{eq:shear} is the correct toy model) and with either $\| \fu_x \|_\infty$ small enough, or with a restriction on the size of the domain (to ensure \emph{a priori} estimates). 
In fact, this is precisely what we do when we study the Prandtl system around a recirculating flow: the linearized equation for the vorticity then becomes a forward-backward equation with variable coefficients, see \eqref{vorticity-model} and \cref{lem:ortho-Prandtl}.

Moreover, taking a step further in the modeling of recirculation problems in fluid mechanics (see \cref{sec:motivation}), we also expect that our approach could be extended to an unbounded domain of the form $(x_0,x_1) \times (0,+\infty)$, with a reference flow such that $\fu_{\rvert y = 0} = 0$, $\fu < 0$ below some critical curve and then $\fu > 0$ above, with $\fu$ having some appropriate asymptotic behavior as $y \to +\infty$.
In such a setting, the Poincaré inequalities in the vertical direction that we use here should probably be replaced with well-suited Hardy inequalities.
As mentioned above, one of the issues is then to obtain \emph{a priori} estimates on the linearized system. 
We comment further on this point at the end of \cref{sec:strategy-Prandtl-bande-infinie}.

\paragraph{On the conditions $\delta_0(0) = \delta_1(0) = 0$ for fixed end-points.}
It is an important feature of our work that we are able to enforce precisely the exact endpoints of the curve  $\{ u = 0 \}$ at $x = x_0$ and $x = x_1$.
\cref{thm:burgers} and \cref{thm:prandtl} are stated for perturbations which satisfy $\delta_i(0) = 0$ (see \eqref{eq:def-XB-intro}), so that the full boundary data $\fu(x_i, y) + \delta_i(y)$ changes sign exactly at $y = 0$, where $\fu= y$ in \cref{thm:burgers} and $\fu=\ufs$ in \cref{thm:prandtl}.
This choice simplifies the definition of the submanifolds $\cM$, $\cM_0$ and $\cM_1$ of boundary data for which we are able to solve the problem.
Nevertheless, given $y_0, y_1$ sufficiently close to $0$ and $\delta_0, \delta_1$ such that $y + \delta_i(y)$ changes sign at $y = y_i$, we expect that similar existence results hold, provided that the perturbations are chosen in an appropriate modification of $\cM_j$, with suitable modifications to the functional spaces and where, in \eqref{eq:yuuxuyy}, the definitions of $\Sigma_i$ are generalized by setting $\Sigma_i := \{ (x_i,y) ; \enskip (-1)^i (\fu(x_i, y)+\delta_i(y)) > 0 \}$.

\paragraph{On the boundary conditions \eqref{CL-Prandtl-bottom} and \eqref{CL-Prandtl-top} for the Prandtl system in the recirculation zone.}
The boundary conditions we choose for the Prandtl system are mostly meant to simplify the present analysis as much as possible.
As stated earlier, they are slightly unconventional since we prescribe both the trace and the normal derivative of $u$, but we let the boundary remain free.
Other choices of boundary conditions are of course possible, and may lead to additional technical difficulties.
These boundary conditions are designed in order to have a nice formulation after we have performed a change of variables in order to straighten the curve $\{u=0\}$. 

Note also that we only consider here the Prandtl system in the vicinity of the curve $\{u=0\}$, and not in the whole infinite strip $(x_0, x_1)\times (0, + \infty)$.
The coupling with the outside regions $y<\gamma_b(x)$ and $y>\gamma_t(x)$ leads to additional difficulties, which have been treated by Iyer and Masmoudi in \cite{IM2022}, albeit with a different method, since conditions \eqref{CL-Prandtl-bottom}-\eqref{CL-Prandtl-top} are not considered in \cite{IM2022}.
We propose in \cref{sec:strategy-Prandtl-bande-infinie} a possible strategy to solve the Prandtl equation in an infinite strip.

\paragraph{On the compatibility conditions $\delta_i((-1)^i) = 0$ and $\delta_i''(0) = \delta_i''((-1)^i) = 0$ in \cref{thm:burgers}.}
These conditions are classical compatibility conditions for solutions to elliptic-parabolic equations.
For example, the condition $\delta_0(1) = 0$ in \cref{thm:burgers} is intended to match the condition $u_{\rvert y = 1} = 0$, and is necessary to have $L^2_xH^2_y$ regularity.
The condition $\delta_0''(0) = 0$ comes from the equation. 
Indeed, if $u$ is a sufficiently regular solution of \eqref{eq:shear} with $f(x_0,0) = 0$, the equality $\p_{yy} u = y \p_x u$ at $(x_0,0)$ enforces $\p_{yy} u(x_0,0) = 0$, so $\delta_0''(0) = 0$.
The condition $\delta_0''(1) = 0$ stems similarly from the equation and the fact that $\p_x u_{\rvert y = 1} = 0$.
It corresponds to a classical parabolic regularity compatibility condition in order to have $L^2_x H^4_y$ regularity.
We have imposed similar conditions for the Prandtl system by requiring $\delta_i \in H^4_0(\Sigma^P_i)$.
Note that in the Prandtl case, we actually require extra cancellation assumptions.
It is possible that the latter are technical, and could be removed. 
	
\paragraph{On the number of orthogonality conditions for the Prandtl system.}
Note that the number of orthogonality conditions in \cref{thm:burgers} and in \cref{thm:prandtl} is different. The reason for this is twofold.

Firstly, as previously noted by Iyer and Masmoudi in \cite{IM2022}, the ``good unknown'' for the equation is the vorticity $\p_y u$, which satisfies an equation which is very similar to \eqref{eq:shear} in a suitable set of variables (see \eqref{eq:Y-Burgers} and \eqref{eq:W-Prandtl} below). Therefore, in a sense, \eqref{Prandtl-small-domain} is smoother than \eqref{eq:eq0-uux}: indeed, without assuming any orthogonality condition, one can expect the vorticity $\p_y u$ to belong to the functional space $H^{2/3}_x L^2_y \cap L^2_x H^2_y$ (see \cref{thm:shear-decomp}), and therefore the solution of the Prandtl system belongs to  $H^{2/3}_x H^1_y \cap L^2_x H^3_y$, in which we have gained one vertical derivative. On the contrary, without any orthogonality condition, one cannot expect the solution $u$ of \eqref{eq:eq0-uux} to have better regularity than $H^{2/3}_x L^2_y \cap L^2_x H^2_y$, which is insufficient for a fixed-point argument for \eqref{eq:eq0-uux}.
This gain of vertical regularity allows us to have a theory for weaker solutions of the Prandtl system, and therefore to get rid of two of the orthogonality conditions.

Secondly,  reconstructing the velocity from the vorticity in the Prandtl system gives rise to one additional orthogonality condition, as we will explain in \cref{sec:Prandtl}. Hence the number of orthogonality conditions in \cref{thm:prandtl} is odd, while it is even in \cref{thm:burgers}.

\subsection{Motivation from recirculation problems in fluid mechanics}
\label{sec:motivation}

Our original motivation stems from fluid mechanics. Indeed, the stationary Prandtl equation \eqref{Prandtl-small-domain} describes  the behavior of a fluid with small viscosity in the vicinity of a wall.
The pressure $p(x)$ is the trace of the pressure of an outer Euler flow. This equation is usually set in a 2d domain of the form $I\times (0, + \infty)$, where $I\subset \R$ is an interval, and $y=0$ is the solid wall.
The equation is endowed with the boundary conditions $u=v=0$ on $y=0$, and $\lim_{y\to \infty } u(x,y)= u_E(x)$, where
 $u_E(x)$  is the trace of the outer Euler flow on the wall, and satisfies $u_E \p_x u_E = - \p_x p$.

As long as $u$ remains positive, \eqref{Prandtl-small-domain} can be seen as a nonlocal, nonlinear diffusion type equation, the variable $x$ being the evolution variable. Using this point of view, Oleinik (see e.g.\ \cite[Theorem 2.1.1]{MR1697762}) proved the local well-posedness of a solution to \eqref{Prandtl-small-domain} when the equation \eqref{Prandtl-small-domain} is supplemented with a boundary data $u_{|x=0}=u_0$, such that $u_0(y)>0$ for $y>0$ and $u_0'(0)>0$. Let us mention that such positive solutions exist globally when $ \p_x p\leq 0$, but are only local when $\p_x p>0$. More precisely, when $\p_x p=1$ for instance, for a large class of boundary data $u_0$, there exists $x^*>0$ such that $\lim_{x\to x^*} u_y(x,0)=0$. Furthermore, the solution may develop a singularity at $x=x^*$, known as Goldstein singularity. The point $x^*$ is called the separation point: intuitively, if the solution to Prandtl exists beyond $x^*$, then it must have a negative sign close to the boundary (and therefore change sign). We refer to the seminal works of Goldstein \cite{Goldstein} and Stewartson \cite{Stewartson} for formal computations on this problem. A first mathematical statement describing separation was given by Weinan E in \cite{E-2000} in a joint work with Luis Cafarelli, but the complete proof was never published. The first author and Nader Masmoudi then gave a complete description of the formation of the Goldstein singularity \cite{DM}. The  work \cite{Shen-Wang-Zhang} indicates that this singularity holds for a large class of initial data.

Because of this singularity, it is actually unclear that the Prandtl system is a relevant physical model in the vicinity of the separation point $x^*$, because the normal velocity $v$ becomes unbounded at $x=x^*$. Consequently, more refined models, such as the triple deck system (see \cite{Lagree} for a presentation of this model, and \cite{IyerVicol,GVDietert} for a recent mathematical analysis of its time-dependent version), were designed specifically to replace the Prandtl system with a more intricate boundary layer model in the vicinity of the separation point. However, beyond the separation point, i.e.\ for $x>x^*$, it is expected that the Prandtl system becomes valid again, but with a changing sign solution.

The well-posedness of the Prandtl system \eqref{Prandtl-small-domain} when the solution $u$ is allowed to change sign has only recently been investigated.
Such solutions are called ``recirculating solutions'', and the zone where $u<0$ is called a recirculation bubble, the usual convention being that $u_E(x)>0$, so that the flow is going forward far from the boundary. 
In the recent preprint \cite{IM2022} by Sameer Iyer and Nader Masmoudi, the authors prove \emph{a priori} estimates in high regularity norms for smooth solutions to the Prandtl equation \eqref{Prandtl-small-domain} in a domain of the form $I\times (0,+\infty)$, with restrictions on the length of the interval $I$, in the vicinity of explicit self-similar recirculating flows, called Falkner--Skan profiles. The latter are given by
\begin{align}
    u(x,y) & = x^m f'(\zeta), \\
    v(x,y) & = - y^{-1} \zeta f(\zeta) - \frac{m-1}{m+1} y^{-1} \zeta^2 f'(\zeta),
\end{align}
where $\zeta := (\frac{m+1}{2})^{\frac 12} y x^{\frac{m-1}{2}}$ is the self-similarity variable, $m$ is a real parameter and $f$ is the solution to the Falkner-Skan equation
\begin{equation*}
    f''' + f f'' + \beta (1- (f')^2) = 0,
\end{equation*}
where $\beta = \frac{2m}{m+1}$, subject to the boundary conditions $f(0) = f'(0) = 0$ and $f'(+\infty) = 1$. Such flows correspond to an outer Euler velocity field $u_E(x) = x^m$. 
For some particular values of $m$ (or, equivalently, $\beta$), these formulas provide physical solutions to \eqref{Prandtl-small-domain} which exhibit recirculation (see~\cite{MR0204011}). 
Obtaining high regularity \emph{a priori} estimates for recirculating solutions to the Prandtl system \eqref{Prandtl-small-domain} on the whole infinite strip is a difficult task.
This important step was achieved by Sameer Iyer and Nader Masmoudi in \cite{IM2022}.

In the present paper, we have chosen to focus on a different type of difficulty, and to 
consider first the toy-model \eqref{eq:eq0-uux}, which differs from \eqref{Prandtl-small-domain} through the lack of the nonlinear transport term $v \p_y u$ and its associated difficulties (nonlocality, loss of derivative) and the exclusion of the zones close to the wall and far from the wall.
For the model \eqref{eq:eq0-uux}, \emph{a priori} estimates are easy to derive, see \cite[Chapter 4]{theseRax}.
The difficulty lies elsewhere, as explained previously. 
Indeed, in order to construct a sequence of approximate solutions satisfying the \emph{a priori} estimates, we need to ensure that the orthogonality conditions are satisfied all along the sequence.
For the Prandtl system \eqref{Prandtl-small-domain}, this difficulty has  recently been tackled by Sameer Iyer and Nader Masmoudi in \cite{IM2023}, building upon their \emph{a priori} estimates of \cite{IM2022} and the ideas developed in the first version of our present work.
We revisit in \cref{sec:Prandtl} part of their work. Performing the different steps of the analysis (straightening the boundary, linearizing, differentiating with respect to the horizontal or vertical variable), we found a way to substantially simplify the analysis of the system in the vicinity of the recirculating line, and the results that we obtain are slightly different from the ones of \cite{IM2022,IM2023}. 
First, we prove a result in a rather \emph{low regularity setting}, in which $u_x$ is not even an $L^2$ function in the whole domain $\Om_P$. This result holds under merely one orthogonality condition, whose role is rather different from the ones of \cref{thm:burgers}, for instance. Indeed, the role of this additional orthonogality condition is not to ensure a certain regularity, but rather to allow for a reconstruction of the velocity from the vorticity. 
Moreover,  we use solely one change of variables, which we present in the next subsection and  which is identical to the one for the Burgers equation. 
Once the adequate change of variables is identified, we retrieve the fact that the vorticity (in these new variables) is a good unknown.
In fact, in the appropriate set of variables, the equation for the vorticity becomes remarkably simple (it is a closed, quasilinear equation).
We however prescribe lateral boundary conditions on the velocity (rather than the vorticity). 
Eventually, we do not require any condition on the \emph{horizontal size of the domain} (i.e.\ on the length $x_1-x_0$). Our understanding is that such conditions may arise when the Prandtl system in the whole infinite strip is considered. They are linked to the well-posedness of a linearized system in the whole strip. We refer to \cref{sec:strategy-Prandtl-bande-infinie} for more comments regarding this point. 

\subsection{Scheme of proof of nonlinear theorems and plan of the paper}
\label{sec:intro-sketch}

The uniqueness of solutions is fairly easy to prove.
For the linear problem \eqref{eq:shear}, uniqueness already holds at the level of weak solutions (see \cref{p:shear-X0} and \cref{sec:proof-uniqueness}).
For the nonlinear problems, uniqueness is straightforward since we are considering strong solutions.
Therefore, the main subject of this paper is the proof of the existence of solutions for the nonlinear problems \eqref{eq:yuuxuyy} and \eqref{Prandtl-small-domain} endowed with the boundary conditions \eqref{CL-Prandtl-bottom}-\eqref{CL-Prandtl-top}-\eqref{CL-Prandtl-lateral}.

A first natural idea would be to prove existence thanks to a nonlinear scheme relying on the linear problem \eqref{eq:shear}.
For example, concerning  equation  \eqref{eq:eq0-uux}, one could wish to construct a sequence of approximate solutions $(u_n)_{n\in \N}$ by setting $u_0 := 0$ (or any other initial guess) and solving
\begin{equation*}
    \begin{cases}
        y \p_x u_{n+1} - \p_{yy} u_{n+1} = f - (u_n-y) \p_x u_n, \\
        (u_{n+1})_{\rvert \Sigma_i} = \delta_i, \\
        (u_{n+1})_{\rvert y = \pm 1} = 0.
    \end{cases}
\end{equation*}
However, this strategy fails.
The key point is that the right-hand side contains a full tangential derivative of $u_n$, whereas the operator $y \p_x - \p_{yy}$ only yields a gain of $2/3$ of a derivative in this direction (more precisely, see \cref{p:z0-l2-h23}, \cref{rmk:hypo} and \cref{p:pagani-shear}).
Hence, this nonlinear scheme would exhibit a ``loss of derivative'', preventing us from proving a uniform bound on the sequence $(u_n)_{n\in \N}$.

Another drawback of this scheme is that it would not translate well to a setting where one does not assume $\delta_i(0) = 0$.
Indeed, in such a case, the inflow boundaries of the problem with the perturbed data $y + \delta_i(y)$ would not match the inflow boundaries of the linear problem \eqref{eq:shear}.

Hence, we will rather construct solutions to \eqref{eq:eq0-uux} through another iterative scheme.
As suggested to us by an anonymous referee and by other colleagues, we first straighten the curve $\{u=0\}$ by setting as  a new vertical variable  $z=u(x,y)$. 
Our new  unknown, both for  the Burgers equation \eqref{eq:eq0-uux}
and for the Prandtl system \eqref{Prandtl-small-domain}, is the inverse function of $u$, i.e.\ the function $Y$ such that $u(x,Y(x,z))=z$. 
In this new set of variables, the equation for $Y$ becomes, in the case of the Burgers equation \eqref{eq:eq0-uux},
\begin{equation}\label{eq:Y-Burgers}
z\p_x Y - (\p_z Y)^{-2} \p_z^2 Y = -\p_z Y f(x,Y),
\end{equation}
and in the case of the Prandtl system \eqref{Prandtl-small-domain}
\begin{equation}\label{eq:Y-Prandtl}
z\p_x Y -\int_{z_b}^z\p_x Y  - (\p_z Y)^{-2} \p_z^2 Y = -\p_z Y \p_x p + \vfs\vert_{\overline{\Gamma_b}} + v_b,
\end{equation}
in which the nonlocal integral term in the left-hand side stems from the transport term $v\p_y u$ in the original equation.
Differentiating \eqref{eq:Y-Prandtl} with respect to $z$, we find that  in the case of the Prandtl system, the vorticity $W=\p_z Y$ satisfies
\begin{equation}\label{eq:W-Prandtl}
z\p_x W + \p_x p \p_z W + \p_z^2 \left(\frac{1}{W}\right)=0.
\end{equation}
We immediately see that the linearized operator associated with \eqref{eq:Y-Burgers} around $Y(x,z)=z$ is equation \eqref{eq:shear}. In a similar fashion, the linearized operator of equation \eqref{eq:W-Prandtl} around the flow $\Yfs$ associated with $\ufs$ is a forward-backward operator with variable coefficients, of the form $z\p_x + \beta \p_z - \p_z^2(\alpha \cdot)$, where $\alpha= (\p_z \Yfs)^{-2}$ and $\beta = \p_x p$. Such forward-backward operators bear strong similarities with the canonical one $z\p_x -\p_{zz}$, and therefore we will rely on our linear analysis to study \eqref{eq:W-Prandtl} (see \cref{lem:WP-Z0-vorticity} and \cref{lem:ortho-Prandtl}).

We then construct solutions of \eqref{eq:Y-Burgers} and \eqref{eq:Y-Prandtl} thanks to an iterative scheme\footnote{In fact, we will state and use an abstract theorem, whose proof follows a similar scheme.}, which we now explicit in the Burgers case, the Prandtl one being similar.
We define a sequence $(\widetilde{Y}_n)_{n\in \N}$ such that
\begin{equation*}
    z \p_x \widetilde{Y}_{n+1} - \p_{zz} \widetilde{Y}_{n+1} = \frac{\p_z \tY_n (2-\p_z \tY_n)}{(1-\p_z \tY_n)^2} \p_{zz} \widetilde{Y}_{n} + (1 - \p_z \widetilde{Y}_{n}) f(x, z - \widetilde{Y}_{n}) + g^{n+1},
\end{equation*}
where the additional term $g^{n+1}$ ensures that the orthogonality conditions are satisfied at every step. We then prove that $(\widetilde{Y}_n)_{n\in \N}$ is a Cauchy sequence in the space $\qone$. Passing to the limit, we obtain a solution $Y=z-\lim_{n\to \infty} \tY_n$ to \eqref{eq:Y-Burgers} with an additional source term $g$. The manifold $\cM$ is then defined by requiring that the limit term $g$ is zero.

\begin{rmk}
    In a first version of this paper \cite{DMR-3}, we had chosen a strategy which seemed only slightly different, but which led to substantial technical difficulties.
    However, we believe that this strategy is rather natural, and could be of use in other problems. Therefore we describe it here.
    
    Let  $(u_n)_{n\in \N}$ be a sequence solving the following iterative scheme
    \begin{equation}
        \label{eq:iterative-bis}
        \begin{cases}
            u_n \p_x u_{n+1} - \p_{yy} u_{n+1} = f^{n+1}, \\
            (u_{n+1})_{\rvert \Sigma_i} = y + \delta_i^{n+1}, \\
            (u_{n+1})_{\rvert y = \pm 1} = \pm 1.
        \end{cases}
    \end{equation}
    For this scheme, it is possible to prove a uniform bound for $u_n$ in the space $H^{5/3}_x L^2_y \cap L^2_x H^5_y$ and the convergence of the sequence in an interpolation space $L^2_x H^{7/2}_y \cap H^{7/6}_x L^2_y$.
    This scheme is similar to the one used to construct solutions to quasilinear symmetric hyperbolic systems, see for instance \cite[Section 4.3]{BCD}.
    
    In \eqref{eq:iterative-bis}, the triplet $(f^{n+1},\delta_0^{n+1},\delta_1^{n+1})$ is an appropriate perturbation of the data $(f,\delta_0,\delta_1)$ tailored to satisfy the orthogonality conditions associated with the linear operator $u_n\p_x -\p_{yy}$.
    In order to define these orthogonality conditions, it is necessary to straighten the curve $\{u_n(x,y)=0\}$: hence this straightening step is still necessary, but performed \emph{after} the ``linearization'' of the equation, rather than \emph{before}.
    
    The issue lies in the fact that the orthogonality conditions change at every step, which is a key difficulty.
    In particular, in order to allow the sequence $u_n$ to converge, one must prove that these perturbations also converge.
    This amounts to proving that the linear forms associated with the  operator $u_n\p_x -\p_{yy}$ depend continuously (and even in a Lipschitz manner) on~$u_n$, \emph{for the same topology as the one within which one proves the convergence of the sequence $u_n$}.
    This continuity estimate  requires identifying quite precisely what the linear forms  are.
    
    \bigskip
    
    We believe that this methodology is rather robust and could be applied to other nonlinear problems in which orthogonality conditions are present at the linearized level,  in particular in contexts where there is no nonlinear change of variables such as the one presented above allowing to treat the nonlinearity as a perturbation.
    For example, the PDE $u(1+ \p_y u) \p_x u - \p_{yy}u = f$ could be an example where our previous methodology applies, but not the one exposed in the present paper.
\end{rmk}
	
\begin{rmk}
    Let us highlight some differences between the strategy of the present paper and the one by Iyer and Masmoudi in \cite{IM2022}. As explained  above, there are several mathematical operations which are required to complete the proof of \cref{thm:prandtl}:
    \begin{itemize}
        \item Changing variables in order to straighten the free boundary;
        
        \item Linearizing the equation around some background profile;
        
        \item Differentiating the equation with respect to the vertical variable in order to obtain an equation for the vorticity;
        
        \item Differentiating the equation with respect to the tangential variable in order to derive higher order estimates (under compatibility conditions).
    \end{itemize}
    
    These operations more or less commute at main order, and lead to the study of the equation $z\p_x u - \p_{zz} u=f$.
    However their (lower order) commutators may be a source of substantial technical difficulties. 
    Our understanding is that the authors of \cite{IM2022} perform the operations in the following order (see Section 3 of their paper):
    1. Linearize; 2. Differentiate with respect to the horizontal variable; 3. Straighten the free boundary; 4. Differentiate with respect to the vertical variable.
    
    We believe that the computations are much simpler, and the structure is better understood, when the straightening change of variables is performed first. This also allows us to have a more accurate comparison between the Burgers type equation and the Prandtl one.
\end{rmk}
	
\bigskip
	
The plan of this work is as follows. 
As a preliminary, we introduce in \cref{sec:spaces} the functional spaces we will use.
First, we study the linear problem \eqref{eq:shear} in \cref{sec:shear}, leading to \cref{thm:shear-Q1}, and prove that the two orthogonality conditions we expose are indeed nonvoid. We also construct the singular profiles $\busing^i$ and prove \cref{thm:shear-decomp}.
In order to introduce our nonlinear scheme, we extend these linear results to the Vlasov--Poisson--Fokker--Planck system as an example in \cref{sec:Fokker-Planck}, where we also set up our general nonlinear methodology.
We then turn towards the proof of \cref{thm:burgers} in \cref{sec:Burgers}, and the one of \cref{thm:prandtl} in \cref{sec:Prandtl}.
In order to prove the existence of weak solutions of the Prandtl system (i.e.\ the first point of \cref{thm:prandtl}), we will need an interpolation result: this rather technical step is performed in \cref{sec:interpolation}.
Eventually, in \cref{sec:proof-uniqueness}, we prove the uniqueness of weak solutions to various linear problems, by adapting an argument due to Baouendi and Grisvard \cite{BG}.
In \cref{sec:proofs-spaces}, we prove various technical results of functional analysis that we use throughout the paper.
\cref{sec:proof-lem:reg-away-sing-pts} contain the postponed proof of a lemma of \cref{sec:Prandtl}.
	
As the paper is quite long, a list of notations is provided starting page \pageref{sec:notations}.

\subsection{Functional spaces and interpolation results}
\label{sec:spaces}

\subsubsection{Notations}
\label{ssec:notations}
Throughout this work, an assumption of the form ``$A \ll 1$'' will mean that there exists a constant $c > 0$, depending only on $\Omega$ such that, if $A \leq c$, the result holds.
Similarly, a conclusion of the form ``$A \lesssim B$'' will mean that there exists a constant $C > 0$, depending only on $\Omega$ and on the underlying flow (namely $\fu(x,y)=y$ or $\ufs$), such that the estimate $A \leq C B$ holds.
For ease of reading, we will not keep track of the value of these constants, mostly linked with embeddings of functional spaces.
Note in particular that the sizes of these constants will depend on the length $x_1-x_0$ (see e.g.\ \cref{p:pagani-shear}).

\nomenclature[OAZ]{$\Omega_\pm$}{Upper and lower halves of the domain $\Omega$}
We will often use the notations $\Om_\pm := \Om \cap \{\pm z>0\}$.

\subsubsection{Trace spaces for the lateral boundaries}

For the traces of the solutions to \eqref{eq:shear} or \eqref{eq:yuuxuyy} at $x = x_0$ and $x = x_1$, we will need the following spaces, due to \cite{Pagani1,Pagani2}.
We define $\mathscr{L}^2_z(-1,1)$ as the completion of $L^2(-1,1)$ with respect to the following norm:
\begin{equation} \label{eq:L2z}
    \| \psi \|_{\mathscr{L}^2_z} := \left(\int_{-1}^1 |z| \psi^2(z) \dd z\right)^{\frac 12}
\end{equation}
and $\mathscr{H}^1_z(-1,1)$ as the completion of $H^1_0(-1,1)$ with respect to the following norm:
\nomenclature[FL]{$\mathscr{L}^2_z$}{Weighted $L^2$ space for boundary data with norm \eqref{eq:L2z}}
\nomenclature[FH00]{$\mathscr{H}^1_z$}{Weighted $H^1$ space for boundary data with norm \eqref{eq:H-pagani-k}}
\begin{equation} \label{eq:H-pagani-k}
    \| \psi \|_{\mathscr{H}^1_z} := \| \psi \|_{\mathscr{L}^2_z} + \| \p_z \psi \|_{\mathscr{L}^2_z}.
\end{equation}

\subsubsection{Pagani's weighted Sobolev spaces}

Let $\mathcal{O}$ be an open subset of $\R^2$, and let $\Omega:=(x_0,x_1)\times (-1,1)$.
In the works \cite{Pagani1,Pagani2} (albeit with swapped variables with respect to our setting), Pagani introduced the space $Z(\mathcal{O})$ of scalar functions $\phi$ on $\mathcal{O}$ such that $\phi$, $\p_z \phi$, $\p_{zz} \phi$ and $z \p_x \phi$ belong to $L^2(\mathcal{O})$ (in the sense of distributions).
In this work, we will refer to this space with the notation $Z^0(\mathcal{O})$.
It is a Banach space for the following norm
\nomenclature[FZ0]{$Z^0$}{Pagani solution space such that $u$, $z \p_x u$ and $\p_{zz} u$ are $L^2$, with norm \eqref{eq:Z0}}
\begin{equation}
    \label{eq:Z0}
    \| \phi \|_{Z^0}
    := 
    \| z \p_x \phi \|_{L^2} +
    \| \p_{zz} \phi \|_{L^2} + 
    \| \p_z \phi \|_{L^2} + 
    \| \phi \|_{L^2}.
\end{equation}
We will also need the space $Z^1(\mathcal{O})$, which we define as the space of scalar functions $\phi$ on $\mathcal{O}$ such that $\phi$ and $\p_x \phi$ belong to $Z^0(\mathcal{O})$, associated with the following norm
\nomenclature[FZ1]{$Z^1$}{Solution space such that $u, \p_x u \in Z^0$, with norm \eqref{eq:Z1}}
\begin{equation} \label{eq:Z1}
    \| \phi \|_{Z^1}
    := 
    \| \phi \|_{Z^0} + \| \p_x \phi \|_{Z^0}.
\end{equation}
The omitted proofs of the results of this section are postponed to \cref{sec:proofs-spaces}.
We start with a straightforward extension result, which allows transferring results on $Z^0(\R^2)$ to $Z^0(\Omega)$.

\begin{lem} \label{p:extension}
    There exists a continuous extension operator from $Z^0(\Omega)$ to $Z^0(\R^2)$.
\end{lem}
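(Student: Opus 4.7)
The plan is to construct the extension in two successive steps: a reflection in the $x$ variable, followed by a \emph{local} reflection in the $z$ variable, each combined with a smooth cutoff.

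\textbf{Step 1: extension in $x$.} Reflect $\phi$ across $x = x_0$ and $x = x_1$ and multiply by a smooth cutoff $\chi_1 \in C^\infty_c(\R)$ equal to $1$ on $[x_0, x_1]$ and supported in a slightly larger interval. This yields a function $\phi_1$ on $\R \times (-1,1)$, compactly supported in $x$, with $\phi_1 = \phi$ on $\Om$. Since $|z|\leq 1$ throughout the extended set and the reflection commutes with $\p_z$ up to sign, one checks routinely that $\|\phi_1\|_{Z^0(\R \times (-1,1))} \lesssim \|\phi\|_{Z^0(\Om)}$; the only slightly delicate point is that the term $z \chi_1'(x)\phi$ appearing in $z \p_x \phi_1$ is controlled by $\|\phi\|_{L^2(\Om)}$ because $|z|\leq 1$.

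\textbf{Step 2: a local Seeley-type reflection in $z$.} The main obstacle is that the weight $|z|$ in $\|z\p_x \phi\|_{L^2}$ degenerates at $z = 0$, so $\p_x \phi$ may fail to be in $L^2(\Om)$ near the line $\{z = 0\}$. A naive unfolding reflection across $z = \pm 1$ would transport this singularity to the lines $\{z = \pm 2\}$, where $|z|^2 \sim 4$ is bounded below, and controlling the extended weighted norm would then require the strictly stronger bound $\p_x\phi \in L^2(\Om)$, which we do not have. To circumvent this, we reflect only over a short strip. Fix $\delta := 1/4$, choose $\chi_2 \in C^\infty_c([0,\delta])$ with $\chi_2(0)=1$ and $\chi_2'(0)=0$, and define, for $z \in (1, 1+\delta)$,
\[
\phi_2(x,z) := \chi_2(z-1)\bigl[3\phi_1(x, 2-z) - 2\phi_1(x, 3-2z)\bigr],
\]
extended by zero for $z \geq 1+\delta$ and constructed symmetrically past $z = -1$. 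The Seeley coefficients $(3,-2)$ and the nodes $(2-z, 3-2z)$ are chosen precisely so that $\phi_2$ and $\p_z \phi_2$ are continuous across $z = 1$ (a direct evaluation). The second derivative $\p_{zz}\phi_2$ is allowed to have a bounded jump but no distributional singular part across $z=1$, so it remains in $L^2(\R^2)$.

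The critical estimate is on $\|z\p_x \phi_2\|_{L^2}$. For $z \in (1,1+\delta)$, the changes of variables $w = 2-z$ and $w = 3-2z$ map $z$ into $(1-\delta,1) = (3/4,1)$ and $(1-2\delta,1) = (1/2,1)$ respectively, both bounded away from $0$. Hence
\[
\int_{1}^{1+\delta} z^2 \bigl|\p_x \phi_1(x,2-z)\bigr|^2 \dd z
= \int_{1-\delta}^1 (2-w)^2 \bigl|\p_x \phi_1(x,w)\bigr|^2 \dd w
\lesssim \int_{1-\delta}^1 w^2 \bigl|\p_x \phi_1(x,w)\bigr|^2 \dd w,
\]
and similarly for the $3-2z$ term after the analogous change of variables, yielding control by $\|z\p_x \phi_1\|_{L^2}^2$. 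The remaining bounds on $\phi_2$, $\p_z\phi_2$ and $\p_{zz}\phi_2$ in $L^2(\R^2)$ follow from the chain rule and product rule together with the fact that $\chi_2$ and its derivatives are bounded and compactly supported. Both steps being linear, gluing $\phi_2$ (and its symmetric counterpart past $z=-1$) to $\phi_1$ produces a compactly supported function on $\R^2$ equal to $\phi$ on $\Om$, with $\|\phi_2\|_{Z^0(\R^2)} \lesssim \|\phi\|_{Z^0(\Om)}$, i.e.\ the desired continuous extension operator.
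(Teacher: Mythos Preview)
Your proof is correct and takes essentially the same approach as the paper: a horizontal reflection with cutoff, followed by a Babich/Seeley reflection in $z$ with coefficients $(3,-2)$ restricted, via a cutoff, to a short strip past $z=\pm1$ so that the reflected arguments stay bounded away from $\{z=0\}$. Your explicit discussion of why a naive full reflection would fail (transporting the degeneracy of $z\p_x$ to a region where the weight no longer helps) is not spelled out in the paper but is exactly the reason behind its choice $\supp\chi_+\subset(-2,1+\tfrac14)$.
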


The next embedding is the most important result concerning the spaces $Z^0$.
Since solutions to $(z \p_x - \p_{zz}) u = f$ for $f \in L^2(\Omega)$ belong to $Z^0(\Omega)$ (see \cref{p:pagani-shear}), the following embedding entails that such solutions belong to $H^{2/3}(\Omega)$.

\begin{prop} \label{p:z0-l2-h23}
    $Z^0(\R^2)$ is continuously embedded in $H^{2/3}_x L^2_z$.
\end{prop}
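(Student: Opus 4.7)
The natural approach is Fourier analysis in $x$, which turns the condition $z\,\partial_x\phi \in L^2$ into a multiplication estimate on the Fourier side and recasts the $H^{2/3}_x L^2_z$ regularity as a weighted $L^2$ statement. I would start from $\phi \in C^\infty_c(\R^2)$ by density and apply the partial Fourier transform $\mathcal{F}_x$ in the horizontal variable. By Plancherel, the desired embedding is equivalent to the inequality
\begin{equation*}
    \int_{\R} |\xi|^{4/3}\,\|\hat\phi(\xi,\cdot)\|_{L^2_z}^2\, d\xi \;\lesssim\; \|\partial_{zz}\phi\|_{L^2(\R^2)}^2 + \|z\,\partial_x\phi\|_{L^2(\R^2)}^2 + \|\phi\|_{L^2(\R^2)}^2,
\end{equation*}
the low-frequency regime $|\xi|\lesssim 1$ being trivially controlled by the $L^2$ norm of $\phi$, and the rest of the $H^{2/3}$ norm being equivalent to the displayed integral.

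Next, for each fixed frequency $\xi \neq 0$, I would exploit the anisotropic scaling natural to the operator $z\,\partial_x - \partial_{zz}$: set $\tilde z := |\xi|^{1/3} z$ and $g(\tilde z) := \hat\phi(\xi,|\xi|^{-1/3}\tilde z)$. A direct change of variable gives
\begin{equation*}
    \|\hat\phi(\xi,\cdot)\|_{L^2_z}^2 = |\xi|^{-1/3}\|g\|_{L^2}^2, \quad \|\partial_{zz}\hat\phi(\xi,\cdot)\|_{L^2_z}^2 = |\xi|\,\|g''\|_{L^2}^2, \quad |\xi|^2\,\|z\hat\phi(\xi,\cdot)\|_{L^2_z}^2 = |\xi|\,\|\tilde z\,g\|_{L^2}^2,
\end{equation*}
so that, up to a common factor $|\xi|$, the three quantities to relate become $\|g\|_{L^2}^2$, $\|g''\|_{L^2}^2$ and $\|\tilde z\,g\|_{L^2}^2$ on $\R$.

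The core analytic input is then a one-dimensional Hermite-type inequality: for every $g\in L^2(\R)$ with $g'',\tilde z g\in L^2(\R)$,
\begin{equation*}
    \|g\|_{L^2}^2 \;\lesssim\; \|g''\|_{L^2}^2 + \|\tilde z\, g\|_{L^2}^2.
\end{equation*}
This is classical. Indeed, the harmonic oscillator $H := -\partial_{\tilde z\tilde z} + \tilde z^2$ has spectrum $\{2k+1 : k\in\N\}$, hence satisfies $\|g\|_{L^2}^2 \leq \langle Hg,g\rangle = \|g'\|_{L^2}^2 + \|\tilde z\,g\|_{L^2}^2$; interpolating $\|g'\|_{L^2}^2 \leq \|g\|_{L^2}\|g''\|_{L^2}$ by Cauchy--Schwarz and absorbing gives the claim.

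Finally, I would apply this inequality to $g$, undo the rescaling, and integrate in $\xi$ to obtain the displayed estimate. The only delicate point is justifying the integration by parts implicit in the Hermite inequality at each $\xi$; the density of $C^\infty_c(\R^2)$ in $Z^0(\R^2)$ (which can be established by mollification, since the defining norms are translation invariant) takes care of this. Overall the argument is short, and the single substantive step is the correct identification of the anisotropic scaling $\tilde z = |\xi|^{1/3} z$, which makes the three $Z^0$ ingredients balance and forces the gain of precisely $2/3$ of a derivative in $x$.
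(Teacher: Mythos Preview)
Your proof is correct and follows essentially the same approach as the paper: Fourier transform in $x$, reduce to a one-dimensional inequality $\|g\|_{L^2}\lesssim \|\tilde z g\|_{L^2}+\|g''\|_{L^2}$, and exploit the anisotropic scaling $\tilde z=|\xi|^{1/3}z$ to get the $2/3$ gain. The only cosmetic differences are that the paper first derives the multiplicative form $\|\psi\|_{L^2}\lesssim \|z\psi\|_{L^2}^{2/3}\|\psi''\|_{L^2}^{1/3}$ by scaling and then applies H\"older in $\xi$ (whereas you rescale at each $\xi$ and use the additive form directly, which is equivalent), and that the paper proves the one-dimensional lemma by an elementary splitting $|z|\gtrless 1$ rather than via the harmonic oscillator spectrum.
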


\begin{rmk}
    \label{rmk:hypo}
    \cref{p:z0-l2-h23} can be seen as an hypoellipticity result for the operator $L = \p_{zz} - z \p_x$ in the full space $\R^2$, which is of the form $X_1^2+X_0$, where $X_1 = \p_z$, $X_0 = - z\p_x$ and $[X_0, X_1] = \p_x$, so the Lie brackets generate the full space and $L$ satisfies Hörmander's sufficient condition of \cite{hormander1967hypoelliptic} for hypoellipticity.
    In fact, in the full space $\R^2$, the $H^{2/3}_x L^2_z \cap L^2_x H^2_z$ regularity of solutions to $L u = f$ for $f \in L^2$ can be derived from the general theory of quadratic operators, which makes a link between the anisotropic gain of regularity and the number of brackets one has to take in order to generate a direction.
    For instance, this regularity follows from \cite[Theorem 2.10]{alphonse2019polar} and more precisely Example 2.11 therein applied with
    \begin{equation*}
        R = 0 \quad \text{and} \quad Q = \begin{pmatrix} 0 & 0 \\ 0 & 1 \end{pmatrix} \quad \text{and} \quad B = \begin{pmatrix} 0 & 1 \\ 0 & 0 \end{pmatrix}.
    \end{equation*}
\end{rmk}

\begin{lem} \label{lem:z0-linf-h12}
    $Z^0(\R^2)$ is continuously embedded in $C^0_z(H^{1/2}_x)$.
\end{lem}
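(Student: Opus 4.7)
The plan is to reduce the statement to a uniform trace bound $\|\phi(\cdot,z_0)\|_{H^{1/2}_x} \lesssim \|\phi\|_{Z^0}$ valid for every $z_0 \in \R$, from which the $C^0_z$ continuity will follow by density. A convenient approximating family is obtained by convolving only in the $x$-variable with a smooth mollifier: this operation commutes with multiplication by $z$, hence preserves convergence of all four quantities defining the $Z^0$-norm, while trivially producing functions that are continuous from $z$ to $H^{1/2}_x$. The uniform bound applied to the difference of two such regularizations will then upgrade uniform approximation into continuity.

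For the trace estimate itself, I would pass to the partial Fourier transform in $x$ and set $g(\xi,z) := \hat\phi(\xi,z)$. The low-frequency piece $\int |g(\xi,z_0)|^2\,d\xi = \|\phi(\cdot,z_0)\|_{L^2_x}^2$ is uniformly bounded by $\|\phi\|_{Z^0}^2$ via the standard embedding $H^1_z L^2_x \hookrightarrow C^0_z L^2_x$. For the high-frequency piece, the one-dimensional Gagliardo--Nirenberg inequality in $z$ gives, for each fixed $\xi$, the estimate $|g(\xi,z_0)|^2 \lesssim \|g(\xi,\cdot)\|_{L^2_z} \|\partial_z g(\xi,\cdot)\|_{L^2_z}$. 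Splitting $|\xi| = |\xi|^{2/3} \cdot |\xi|^{1/3}$ and applying Cauchy--Schwarz in $\xi$ yields
\[
\int |\xi|\,|g(\xi,z_0)|^2\,d\xi \lesssim \Bigl(\int |\xi|^{4/3} \|g(\xi,\cdot)\|_{L^2_z}^2\,d\xi\Bigr)^{1/2} \Bigl(\int |\xi|^{2/3} \|\partial_z g(\xi,\cdot)\|_{L^2_z}^2\,d\xi\Bigr)^{1/2}.
\]

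The first factor is exactly $\|\phi\|_{H^{2/3}_x L^2_z}$, which is controlled by $\|\phi\|_{Z^0}$ thanks to the hypoellipticity gain of \cref{p:z0-l2-h23}. For the second factor, a further Fourier transform in $z$ (with dual variable $\eta$) combined with the elementary Young-type inequality $|\xi|^{2/3} \eta^2 \leq \tfrac12(|\xi|^{4/3} + \eta^4)$ gives
\[
\iint |\xi|^{2/3} \eta^2\,|\tilde\phi(\xi,\eta)|^2\,d\xi\,d\eta \lesssim \|\phi\|_{H^{2/3}_x L^2_z}^2 + \|\partial_{zz}\phi\|_{L^2}^2,
\]
and both terms are again controlled by $\|\phi\|_{Z^0}$. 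Collecting the estimates produces the target uniform trace bound.

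The main obstacle is that the definition of $Z^0$ provides no direct $H^s_x$-regularity on $\partial_z\phi$: the only \emph{a priori} information on the derivative in the anisotropic $x$-direction is the \emph{weighted} bound $z\partial_x\phi \in L^2$. The argument therefore has to combine the hypoellipticity gain on $\phi$ itself (namely $H^{2/3}_x L^2_z$ from \cref{p:z0-l2-h23}) with the second-order $L^2$ control of $\partial_{zz}\phi$ in order to interpolate the missing $H^{1/3}_x L^2_z$ estimate on $\partial_z\phi$; this is precisely what pins down the target exponent $1/2$ in the trace space and forces the specific $(4/3,\,2/3)$ weight split of $|\xi|$ in the Cauchy--Schwarz step.
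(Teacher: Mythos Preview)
Your argument is correct, and it rests on the same two ingredients as the paper's proof: the hypoelliptic gain $Z^0 \hookrightarrow H^{2/3}_x L^2_z$ of \cref{p:z0-l2-h23}, and the direct embedding $Z^0 \hookrightarrow H^2_z L^2_x$. The difference lies in how these two pieces are combined. The paper simply invokes the abstract fractional trace theorem of Lions--Magenes (equation~(4.7), Chapter~1 of \cite{LM68}): the intersection $L^2_z(H^{2/3}_x) \cap H^2_z(L^2_x)$ embeds into $C^0_z$ with values in the interpolation space $[L^2_x, H^{2/3}_x]_{3/4} = H^{1/2}_x$, and the proof is three lines. You instead re-derive this trace inequality by hand: the $|\xi|^{2/3}\cdot|\xi|^{1/3}$ split followed by Cauchy--Schwarz and the Young step $|\xi|^{2/3}\eta^2 \le \tfrac12(|\xi|^{4/3}+\eta^4)$ is exactly the Fourier-side computation underlying the abstract interpolation, and your mollification-in-$x$ argument to pass from the uniform bound to continuity is a clean substitute for density of smooth functions in the intersection space. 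Your route is more self-contained and makes the exponent $1/2$ transparently emerge from the arithmetic $\tfrac23 \cdot \tfrac34 = \tfrac12$; the paper's route is shorter but relies on the black box.
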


\begin{proof}
    By definition, $Z^0(\R^2) \hookrightarrow H^2_z(L^2_x)$.
    By \cref{p:z0-l2-h23}, $Z^0(\R^2) \hookrightarrow L^2_z(H^{2/3}_x)$.
    By the ``fractional trace theorem'' \cite[Equation~(4.7), Chapter~1]{LM68}, $Z^0(\R^2) \hookrightarrow C^0_z(H^{1/2}_x)$.
\end{proof}

\begin{lem} 
    \label{lem:Z0-trace-H1z}
    $Z^0(\Omega)$ is continuously embedded in $C^0([x_0,x_1];\mathscr{H}^1_z(-1,1))$.
\end{lem}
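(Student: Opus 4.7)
The plan is to prove the uniform estimate
\[
\sup_{x \in [x_0,x_1]} \|\phi(x,\cdot)\|_{\mathscr{H}^1_z(-1,1)} \lesssim \|\phi\|_{Z^0(\Omega)}
\]
for sufficiently smooth $\phi$, after which the continuity of $x \mapsto \phi(x,\cdot)$ will follow from a standard density argument in $Z^0(\Omega)$. Thanks to \cref{p:extension}, I may work with a smooth compactly supported extension of $\phi$ to $\R^2$ whose $Z^0(\R^2)$ norm is controlled by $\|\phi\|_{Z^0(\Omega)}$. I will split the squared $\mathscr{H}^1_z$ norm into $F(x) := \int_{-1}^1 |z|\phi^2(x,z)\dd z$ and $H(x) := \int_{-1}^1 |z|(\p_z\phi)^2(x,z)\dd z$ and bound each in $L^\infty_x$.

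\noindent\textbf{The weighted $L^2$ part.} For $F$, a direct computation gives
\[
F'(x) = 2\int_{-1}^1 \operatorname{sign}(z)\, \phi \cdot (z\p_x\phi)\,\dd z,
\]
so by Cauchy--Schwarz $\|F'\|_{L^1_x} \lesssim \|\phi\|_{L^2(\Omega)}\|z\p_x\phi\|_{L^2(\Omega)} \lesssim \|\phi\|_{Z^0}^2$. Combined with the crude bound $\int_{x_0}^{x_1} F \leq \|\phi\|_{L^2}^2$, the Sobolev embedding $W^{1,1}((x_0,x_1)) \hookrightarrow L^\infty((x_0,x_1))$ yields $\sup_x F(x) \lesssim \|\phi\|_{Z^0}^2$.

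\noindent\textbf{The weighted gradient part.} For $H$, the natural approach is an integration by parts in $z$ performed separately on $(0,1)$ and $(-1,0)$, so that the sign change of the weight $|z|$ is handled carefully. Writing $|z|(\p_z\phi)^2 = (|z|\p_z\phi)(\p_z\phi)$ and integrating by parts with $v = \phi$ produces
\begin{equation*}
H(x) = \sum_{\epsilon=\pm 1}\epsilon\,\p_z\phi(x,\epsilon)\,\phi(x,\epsilon) - \tfrac{1}{2}\bigl(\phi^2(x,1)+\phi^2(x,-1)\bigr) + \phi^2(x,0) - \int_{-1}^1 |z|\phi\,\p_{zz}\phi\,\dd z,
\end{equation*}
where the residual $\phi^2(x,0)$ comes from the jump of $\operatorname{sign}(z)$ at $z=0$. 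The last integral is bounded pointwise by $\|\phi(x,\cdot)\|_{L^2_z}\|\p_{zz}\phi(x,\cdot)\|_{L^2_z}$ and can be controlled in $L^\infty_x$ using the same $W^{1,1}_x \hookrightarrow L^\infty_x$ mechanism as for $F$, after differentiating in $x$ and expressing $\p_x\p_{zz}\phi$ through the identity $\p_{zz}\phi = L\phi + z\p_x\phi$ combined with $\p_x(z\p_x\phi)$ estimates on smooth functions.

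\noindent\textbf{Main obstacle and conclusion.} The hardest part is the uniform control in $x$ of the point traces $\phi(x,\pm 1)$, $\p_z\phi(x,\pm 1)$ and, most notably, $\phi(x,0)$, since $H^{1/2}_x$ does \emph{not} embed into $L^\infty_x$ in one dimension and \cref{lem:z0-linf-h12} only delivers $\phi(\cdot,z_0) \in H^{1/2}_x$ for each fixed~$z_0$. To overcome this, I plan to use a localisation argument: away from $z=0$, the relation $\p_x\phi = (z\p_x\phi)/z$ upgrades the $Z^0$ regularity to full $H^1_x H^1_z$ regularity on any strip $\{|z| > \eta\}$, which by 2D Sobolev embedding produces $L^\infty_x$ bounds on the traces at $z = \pm 1$ and on $\p_z\phi(\cdot,\pm 1)$. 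Near $z = 0$, the vanishing of the weight $|z|$ in $\mathscr{H}^1_z$ compensates: applying the $F$-type Sobolev-in-$x$ trick to the quantity $x \mapsto \phi^2(x,0)$ (using $\p_x(\phi^2(x,0)) = 2\phi(x,0)\p_x\phi(x,0)$ together with the hypoelliptic gain $\phi \in H^{2/3}_x L^2_z$ from \cref{p:z0-l2-h23} applied to $\phi(\cdot,0)$ via \cref{lem:z0-linf-h12}) provides the missing bound. Combining all pieces yields $\sup_x\|\phi(x,\cdot)\|_{\mathscr{H}^1_z}^2 \lesssim \|\phi\|_{Z^0}^2$, and the continuity of $x \mapsto \phi(x,\cdot)$ in $\mathscr{H}^1_z$ is then obtained by approximating $\phi$ by smooth functions, for which continuity is immediate, and passing to the limit using the uniform bound.
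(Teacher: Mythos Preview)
Your approach has a genuine gap at the heart of the argument for $H(x)$. Your integration-by-parts identity
\[
H(x) = \sum_{\epsilon=\pm1}\epsilon\,\p_z\phi(x,\epsilon)\,\phi(x,\epsilon) - \tfrac12\bigl(\phi^2(x,1)+\phi^2(x,-1)\bigr) + \phi^2(x,0) - \int_{-1}^1 |z|\,\phi\,\p_{zz}\phi
\]
is correct, but the term $\phi^2(x,0)$ is \emph{not} uniformly bounded in $x$ in general. The Remark immediately following this lemma in the paper makes this explicit: the trace $\phi(\cdot,0)$ ranges over all of $H^{1/2}(x_0,x_1)$ as $\phi$ ranges over $Z^0$, and $H^{1/2}$ in one dimension contains unbounded functions. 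Your proposed rescue---bounding $x\mapsto\phi^2(x,0)$ via $W^{1,1}_x\hookrightarrow L^\infty_x$---requires $\p_x(\phi^2(\cdot,0))=2\phi(\cdot,0)\,\p_x\phi(\cdot,0)\in L^1_x$, but with $\phi(\cdot,0)\in H^{1/2}_x$ and $\p_x\phi(\cdot,0)\in H^{-1/2}_x$ this product need not be integrable (equivalently, $g\in H^{1/2}$ does not force $g^2\in W^{1,1}$, since $g^2\in W^{1,1}$ would imply $g^2\in L^\infty$). So this piece cannot be controlled on its own; any valid proof must avoid isolating $\phi^2(x,0)$, or must exhibit a cancellation with the integral term $\int|z|\phi\,\p_{zz}\phi$, which you treat separately. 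A secondary issue is that your claim of $H^1_xH^1_z$ regularity on strips $\{|z|>\eta\}$ overstates what $Z^0$ gives: you only get $H^1_xL^2_z\cap L^2_xH^2_z$ there, whose trace theory yields $\p_z\phi(\cdot,\pm1)\in H^{1/4}_x\not\hookrightarrow L^\infty_x$.

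For comparison, the paper does not attempt a self-contained proof at all: it simply invokes Pagani's trace theorem for the space $Z^0$ (Theorem~2.1 in \cite{Pagani2}), whose proof uses the specific structure of the operator $z\p_x-\p_{zz}$ rather than term-by-term energy estimates.
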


\begin{proof}
    This is contained in the trace result \cite[Theorem~2.1]{Pagani2}.
\end{proof}

\begin{lem}
    \label{lem:Z0-trace-pz-top}
    For $\phi \in Z^0(\Om)$, $\p_z \phi \vert_{z = \pm 1} \in H^{1/4}(x_0,x_1)$.
\end{lem}

\begin{proof}
    Let $\chi_+ \in C^\infty([-1,1])$ such that $\chi \equiv 1$ in a neighborhood of $\{ z = +1 \}$ and $\chi(z) = 0$ for $z < 1/2$.
    For $\phi \in Z^0(\Om)$, $\chi_+(z) \phi(x,z) \in H^1_x L^2_z \cap L^2_x H^2_z$.
    By the ``fractional trace theorem'' \cite[Equation~(4.7), Chapter~1]{LM68}, $(\p_z (\chi_+ \phi)) \vert_{z = +1} \in H^{1/4}(x_0,x_1)$.
    The result follows since $\chi_+ \equiv 1$ near $\{ z = +1 \}$.
    The same argument applies for the trace at $z = -1$.
\end{proof}

\begin{rmk} \label{rmk:Z0-Linf}
    Although it is ``almost'' the case, there does not hold $Z^0(\R^2) \hookrightarrow C^0(\R^2)$.
    \begin{itemize}
        \item Pagani \cite[Theorem 2.1]{Pagani1} proves that the operator $\phi \mapsto \phi(\cdot, 0)$ is onto from $Z^0(\R^2)$ to $H^{\frac 12}(\R)$.
        But $H^{\frac 12}(\R)$ contains unbounded functions of $x$.
        
        \item Pagani \cite[Theorem 2.3]{Pagani1} proves that the operator $\phi \mapsto \phi(0, \cdot)$ is onto from $Z^0(\R^2)$ to the space $\mathscr{H}^1_z(\R)$.
        But this space contains unbounded functions, for example $\psi(z) := (-\ln |z|/2)^s \chi(z)$ for $s < \frac 12$ and $\chi \in C^\infty_c(\R)$ with $\chi \equiv 1$ in a neighborhood of $z=0$.
    \end{itemize}
\end{rmk}

\subsubsection{Baouendi and Grisvard's weak space}
    
In \cite{BG}, Baouendi and Grisvard introduce the space
\nomenclature[FB]{$\cB$}{Baouendi--Grisvard solution space of \eqref{def:cB}, used in \cref{sec:proof-uniqueness}}
\begin{equation} 
    \label{def:cB}
    \cB:=\{\phi \in L^2((x_0, x_1), H^1_0(-1,1)); \enskip z\p_x \phi \in L^2_x(H^{-1}_z)\}.
\end{equation}
Baouendi and Grisvard proved the uniqueness of solutions to \eqref{eq:shear} in $\cB$. 
They also proved that functions in $\cB$ have traces on $\{x=x_i\}$ in $\mathscr{L}^2_z(-1,1)$. 
These results are recalled in \cref{sec:proof-uniqueness}, and will be used abundantly throughout the paper.

The following embedding is proved in \cref{sec:proofs-spaces} and used in \cref{sec:WP-Prandtl}.

\begin{lem}
    \label{lem:embed-cB-H13x}
    $\cB$ is continuously embedded in $H^{1/3}_x L^2_z$.
\end{lem}

\begin{lem}
    \label{lem:embed-cB-C0z-L3x}
    $\cB$ is continuously embedded in $C^0_z([-1,1]; H^{1/6}_x)$.
\end{lem}

\begin{proof}
    By definition, $\cB \hookrightarrow H^1_z (L^2_x)$.
    By \cref{lem:embed-cB-H13x}, $\cB \hookrightarrow L^2_z (H^{1/3}_x)$.
    Hence, the result follows from the ``fractional trace theorem'' \cite[Equation~(4.7), Chapter~1]{LM68}.
\end{proof}

\subsubsection{Anisotropic Sobolev spaces}

We will construct solutions to \eqref{eq:shear}, \eqref{eq:yuuxuyy} and \eqref{Prandtl-small-domain} in various anisotropic Sobolev spaces such as $\qone$ of \eqref{eq:def-Q1-intro}.
Within these spaces, one has heuristically the correspondence $\p_x \approx \p_z^3$, which corresponds to the appropriate scaling due to the degeneracy of $z \p_x$ at $z = 0$.

Indeed, if $u$ is a solution to  $z\p_x u-\p_{zz}u=0$ say on the whole plane $\R^2$, then the rescaled functions $u_\lambda(x,z) := u(\lambda^3 x, \lambda z)$ are also solutions. 
This is also consistent with the shape of the singular profiles $\busing^i$ from \cref{thm:shear-decomp}, and leads to the rule of thumb ``one derivative in $x$ equals three derivatives in $z$'' (which is different from the usual parabolic scaling, because of the cancellation on the line $z=0$).

In particular, we will use abundantly the following embeddings from the interpolated Pagani spaces to anisotropic Sobolev ones.

\begin{lem}
    \label{lem:Zsigma-Qsigma}
    Let $\sigma \in [0,1]$ and $Z^\sigma(\Omega) := [Z^0(\Om),Z^1(\Om)]_\sigma$.
    Then $Z^\sigma \hookrightarrow H^{2/3+\sigma}_x L^2_z \cap H^\sigma_x L^2_z$.
    In particular $Z^1 \hookrightarrow \qone$ defined in \eqref{eq:def-Q1-intro}.
\end{lem}

\begin{proof}
    By \cref{p:z0-l2-h23}, $Z^0(\Omega) \hookrightarrow H^{2/3}_x L^2_z$ and $Z^1(\Omega) \hookrightarrow H^{5/3}_x L^2_z$.
    Hence
    \begin{equation*}
        Z^\sigma(\Omega) \hookrightarrow [H^{2/3}_x L^2_z,H^{5/3}_x L^2_z]_\sigma = H^{2/3+\sigma}_x L^2_z
    \end{equation*}
    using e.g.\ \cite[Equation (13.4), Chapter 1]{LM68}.
    
    Moreover, by definition, $Z^0(\Omega) \hookrightarrow L^2_x H^2_z$ and $Z^1(\Omega) \hookrightarrow H^1_x H^2_z$, so $Z^\sigma(\Omega) \hookrightarrow H^\sigma_x H^2_z$.
\end{proof}

\begin{rmk}
    The definition \eqref{eq:def-Q1-intro} of $\qone$ does not contain the ``full vertical'' regularity $L^2_x H^5_z$, since we do not need it to close our nonlinear estimates.
    However, assuming sufficient regularity on the source terms, one can build solutions to \eqref{eq:shear} and \eqref{eq:yuuxuyy} in $\qone \cap L^2_x H^5_x$, and this was in fact what we did in the earlier version \cite{DMR-3} of this work.
\end{rmk}

\newpage
\section{The case of the linear shear flow}
\label{sec:shear}

This section concerns the well-posedness of the linear system \eqref{eq:shear} which we restate here for convenience and by using $z$ as a vertical variable rather than $y$ to prepare for the next sections.
We thus consider, in $\Omega = (x_0,x_1) \times (-1,1)$, the system
\begin{equation}
    \label{eq:shear-z}
    \begin{cases}
        z \p_x u - \p_{zz} u = f,\\
        u_{\rvert \Sigma_i} = \delta_i, \\
        u_{\rvert z = \pm 1} = 0,
    \end{cases}
\end{equation}
where $\Sigma_0 = \{ x_0 \} \times (0,1)$ and $\Sigma_1 = \{ x_1 \} \times (-1, 0)$.

First, in \cref{sec:shear-weak}, we recall the theory of weak solutions, due to Fichera for the existence, and to Baouendi and Grisvard for the uniqueness.
Then, in \cref{sec:shear-strong}, we recall the theory of strong solutions with maximal regularity, due to Pagani.
Our contributions regarding this problem are contained in the following subsections.
In \cref{sec:shear-ortho}, we derive orthogonality conditions which are necessary to obtain higher tangential regularity and prove the existence result of \cref{thm:shear-Q1}.
In \cref{sec:radial}, we construct explicit singular solutions and prove the decomposition result of \cref{thm:shear-decomp}.
Eventually, in \cref{sec:shear-frac}, we state a result concerning the well-posedness of \eqref{eq:shear-z} with fractional tangential regularity, which will be used in \cref{sec:Prandtl} and proved in \cref{sec:interpolation}.
 
\subsection{Existence and uniqueness of weak solutions}
\label{sec:shear-weak}

\begin{defi}[Weak solution] \label{def:weak-shear}
    Let $f \in L^2((x_0,x_1);H^{-1}(-1,1))$ and $\delta_0, \delta_1 \in \mathscr{L}^2_z(-1,1)$. 
    We say that $u \in L^2((x_0,x_1);H^1_0(-1,1))$ is a  \emph{weak solution} to~\eqref{eq:shear-z} when, for all $v \in H^1(\Omega)$ vanishing on $\partial\Om \setminus (\Sigma_0\cup\Sigma_1)$, the following weak formulation holds
    \begin{equation*}
        - \int_\Omega z u \p_x v + \int_\Om \p_z u \p_z v
        = \int_\Omega f v  + \int_{\Sigma_0} z \delta_0 v - \int_{\Sigma_1} z \delta_1 v.
    \end{equation*}
\end{defi}

Weak solutions in the above sense are known to exist since the work Fichera \cite[Theorem XX]{MR0111931} (which concerns generalized versions of \eqref{eq:shear-z}, albeit with vanishing boundary data).
Uniqueness dates back to \cite[Proposition 2]{BG} by Baouendi and Grisvard. 

\begin{prop} \label{p:shear-X0}
    Let $f \in L^2((x_0,x_1);H^{-1}(-1,1))$ and $\delta_0, \delta_1 \in \mathscr{L}^2_z(-1,1)$. 
    There exists a unique weak solution $u \in L^2((x_0,x_1);H^1_0(-1,1))$ to \eqref{eq:shear-z}.
    Moreover,
    \begin{equation} \label{shear-estimate-X0}
        \| u \|_{L^2_x H^1_z} \lesssim \| f \|_{L^2_x(H^{-1}_z)} + \| \delta_0 \|_{\mathscr{L}^2_z} + \| \delta_1 \|_{\mathscr{L}^2_z}.
    \end{equation}
\end{prop}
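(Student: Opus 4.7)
My approach is to treat existence and uniqueness separately: existence by a Lions-type non-symmetric Lax--Milgram theorem, and uniqueness by appealing to the Baouendi--Grisvard argument (postponed to \cref{sec:proof-uniqueness} in the paper).

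For existence, I would introduce the test space
\begin{equation*}
    W := \{ v \in H^1(\Omega) ; \enskip v_{\rvert z = \pm 1} = 0, \enskip v_{\rvert \{x_0\} \times (-1,0)} = 0, \enskip v_{\rvert \{x_1\} \times (0,1)} = 0 \},
\end{equation*}
i.e.\ $H^1$ functions that vanish on the non-characteristic boundary and on the \emph{outflow} parts. The weak formulation \eqref{weak-shear} takes the shape $b(u,v) = \tilde\ell(v)$, where
\begin{equation*}
    b(u,v) := -\int_\Omega z u \,\p_x v + \int_\Omega \p_z u\,\p_z v,
    \qquad
    \tilde\ell(v) := \int_\Omega f v + \int_{\Sigma_0} z\delta_0 v - \int_{\Sigma_1} z\delta_1 v.
\end{equation*}
The crucial identity is the integration by parts $-\int_\Omega z v\,\p_x v = -\tfrac{1}{2}\int_{-1}^{1} z [v^2]_{x_0}^{x_1}\dd z$; since $v \in W$ vanishes on the outflow parts, the surviving boundary integrals are $\tfrac12\int_0^1 z\,v(x_0,z)^2\dd z$ and $\tfrac12\int_{-1}^0 (-z)\,v(x_1,z)^2 \dd z$, both nonnegative. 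Combined with Poincaré in the $z$-direction, this yields the coercivity-type estimate
\begin{equation*}
    b(v,v) \;=\; \|\p_z v\|_{L^2}^2 + \tfrac12 \|v_{\rvert \Sigma_0}\|_{\mathscr{L}^2_z}^2 + \tfrac12 \|v_{\rvert \Sigma_1}\|_{\mathscr{L}^2_z}^2 \;\gtrsim\; \|v\|_{L^2_x H^1_z}^2 + \|v_{\rvert \Sigma_0}\|_{\mathscr{L}^2_z}^2 + \|v_{\rvert \Sigma_1}\|_{\mathscr{L}^2_z}^2
\end{equation*}
for $v \in W$. Equipping $W$ with the norm $\|v\|_W := b(v,v)^{1/2}$ turns it into a Hilbert space, and $b(\cdot,v)$ is continuous on $L^2_x H^1_z \supset W$ for each fixed $v \in W$.

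To apply Lions' theorem (\cite{LM68}), I need $\tilde\ell$ to be continuous on $(W,\|\cdot\|_W)$. The source term is handled by the duality $H^{-1}_z$/$H^1_0$: $|\int f v| \leq \|f\|_{L^2_x H^{-1}_z}\|v\|_{L^2_x H^1_z} \lesssim \|f\|_{L^2_x H^{-1}_z}\|v\|_W$. For the boundary terms, Cauchy--Schwarz directly against the weight $|z|$ gives
\begin{equation*}
    \Bigl|\int_{\Sigma_0} z \delta_0 v \Bigr| \leq \|\delta_0\|_{\mathscr{L}^2_z}\,\|v_{\rvert \Sigma_0}\|_{\mathscr{L}^2_z} \lesssim \|\delta_0\|_{\mathscr{L}^2_z}\,\|v\|_W,
\end{equation*}
and likewise for $\delta_1$. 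Lions' theorem then produces $u \in L^2_x H^1_0(-1,1)$ solving $b(u,v)=\tilde\ell(v)$ for all $v \in W$, with $\|u\|_{L^2_x H^1_z} \lesssim \|\tilde\ell\|_{W^*}$, which is exactly~\eqref{shear-estimate-X0}.

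For uniqueness, one is reduced to showing that if $u \in L^2_x H^1_0$ satisfies $b(u,v)=0$ for all $v \in W$, then $u\equiv 0$. The equation then reads $z\p_x u = \p_{zz}u \in L^2_x H^{-1}_z$ in the distributional sense, so $u \in \cB$ (see \eqref{def:cB}), and by the Baouendi--Grisvard trace theorem $u$ admits traces on $\Sigma_i$ in $\mathscr{L}^2_z$. Testing against well-chosen $v$ forces these traces to vanish on the inflow parts, while on the outflow parts the equation dictates the sign in an energy identity. The resulting Green identity closes the argument; this is the content of \cref{sec:proof-uniqueness}, to which I would simply refer. The main obstacle is the functional-analytic setup of the test space $W$ and verifying that the boundary terms in $\tilde\ell$ are dominated by $\|v\|_W$—once this is in place, the rest is a clean application of Lions' theorem.
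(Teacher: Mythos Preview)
Your proof is correct and follows essentially the same route as the paper: the paper introduces the test space $\mathscr{V} = W$ of $H^1$ functions vanishing on $\partial\Omega \setminus (\Sigma_0\cup\Sigma_1)$, computes the same coercivity identity $a(v,v)=\|\p_z v\|_{L^2}^2 + \tfrac12\|v_{\rvert\Sigma_0}\|_{\mathscr{L}^2_z}^2 + \tfrac12\|v_{\rvert\Sigma_1}\|_{\mathscr{L}^2_z}^2$, bounds the linear form exactly as you do, and applies an abstract Lax--Milgram/Lions-type existence principle (their \cref{lax-weak}), while deferring uniqueness to \cref{sec:proof-uniqueness}. One cosmetic difference: the paper explicitly introduces the completion $\mathscr{U}$ of $W$ with respect to $b(v,v)^{1/2}$ rather than asserting $(W,\|\cdot\|_W)$ is itself Hilbert (it need not be, since this norm is weaker than the $H^1$ norm), but Lions' theorem does not require completeness of the test space, so this does not affect your argument.
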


\begin{proof}
    The proof of uniqueness is postponed to \cref{sec:proof-uniqueness} where we adapt Baouendi and Grisvard's arguments to prove the uniqueness of weak solutions to all the linear problems we encounter in this paper in \cref{lem:uniqueness-BG}.
    It relies on the proof of a trace theorem and a Green identity for the space $\cB$ defined in \eqref{def:cB}.
    
    Let us prove the existence.
    We introduce two Hilbert spaces $\mathscr{V} \hookrightarrow \mathscr{U} \hookrightarrow L^2((x_0,x_1);H^1_0(0,1))$ as follows.
    Let $\mathscr{V} := \{ v \in H^1(\Omega) ; \enskip v = 0 \text{ on } \Om \setminus (\Sigma_0 \cup \Sigma_1) \}$.
    Let $\mathscr{U}$ be the completion of $H^1(\Omega) \cap L^2((x_0,x_1);H^1_0(-1,1))$ with respect to the scalar product
    \begin{equation} \label{eq:scalar-u}
        \langle u, v \rangle_{\mathscr{U}} := \int_\Omega \p_z u \p_z v + \int_{\Sigma_0} z u v - \int_{\Sigma_1} z u v.
    \end{equation}
    For $u, v \in \mathscr{U} \times \mathscr{V}$, let
    \begin{align}
        a(u,v) & := - \int_\Omega z u \p_x v + \int_\Omega \p_z u \p_z v, \\
        \label{def:bv}
        b(v) & := \int_\Omega f v + \int_{\Sigma_0} z \delta_0 v - \int_{\Sigma_1} z \delta_1 v.
    \end{align}
    In particular, for every $v \in \mathscr{V}$, integration by parts leads to $a(v,v) = \|v\|_\mathscr{U}^2$ and
    \begin{equation} \label{eq:bv-X0}
        |b(v)| \leq \left( \| f \|_{L^2_x(H_z^{-1})} + \| \delta_0 \|_{\mathscr{L}^2_z} + \| \delta_1 \|_{\mathscr{L}^2_z} \right) \| v \|_{\mathscr{U}}.
    \end{equation}
    Hence, $b \in \mathcal{L}(\mathscr{V})$ can be extended as a linear form over $\mathscr{U}$ and existence follows from the Lax-Milgram type existence principle \cref{lax-weak} in \cref{sec:proofs-spaces}, which also yields the energy estimate \eqref{shear-estimate-X0} thanks to \eqref{eq:bv-X0} and Poincaré's inequality.
\end{proof}

\begin{rmk}
    Functions in $\mathscr{U}$ \emph{a priori} do not have traces on $\Sigma_i$ so one could wonder how definition \eqref{def:bv} makes sense when $v \in \mathscr{U}$.
    The integrals $\int_{\Sigma_i} z \delta_i v$ make sense precisely because $\mathscr{U}$ is defined as a completion with respect to \eqref{eq:scalar-u}.
    In fact, weak solutions do have traces in a strong sense, as proved in \cref{lem:tracesBG}, thanks to the extra regularity in $x$ provided by the equation.
\end{rmk}

\begin{rmk}
    Instead of using the weak Lax-Milgram existence principle \cref{lax-weak}, an alternate proof would be to regularize equation \eqref{eq:shear-z} by vanishing viscosity, and to obtain uniform $L^2_x H^1_z$ estimates on the approximation.
    This approach will be used in \cref{lem:reg-away-sing-pts} proved in the Appendix, in which we prove $H^1_x L^2_z$ regularity of the weak solutions far from the lateral boundaries.
\end{rmk}

\subsection{Strong solutions with maximal regularity}
\label{sec:shear-strong}

We now turn to strong solutions, i.e.\ solutions for which \eqref{eq:shear-z} holds almost everywhere.
The main result on this topic is due to Pagani. 

\begin{prop} \label{p:pagani-shear}
    Let $f \in L^2(\Omega)$ and $\delta_0, \delta_1 \in \mathscr{H}^1_z(-1,1)$ such that $\delta_0(1) = \delta_1(-1) = 0$.
    The unique weak solution $u$ to \eqref{eq:shear-z} belongs to $Z^0(\Omega)$ and satisfies
    \begin{equation} \label{estimate-pagani-Z0}
        \| u \|_{Z^0} \lesssim \| f \|_{L^2} + \| \delta_0 \|_{\mathscr{H}^1_z} + \| \delta_1 \|_{\mathscr{H}^1_z}.
    \end{equation}
    The boundary conditions $u_{\rvert \Sigma_i} = \delta_i$ hold in the sense of traces in $\mathscr{H}^1_z(\Sigma_i)$ (see \cref{lem:Z0-trace-H1z}).
\end{prop}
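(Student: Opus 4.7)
The plan is to reduce the problem to homogeneous lateral data, construct a strong $Z^0$-solution with $L^2$ source by vanishing viscosity, and then identify it with the unique weak solution supplied by \cref{p:shear-X0}. For the reduction, I would invoke a trace/lifting result à la Pagani: given $\delta_i \in \mathscr{H}^1_z$ with $\delta_i((-1)^i) = 0$, there exists $U \in Z^0(\Omega)$ with $U_{\rvert \Sigma_i} = \delta_i$, $U_{\rvert z = \pm 1} = 0$ and $\|U\|_{Z^0} \lesssim \|\delta_0\|_{\mathscr{H}^1_z} + \|\delta_1\|_{\mathscr{H}^1_z}$, which can be constructed explicitly on each half of $\Omega$ via the heat kernel on $(-1,1)$ with evolution variable $x$ (forward on $\Omega_+$, backward on $\Omega_-$). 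Writing $v := u - U$ reduces everything to $z\p_x v - \p_{zz} v = \tilde f$ with $\tilde f := f - z \p_x U + \p_{zz} U \in L^2(\Omega)$ and vanishing data on $\Sigma_0 \cup \Sigma_1 \cup \{z = \pm 1\}$.

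For the strong solvability of the reduced problem, I would regularize by the uniformly elliptic equation
\begin{equation*}
    -\varepsilon \p_{xx} v_\varepsilon + z \p_x v_\varepsilon - \p_{zz} v_\varepsilon = \tilde f,
\end{equation*}
with the Dirichlet condition $v_\varepsilon = 0$ on $\Sigma_0 \cup \Sigma_1 \cup \{z = \pm 1\}$ and the Neumann condition $\p_x v_\varepsilon = 0$ on the outflow portions $\{x_0\} \times (-1,0)$ and $\{x_1\} \times (0,1)$. Standard mixed-boundary elliptic theory yields a weak $H^1$ solution, smooth enough to justify the manipulations below. The key step is then $Z^0$ estimates uniform in $\varepsilon$. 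Testing against $v_\varepsilon$ produces the weak $L^2_x H^1_z$ bound because $\int z \p_x v_\varepsilon \cdot v_\varepsilon = \tfrac 12 \int_{-1}^1 z\,[v_\varepsilon^2(x_1,z) - v_\varepsilon^2(x_0,z)]\,dz \geq 0$: the sign mismatch between $z$ and the vanishing locus of $v_\varepsilon$ on each vertical side is favorable. Testing against $-\p_{zz} v_\varepsilon$ and integrating by parts twice rewrites the cross-term $-\int z \p_x v_\varepsilon \, \p_{zz} v_\varepsilon$ as $\int \p_x v_\varepsilon \, \p_z v_\varepsilon + \tfrac 12 \int_{-1}^1 z\,[(\p_z v_\varepsilon)^2(x_1,z) - (\p_z v_\varepsilon)^2(x_0,z)]\,dz$: the second piece has the good sign by the same argument, and the first one collapses, via one further integration by parts using $v_\varepsilon(\cdot, \pm 1) = 0$, to the lower-order boundary value $-\tfrac 14 [v_\varepsilon^2(x_0,0) + v_\varepsilon^2(x_1,0)]$.

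Combined with the equation itself, this delivers the required $Z^0$ bound uniformly in $\varepsilon$, once the lower-order boundary contributions are absorbed by trace continuity and Young's inequality. Extracting a weak limit as $\varepsilon \to 0$ yields a $Z^0$-solution $v$ of the reduced equation with $\tilde f$ as source and vanishing traces on the lateral boundaries (via \cref{lem:Z0-trace-H1z}); uniqueness of weak solutions (\cref{p:shear-X0}) then identifies $v + U$ with $u$, whose traces on $\Sigma_i$ belong to $\mathscr{H}^1_z(\Sigma_i)$ by \cref{lem:Z0-trace-H1z} and coincide with $\delta_i$ by construction. The main obstacle is precisely the lower-order boundary term $v_\varepsilon^2(x_i,0)$: it does not sit inside the weighted $\mathscr{L}^2_z$ norms natural to the inflow traces, so its absorption requires either a Hardy-type argument near $z = 0$ or a bootstrap exploiting the weak $L^2_x H^1_z$ bound together with the embedding of $Z^0$ into $C^0_x(\mathscr{H}^1_z)$, and this is where the weighted functional framework set up in \cref{sec:spaces} is indispensable.
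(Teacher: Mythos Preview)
The paper does not give an independent proof here: it simply invokes Pagani's result \cite[Theorem~5.2]{Pagani2}, noting that Pagani's argument proceeds by localization (the estimate being routine away from the two critical points $(x_i,0)$) and, near those points, by reduction to a half-space problem solved via explicit representation formulas (Mellin transform and Wiener--Hopf factorization). Your proposal attempts something genuinely different --- a self-contained $Z^0$ a priori estimate via vanishing viscosity and energy methods --- but it breaks at a decisive step.

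The gap is in your treatment of the bulk cross-term $\int_\Omega \partial_x v_\varepsilon\,\partial_z v_\varepsilon$. You claim it ``collapses, via one further integration by parts'' to $-\tfrac14[v_\varepsilon^2(x_0,0)+v_\varepsilon^2(x_1,0)]$, but no such identity exists: $\partial_x v\,\partial_z v$ is not a divergence, and for generic $v$ (even $v\in C^\infty_c(\Omega)$, where all your boundary values vanish identically) one has $\int_\Omega \partial_x v\,\partial_z v\neq 0$. So after testing against $-\partial_{zz}v_\varepsilon$ you are left with a genuine bulk term that you cannot control uniformly in $\varepsilon$, since you have no $\varepsilon$-independent bound on $\|\partial_x v_\varepsilon\|_{L^2}$. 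This is not a cosmetic issue to be fixed by Hardy inequalities or the weighted framework of \cref{sec:spaces}: the inaccessibility of the $Z^0$ estimate near $(x_i,0)$ by pure multiplier methods is exactly why Pagani resorts to explicit half-space formulas. A secondary (but related) difficulty is that your elliptic regularization carries a mixed Dirichlet--Neumann junction at each $(x_i,0)$, so $v_\varepsilon$ is generically not in $H^2(\Omega)$ --- it has the usual $r^{1/2}\sin(\theta/2)$ corner singularity alluded to around \eqref{eq:elliptic-u} --- which makes the very use of $-\partial_{zz}v_\varepsilon$ as a test function suspect.
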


\begin{proof}
    This is a particular case of \cite[Theorem~5.2]{Pagani2}.
    Pagani's proof proceeds by localization.
    Far from the critical points $(x_0,0)$ and $(x_1,0)$, the regularity is rather straightforward.
    Near these critical points, the regularity stems from the regularity obtained for a similar problem set in a half-space $(0,+\infty)\times \R$ or $\R \times (0,+\infty)$.
    Pagani studies such half-space problems in \cite{Pagani1} where he derives explicit representation formulas for the solutions, using the Mellin transform and the Wiener-Hopf method.
    We do not reproduce these arguments here for brevity.

    Note that the implicit constant, say $C_P$, in \eqref{estimate-pagani-Z0} may depend on $\Omega$.
    By scaling arguments, one can prove that $C_P \leq C ( 1 + |x_1-x_0|^{-1})$ for some universal $C > 0$.
\end{proof}

\subsection{Orthogonality conditions for higher tangential regularity}
\label{sec:shear-ortho}

We now investigate whether solutions to \eqref{eq:shear-z} enjoy higher regularity in the horizontal direction.
As mentioned in \cref{sec:comments}, it is quite easy to obtain \emph{a priori} estimates in the space $Z^1(\Omega)$ (see \cref{p:shear-apriori-Z1}).
However, we prove in \cref{p:shear-WP-Z1} that the weak solution enjoys such a regularity \emph{if only if the data satisfies appropriate orthogonality conditions}.
Eventually, we give statements highlighting the fact that these conditions are non-empty.

\begin{prop} \label{p:shear-apriori-Z1}
    Let $f \in H^1((x_0,x_1);H^{-1}(-1,1))$ and $\delta_0, \delta_1 \in \mathscr{H}^1_z(-1,1)$ such that $\delta_0(1) = \delta_1(-1) = 0$ and such that $\Delta_0, \Delta_1 \in \mathscr{L}^2_z(-1,1)$, where
    \nomenclature[OAD3]{$\Delta_i$}{Boundary data for $\p_x u$, given by $\Delta_i = (f+\delta_i'')/z$, see \eqref{eq:def-Di}}
    \begin{equation} \label{eq:def-Di}
        \Delta_i(z) := \frac{f(x_i,z) + \p_z^2 \delta_i(z)}{z}.
    \end{equation}
    If the unique weak solution $u$ to \eqref{eq:shear-z} belongs to $H^1((x_0,x_1);H^1_0(-1,1))$, then one has the following weak solution estimate for $\p_x u$:
    \begin{equation} \label{eq:apriori-ux}
        \| \p_x u \|_{L^2_x H^1_z} 
        \lesssim 
        \| \p_x f \|_{L^2_x(H^{-1}_z)} +
        \| \Delta_0 \|_{\mathscr{L}^2_z(\Sigma_0)} + 
        \| \Delta_1 \|_{\mathscr{L}^2_z(\Sigma_1)}.
    \end{equation}
    If, moreover, $f \in H^1((x_0,x_1);L^2(-1,1))$, $\Delta_0, \Delta_1 \in \mathscr{H}^1_z(-1,1)$ and $\Delta_0(1) = \Delta_1(-1) = 0$, then $u \in Z^1(\Omega)$ and one has the following strong solution estimate for $\p_x u$:
    \begin{equation} \label{eq:apriori-uz1}
        \| \p_x u \|_{Z^0} 
        \lesssim \| \p_x f \|_{L^2} + 
        \| \Delta_0 \|_{\mathscr{H}^1_z} + 
        \| \Delta_1 \|_{\mathscr{H}^1_z}.
    \end{equation}
\end{prop}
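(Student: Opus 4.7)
The plan is to introduce $v := \partial_x u$, identify it as the weak (respectively strong) solution to a problem of the same form as \eqref{eq:shear-z} with source $\partial_x f$ and lateral data $(\Delta_0, \Delta_1)$, and directly apply \cref{p:shear-X0} (resp.\ \cref{p:pagani-shear}) to obtain \eqref{eq:apriori-ux} (resp.\ \eqref{eq:apriori-uz1}).

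The functional setting is straightforward: the hypothesis $u \in H^1((x_0,x_1); H^1(-1,1))$ together with $u_{\rvert z = \pm 1} = 0$ yields $v \in L^2((x_0, x_1); H^1_0(-1,1))$, and distributional differentiation of the equation in $x$ gives $z \partial_x v - \partial_{zz} v = \partial_x f$ in $\mathcal{D}'(\Omega)$. The delicate step is to identify the traces of $v$ on $\Sigma_i$. A direct $x$-trace of $v$ is not available at this regularity; however, the combination $z \partial_x u = \partial_{zz} u + f$ belongs to $H^1((x_0, x_1); H^{-1}(-1,1))$ by the equation, hence admits traces at $x = x_i$ in $H^{-1}(-1,1)$ equal to $\delta_i'' + f(x_i, \cdot)$, using $u_{\rvert x = x_i} = \delta_i$. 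Dividing by $z$, which is meaningful on $\Sigma_i$ where $z \neq 0$, gives $v_{\rvert \Sigma_i} = \Delta_i$; the weighted assumption $\Delta_i \in \mathscr{L}^2_z$ ensures that this trace has exactly the regularity required by \cref{def:weak-shear}.

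Once the trace is identified, the weak formulation \eqref{weak-shear} for $v$ follows from integrating $\int_\Omega (z \partial_x v - \partial_{zz} v) \phi$ by parts against a test function $\phi$ vanishing on $\partial\Omega \setminus (\Sigma_0 \cup \Sigma_1)$: the vertical boundary terms disappear since $\phi = 0$ at $z = \pm 1$, while the horizontal boundary terms at $x = x_0, x_1$ are supported on $\Sigma_0 \cup \Sigma_1$ (by the support of $\phi$ on the inflow parts) and produce the $\Delta_i$ contributions with the correct signs. Uniqueness from \cref{p:shear-X0} then forces $v$ to coincide with the weak solution of its own problem, so the energy estimate \eqref{shear-estimate-X0} applied to $v$ yields \eqref{eq:apriori-ux}. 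For the strong estimate \eqref{eq:apriori-uz1}, the additional regularity hypotheses ($\partial_x f \in L^2$, $\Delta_i \in \mathscr{H}^1_z$ with $\Delta_0(1) = \Delta_1(-1) = 0$) allow a direct application of \cref{p:pagani-shear} to $v$, giving $v \in Z^0(\Omega)$ with the claimed estimate; combined with $u \in Z^0(\Omega)$ (\cref{p:pagani-shear} applied to $u$ under the original hypotheses), this yields $u \in Z^1(\Omega)$ by the definition \eqref{eq:Z1}.

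The main obstacle I foresee is the rigorous justification of the trace identification, where the degeneracy of the weight $z$ at $z=0$ prevents a naive division. The weighted spaces $\mathscr{L}^2_z$ (resp.\ $\mathscr{H}^1_z$) are precisely designed to accommodate the potentially singular behavior of $\Delta_i$ near $z=0$; in particular, the condition $\Delta_i \in \mathscr{L}^2_z$ implicitly enforces the appropriate compatibility of $f(x_i, \cdot)$ with $-\delta_i''$ at $z=0$, without which no $H^1_x H^1_z$ regularity can propagate through the critical points $(x_i, 0)$.
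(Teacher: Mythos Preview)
Your strategy is the same as the paper's: show that $w=\partial_x u$ is the weak solution to the differentiated problem \eqref{eq:ux}, then invoke \cref{p:shear-X0} and \cref{p:pagani-shear}. The difference is in how the weak formulation for $w$ is derived, and your version has a gap.

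You propose to start from $\int_\Omega (z\partial_x w - \partial_{zz} w)\phi$ and integrate by parts. But at the regularity $w\in L^2_x H^1_z$, the quantity $z\partial_x w$ is not known to lie in any space where pairing with $\phi$ and integration by parts in $x$ are justified; the boundary terms you want on $\Sigma_i$ cannot be extracted this way. Likewise, your trace identification via ``$z\partial_x u$ has $H^1_x H^{-1}_z$ traces, then divide by $z$'' is suggestive but does not by itself produce the boundary integrals $\int_{\Sigma_i} z\Delta_i\phi$ in the weak formulation.

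The paper closes this gap by going in the opposite direction: it takes a test function $v$ in the class
\[
\mathscr{V}=\{v\in C^\infty(\overline{\Omega}):\ v=0\text{ on }\partial\Omega\setminus(\Sigma_0\cup\Sigma_1),\ \partial_x v=0\text{ on the outflow parts}\},
\]
so that $\partial_x v$ is an admissible test function in the weak formulation \eqref{weak-shear} for $u$. One then integrates by parts in $x$ inside $\int_\Omega z u\,\partial_x(\partial_x v)$ and $\int_\Omega \partial_z u\,\partial_z(\partial_x v)$, which is legitimate precisely because of the assumed $H^1_x H^1_z$ regularity of $u$. The resulting boundary terms at $x=x_i$ involve only $u_{|\Sigma_i}=\delta_i$ and $f(x_i,\cdot)$, and after an integration by parts in $z$ on those boundary contributions, the combination $z\Delta_i$ emerges without ever dividing by $z$. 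Density of $\mathscr{V}$ in the space of test functions for \cref{def:weak-shear} then gives the weak formulation for $w$.
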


\begin{proof}
    The key point is the following argument:
    if $\p_x u$ enjoys $L^2_x H^1_z$ regularity, then  $\p_x u$ is the unique weak solution to
    \begin{equation} \label{eq:ux}
        \begin{cases}
            z \p_x w - \p_{zz} w = \p_x f,\\
            w_{\rvert \Sigma_i} = \Delta_i, \\
            w_{\rvert z = \pm 1} = 0.
        \end{cases}
    \end{equation}
    Then estimate \eqref{eq:apriori-ux} follows from \eqref{shear-estimate-X0} and estimate \eqref{eq:apriori-uz1} follows from \eqref{estimate-pagani-Z0}.
    
    Hence, let us prove that, if $\p_x u \in L^2_x H^1_z$, then $\p_x u$ is a weak solution to \eqref{eq:ux}.
    Let
    \begin{equation*}
        \begin{split}
            \mathscr{V} := \big\{ v \in C^\infty(\overline{\Omega}) ; \quad & 
            v = 0 \text{ on } \partial \Om \setminus (\Sigma_0 \cup \Sigma_1), \\ 
            & \p_x v = 0 \text{ on } \{ x_0 \} \times (-1,0) \text{ and } \{ x_1 \} \times (0,1) \}.
        \end{split}
    \end{equation*}
    Let $v \in \mathscr{V}$. 
    Then $\p_x v$ is an admissible test function for \cref{def:weak-shear}.
    Hence, since $u$ is the weak solution to \eqref{eq:shear-z}, one has
    \begin{equation*}
        - \int_\Omega z u \p_x (\p_x v) + \int_\Om \p_z u \p_z (\p_x v)
        = \int_\Omega f (\p_x v)  + \int_{\Sigma_0} z \delta_0 (\p_x v) - \int_{\Sigma_1} z \delta_1 (\p_x v).
    \end{equation*}
    The $H^1_x H^1_z$ regularity of $u$ legitimates integrations by parts in $x$ in the left-hand side.
    Thus
    \begin{equation*}
        \begin{split}
            \left[ - \int_{-1}^{1} z u \p_x v \right]^{x_1}_{x_0}
            + \int_\Omega z (\p_x u) \p_x v 
            & + \left[ \int_{-1}^1 \p_z u \p_z v \right]^{x_1}_{x_0}
            - \int_\Omega \p_z (\p_x u) \p_z v \\
            & = \left[ \int_{-1}^1 f v \right]^{x_1}_{x_0} - \int_\Omega f_x v
            + \int_{\Sigma_0} z \delta_0 (\p_x v) - \int_{\Sigma_1} z \delta_1 (\p_x v),
        \end{split}
    \end{equation*}
    which, after taking the boundary conditions into account, integrating by parts in $z$ in the boundary terms $\int_{-1}^1 \p_z u \p_z v$ and recalling \eqref{eq:def-Di} yields
    \begin{equation*}
        - \int_\Omega z (\p_x u) \p_x v + \int_\Om \p_z (\p_x u) \p_z v
        = \int_\Omega f_x v  + \int_{\Sigma_0} z \Delta_0 v - \int_{\Sigma_1} z \Delta_1 v.
    \end{equation*}
    Since $\mathscr{V}$ is dense in the set of test functions for \cref{def:weak-shear}, this proves that $\p_x u$ is the weak solution to \eqref{eq:ux}.
\end{proof}

We start by defining ``dual profiles'' which are necessary to state our orthogonality conditions.

\begin{lem}[Dual profiles] \label{lem:def-dual-shear}
    We define $\overline{\Phi^0}$, $\overline{\Phi^1} \in Z^0(\Om_\pm)$ as the unique solutions to
    \nomenclature[OAW]{$\overline{\Phi^j}$}{Dual profiles of \cref{lem:def-dual-shear} involved in orthogonality conditions for the shear flow}
    \begin{equation} \label{eq:Phij-shear}
        \begin{cases}
            -z \p_x \overline{\Phi^j} -\p_{zz}\overline{\Phi^j} =0 & \text{in } \Om_\pm,\\
            \left[\overline{\Phi^j} \right]_{|z=0}=\mathbf 1_{j=1}, \\ \left[\p_z\overline{\Phi^j} \right]_{|z=0}=-\mathbf 1_{j=0},\\
            \overline{\Phi^j}_{\rvert \p \Om \setminus(\Sigma_0\cup \Sigma_1)}=0.
        \end{cases}
    \end{equation}
\end{lem}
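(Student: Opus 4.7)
The plan is to establish the lemma in two independent parts: uniqueness via the Baouendi--Grisvard technique applied to the adjoint operator, and existence via an explicit lift of the prescribed jump data followed by an application of Pagani's maximal regularity theorem to the adjoint problem.

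For uniqueness, if $\overline{\Phi^j}_1$ and $\overline{\Phi^j}_2$ are two $Z^0(\Om_\pm)$ solutions, I would consider their difference $w$, which satisfies the homogeneous adjoint equation on each half, vanishes on $\p\Om\setminus(\Sigma_0\cup\Sigma_1)$, and has zero jumps in both value and vertical derivative across $\{z=0\}$. Since $\p_{zz}w\in L^2$ on each half and the jumps vanish, the two pieces glue into a single function $w\in Z^0(\Om)$, which is then a weak solution (in the sense of \cref{def:weak-shear}) of the adjoint shear problem with vanishing data. The uniqueness from \cref{p:shear-X0} (relying on the Baouendi--Grisvard argument of \cref{sec:proof-uniqueness}), applied to the backward operator via the reflection $x\mapsto x_0+x_1-x$, then forces $w=0$.

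For existence, I would peel off the jumps using an explicit $x$-independent lift. Choosing $\rho\in C^\infty_c(-1,1)$ with $\rho(0)=1$ and $\rho'(0)=0$, set
\begin{equation*}
    \phi^0(x,z):=-\tfrac{1}{2}|z|\rho(z),\qquad \phi^1(x,z):=\tfrac{1}{2}\operatorname{sgn}(z)\rho(z).
\end{equation*}
These are smooth on each $\Om_\pm$, vanish at $z=\pm 1$, and a direct check gives $[\phi^j]_{z=0}=\mathbf 1_{j=1}$ and $[\p_z\phi^j]_{z=0}=-\mathbf 1_{j=0}$. Writing $\overline{\Phi^j}=\phi^j+r^j$, the remainder $r^j$ must have vanishing jumps across $\{z=0\}$ and hence extends to a globally-defined function on $\Om$ solving
\begin{equation*}
    \begin{cases}
        -z\p_x r^j - \p_{zz} r^j = g^j & \text{in } \Om,\\
        r^j = -\phi^j & \text{on } \p\Om\setminus(\Sigma_0\cup\Sigma_1),
    \end{cases}
\end{equation*}
where $g^j:=(z\p_x+\p_{zz})\phi^j$, computed piecewise, lies in $L^\infty(\Om)\subset L^2(\Om)$. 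The lateral traces $-\phi^j\rvert_{\{x_0\}\times(-1,0)}$ and $-\phi^j\rvert_{\{x_1\}\times(0,1)}$ belong to $\mathscr{H}^1_z$ with the required vanishing at $z=\pm 1$, so existence and uniqueness of $r^j\in Z^0(\Om)$ then follow from \cref{p:pagani-shear} applied to the adjoint problem (equivalent to the forward one after the reflection $x\mapsto x_0+x_1-x$, which exchanges $\Sigma_0$ with $\{x_1\}\times(0,1)$ and $\Sigma_1$ with $\{x_0\}\times(-1,0)$).

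The main technical point to verify is that $r^j$ introduces no spurious jump at $\{z=0\}$, so that $\overline{\Phi^j}=\phi^j+r^j$ inherits exactly the prescribed jump structure. This follows because $r^j\in Z^0(\Om)$ forces $\p_{zz}r^j\in L^2(\Om)$, hence $r^j(x,\cdot)\in H^2(-1,1)\hookrightarrow C^1([-1,1])$ for a.e.\ $x$, so that $[r^j]_{z=0}=[\p_z r^j]_{z=0}=0$ pointwise almost everywhere. The singular behavior expected at the critical corners $(x_0,0)$ and $(x_1,0)$ (mirroring the profiles $\busing^i$ of \cref{thm:decomp-linear}) is absorbed by the $Z^0$ scale and does not disrupt this gluing argument.
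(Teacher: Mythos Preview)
Your proposal is correct and follows essentially the same strategy as the paper: uniqueness by gluing the difference across $\{z=0\}$ into a global $Z^0(\Om)$ function and invoking the Baouendi--Grisvard uniqueness for the reversed-direction problem, and existence by lifting the prescribed jumps with an explicit $x$-independent profile and solving for the $Z^0(\Om)$ remainder via \cref{p:pagani-shear}. The only cosmetic difference is that the paper uses one-sided lifts $-z\mathbf 1_{z>0}\zeta(z)$ and $\mathbf 1_{z>0}\zeta(z)$ (supported in $\Om_+$ only) rather than your symmetric even/odd lifts, a choice which is immaterial here but aligns with conventions reused later in the paper (e.g.\ \cref{cor:decomp-Phij}).
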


\begin{proof}
    Uniqueness is straightforward.
    Given $j \in \{ 0, 1 \}$ and two solutions to \eqref{eq:Phij-shear}, let $\phi$ denote their difference.
    Then $\phi \in Z^0(\Om_\pm)$ and both $\phi$ and $\p_z \phi$ are continuous across the line $\{ z = 0 \}$. 
    Hence, $\phi \in Z^0(\Om)$ and $\phi$ is the solution to a problem of the form \eqref{eq:shear-z} (with reversed tangential direction).
    So $\phi = 0$ since weak solutions to such problems are unique in $Z^0$.
    
    We prove the existence of $\overline{\Phi^0}$. 
    We define $\overline{\Phi^0}(x,z) := - z \mathbf{1}_{z>0} \zeta(z) + \Psi^0(x,z)$, where we choose $\zeta\in C^\infty_c(\R)$ such that $\zeta\equiv 1$ in a neighborhood of $z=0$ and $\supp \zeta\subset (-1/2, 1/2)$, and where $\Psi^0 \in L^2((x_0,x_1);H^1_0(-1,1))$ is the unique weak solution to
    \begin{equation*}
    \begin{cases}
        - z \p_x \Psi^0 - \p_{zz} \Psi^0 = -2 \mathbf 1_{z>0} \zeta'(z) - z \mathbf{1}_{z>0} \zeta''(z) & \text{in } \Om, \\
        \Psi^0(x_0,z) = 0 & \text{for } z \in (-1,0), \\
        \Psi^0(x_1,z) = z \zeta(z) & \text{for } z \in (0,1), \\
        \Psi^0_{|z=\pm 1}=0.
    \end{cases}
    \end{equation*}
    By \cref{p:pagani-shear}, $\Psi^0 \in Z^0(\Omega)$.
    Hence $\p_{zz} \overline{\Phi^0} \in L^2(\Om_\pm)$ and $z\p_x \overline{\Phi^0} \in L^2(\Om_\pm)$.
    
    The construction of the profile $\overline{\Phi^1} $ is similar and is left to the reader.
    For example, one can decompose $\overline{\Phi^1}$ as $\overline{\Phi^1}(x,z) = \mathbf{1}_{z>0} \zeta(z) + \Psi^1(x,z)$, where, similarly, $\Psi^1 \in Z^0(\Omega)$.
\end{proof}

\begin{rmk} \label{rmk:Phij-saut}
    The jump conditions in \eqref{eq:Phij-shear} prevent the dual profiles from enjoying vertical regularity across the line $\{ z = 0 \}$.
    More subtly, even inside each half-domain, neither the $\overline{\Phi^j}$ nor their lifted version the $\Psi^j$ enjoy tangential regularity.
    Indeed, formally, $\p_x \overline{\Phi^j}$ and $\p_x \Psi^j$ satisfy systems of the form \eqref{eq:shear-z} (with reversed tangential direction) with zero source term and zero boundary data.
    Hence, if they were sufficiently regular, they would be zero by the uniqueness results of \cref{sec:proof-uniqueness}, and so would $\overline{\Phi^j}$ and $\Psi^j$ by integration, contradicting \eqref{eq:Phij-shear}.
    We will see in \cref{cor:decomp-Phij} that these dual profiles indeed do contain an explicit singular part localized near the endpoints $(x_i,0)$.
\end{rmk}
	
We now turn to the main result of this section, which gives a necessary and sufficient condition for the solutions to enjoy the mentioned tangential regularity.
Strangely, we could not find a proof of \cref{p:shear-WP-Z1} in the literature, although some works mention orthogonality conditions (see \cite[Equation (4.2)]{MR0111931} or \cite{pyatkov2019some}).
Hence, we provide here a full proof. 
This strategy will be extended in \cref{sec:vorticity} to equations with smooth variable coefficients (see \cref{lem:ortho-Prandtl}).
We prove further that these orthogonality conditions are not empty.

We will work with the following space\nomenclature[FHK]{$\cLin_K$}{Hilbert space with norm \eqref{eq:norm-HK} of data triplets $(f,\delta_0,\delta_1)$ for the shear flow problem} of data triplets:
\begin{equation}
    \label{eq:def-HK}
    \begin{split}
        \cLin_K := \Big\{ 
            (f,\delta_0,\delta_1) & \in H^1_x L^2_z \times \mathscr{H}^1_z(\Sigma_0) \times \mathscr{H}^1_z(\Sigma_1) ; \quad  (\Delta_0,\Delta_1) \in \mathscr{H}^1_z(\Sigma_0) \times \mathscr{H}^1_z(\Sigma_1)  \\
            & \text{ and }
            \delta_0(1) = \delta_1(-1) = \Delta_0(1) = \Delta_1(-1) = 0 
        \Big\},
    \end{split}
\end{equation}
where $\Delta_i$ is defined in \eqref{eq:def-Di}, with the associated norm
\begin{equation}
    \label{eq:norm-HK}
    \| (f,\delta_0,\delta_1) \|_{\cLin_K} := 
    \| f \|_{H^1_x L^2_z} + \sum_{i \in \{0,1\}} \|\delta_i\|_{\mathscr{H}^1_z} + \| \Delta_i \|_{\mathscr{H}^1_z}.
\end{equation}

\begin{lem} \label{lem:delta_i=H2}
    For $(f,\delta_0,\delta_1) \in \cLin_K$, one has $\delta_i \in H^2(\Sigma_i)$ with $\|\delta_i\|_{H^2} \lesssim \| (f,\delta_0,\delta_1) \|_{\cLin_K}$.
\end{lem}

\begin{proof}
    For $i \in \{0,1\}$, recalling \eqref{eq:def-Di}, one has
    \begin{equation*}
        \| \delta_i'' \|_{L^2(\Sigma_i)} \leq \| \delta_i'' + f(x_i,\cdot) \|_{L^2(\Sigma_i)} + \| f(x_i, \cdot) \|_{L^2(\Sigma_i)} \\
        \lesssim \| \Delta_i \|_{\mathscr{L}^2_z} + \| f \|_{H^1_x L^2_z}.
    \end{equation*}
    Moreover, one checks that $\|\delta_i\|_{L^2} \lesssim \|\delta_i\|_{\mathscr{L}^2_z} + \|\delta_i''\|_{L^2}$ (proceeding e.g.\ as in \cref{lem:psi-zpsi-psi''}).
\end{proof}
	
\begin{prop} \label{p:shear-WP-Z1}
    Let $(f,\delta_0,\delta_1) \in \cLin_K$.
    The unique weak solution $u$ to~\eqref{eq:shear-z} belongs to $H^1_x H^1_z$ if and only if, for $j = 0$ and $j = 1$,
    \begin{equation} \label{eq:compat-shear}
        \int_\Om \p_x f \overline{\Phi^j} + \int_{\Sigma_0} z \Delta_0 \overline{\Phi^j}  - \int_{\Sigma_1} z \Delta_1 \overline{\Phi^j}  = \p_z^j\delta_1(0)-\p_z^j \delta_0(0),
    \end{equation}
    where $\overline{\Phi^0} $ and $\overline{\Phi^1} $ are defined in \cref{lem:def-dual-shear}.

    Furthermore, in this case, $u$ actually belongs to $Z^1(\Om)$ and the following estimate holds:
    \begin{equation} 
        \label{eq:u-Z1}
        \| u\|_{Z^1} \lesssim \| (f,\delta_0,\delta_1) \|_{\cLin_K}.
    \end{equation}
\end{prop}
	
\begin{proof}
    \step{We exhibit possible discontinuities.}
    Let us consider the unique solution $u\in Z^0(\Omega)$ to \eqref{eq:shear-z}.
    Following the strategy sketched by Goldstein and Mazumdar \cite[Theorem~4.2]{AG} (see \cref{rmk:goldstein} for further comments), we introduce the unique strong  solution $w \in Z^0(\Omega)$ to \eqref{eq:ux}, so that $w$ is a good candidate for $\p_x u$.
    The idea is then to introduce the function $u_1$ defined by
    \begin{equation}\label{def:v_1}
    u_1(x,z) :=
    \begin{cases}
        \delta_0 (z) + \int_{x_0}^x w(x',z)\dd x' & \text{in } \Omega_+, \\
        \delta_1(z)-\int^{x_1}_x w(x',z)\dd x' & \text{in } \Omega_-
    \end{cases}
    \end{equation}
    so that $\p_x u_1=w$ almost everywhere. 
    Furthermore, it can be easily proved that, in $\mathcal{D}'(\Om_\pm)$,
    \begin{equation*}
        z \p_x u_1 - \p_{zz} u_1 = f.
    \end{equation*}
    However, this does not entail that $u_1$ is a solution to this equation in the whole domain. 
    Indeed, $u_1$ and $\p_z u_1$ may have discontinuities across the line $\{z=0\}$.
    One checks that $u_1$ and $\p_z u_1$ are continuous across $z=0$ if and only if
    \begin{equation}\label{pas-de-saut}
    \begin{aligned}
    \int_{x_0}^{x_1} w(x,0)\dd x&=\delta_1(0)-\delta_0(0),\\
    \int_{x_0}^{x_1} w_z(x,0)\dd x&= \p_z \delta_1 (0)- \p_z \delta_0(0).
    \end{aligned}
    \end{equation}
    The two integrals are well-defined since $w_z$ and $w_{zz}$ belong to $L^2(\Om)$.
    
    \step{We compute the horizontal mean value of $w$ and $w_z$ using the dual profiles.}
    Let $\phi \in Z^0(\Om_\pm)$ such that $\phi_{\rvert \p \Om \setminus (\Sigma_0\cap \Sigma_1)} = 0$.
    Since $w \in Z^0(\Omega)$, it satisfies \eqref{eq:ux} almost everywhere, so that we can multiply the equation by $\phi$ and integrate over $\Om_+$.
    Hence,
    \begin{equation*}
        \int_{\Om_+} f_x \phi = \int_{\Om_+} (z \p_x w - \p_{zz} w) \phi,
    \end{equation*}
    where, on the one hand,
    \begin{equation*}
        \int_{\Om_+} z (\p_x w) \phi = \int_{\Sigma_1} z \Delta_1 \phi - \int_{\Om_+} z w \p_x \phi
    \end{equation*}
    and on the other hand,
    \begin{equation*}
        - \int_{\Om_+} (\p_{zz} w) \phi 
        = \int_{x_0}^{x_1} (\p_z w \phi - w \p_z \phi)(x,0^+) \dd x - \int_{\Om_+} w \p_{zz}\phi.
    \end{equation*}
    Thus, performing the same computation on $\Om_-$ and summing both contributions yields
    \begin{equation*}
        \begin{split}
            \int_{x_0}^{x_1} (\p_z w [\phi]_{\rvert z = 0} - w [\p_z \phi]_{\rvert z = 0})(x,0) \dd x = \int_{\Om} f_x \phi & + \int_{\Sigma_0} z \Delta_0 \phi - \int_{\Sigma_1} z \Delta_1 \phi \\
            & +\sum_\pm \int_{\Om_{\pm}} w (z \p_x \phi + \p_{zz} \phi).
        \end{split}
    \end{equation*}
    Hence, for $j \in \{0,1\}$,
    \begin{equation*}
        \int_{x_0}^{x_1} \p_z^j w(x,0) \dd x =  \int_{\Om} f_x \overline{\Phi^j} + \int_{\Sigma_0} z \Delta_0 \overline{\Phi^j} - \int_{\Sigma_1} z \Delta_1 \overline{\Phi^j},
    \end{equation*}
    where the dual profiles $\overline{\Phi^0}$ and $\overline{\Phi^1}$ are defined in \cref{lem:def-dual-shear}.
    
    \step{Conclusion.}
    Assume that the orthogonality conditions \eqref{eq:compat-shear} are satisfied for $j = 0$ and $j = 1$.
    Then~\eqref{pas-de-saut} holds, and a consequence, $[u_1]_{|z=0}=[\p_z u_1]_{|z=0}=0$. Thus $u_1 \in L^2((x_0,x_1);H^1_0(-1,1))$ is a weak solution to \eqref{eq:shear-z}.
    We infer from the uniqueness of weak solutions that $u=u_1$, and therefore $\p_x u=w\in Z^0$.
    Hence $u \in H^1((x_0,x_1);H^1_0(-1,1))$. 
    Estimate \eqref{eq:u-Z1} follows from \eqref{estimate-pagani-Z0} and \eqref{eq:apriori-uz1}.
    
    Conversely, if $u$ is a solution to \eqref{eq:shear-z} with $H^1((x_0,x_1);H^1_0(-1,1))$ regularity, then $\p_x u$ is a weak solution to \eqref{eq:ux} (see the proof of \cref{p:shear-apriori-Z1}) and $u$ is given in terms of $\p_x u$ by \eqref{def:v_1} almost everywhere. 
    Thus $[u_1]_{|z=0}=[\p_z u_1]_{|z=0}=0$.
    Hence $\int_{x_0}^{x_1} u_x(x,0)\dd x= \delta_1(0) - \delta_0(0)$ and $\int_{x_0}^{x_1} u_{xz}(x,0)\dd x=\p_z \delta_1(0) - \p_z \delta_0(0)$, and thus the orthogonality conditions \eqref{eq:compat-shear} are satisfied.
\end{proof}

\begin{rmk}
    \label{rmk:goldstein}
    Oddly, in \cite[Theorem~4.2]{AG}, Goldstein and Mazumdar do not mention the orthogonality conditions \eqref{eq:compat-shear}. 
    They merely state that, ``since $\p_{zz} u_1 = z \p_x u_1 - f$ in $\mathcal{D}'(\Om_\pm)$ and since $z u_1, f \in C^0([x_0,x_1];L^2(-1,1))$, consequently $z \p_x u_1 - \p_{zz} u_1 = f$ in $L^2(\Omega)$''.
    However, these orthogonality conditions are non-empty, as we show below (see \cref{free:f}).
\end{rmk}

\begin{defi} \label{def:ell-shear}
    In the sequel, we denote by $\overline{\ell^j}$ the linear forms associated with the orthogonality conditions \eqref{eq:compat-shear} for the linear shear flow problem, i.e., for $(f,\delta_0,\delta_1) \in \cLin_K$, we set
    \nomenclature[OLlj]{$\overline{\ell^j}$}{Linear forms on $\cLin_K$ giving the orthogonality conditions for the shear flow}
    \begin{equation*} 
    \overline{\ell^j}(f,\delta_0,\delta_1)  := 
    \p_z^j \delta_0(0)-\p_z^j\delta_1(0)
    + \int_\Om \p_x f \overline{\Phi^j} 
    + \int_{\Sigma_0} z \Delta_0\overline{\Phi^j}  
    - \int_{\Sigma_1} z \Delta_1 \overline{\Phi^j}.
    \end{equation*}
\end{defi}

\begin{lem} \label{lem:continuity-ell-easy}
    The linear forms $\overline{\ell^j}$ for $j \in \{0,1\}$ are continuous over $\cLin_K$.
\end{lem}

\begin{proof}
    First, by \cref{lem:delta_i=H2}, for $(f,\delta_0,\delta_1) \in \cLin_K$, $\delta_i \in H^2(\Sigma_i)$ so that $\delta_i(0)$ and $\delta_i'(0)$ depend continuously on $(f,\delta_0,\delta_1) \in \cLin_K$.
    Second, by \cref{lem:def-dual-shear}, $\overline{\Phi^j} \in Z^0(\Om_\pm)$ so, in particular $\overline{\Phi^j} \in L^2(\Om)$.
    Hence $f \mapsto \int_{\Om} \p_x f \overline{\Phi^j}$ is continuous on $H^1_x L^2_z$.
    Eventually, by \cref{lem:Z0-trace-H1z}, $\overline{\Phi^j}(x_i,\cdot) \in \mathscr{H}^1_z(\Sigma_i) \hookrightarrow \mathscr{L}^2_z(\Sigma_i)$, so $(f,\delta_0,\delta_1) \mapsto \int_{\Sigma_i} z \Delta_i(z) \overline{\Phi^j}(x_i,z) \dd z$ is continuous on $\cLin_K$. 
\end{proof}

\begin{rmk}
    Although this continuity result will be sufficient for most of our purpose, the linear forms $\overline{\ell^j}$ are in fact continuous for weaker topologies than the one of $\cLin_K$.
    In particular, one does not need $f \in H^1_x L^2_z$ (see \cref{rmk:16-shear}).
\end{rmk}

We now prove that the orthogonality conditions \eqref{eq:compat-shear} are non-empty and independent.

\begin{prop}[Independence of the orthogonality conditions]
    \label{free:f}
    The linear forms $\overline{\ell^0}$ and $\overline{\ell^1}$ are linearly independent over $C^\infty_c(\Om) \times \{ 0 \} \times \{ 0 \} \subset \cLin_K$.
\end{prop}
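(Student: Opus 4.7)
The plan is to reduce the independence of $\overline{\ell^0}$ and $\overline{\ell^1}$ on the subspace of smooth compactly supported source terms to the observation that the dual profiles $\overline{\Phi^0}$ and $\overline{\Phi^1}$ cannot combine to a function of $z$ alone without forcing their jumps at $\{z=0\}$ to vanish.

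First, I would simplify the expression of $\overline{\ell^j}(f,0,0)$ for $f \in C^\infty_c(\Om)$. Since $f$ has compact support inside the open domain, $f(x_i,\cdot) \equiv 0$, so $\Delta_0 = \Delta_1 = 0$, and of course $\p_z^j \delta_i(0) = 0$. \Cref{def:ell-shear} therefore reduces to
\begin{equation*}
    \overline{\ell^j}(f,0,0) = \int_\Om \p_x f \cdot \overline{\Phi^j}.
\end{equation*}
Suppose that $\alpha \overline{\ell^0} + \beta \overline{\ell^1} = 0$ on $C^\infty_c(\Om)\times\{0\}\times\{0\}$, and set $\Phi := \alpha \overline{\Phi^0} + \beta \overline{\Phi^1}$, which is a well-defined distribution on $\Om$ (locally $L^2$). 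The assumption rewrites as $\int_\Om \p_x f \cdot \Phi = 0$ for every $f \in C^\infty_c(\Om)$, i.e., $\p_x \Phi = 0$ in $\mathcal{D}'(\Om)$.

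Next I would exploit the PDE satisfied by $\Phi$ in each half-domain $\Om_\pm$. By linearity of \eqref{eq:Phij-shear}, $\Phi \in Z^0(\Om_\pm)$ solves $-z\p_x \Phi - \p_{zz}\Phi = 0$ in $\Om_\pm$. Combined with $\p_x \Phi = 0$, this gives $\p_{zz}\Phi = 0$ in each half, so $\Phi$ is affine in $z$ separately on $\Om_+$ and on $\Om_-$. Writing $\Phi(z) = c_\pm z + d_\pm$ in $\Om_\pm$, the homogeneous Dirichlet conditions of \eqref{eq:Phij-shear} on $z = \pm 1$ give $\Phi(z) = c_-(z+1)$ on $\Om_-$ and $\Phi(z) = c_+(z-1)$ on $\Om_+$. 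The remaining Dirichlet conditions on $\{x_0\}\times(-1,0)$ and $\{x_1\}\times(0,1)$ (which are subsets of $\partial\Om\setminus(\Sigma_0\cup\Sigma_1)$) then force $c_- = c_+ = 0$, hence $\Phi \equiv 0$ in $\Om_+\cup\Om_-$.

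Finally I would read off $\alpha$ and $\beta$ from the jump conditions at $z=0$. Since $\Phi$ vanishes on both sides of $\{z=0\}$, $[\Phi]_{|z=0} = 0$ and $[\p_z \Phi]_{|z=0} = 0$. On the other hand, by linearity of the jump conditions in \eqref{eq:Phij-shear},
\begin{equation*}
    [\Phi]_{|z=0} = \alpha\cdot 0 + \beta\cdot 1 = \beta, \qquad [\p_z \Phi]_{|z=0} = -\alpha + \beta\cdot 0 = -\alpha,
\end{equation*}
so that $\alpha = \beta = 0$, which proves the claimed independence.

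I do not expect any real obstacle in this argument; the only point of care is the distributional integration by parts in $x$ across the line $\{z=0\}$, which is justified by $\overline{\Phi^j} \in L^2(\Om)$ and by working with the single scalar equation $\p_x \Phi = 0$ in $\mathcal{D}'(\Om)$ rather than trying to integrate by parts on each half separately.
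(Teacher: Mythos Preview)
Your proof is correct and follows essentially the same approach as the paper: both deduce $\p_x\Phi=0$ in $\mathcal{D}'(\Om)$ from the vanishing of $\int_\Om \p_x f\,\Phi$, then use the lateral boundary conditions on $\{x_0\}\times(-1,0)$ and $\{x_1\}\times(0,1)$ to force $\Phi\equiv0$ in each half, and finally read off $\alpha=\beta=0$ from the jump conditions. The paper is slightly more direct in that once $\p_x\Phi=0$ and $\Phi_{|x=x_1}=0$ on $(0,1)$ (using the $Z^0$ trace), one immediately gets $\Phi=0$ in $\Om_+$ without invoking the PDE or the affine-in-$z$ structure; your detour through $\p_{zz}\Phi=0$ is unnecessary but harmless.
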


\begin{proof}
    Proceeding by contradiction, let $(c_0,c_1) \in \R^2$ such that, for every $f \in C^\infty_c(\Om)$, there holds $c_0 \overline{\ell^0}(f,0,0) + c_1 \overline{\ell^1}(f,0,0) = 0$.
    Then $\Phi^c := c_0 \overline{\Phi^0}  + c_1 \overline{\Phi^1} $ satisfies $\int_\Om \p_x f \Phi^c  = 0$ for every $f \in C^\infty_c(\Om)$, so $\p_x \Phi^c = 0$ in $\mathcal{D}'(\Om_+)$.
    Since $\Phi^c(x_1,z) = 0$ for $z \in (0,1)$ and $\Phi^c \in Z^0(\Om_+)$, this implies that $\Phi^c = 0$ in $\Om_+$ (since $Z^0$ functions have traces in the usual sense, see \cref{lem:Z0-trace-H1z}).
    The same holds in $\Om_-$.
    Hence $[\Phi^c]_{\rvert z = 0} = [\p_z \Phi^c]_{\rvert z = 0} = 0$, which implies $c_0 = c_1 = 0$.
\end{proof}

\begin{rmk}
    \cref{free:f} of course implies that $\overline{\ell^0}$ and $\overline{\ell^1}$ are linearly independent on $\cLin_K$.
    Although \cref{free:f} gives a prominent role to the source term $f$, we will actually also prove that $\overline{\ell^0}$ and $\overline{\ell^1}$ are linearly independent on $\{0\} \times C^\infty_c(\Sigma_0) \times C^\infty_c(\Sigma_1) \subset \cH$.
    This property relies on the structure of the dual profiles $\overline{\Phi^j}$ near the points $(x_i,0)$, and will be proved at the end of this section (see \cref{cor:free_ellj-sansf}).
\end{rmk}

Eventually, gathering all of the above results, definitions and notations, we have proved the following well-posedness result for the linear problem.

\begin{prop} \label{thm:shear-Z1}
Let $\cH_K$ be the vector space defined in \eqref{eq:def-HK}.
    There exists a vector subspace $\cH^\perp_{K,\operatorname{sg}} = (\ker \overline{\ell^0}) \cap (\ker \overline{\ell^1})$ of codimension 2 in $\cH_K$ such that, for each $(f,\delta_0, \delta_1) \in \cH_K$, there exists a solution $u \in Z^1(\Omega)$ to~\eqref{eq:shear-z} if and only if $(f,\delta_0,\delta_1) \in \cH^\perp_{K,\operatorname{sg}}$.
    Such a solution is unique and satisfies estimate \eqref{eq:u-Z1}.
\end{prop}

\cref{thm:shear-Q1} of the introduction is a rough restatement of the above \cref{thm:shear-Z1} allowing us to avoid introducing more notations and functional spaces at this early stage.

\begin{proof}[Proof of \cref{thm:shear-Q1}]
    One easily checks from \eqref{eq:def-XB-intro} and \eqref{eq:def-HK} that $\cX_B \hookrightarrow \cLin_K$.
    Moreover, setting $\hBPerp := \cX_B \cap (\ker \overline{\ell^0}) \cap (\ker \overline{\ell^1})$, one obtains that $\hBPerp$ is of codimension 2 in $\cX_B$ from \cref{free:f} or \cref{cor:free_ellj-sansf}.
    Hence, given $(f,\delta_0,\delta_1) \in \hBPerp$, \cref{thm:shear-Z1} gives a solution $u \in Z^1$.
    By \cref{lem:Zsigma-Qsigma}, $Z^1\hookrightarrow \qone$ (defined in \eqref{eq:def-Q1-intro}), so $u \in \qone$.
    Reciprocally, given a solution $u \in \qone$ corresponding to some data triple $(f,\delta_0,\delta_1) \in \cX_B$, one has $u \in H^1_x H^2_z$ (see \eqref{eq:def-Q1-intro}), so \cref{p:shear-WP-Z1} applies and one has $(f,\delta_0,\delta_1) \in \hBPerp$.
\end{proof}

Similarly, going further, it can be easily checked that the control of $k$ derivatives in $x$ requires the cancellation of $2k$ independent conditions.
Although controlling a single $x$-derivative will be sufficient in the sequel to obtain our nonlinear result, we establish here this short higher-regularity statement as an illustration.
More precisely, we have the following result.

\begin{lem} \label{lem:H2k-regularity}
    Let $k\geq 1$.
    Let $f \in C^\infty(\overline{\Om})$, $\delta_i \in C^\infty(\overline{\Sigma_i})$.
    Define recursively $\Delta_i^n$ for $0\leq n\leq k$ and $z \in \Sigma_i$ by
    \begin{align}
        \Delta_i^0(z) & := \delta_i(z), \\
        \Delta_i^n(z) & := \frac{1}{z}\left(\p_x^{n-1} f(x_i,z) + \p_{zz}\Delta_i^{n-1}(z)\right).
    \end{align}
    Assume that the following compatibility conditions are satisfied:
    \begin{equation*}
        \forall n\in\{0,\cdots, k\},\quad \Delta_0^n(1)=\Delta_1^n(-1)=0.
    \end{equation*}
    Assume furthermore that for all $n\in\{0,\cdots, k\} $, $\Delta_i^n\in \mathscr{H}^1_z(\Sigma_i)$.
    
    Let $u$ be the unique solution to \eqref{eq:shear}. 
    Then $u\in H^k_xH^2_z$ if and only if the following orthogonality conditions are satisfied
    \begin{equation*}
    \overline{\ell^j}(\p_x^n f, \Delta_0^n,\Delta_1^n)=0, \quad \forall n\in \{0,\cdots, k-1\}, 
    \quad j \in \{0,1\}. 
    \end{equation*} 
    Furthermore, these $2k$ orthogonality conditions are linearly independent.
\end{lem}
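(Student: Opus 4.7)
The plan is to induct on $k$, with \cref{prop:linear-flux-X1} playing the role of both base case and induction engine. The cornerstone observation is that the recursion defining $\Delta_i^n$ is tailored precisely so that, whenever $u$ is sufficiently regular, the function $v_n := \p_x^n u$ solves the same shear equation with source $\p_x^n f$ and lateral traces $\Delta_i^n$. Indeed, applying $\p_x^n$ to $z \p_x u - \p_{zz} u = f$ gives $z \p_x v_n - \p_{zz} v_n = \p_x^n f$, and evaluating the identity $z v_n = \p_x^{n-1}f + \p_{zz} v_{n-1}$ at $x = x_i$ reproduces exactly $\Delta_i^n(z) = (\p_x^{n-1}f(x_i,z) + \p_{zz}\Delta_i^{n-1}(z))/z$. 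The assumption that $\Delta_i^n \in \mathscr{H}^1_z$ with $\Delta_0^n(1) = \Delta_1^n(-1) = 0$ for all $n \leq k$ is exactly what \cref{p:pagani-shear} and \cref{prop:linear-flux-X1} require at each of the $k$ induction steps.

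For the ``if'' direction, assume the $2k$ orthogonality conditions hold. Starting from $v_0 = u \in Z^0(\Om)$ given by \cref{p:pagani-shear}, we run an induction on $n$ from $0$ to $k-1$: at step $n$, $v_n$ is the unique weak solution of the shear problem with data $(\p_x^n f, \Delta_0^n, \Delta_1^n)$ and belongs to $Z^0(\Om)$. The vanishing of $\overline{\ell^j}(\p_x^n f, \Delta_0^n, \Delta_1^n)$ combined with \cref{prop:linear-flux-X1} yields $v_n \in H^1_x H^1_z$, whence $v_{n+1} \in L^2_x H^1_z$; a weak-formulation manipulation as in the proof of \cref{p:shear-apriori-Z1} identifies $v_{n+1}$ as the weak solution of the shifted problem with data $(\p_x^{n+1}f, \Delta_0^{n+1}, \Delta_1^{n+1})$, and \cref{p:pagani-shear} upgrades this to $v_{n+1} \in Z^0(\Om)$, closing the induction. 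After $k$ steps, $v_n \in Z^0 \hookrightarrow L^2_x H^2_z$ for every $n \leq k$, which is precisely $u \in H^k_x H^2_z$. The ``only if'' direction is the converse: if $u \in H^k_x H^2_z$, then $v_n \in H^1_x H^1_z$ for every $n \leq k-1$, each such $v_n$ is again identified as the weak solution of the corresponding shifted problem, and the necessity half of \cref{prop:linear-flux-X1} forces the two associated orthogonality conditions.

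The linear independence of the $2k$ forms extends the proof of \cref{free:f}. Restricting to data $(f, 0, 0)$ with $f \in C^\infty_c(\Om)$ kills all trace contributions, so the $(n,j)$-th form reduces to $f \mapsto \int_\Om \p_x^{n+1}f \, \overline{\Phi^j}$. A vanishing combination $\sum_{n,j} c_n^j \int_\Om \p_x^{n+1}f \, \overline{\Phi^j} = 0$ for every such $f$ translates into the distributional identity $\sum_{n,j} c_n^j \p_x^{n+1} \overline{\Phi^j} = 0$ on $\Om$, where each $\overline{\Phi^j}$ is viewed as an $L^2(\Om)$ function with a jump across $\{z=0\}$. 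Restricting the test functions to $\Om_+$ and using that on each half-domain the $\overline{\Phi^j}$ are nontrivial solutions of the backward shear equation with vanishing data on the outflow boundaries, one peels off the coefficients by decreasing order in $n$; the separation between $j = 0$ and $j = 1$ comes from the differing jump prescriptions for $\overline{\Phi^0}$ and $\overline{\Phi^1}$ at $\{z = 0\}$ in \eqref{eq:Phij-shear}, which are inherited by all $x$-derivatives. The main obstacle will be making this peeling fully rigorous, since the dual profiles $\overline{\Phi^j}$ are only in $Z^0(\Om_\pm)$ so that their iterated $x$-derivatives are a priori merely distributional; a clean way to circumvent this difficulty is to invoke the singular decomposition of the $\overline{\Phi^j}$ near the critical corners $(x_i, 0)$ promised in \cref{cor:decomp-Phij}, which renders the cascading jump computations explicit and reduces the argument at each level to the already-settled case of \cref{free:f}.
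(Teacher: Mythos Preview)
Your treatment of the equivalence $u\in H^k_xH^2_z \Leftrightarrow$ orthogonality conditions is correct and matches the paper's induction on $n$ via \cref{prop:linear-flux-X1} and \cref{p:shear-apriori-Z1}.

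For the linear independence, your route via the singular decomposition of \cref{cor:decomp-Phij} would work but is heavier than what the paper does, and your sketch mixes two incompatible peeling directions. The paper peels from the \emph{lowest} order in $n$, not the highest, and needs no singular-profile machinery. The point is that the prescribed jumps $[\overline{\Phi^j}]_{|z=0}$ and $[\p_z\overline{\Phi^j}]_{|z=0}$ in \eqref{eq:Phij-shear} are \emph{constants in $x$}; hence all higher $x$-derivatives have vanishing jumps. Taking the jump and the $\p_z$-jump of the distributional identity $\sum_{n,j}(-1)^n c_n^j\,\p_x^n\overline{\Phi^j}=0$ therefore kills every term with $n\geq 1$ and forces $c_0^0=c_0^1=0$ directly from the jump matrix. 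One then factors out a single $\p_x$ from the remaining sum, obtaining that $\sum_{n\geq 1}(-1)^n c_n^j\,\p_x^{n-1}\overline{\Phi^j}$ is independent of $x$; evaluating on the outflow lateral boundary, where each $\overline{\Phi^j}$ vanishes, shows this function of $z$ is identically zero, and the induction closes. This argument only uses the traces of $\overline{\Phi^j}$ itself at $z=0^\pm$ (well defined by the $Z^0(\Om_\pm)$ regularity), so your concern about the iterated $x$-derivatives being merely distributional does not actually arise. Your singularity-based peeling in decreasing order is a legitimate alternative, but invoking \cref{cor:decomp-Phij} brings in the whole radial-profile construction for what is, in the end, a two-line jump computation.
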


\begin{proof}
    First, notice that $\p_x^n u$ satisfies formally
    \begin{equation*}
    \begin{cases}
        (z\p_x-\p_{zz})\p_x^n u=\p_x^n f\quad \text{in }\Om,\\
        \p_x^n u_{|z=\pm 1}=0,\\
        \p_x^n u_{|\Sigma_i}=\Delta_i^n.
    \end{cases}
    \end{equation*}
    The first part of the statement follows easily from \cref{p:shear-WP-Z1} and \cref{p:shear-apriori-Z1} and from an induction argument.
    
    Let us now check the independence of the orthogonality conditions.
    We extend the methodology used in the proof of \cref{free:f}.
    Assume that there exist $c_n^j\in \R$, $0\leq n\leq k-1$, $j=0,1$ such that for all $(f,\delta_0,\delta_1)$ satisfying the assumptions of the lemma,
    \begin{equation*}
    \sum_{j=0,1} \sum_{n=0}^{k-1} c_n^j  \overline{\ell^j}(\p_x^n f, \Delta_0^n,\Delta_1^n)=0.
    \end{equation*}
    In particular, for any $f\in C^\infty_c(\Om)$,
    \begin{equation*}
    \sum_{j=0,1}\sum_{n=0}^{k-1} c_n^j  \overline{\ell^j}(\p_x^n f, 0,0)=0,
    \end{equation*}
    i.e.
    \begin{equation*}
    \sum_{n=0}^{k-1} \int_\Om \p_x^n f \left( \sum_{j=0,1} c_n^j \overline{\Phi^j}\right)=0.
    \end{equation*}
    This means that
    \begin{equation*}
    \sum_{j=0,1}\sum_{n=0}^{k-1} (-1)^n c_n^j \p_x^n \overline{\Phi^j}=0
    \end{equation*}
    in the sense of distributions. Since $[\p_x^n \overline{\Phi^j}]_{|z=0}=[\p_x^n\p_z \overline{\Phi^j}]_{|z=0}=0$ for $n\geq 1$, we infer that 
    \begin{equation*}
    \left[ c_0^0 \overline{\Phi^0} + c_0^1 \overline{\Phi^1}\right]_{|z=0}= \left[ \p_z(c_0^0 \overline{\Phi^0} + c_0^1 \overline{\Phi^1})\right]_{|z=0}=0. 
    \end{equation*}
    Once again, using the jump conditions on $\overline{\Phi^j}$, we deduce that $c_0^j=0$, and thus
    \begin{equation*}
    \p_x\left( \sum_{j=0,1}\sum_{n=1}^{k-1} (-1)^n c_n^j \p_x^{n-1} \overline{\Phi^j}\right)=0.
    \end{equation*}
    It follows that
    \begin{equation*}
    \sum_{j=0,1}\sum_{n=1}^{k-1} (-1)^n c_n^j \p_x^{n-1} \overline{\Phi^j}=p(z)
    \end{equation*}
    for some function $p$. Note that by parabolic regularity, the profiles $\overline{\Phi^j}$ (and therefore the function $p$) are smooth away from the line $\{z=0\}$. Taking the trace of the above identity on $\{x_0\}\times (-1,0)\cup \{x_1\}\times (0,1)$, we find that $p=0$. Arguing by induction, we infer eventually that $c_n^j=0$ for all $0\leq n\leq k-1$, $j=0,1$.
\end{proof}

\begin{coro}[Biorthogonal basis] \label{lem:biorth}
    There exist $\Xi^k = (f^k, \delta_0^k, \delta_1^k) \in \cLin_K$ for $k \in \{ 0, 1 \}$ such that, for every $j,k \in \{ 0,1 \}$,
    \begin{equation*}
        \overline{\ell^j}(\Xi^k) = \overline{\ell^j}(f^k, \delta_0^k, \delta_1^k) = \mathbf{1}_{j = k}
    \end{equation*}
    and such that, within $\cLin_K$,
    \begin{equation} \label{eq:hperp-ker}
        \left(\R \Xi^0 + \R \Xi^1\right)^\perp 
        = \ker \overline{\ell^0} \cap \ker \overline{\ell^1}
    \end{equation}
    is a vector subspace of codimension 2.
\end{coro}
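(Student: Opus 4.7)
The plan is to use the Riesz representation theorem to reduce the construction of $(\Xi^0, \Xi^1)$ to a $2 \times 2$ linear algebra problem, and to invoke \cref{free:f} to guarantee that the resulting system is invertible.

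The first step is to verify that $\overline{\ell^0}$ and $\overline{\ell^1}$ are continuous linear forms on the Hilbert space $\mathcal{H}$. The pointwise values $\p_z^j \delta_i(0)$ are controlled by $\|\delta_i\|_{H^5}$ via standard Sobolev embedding; the bulk term is bounded by $\|\p_x f\|_{L^2}\|\overline{\Phi^j}\|_{L^2}$, using \cref{lem:def-dual-shear} which provides $\overline{\Phi^j} \in Z^0(\Om_\pm) \hookrightarrow L^2(\Om)$; and each boundary integral $\int_{\Sigma_i} z \Delta_i \overline{\Phi^j}$ reduces to $\int_{\Sigma_i} (f(x_i,\cdot) + \p_z^2 \delta_i)\,\overline{\Phi^j}$, which is finite thanks to the trace $\overline{\Phi^j}_{\rvert \Sigma_i} \in \mathscr{H}^1_z(\Sigma_i)$ from \cref{lem:Z0-trace-H1z} combined with the bounds on $f$ and $\delta_i''$ built into the definition \eqref{eq:H-norm} of $\|\cdot\|_{\mathcal{H}}$.

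Once continuity is secured, the Riesz representation theorem provides $\Psi^0, \Psi^1 \in \mathcal{H}$ with $\overline{\ell^j}(\xi) = \langle \xi, \Psi^j\rangle_{\mathcal{H}}$ for every $\xi \in \mathcal{H}$, and the linear independence of $\overline{\ell^0}, \overline{\ell^1}$ supplied by \cref{free:f} translates into the linear independence of $\Psi^0$ and $\Psi^1$ in $\mathcal{H}$. Consequently, the Gram matrix $G$ with entries $G_{jk} := \overline{\ell^j}(\Psi^k) = \langle \Psi^j, \Psi^k\rangle_{\mathcal{H}}$ is symmetric positive definite and hence invertible. I would then set $\Xi^j := \sum_{k=0,1}(G^{-1})_{jk}\Psi^k$, so that by direct computation $\overline{\ell^i}(\Xi^j) = \sum_k (G^{-1})_{jk} G_{ki} = \mathbf{1}_{i=j}$.

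Finally, since $\R\Xi^0 + \R\Xi^1 = \R\Psi^0 + \R\Psi^1$, one obtains
\begin{equation}
    \ker \overline{\ell^0} \cap \ker \overline{\ell^1} = (\R\Psi^0 + \R\Psi^1)^\perp = (\R\Xi^0 + \R\Xi^1)^\perp,
\end{equation}
and this subspace has codimension $2$ in $\mathcal{H}$ because the continuous map $\xi \mapsto (\overline{\ell^0}(\xi), \overline{\ell^1}(\xi))$ is surjective onto $\R^2$, its two components being linearly independent. The only non-routine ingredient is the continuity estimate in the first step; everything else is abstract Hilbert space arithmetic combined with \cref{free:f}, so no serious obstacle is expected.
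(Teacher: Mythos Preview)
Your proof is correct and follows essentially the same approach as the paper: Riesz representation to obtain $\Psi^j$, linear independence from \cref{free:f}, and inversion of the Gram matrix to construct the biorthogonal pair. The paper's proof is terser in that it simply asserts continuity of the $\overline{\ell^j}$ on $\mathcal{H}$ without the explicit verification you provide, but otherwise the arguments coincide.
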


\begin{proof}
    Since $\overline{\ell^0}$ and $\overline{\ell^1}$ are continuous linear forms on $\cLin_K$, by the Riesz representation theorem, they can be written as scalar products with two given triplets, say $\overline{\Xi^0}, \overline{\Xi^1} \in \cLin_K$ which are linearly independent thanks to \cref{free:f}.
    Then one looks for $\Xi^k = (f^k,\delta_0^k,\delta_1^k)$ as $a_k \overline{\Xi^0} + b_k \overline{\Xi^1}$ where $a_k, b_k \in \R^2$ are such that $a_k \langle \overline{\Xi^j} ; \overline{\Xi^0} \rangle + b_k \langle \overline{\Xi^j} ; \overline{\Xi^1} \rangle = \mathbf{1}_{j=k}$.
    These systems can be solved since $\overline{\Xi^0}$ and $\overline{\Xi^1}$ are free.
    This proves the equality \eqref{eq:hperp-ker}. 
    The independence of the linear forms guarantees that \eqref{eq:hperp-ker} is of codimension 2 in $\cLin_K$.
\end{proof}
	
\subsection{Singular radial solutions in the half-plane and profile decomposition}
\label{sec:radial}

In this subsection, we give a full description of the singularities that appear when the orthogonality conditions are not satisfied.
We start by constructing singular solutions to the homogeneous equation set in the half-plane, using separation of variables in polar-like coordinates.
We then localize these solutions near the critical points $(x_i,0)$ to obtain the decomposition result of \cref{thm:shear-decomp}.

Our approach is similar to the one developed by Grisvard in \cite[Section 4.4]{MR775683} for elliptic problems in polygonal domains (see in particular the singular profiles of equation (4.4.3.7) and the decomposition result of Theorem 4.4.3.7 therein). 
The main difference is that we cannot use usual polar coordinates and that the construction of the elementary singular profiles is much more technical than, for instance, the classical solution of the form $r^{\frac 12} \sin (\theta/2)$ which is involved in the resolution of Dirichlet-Neumann junctions as in the elliptic problem \eqref{eq:elliptic-u} mentioned in the introduction.

\subsubsection{Construction of singular solutions in the half-plane}

In this paragraph, we look for elementary singular radial solutions to the following problem without source-term in the half-plane:
\begin{equation} \label{eq:half-plane}
    \begin{cases}
        z \p_x u - \p_{zz} u = 0 & x \geq 0, z \in \R, \\
        u(0,z) = 0 & z > 0.
    \end{cases}
\end{equation}

\begin{rmk}
    In \cite{fleming1963problem}, Fleming considered the related problem of finding a ``fairly explicit formula'' for solutions to $z \p_x u - \p_{zz} u = 0$ in a strip $(0,1) \times \R$, with prescribed boundary data at $x=0$, $z>0$ and $x=1$, $z<0$.
    His proof involves Whittaker functions, which are related to the confluent hypergeometric functions we use below.
    
    In \cite{gor1975formula}, Gor'kov computes a representation formula for solutions to \eqref{eq:half-plane} with a non-zero source term and boundary data, and proves uniqueness of such solutions, under a growth assumption of the form $|u(0,z)| \lesssim |z|^\sigma$ for $0 \leq \sigma < \frac 12$ on the line $\{x = 0\}$, for which he claims that uniqueness holds.
    The threshold $\sigma = \frac12$ is precisely the scaling (at which uniqueness indeed breaks) of the first fundamental singular solution $v_0$ which we construct below.
    
    Our setting is a little different from the works mentioned above, as we look for (non-zero) solutions to the homogeneous equation.
     Similar computations were also performed in \cite{hwang2014fokker,hwang2019structure}, albeit with different boundary conditions, and therefore with a different exponent for $r$, and a different asymptotic behavior for the profile $\Lambda$ in \cref{prop:rGk}.
    However we were not able to find the specific expression of the profiles from \cref{prop:rGk} in previous works. 
\end{rmk}

Near the point $(0,0)$ which is expected to be singular, balancing the terms $z \p_x$ and $\p_{zz}$ leads to the natural scaling $z \sim x^{\frac13}$.
Thus, we introduce the following polar-like coordinates $(r,t) \in [0,+\infty) \times \R$:
\nomenclature[OLR]{$r$}{Radial-like variable given by $r = (z^2+x^{\frac23})^{\frac12}$}
\nomenclature[OLT]{$t$}{Angular-like variable given by $t = z x^{-\frac13}$}
\begin{equation} \label{eq:r-t}
    r := (z^2 + x^{\frac 23})^{\frac 12}
    \quad \text{and} \quad
    t := z x^{-\frac 13}
\end{equation}
The reverse change of coordinates is given by
\begin{equation} \label{eq:x-z}
x=\frac{r^3}{(1+t^2)^{\frac 32}}
\quad \text{and} \quad
z=\frac{rt}{(1+t^2)^{\frac 12}}.
\end{equation}
Since it will be convenient to switch from cartesian coordinates $(x,z)$ to the polar-like coordinates $(r,t)$, we compute the Jacobian
\begin{equation}\label{jacobian}
J(r,t)=
\begin{pmatrix}
    \displaystyle\frac{\p r}{\p x} & \displaystyle\frac{\p r}{\p z}
    \\~\\
    \displaystyle\frac{\p t}{\p x} & 
    \displaystyle\frac{\p t}{\p z}
\end{pmatrix}
=
\begin{pmatrix}
    \displaystyle\frac{1}{3x^{\frac 13} r} & \displaystyle\frac{z}{r} \\~\\
    -\displaystyle\frac{t}{3x} & 
    \displaystyle\frac{1}{x^{\frac 13}}
\end{pmatrix}
= 
\begin{pmatrix}
    \displaystyle\frac{(1+t^2)^{\frac 12}}{3r^2} & \displaystyle\frac{t}{(1+t^2)^{\frac 12}} \\~\\
    -\displaystyle\frac{t(1+t^2)^{\frac 32}}{3r^3} & \displaystyle\frac{(1+t^2)^{\frac 12}}{r}
\end{pmatrix}
\end{equation}
where we have used the equalities \eqref{eq:x-z}.
In particular,
\begin{equation}
    \label{eq:det-J}
    \det J(r,t) = \frac{(1+t^2)^2}{3r^3},
\end{equation}
which we will use to compute integrals using the $(r,t)$ variables.

By \eqref{jacobian}, for any $C^1$ function $\varphi$,
\begin{align}
    \label{eq:px-pr-pt}
    \p_x \varphi & = \frac{(1+t^2)^{\frac12}}{3r^2}\p_r\varphi -\frac{t(1+t^2)^{\frac32}}{3r^3}\p_t \varphi,\\
    \label{eq:pz-pr-pt}
    \p_z \varphi & = \frac{t}{(1+t^2)^{\frac12}} \p_r \varphi + \frac{(1+t^2)^{\frac12}}{r}\p_t \varphi.
\end{align}
In particular, if $u(r,t) = r^\lambda \gl(t)$,
\begin{align}
    \label{eq:zdx}
    z\p_x u & = \frac{r^{\lambda-2}}{3}\left[ \lambda t \gl(t) -t^2(1+t^2) \p_t\gl(t)\right],\\
    \label{eq:dzz}
    \p_{zz} u &= \left[\frac{t}{(1+t^2)^{\frac12}} \p_r  + \displaystyle\frac{(1+t^2)^{\frac12}}{r}\p_t \right] \left( r^{\lambda-1} \left(\frac{\lambda t}{(1+t^2)^{\frac12}} \gl(t) + (1+t^2)^{\frac12} \p_t\gl(t) \right)\right)\\
    \nonumber
    &= r^{\lambda-2}\left[ (\lambda-1) \left(\frac{\lambda t^2}{1+t^2} \gl(t) + t \p_t\gl(t)\right) + (1+t^2)^{\frac12} \p_t \left(\frac{\lambda t}{(1+t^2)^{\frac12}} \gl(t) + (1+t^2)^{\frac12} \p_t\gl(t)\right)\right].
\end{align}
We are now ready to construct solutions to \eqref{eq:half-plane} using these coordinates.

\begin{figure}
    \centering
    \includegraphics[width=14cm]{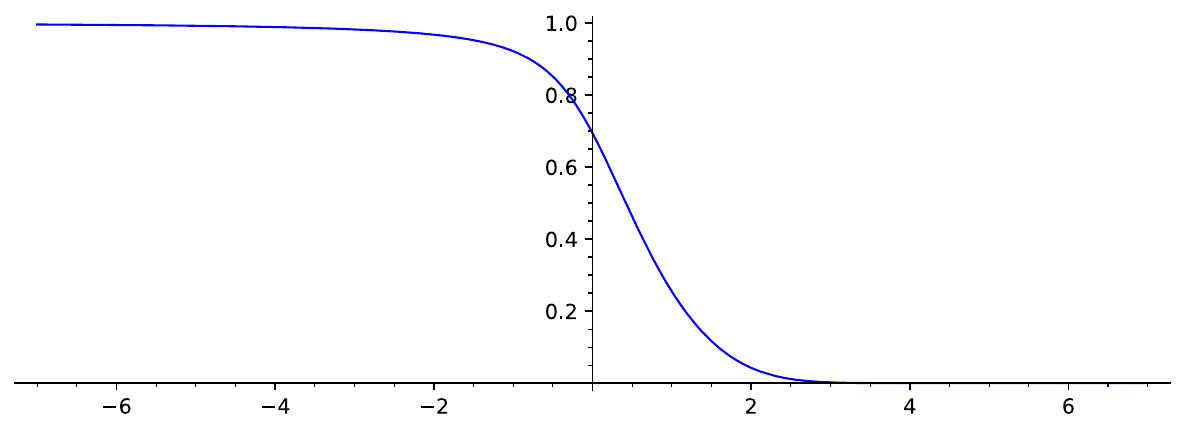}
    \caption{Plot of $t \mapsto \gnot(t)$ for $t \in (-7,7)$, highlighting the main properties: $\gnot$ is a smooth, monotone decreasing function on $\R$, such that $\gnot(-\infty)=1$ and $\gnot(+\infty) = 0$}
    \label{fig:G0}
\end{figure}

\begin{prop} 
    \label{prop:rGk}
    For every $k \in \Z$, equation \eqref{eq:half-plane} has a solution of the form 
    \nomenclature[OLV]{$v_k$}{$k$-th explicit singular solution in the half plane, $v_k = r^{\frac12+3k}\gk{k}(t)$}
    \begin{equation*}
        v_k := r^{\frac 12 + 3k} \gk{k}(t)
    \end{equation*}
    with the variables $(r,t)$ of \eqref{eq:r-t} and $\gk{k} \in C^\infty(\R;\R)$ is a smooth bounded function satisfying $\gk{k}(-\infty) = 1$ and $\gk{k}(+\infty) = 0$.
    \nomenclature[OAL]{$\gk{k}$}{Angular profile of the $k$-th explicit singular solution in the half-plane}
    The profile $\gnot$ is presented in \cref{fig:G0}.
\end{prop}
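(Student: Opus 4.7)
The plan is to substitute the self-similar ansatz $u = r^\lambda \gl(t)$ with $\lambda = 1/2 + 3k$ into $z\p_x u - \p_{zz} u = 0$. Using \eqref{eq:zdx} and \eqref{eq:dzz}, both contributions scale as $r^{\lambda-2}$, and factoring this power out produces the second-order linear ODE
\[
    (1+t^2)\gl'' + \left[\frac{t^2(1+t^2)}{3} + 2\lambda t\right]\gl' + \left[\frac{\lambda((\lambda-1)t^2 + 1)}{1+t^2} - \frac{\lambda t}{3}\right]\gl = 0.
\]
Its leading coefficient $1+t^2$ is smooth and strictly positive on $\R$, so the Cauchy-Lipschitz theorem provides a two-dimensional space of $C^\infty(\R)$ solutions. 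The task reduces to selecting the unique element, up to scalar, satisfying $\gl(+\infty) = 0$, and then normalizing so that $\gl(-\infty) = 1$.

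Next, I would carry out a WKB-style asymptotic analysis of this ODE at $\pm\infty$. Dropping lower-order terms, the dominant balance is $\gl'' + (t^2/3)\gl' \approx 0$, whose elementary solutions are $1$ and $\int_0^t e^{-s^3/9}\,\mathrm{d}s$. Refining with ansätze of the form $\gl \sim c_0 + c_1/t^2 + \cdots$ and $\gl \sim t^\beta e^{-t^3/9}(1+\cdots)$, and balancing the subleading terms (which involve $\lambda$), one identifies at each infinity a \emph{regular} mode with finite nonzero limit and an \emph{exponential} mode of size $\sim t^{-2} e^{-t^3/9}$. Crucially, the exponential mode decays super-exponentially at $+\infty$ but blows up super-exponentially at $-\infty$. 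Consequently the subspace of solutions bounded at $-\infty$ is one-dimensional, and pins down a candidate profile $\gl_*$ up to a multiplicative constant.

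The crux is then to show that $\gl_*$ actually satisfies $\gl_*(+\infty) = 0$, rather than tending to a nonzero limit. To resolve this global connection problem I would reduce the above ODE to the confluent hypergeometric equation $s h'' + (c-s)h' - a h = 0$, via a substitution of the form $\gl(t) = (1+t^2)^\alpha h(s)$ with $s = t^3/9$ and $\alpha, a, c$ chosen as explicit functions of $\lambda$. Under this reduction, Tricomi's function $U(a,c,s)$ is the unique solution (up to scalar) that decays as $\operatorname{Re} s \to +\infty$, which identifies precisely the exponential mode. The known connection formulas for $U$ under analytic continuation of $\arg s$ from $0$ to $\pi$ (corresponding to $t$ moving from $+\infty$ to $-\infty$) then show that this very solution has a finite nonzero limit at $-\infty$. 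Normalizing by that limit defines $\gk{k}$; smoothness and the prescribed asymptotic values follow by construction. The main obstacle is precisely this global connection step, since no purely local asymptotic argument can rule out super-exponential growth of the $+\infty$-decaying solution at $-\infty$; the explicit identification with Tricomi's function is what makes the connection problem tractable.
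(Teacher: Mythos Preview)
Your overall strategy---reduce to Kummer's equation via a substitution of the form $\gl(t)=(1+t^2)^{\alpha}h$ with a cubic change of variable, then use Tricomi's $U$---is exactly the paper's route. However, two of the steps are misidentified in a way that hides where the quantization $\lambda=\tfrac12+3k$ actually enters.

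First, the substitution $s=t^3/9$ does not give the standard Kummer equation; you obtain $s\,h''+(\tfrac23+s)h'-(\lambda/3)h=0$, with the wrong sign in front of $s$. The correct choice is $\zeta=-t^3/9$ (together with $\alpha=-\lambda/2$), yielding the standard form with $a=-\lambda/3$, $b=2/3$. Second, and more importantly, your identification of $U$ with the exponential mode at $t\to+\infty$ is wrong. In the correct variable, $\zeta\to+\infty$ corresponds to $t\to-\infty$, and there $U(a,b,\zeta)\sim\zeta^{-a}$ is the solution \emph{without} the $e^{\zeta}=e^{|t|^3/9}$ blow-up. So $U$ is precisely your $\gl_*$, the regular mode at $-\infty$, and the limit $\gl_*(-\infty)$ finite is automatic rather than a consequence of connection formulas. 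The genuine connection problem is the behavior of this solution at $t\to+\infty$, i.e.\ at $\zeta\to-\infty$. There the leading asymptotic of $U$ is still $\zeta^{-a}$ (algebraic, not super-exponential), and for generic $\lambda$ one gets $\gl_*(+\infty)\neq 0$. The paper's key step is to take the real combination $W(\zeta)=\Re\{e^{i\pi/3}U(a,b,\zeta)\}$: at $\zeta=-\rho$ this produces the leading term $\Re\{e^{i\pi/3}e^{-ia\pi}\}\rho^{-a}$, which vanishes if and only if $\tfrac13-a\in\tfrac12+\Z$, i.e.\ $\lambda=\tfrac12+3k$. This cancellation is the whole point and is absent from your sketch.

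A further gap is smoothness at $t=0$: Kummer's equation is singular at $\zeta=0$ and $U$ has a branch point there via the term $\zeta^{1-b}=\zeta^{1/3}$ in its expansion. The paper checks that the specific combination $\Re\{e^{i\pi/3}\zeta^{1/3}\}\big|_{\zeta=-t^3/9}$ is an entire function of $t$, so that $H(t)=W(-t^3/9)$ is smooth across $t=0$. This step is not merely cosmetic, since the Cauchy--Lipschitz argument you invoke applies to the ODE in $t$, but the explicit solution is constructed in $\zeta$.
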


\begin{proof}
    By separation of variables, we look for a solution to \eqref{eq:half-plane} under the form $u := r^\lambda \gl(t)$ where $\lambda \in \R$ and $\gl : \R \to \R$ is a smooth function.
    The boundary condition $u(0,z) = 0$ for $z > 0$ translates to $\gl(+\infty) = 0$.
    From \eqref{eq:zdx} and \eqref{eq:dzz} above, one checks that such a $u$ satisfies $z \p_x u - \p_{zz} u = 0$ if and only if
    \begin{equation} \label{eq:EDO-G}
        \p_t^2 \gl(t) + \left( \frac{t^2}{3} + \frac{2 \lambda t}{1+t^2} \right) \p_t \gl(t) + \lambda \left( - \frac{1}{3} \frac{t}{1+t^2} + \frac{1+(\lambda-1) t^2}{(1+t^2)^2} \right) \gl(t) = 0.
    \end{equation}
    To absorb the $(1+t^2)$ factors, we perform the change of unknown $\gl(t) =: (1+t^2)^{-\frac \lambda 2} H(t)$.
    Then, $\gl$ satisfies \eqref{eq:EDO-G} if and only if $H$ is a solution to
    \begin{equation} \label{eq:EDO-H}
        \p_t^2 H(t) + \frac{t^2}{3} \p_t H(t) - \frac{\lambda t}{3} H(t) = 0.
    \end{equation}
    Moreover, for $t \neq 0$, using the change of variable $\zeta := - t^3 / 9$, and looking for $H(t) =: W(-t^3/9)$, we obtain that $H$ solves \eqref{eq:EDO-H} on $\R \setminus \{ 0 \}$ if and only if $W$ is a solution to
    \begin{equation} \label{eq:EDO-W}
        \zeta \p_\zeta^2 W(\zeta) + \left(\frac 23 - \zeta\right) \p_\zeta W(\zeta) - \left(-\frac \lambda 3\right) W(\zeta) = 0
    \end{equation}
    which corresponds to Kummer's equation, with $a = - \frac \lambda 3$ and $b = \frac 2 3$.
    It is known (see \cite[Section 13.2]{NIST}) that \eqref{eq:EDO-W} has a unique solution behaving like $\zeta^{-a}$ as $\zeta \to \infty$.
    This (complex valued) solution is usually denoted by $U(a,b,\zeta)$ and called \emph{confluent hypergeometric function of the second kind}, or \emph{Tricomi's function}.
    In general, $U$ has a branch point at $\zeta = 0$.
    More precisely, the asymptotic $\zeta^{-a}$ holds in the region $|\arg \zeta| < \frac {3\pi} 2$ and the principal branch of $U(a,b,\zeta)$ corresponds to the principal value of $\zeta^{-a}$.
    Moreover, when $b$ is not an integer, which is our case, one has (see \cite[Equation 13.2.42]{NIST}),
    \begin{equation} \label{eq:U-M}
        U(a,b,\zeta) = \frac{\Gamma(1-b)}{\Gamma(a-b+1)} M(a,b,\zeta) 
        + \frac{\Gamma(b-1)}{\Gamma(a)} \zeta^{1-b} M(a-b+1,2-b,\zeta),
    \end{equation}
    where $M$ is the \emph{confluent hypergeometric function of the first kind} or \emph{Kummer's function},
    \begin{equation*}
        M(a,b,\zeta) := \sum_{n \in \N} \frac{(a)_n}{(b)_n} \frac{\zeta^n}{n!},
    \end{equation*}
    where $(a)_n$ and $(b)_n$ denote the rising factorial.
    In particular, $M$ is an entire function of $\zeta$.
    From \eqref{eq:U-M}, we see that the singularity in Tricomi's function $U$ stems from the fractional power $\zeta^{1-b} = \zeta^{\frac 13}$.
    When $\zeta = - \rho$ (for $\rho > 0$), $\zeta^{\frac 1 3} = e^{\frac{i\pi}{3}} \rho^{\frac 13}$.
    
    We therefore set
    \begin{equation}
        \label{eq:def-W-zeta}
        W(\zeta) := \Re \left\{ e^{\frac{i\pi}{3}}  U\left(-\frac\lambda 3,\frac 2 3, \zeta\right) \right\} .
    \end{equation}
    By linearity, $W$ is still a solution to \eqref{eq:EDO-W}.
    Moreover, by \cite[Equation 13.7.3]{NIST}, as $\zeta \to \infty$,
    \begin{equation}
        \label{eq:W-zeta-asympt}
        W(\zeta) = \Re \left\{ e^{\frac{i\pi}{3}} \zeta^{-a} \left( 1 + O\left( \frac{1}{|\zeta|}\right) \right) \right\}.
    \end{equation}
    In particular, when $\lambda = \frac{1}{2} + 3k$ for $k \in \Z$ (and only in this situation), as $\rho \to +\infty$,
    \begin{equation*}
        W(-\rho) = O(\rho^{-a-1}),
    \end{equation*}
    because $\Re \{ e^{i\pi/3} e^{-ia\pi} \rho^{-a} \} = \Re \{ (-1)^k e^{i\pi/3} e^{i\pi/6} \rho^{-a}\} = (-1)^k \rho^{-a} \Re \{i\} = 0$.
    Defining $H(t) := W(-t^3/9)$ for $W$ as in \eqref{eq:def-W-zeta} and recalling that $\gl(t) = (1+t^2)^{-\lambda/2} H(t)$ implies that $\gl(+\infty) = 0$.
    Indeed, as $t \to +\infty$,
    \begin{equation} \label{eq:G+inf}
        \gl(t) = (1+t^2)^{-\frac \lambda 2} O \left( t^{3(\frac\lambda 3-1)} \right) = O(t^{-3}).
    \end{equation}
    Moreover, from \eqref{eq:W-zeta-asympt}, we obtain that $\gl$ is bounded as $t \to -\infty$.
    Indeed, as $t \to -\infty$,
    \begin{equation} \label{eq:G-inf}
        \begin{split}
            \gl(t) & = (1+t^2)^{-\frac \lambda 2} 
            \Re \left\{ e^{\frac{i\pi}{3}} \left(-\frac{t^3}{9}\right)^{-a} \left( 1 + O\left( \frac{1}{|t|^3}\right) \right) \right\}
            \\ & = \frac{1}{2} 9^{-\frac 16 - k} (1+t^2)^{-\frac \lambda 2} |t|^{-3a} \left(1 + O(|t|^{-3})\right)
            = \frac{1}{2} 9^{-\frac 16 - k} + O(|t|^{-2}).
        \end{split}
    \end{equation}
    Eventually, let us check that $H$ is an entire function of $t$, which will entail that $\gl$ is smooth.
First, note that $M(-\lambda/3, 2/3, -t^3/9)$ and $M(-(\lambda-1)/3, 4/3, -t^3/9)$ are real valued and entire functions of $t$. 
Additionally,
\begin{equation*}\begin{aligned}
\Re\left\{ e^{i\pi/3} \left(-\frac{t^3}{9}\right)^{1/3}\right\}&=
 9^{-1/3}\times \begin{cases}
|t| /2 & \text{ if } t<0,\\
t  \Re(e^{2i\pi/3}) & \text{ if } t<0
\end{cases}\\
&= -\frac{1}{2}\frac{t}{ 9^{1/3}}.
\end{aligned}
\end{equation*}
   Using \eqref{eq:def-W-zeta} and \eqref{eq:U-M}, we obtain
    \begin{equation} \label{eq:H-MM}
        H(t) = \frac{1}{2} \frac{\Gamma(1-b)}{\Gamma(a-b+1)} M\left(a,b,-\frac{t^3}{9}\right) 
        - \frac{1}{2} \frac{t}{9^{1/3}} \frac{\Gamma(b-1)}{\Gamma(a)}  M\left(a-b+1,2-b,-\frac{t^3}{9}\right),
    \end{equation}
    so that $H$ is entire because $M$ is.
    This also entails that $H$ solves \eqref{eq:EDO-H} even across $t = 0$.
    Moreover, \eqref{eq:G+inf} and \eqref{eq:G-inf} imply that $\gl$ is bounded on $\R$.
    Eventually, using \eqref{eq:G-inf}, we can define $\gk{k}$ as $2 \cdot 9^{\frac 16 + k} \gl$, which ensures that $\gk{k}(-\infty) = 1$.
    For this normalization, one deduces from \eqref{eq:H-MM} that
    \begin{equation} \label{eq:Gk-0}
        \gk{k}(0) = 9^{\frac 16 + k} \frac{\Gamma(1/3)}{\Gamma(1/6-k)},
    \end{equation}
    which will be used below.
\end{proof}

If $u$ is a solution to \eqref{eq:half-plane}, then, formally, $\p_x u$ too (the operator $z\p_x-\p_{zz}$ commutes with $\p_x$, and the boundary condition at $x = 0$ and $z > 0$ is satisfied thanks to the equation).
This property entails that the solutions $v_k = r^{\frac12+3k} \gk{k}(t)$ are related by a recurrence relation on the profiles $\gk{k}$.

\begin{lem}[Recurrence relations]
    \label{lem:rec}
    Let $k \in \Z$ and $c_k := \frac14 - 9k^2$.
    One has
    \begin{equation} \label{eq:pxvk}
        \p_x v_k = c_k v_{k-1}.
    \end{equation}
    Moreover, for every $t \in \R$,
    \begin{equation}
        \label{eq:G-rec}
        c_k \gk{k-1}(t) = \frac{(1+t^2)^{\frac12}}{3} \left(\left(\frac 12 + 3k\right) \gk{k}(t) - t(1+t^2) \gk{k}'(t) \right),
    \end{equation}
    or, equivalently,
    \begin{equation} \label{eq:gk'}
        \gk{k}'(t) = \frac{1}{t(1+t^2)} \left(  \left(\frac 12 + 3k\right) \gk{k}(t)- \frac{3 c_k \gk{k-1}(t)}{(1+t^2)^{\frac 12}} \right).
    \end{equation}
\end{lem}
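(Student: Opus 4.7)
The plan is to exploit the commutativity of $\p_x$ with $z\p_x - \p_{zz}$ together with the uniqueness (up to scalar) of the separated-variable solutions of \cref{prop:rGk}. As a first step, I would check that $\p_x v_k$ is itself a solution of \eqref{eq:half-plane}: the equation is preserved by $\p_x$ because the operator $z\p_x - \p_{zz}$ commutes with $\p_x$, and the boundary condition $(\p_x v_k)(0,z)=0$ for $z>0$ follows from $z\p_x v_k = \p_{zz}v_k$ combined with the decay \eqref{eq:G+inf}, which ensures that $v_k$ vanishes smoothly (at order $x$) near $\{x=0, z>0\}$.

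Second, I would compute $\p_x v_k$ directly via \eqref{eq:px-pr-pt}, using $\p_r v_k = (\tfrac{1}{2}+3k)r^{-\frac{1}{2}+3k}\gk{k}(t)$ and $\p_t v_k = r^{\frac{1}{2}+3k}\gk{k}'(t)$. A short calculation yields
$$\p_x v_k = r^{\frac{1}{2}+3(k-1)}\,\alpha_k(t), \qquad \alpha_k(t) := \frac{(1+t^2)^{\frac{1}{2}}}{3}\left[\left(\tfrac{1}{2}+3k\right)\gk{k}(t) - t(1+t^2)\gk{k}'(t)\right],$$
so that $\p_x v_k$ is a separated-variable solution with the admissible exponent $\lambda = \tfrac{1}{2}+3(k-1)$, and its profile $\alpha_k$ is smooth, bounded on $\R$, and vanishes at $+\infty$ (this last point following from \eqref{eq:G+inf} applied to $\gk{k}$ and its derivative).

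Third, the uniqueness (up to a multiplicative constant) of such profiles, established in \cref{prop:rGk} via the asymptotic analysis of Tricomi's function, forces $\alpha_k = c_k\gk{k-1}$ for some $c_k \in \R$, which proves both \eqref{eq:pxvk} and \eqref{eq:G-rec}. To identify $c_k$, I would evaluate \eqref{eq:G-rec} at $t = 0$ and obtain $c_k\gk{k-1}(0) = (\tfrac{1}{6}+k)\gk{k}(0)$. Combined with the explicit formula \eqref{eq:Gk-0} and the functional equation $\Gamma(\tfrac{1}{6}-k+1) = (\tfrac{1}{6}-k)\Gamma(\tfrac{1}{6}-k)$, this yields $\gk{k}(0)/\gk{k-1}(0) = 9(\tfrac{1}{6}-k)$, whence $c_k = 9(\tfrac{1}{6}+k)(\tfrac{1}{6}-k) = \tfrac{1}{4}-9k^2$. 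Equation \eqref{eq:gk'} then follows from \eqref{eq:G-rec} by dividing by $t(1+t^2)$.

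The most delicate point is the uniqueness invoked in the third step: one must ensure that, for the special exponents $\lambda = \tfrac{1}{2}+3m$ with $m \in \Z$, the joint constraints ``bounded at $-\infty$'' and ``vanishing at $+\infty$'' select a \emph{one-dimensional} subspace within the two-dimensional kernel of \eqref{eq:EDO-G}. An alternative route bypassing this subtlety would be to verify by direct substitution that $\alpha_k$ satisfies the same ODE \eqref{eq:EDO-G} as $\gk{k-1}$ at the exponent $\tfrac{1}{2}+3(k-1)$, and then match the Cauchy data $(\alpha_k(0), \alpha_k'(0))$ with $(c_k\gk{k-1}(0), c_k\gk{k-1}'(0))$ by Cauchy--Lipschitz to pin down $c_k$.
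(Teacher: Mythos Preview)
Your proposal is correct and follows essentially the same approach as the paper: compute $\p_x v_k$ in the $(r,t)$ coordinates via \eqref{eq:px-pr-pt}, invoke the uniqueness from \cref{prop:rGk} to identify the profile as a multiple of $\gk{k-1}$, and pin down $c_k$ by comparing values at $t=0$ using \eqref{eq:Gk-0}. The paper handles your uniqueness concern by noting that the construction in \cref{prop:rGk} ``proceeds by equivalence'', and offers a different alternative route (recurrence identities on Tricomi's function $U$) rather than your Cauchy-data matching, but the core argument is the same.
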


\begin{proof}
    By \eqref{eq:px-pr-pt}, one has $\p_x v_k = r^{\frac12+3(k-1)} H_k(t)$, where $H_k(t)$ is the right-hand side of \eqref{eq:G-rec}.
    Thus $\p_x v_k$ is a solution to \eqref{eq:half-plane} of the form studied in \cref{prop:rGk}.
    Since the proof of \cref{prop:rGk} proceeds by equivalence, $v_{k-1}$ is the only solution of the form $r^{\frac12+3(k-1)}$. 
    This entails that $H_k(t)$ is proportional to $\gk{k-1}(t)$ and the constant can be identified by comparing the values at $0$ using \eqref{eq:Gk-0}, yielding \eqref{eq:G-rec}, \eqref{eq:pxvk} and \eqref{eq:gk'} (which are all equivalent) with $c_k = \frac14-9k^2$.
    
    Actually, these identities are linked with recurrence relations on Tricomi's function $U$.
    Let us give another proof of \eqref{eq:gk'} using this approach.
    By the proof of \cref{prop:rGk},
    \begin{equation*}
        \gk{k}(t) = 2 \cdot 9^{\frac16+k} (1+t^2)^{-\frac14-\frac32 k} \cdot \Re \left\{
        e^{\frac{i\pi}{3}} U\left(-\frac16-k,\frac23,-\frac{t^3}{9}\right) \right\}.
    \end{equation*}
    First, using the relation $\p_\zeta U(a-1,b,\zeta) = (1-a) U(a,b+1,\zeta)$ (see \cite[Equation (13.3.22)]{NIST}),
    \begin{equation*}
        \begin{split}
            \gk{k}'(t) & = -\left(\frac12+3k\right) \frac{t}{1+t^2} \gk{k}(t) \\
            & + 
            2 \cdot 9^{\frac16+k} (1+t^2)^{-\frac14-\frac32 k} \cdot \frac{3}{t} \left(k+\frac16\right) \Re \left\{
            e^{\frac{i\pi}{3}} \left(-\frac{t^3}{9}\right) U\left(-\frac16-k+1,\frac53,-\frac{t^3}{9}\right) \right\}.
        \end{split}
    \end{equation*}
    Eventually, \eqref{eq:gk'} follows from the relation $(b-a)U(a,b,\zeta)+U(a-1,b,\zeta)-\zeta U(a,b+1,\zeta) = 0$ (see \cite[Equation (13.3.10)]{NIST}).
\end{proof}

\begin{rmk}
    We will see below that $v_0$ is linked with two solutions to \eqref{eq:shear-z} which have $Z^0$ regularity, but do not belong to $H^1_x H^1_z$.
    Similarly, for each $k \geq 0$, $v_k$ is linked with solutions $u$ such that $\p^k_x u \in Z^0(\Omega)$ but $u \notin H^{k+1}_x H^1_z$.
    Conversely, for $k = -1$, one could expect to be able to construct a very weak solution $u$ based on $v_{-1}$ which would entail that uniqueness fails for solutions with less than $L^2_xH^1_z$ regularity.
\end{rmk}

\cref{lem:rec} entails the following decay estimates, which will be useful in the sequel.

\begin{lem} \label{lem:G-decay}
    For every $k \in \Z$, there exists $C_k > 0$ such that, for every $t \in \R$,
    \begin{equation*}
        |\gk{k}(t)| + |t^3 \gk{k}'(t)| + |t^4 \gk{k}''(t)| + |t^5 \gk{k}'''(t)|\leq C_k.
    \end{equation*}
\end{lem}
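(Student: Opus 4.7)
The plan combines the smoothness of $\gk{k}$ on $\R$---which trivially handles the bounded range---with the recurrence relation \eqref{eq:gk'} and its derivatives, which will produce the decay of the successive derivatives of $\gk{k}$ as $|t|\to\infty$.

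First, I would observe that \cref{prop:rGk} already exhibits $\gk{k}$ as a smooth function on $\R$ whose limits at $\pm\infty$ are $1$ and $0$, with the quantitative asymptotics \eqref{eq:G+inf} and \eqref{eq:G-inf}. This yields $\sup_\R |\gk{k}| \leq C_k$, which is the first term in the claimed estimate.

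Next, I would use the recurrence relation \eqref{eq:gk'},
\[
\gk{k}'(t) = \frac{1}{t(1+t^2)} \left( \left(\tfrac 12 + 3k\right) \gk{k}(t) - \frac{3 c_k \gk{k-1}(t)}{(1+t^2)^{1/2}} \right),
\]
whose bracket is uniformly bounded on $\R$ by the previous step applied to the indices $k$ and $k-1$. Since $|t(1+t^2)| \geq |t|^3$, this gives $|\gk{k}'(t)| \leq C_k |t|^{-3}$ on $\{|t|\geq 1\}$, while the smoothness of $\gk{k}'$ at the origin provides a uniform bound on $[-1,1]$; combined with $|t|^3 \leq 1$ there, this yields $|t^3 \gk{k}'(t)| \leq C_k$ everywhere.

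For the second and third derivatives, the plan is to differentiate \eqref{eq:gk'} once or twice and bound each resulting term via the Leibniz rule. Each differentiation of the prefactor $1/(t(1+t^2))$ costs an extra factor of $|t|^{-1}$ at infinity, while each differentiation of the bracket either produces $\gk{k}^{(j)}(t)$ or $\gk{k-1}^{(j)}(t)$---decaying like $|t|^{-j-2}$ by the previous steps, with a short induction on the index $k$ supplying the bound on $\gk{k-1}''$---or else a derivative of $(1+t^2)^{-1/2}$ acting on the bounded $\gk{k-1}$, which also supplies extra negative powers of $|t|$. A routine term-by-term count then gives $|\gk{k}''(t)| \leq C_k |t|^{-4}$ and $|\gk{k}'''(t)| \leq C_k |t|^{-5}$ for $|t| \geq 1$, while smoothness of $\gk{k}$ handles the compact range $[-1,1]$. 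The hardest part is simply this power-counting bookkeeping; no genuine obstacle arises once the recurrence has been differentiated, since every contribution decays strictly faster than the target rate.
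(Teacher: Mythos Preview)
Your proposal is correct and follows essentially the same approach as the paper: boundedness of $\gk{k}$ from \cref{prop:rGk}, then the recurrence \eqref{eq:gk'} for the first derivative, then successive differentiations of \eqref{eq:gk'} for the higher derivatives. The paper's proof is terser but the argument is identical; your explicit separation into $|t|\leq 1$ versus $|t|\geq 1$ and the power-counting are just the details the paper leaves implicit. One small wording point: what you call ``a short induction on the index $k$'' is not literally an induction (since $k\in\Z$ there is no base case), but rather applying the already-established derivative bounds at the shifted indices $k-1$, $k-2$, which in turn only requires boundedness of finitely many $\gk{m}$ from \cref{prop:rGk}; this is harmless and the intent is clear.
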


\begin{proof}
    For all $k \in \Z$, the bound $|\gk{k}(t)| \leq C_k$ is already contained in \cref{prop:rGk} which claims that $\gk{k}$ is bounded.
    Since $\gk{k-1}$ and $\gk{k}$ are uniformly bounded over $\R$, we deduce from  \eqref{eq:gk'} that $t^3 \gk{k}'(t)$ is also bounded on $\R$.
    Eventually, differentiating \eqref{eq:gk'} with respect to $t$ leads to a uniform bound for $|t^4 \gk{k}''(t)|$ and $|t^5 \gk{k}'''(t)|$ over $\R$.
\end{proof}

Moreover, the recurrence relations of \cref{lem:rec} also imply that the solutions $v_k$ to \eqref{eq:half-plane} are smooth, up to the boundary $\{ x = 0 \}$, except at the origin $(0,0)$.

\begin{lem} \label{lem:vk-smooth}
    For every $k \in \Z$, $v_k \in C^\infty(P_*)$, where $P_* := ([0,+\infty) \times \R) \setminus \{ (0,0) \}$.
\end{lem}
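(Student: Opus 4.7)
The plan is to express every partial derivative of $v_k$ as a polynomial in $z$ whose coefficients are finite $\R$-linear combinations of the two families $\{v_{k'}\}_{k' \leq k}$ and $\{\p_z v_{k'}\}_{k' \leq k}$, and then to verify continuity on $P_*$ of each base function. Smoothness on the open set $\{x > 0\}$ is automatic, since $r$ and $t$ from \eqref{eq:r-t} are $C^\infty$ there and so is $\gk{k}$, so the entire work lies at the boundary $\{x = 0, z \neq 0\}$.

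For the reduction, two algebraic identities are decisive: the recurrence $\p_x v_k = c_k v_{k-1}$ of \cref{lem:rec}, and the defining equation \eqref{eq:half-plane} rewritten as $\p_{zz} v_k = z \p_x v_k = c_k z v_{k-1}$. A simple induction on $i+j$ using the Leibniz rule then shows that $\p_x^i \p_z^j v_k$ lies in the announced span: the case $i \geq 1$ reduces to $i = 0$ via the recurrence, the cases $(i,j) = (0,0)$ and $(0,1)$ are trivial, and for $j \geq 2$ one writes $\p_z^j v_k = c_k \p_z^{j-2}(z v_{k-1})$ and invokes the inductive hypothesis.

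Continuity of the base at a boundary point $(0, z_0) \in P_*$ (so $z_0 \neq 0$) then follows from elementary asymptotics: as $(x, z) \to (0, z_0)$ with $x > 0$, one has $r \to |z_0|$, while $t \to +\infty$ if $z_0 > 0$ and $t \to -\infty$ if $z_0 < 0$. Using $\gk{k'}(-\infty) = 1$ and $\gk{k'}(+\infty) = 0$ from \cref{prop:rGk}, the function $v_{k'}$ extends continuously to $P_*$, with boundary value $|z_0|^{1/2+3k'}$ for $z_0 < 0$ and $0$ for $z_0 > 0$. For $\p_z v_{k'}$, formula \eqref{eq:pz-pr-pt} yields
\[
\p_z v_{k'} = r^{-1/2+3k'}\,F_{k'}(t), \qquad F_{k'}(t) := \frac{(1/2 + 3k')\,t}{(1+t^2)^{1/2}}\,\gk{k'}(t) + (1+t^2)^{1/2}\,\gk{k'}'(t),
\]
and the bound $|t^3 \gk{k'}'(t)| \leq C$ from \cref{lem:G-decay} ensures that $(1+t^2)^{1/2}\gk{k'}'(t) = O(|t|^{-2})$, so that $F_{k'}(+\infty) = 0$ and $F_{k'}(-\infty) = -(1/2+3k')$, providing a continuous extension of $\p_z v_{k'}$ to $P_*$.

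Combining the two steps gives $\p^\alpha v_k \in C^0(P_*)$ for every multi-index $\alpha$, hence $v_k \in C^\infty(P_*)$. The main technical delicacy lies in the reduction step, namely the bookkeeping of the polynomial-in-$z$ coefficients across successive applications of the Leibniz rule; once the base set $\{v_{k'}, \p_z v_{k'}\}$ is identified, the continuity step is essentially algebraic and follows directly from the boundary behavior of $\gk{k'}$ already established in \cref{prop:rGk} and \cref{lem:G-decay}.
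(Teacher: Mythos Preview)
Your proof is correct and follows essentially the same approach as the paper: both arguments reduce to showing $v_{k'}$ and $\p_z v_{k'}$ are in $C^0(P_*)$ via the asymptotics of $\gk{k'}$ from \cref{prop:rGk} and \cref{lem:G-decay}, and then bootstrap to all higher derivatives using the recurrence $\p_x v_k = c_k v_{k-1}$ of \cref{lem:rec} together with the equation $\p_{zz} v_k = c_k z v_{k-1}$. Your inductive bookkeeping (expressing $\p_x^i\p_z^j v_k$ as a polynomial in $z$ with coefficients in the span of $\{v_{k'},\p_z v_{k'}\}$) is a slightly more explicit formalization of what the paper summarizes as ``iterating the argument''.
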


\begin{proof}
    The smoothness inside the half-plane $\{ x > 0 \}$ follows directly from \cref{prop:rGk} since $\gk{k} \in C^\infty(\R)$ and the function $r \mapsto r^{\frac12+3k}$ as well as the change of coordinates of \eqref{eq:r-t} are smooth inside this domain.
    
    By \cref{prop:rGk}, since $\gk{k}$ is continuous on $\R$ and has limits at $t = \pm \infty$, we obtain that $v_k = r^{\frac12+3k} \gk{k}(t)$ is continuous up to the boundary $\{ x = 0 \}$, except at the origin: $v_k \in C^0(P_*)$.
    
    We now turn to the continuity of derivatives. Using \eqref{eq:pz-pr-pt}, 
    \begin{equation*}
        \p_z v_k = r^{-\frac 12+3k} \left[ \left(\frac12+3k\right) \frac{t}{(1+t^2)^{\frac12}} \gk{k}(t) + (1+t^2)^{\frac12} \gk{k}'(t)\right].
    \end{equation*}
    Since $\gk{k}$ has limits at $t = \pm \infty$ and since, by \cref{lem:G-decay}, $t^3 \gk{k}'(t) = O(1)$, we obtain that $\p_z v_k$ has limits at $t = \pm \infty$.
    Hence $\p_z v_k \in C^0(P_*)$.
    
    Eventually, the $C^\infty(P_*)$ regularity follows from an induction argument.
    Indeed, by \eqref{eq:pxvk}, $\p_x v_k = c_k v_{k-1}$, so $\p_x v_k \in C^0(P_*)$ because $v_{k-1} \in C^0(P_*)$.
    And, similarly, in the vertical direction, using \eqref{eq:half-plane}, $\p_{zz} v_k = z \p_x v_k = z c_k v_{k-1}$ so $\p_{zz} v_k \in C^0(P_*)$.
    Iterating the argument concludes the proof.
\end{proof}

\subsubsection{Localization and decomposition}

We now introduce singular profiles $\busing^i$, for $i=0,1$, localized in the vicinity of $(x_i,0)$ and based on the singular profiles of the previous paragraph.
Let $\chi_i \in C^\infty(\overline{\Om})$ be a cut-off function such that $\chi_i \equiv 1$ in a neighborhood of $(x_i, 0)$, and $\supp \chi \subset B((x_i, 0), \bar{R})$ for some $\bar{R} < \min (1, x_1-x_0)/2$. 
These localized profiles are the ones involved in the main decomposition result of \cref{thm:shear-decomp}. 

\begin{defi}
    \label{def:sing-profiles}
    For $i \in \{0,1\}$, let
    \nomenclature[OLU]{$\busing^i$}{Reference singular solution localized near $(x_i,0)$}
    \nomenclature[OAX]{$\chi_i$}{Cut-off function localized near $(x_i,0)$}
    \begin{equation*}
    \busing^i(x,z) := r_i^{\frac 12} \gnot(t_i) \chi_i(x,z),
    \end{equation*}
    where $\gnot$ is constructed in \cref{prop:rGk} and
    \nomenclature[OLR]{$r_i$}{Radial-like variable near $(x_i,0)$ given by $r_i=(z^2+\lvert x-x_i \rvert^{\frac23})^{\frac12}$}
    \nomenclature[OLT]{$t_i$}{Angular-like variable near $(x_i,0)$, given by $t_i = (-1)^i z \lvert x-x_i \rvert^{-\frac13}$}
    \begin{equation*}
    r_i := \left(z^2 + |x-x_i|^{\frac 23}\right)^{\frac 12}
    \quad \text{and} \quad
    t_i := (-1)^i z |x-x_i|^{-\frac 13}.
    \end{equation*}
\end{defi}

\begin{lem} \label{lem:busing}
    \nomenclature[OLF]{$\bfi$}{Smooth source term associated with the singular solution $\busing^i$}
    For $i \in \{ 0, 1\}$, there exists $\bfi \in C^\infty(\overline{\Om})$, with $\bfi \equiv 0$ in neighborhoods of $(x_i,0)$ and $\{ z = \pm 1 \}$, such that $\busing^i$ is the unique solution with $Z^0(\Omega)$ regularity to
    \begin{equation}
        \label{eq:busing}
        \begin{cases}
            z \p_x \busing^i - \p_{zz} \busing^i=\bfi,\\
            \busing^i {}_{|\Sigma_0 \cup \Sigma_1}=0,\\
            \busing^i {}_{|z=\pm 1}=0.
        \end{cases}
    \end{equation}
    Moreover, $\busing^i \in C^\infty(\overline{\Omega} \setminus \{ (x_i,0) \})$ but $\busing^i \notin H^1_x H^1_z$.
\end{lem}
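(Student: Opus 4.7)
My plan is to verify all the stated properties directly from the formula $\busing^i = w_i\chi_i$ with $w_i(x,z) := r_i^{1/2}\gnot(t_i)$, exploiting that $w_i$ is (after translation for $i=0$, and after the central symmetry $(x,z)\mapsto(2x_1-x,-z)$ for $i=1$, both of which preserve $z\p_x-\p_{zz}$) precisely the solution $v_0$ built in \cref{prop:rGk}. Therefore
\[
z\p_x w_i - \p_{zz}w_i = 0 \quad\text{on } \Om\setminus\{(x_i,0)\},
\]
and $w_i\in C^\infty(\overline{\Om}\setminus\{(x_i,0)\})$ by \cref{lem:vk-smooth}. By the Leibniz rule,
\[
\bfi := z\p_x\busing^i - \p_{zz}\busing^i = z w_i\,\p_x\chi_i - 2\p_z w_i\,\p_z\chi_i - w_i\,\p_{zz}\chi_i
\]
is supported in $\supp\nabla\chi_i$, a set that stays at positive distance from $(x_i,0)$; there $w_i$ is smooth, so $\bfi\in C^\infty(\overline{\Om})$ and $\bfi$ vanishes in a neighborhood of $(x_i,0)$. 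Since $\bar R<\min(1,x_1-x_0)/2$, the cut-off $\chi_i$ vanishes identically near $\{z=\pm 1\}$ and near $\Sigma_{1-i}$, so both $\bfi$ and $\busing^i$ do too; on $\Sigma_i$ we have $|x-x_i|=0$, which forces $t_i=\pm\infty$, and $\gnot(\pm\infty)=0$ kills $\busing^i|_{\Sigma_i}$.

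To prove $\busing^i\in Z^0(\Om)$, I would switch to the polar-like coordinates $(r_i,t_i)$ with Jacobian $(1+t_i^2)^2/(3r_i^3)$ from \eqref{eq:det-J}. Contributions from derivatives of $\chi_i$ are smooth and bounded, hence in $L^2$. For the principal parts, I would use the recurrence $\p_x v_0 = c_0 v_{-1}$ from \cref{lem:rec} (so that $\p_{zz}w_i = z\p_x w_i\sim r_i^{-3/2}t_i(1+t_i^2)^{-1/2}\gmin(t_i)$) together with the decay bounds of \cref{lem:G-decay} on $\gnot$ and its derivatives; each required $L^2$-norm then reduces to an elementary integral in $(r_i,t_i)$ which converges thanks to the positive power of $r_i$ gained through the Jacobian and the quadratic decay in $t_i$. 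The claim $\busing^i\in C^\infty(\overline\Om\setminus\{(x_i,0)\})$ is then immediate from \cref{lem:vk-smooth}.

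For the failure of $H^1_xH^1_z$-regularity, applying $\p_x v_0 = c_0 v_{-1}$ once more yields $\p_x w_i = \pm c_0\,r_i^{-5/2}\gmin(t_i)$, and the same change of variables gives, in the region $\{\chi_i\equiv 1\}$,
\[
\int_{B((x_i,0),\epsilon)}|\p_x\busing^i|^2\,\dd x\,\dd z \;\gtrsim\; \int_0^\epsilon \frac{\dd r_i}{r_i^2}\int_\R \frac{|\gmin(t)|^2}{(1+t^2)^2}\,\dd t = +\infty,
\]
the inner integral being finite and strictly positive since $\gmin$ is bounded and $\gmin(-\infty)=1$. Hence $\p_x\busing^i\notin L^2(\Om)$ and a fortiori $\busing^i\notin H^1_x H^1_z$. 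Finally, uniqueness is immediate from \cref{p:shear-X0}: two $Z^0\hookrightarrow L^2_x H^1_z$ solutions of \eqref{eq:busing} differ by a weak solution of the shear problem with zero source and zero lateral data, hence coincide. The main obstacle is the $Z^0$ verification for $z\p_x\busing^i$, which lies right at the borderline of integrability and relies both on the explicit singular behaviour $v_{-1}\sim r^{-5/2}\gmin(t)$ and on the cancellation furnished by the factor $z/r\sim t(1+t^2)^{-1/2}$.
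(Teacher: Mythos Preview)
Your proof is correct and follows essentially the same route as the paper's. Two small points deserve mention. First, a slip: you write ``$\gnot(\pm\infty)=0$'', but in fact $\gnot(-\infty)=1$ (see \cref{prop:rGk}); fortunately on $\Sigma_i$ one always has $t_i\to +\infty$ (check the sign $(-1)^i$ in \eqref{def:rj} against the sign of $z$ on $\Sigma_i$), so only $\gnot(+\infty)=0$ is needed and your conclusion stands. Second, there are two minor tactical differences from the paper: the paper sidesteps the borderline computation you flag at the end by simply writing $z\p_x\busing^i=\bfi+\p_{zz}\busing^i$ once $\p_{zz}\busing^i\in L^2$ is established; and for the failure of $H^1_xH^1_z$, the paper shows $\p_{xz}\busing^i\notin L^2$ (via an $r_i^{-7/2}H(t_i)$ term with $H(0)\neq 0$), whereas you show the stronger fact $\p_x\busing^i\notin L^2$ via the recurrence $\p_x v_0=\tfrac14 v_{-1}$. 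Both routes are valid; your systematic use of \cref{lem:rec} is arguably cleaner.
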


\begin{proof}
    By symmetry, we only prove the statement for $\bfzero$ and $\busing^0$.
    In order to alleviate the notation, we drop the index $0$ in $r_0$, $t_0$ and $\chi_0$.
    We introduce positive numbers $0 < r_- < r_+$ such that $\chi \equiv 1$ for $r \leq r_-$ and $\chi \equiv 0$ for $r \geq r_+$. 
    In particular, all derivatives of $\chi$ are smooth, bounded, and supported in $\mathbf{1}_{r_-<r<r_+}$.
    
    Straightforward computations lead to \eqref{eq:busing}, provided that one defines
    \begin{equation} \label{eq:f0}
        \bfzero := r^{\frac 12}\gnot(t)\left(z\p_x \chi - \p_{zz} \chi\right)- 2 \p_z (r^{\frac 12}\gnot(t)) \p_z \chi
        = v_0 \left(z\p_x \chi - \p_{zz} \chi\right) - 2 \p_z v_0 \p_z \chi.
    \end{equation}
    Since the derivatives of $\chi$ are supported away from the point $(x_0,0)$, the $C^\infty(\overline{\Om})$ regularity of $\bfzero$ follows directly from the smoothness of $v_0$ away from the origin proved in \cref{lem:vk-smooth}.
    Since $\busing^0(x,z) = v_0(x,z) \chi(x,z)$, the $C^\infty(\overline{\Om} \setminus \{ (x_i,0) \})$ regularity of $\busing^0$ follows from \cref{lem:vk-smooth}.
    
    Therefore, to prove the lemma, there remains to prove that $\busing^0$, $\p_{zz} \busing^0$ and $z \p_x \busing^0$ are in $L^2(\Omega)$ but $\p_x \p_z \busing^0 \notin L^2(\Omega)$.
    We will use the change of coordinates from cartesian to polar-like ones of Jacobian given by \eqref{eq:det-J}, so that, for $\varphi : \Omega \to \R$, 
    \begin{equation*}
        \| \varphi \|_{L^2(\Om)}^2 = 
        \int_0^\infty \int_{\R} \frac{3 r^3}{(1+t^2)^2} \varphi(r,t)^2 \dd t \dd r. 
    \end{equation*}
    In particular, we have the following integrability criterion. 
    Assume that $\varphi$ is of the form $r^{\mu} H(t) \psi$ where $H(t) = O_{t \to \pm \infty}(|t|)$ and $\supp \psi \subset \mathbf{1}_{r < r_+}$.
    If $\mu > -2$ or $\supp \psi \subset \mathbf{1}_{r_- < r}$, then $\varphi \in L^2(\Omega)$.
    
    \step{Preliminary estimates.}
    Let $\psi$ such that $\supp \psi \subset \mathbf{1}_{r < r_+}$.
    By the previous integrability criterion, since $\gnot(t) = O(1)$, $r^{\frac 12} \gnot(t) \psi \in L^2(\Omega)$.
    Using \eqref{eq:pz-pr-pt},
    \begin{equation*} 
    \p_z \left(r^{\frac 12}\gnot(t)\right)= r^{-\frac 12} \left[ \frac{t}{2(1+t^2)^{\frac12}} \gnot(t) + (1+t^2)^{\frac12} \gnot'(t)\right].
    \end{equation*}
    By \cref{lem:G-decay}, $|t| \gnot'(t) = O(|t|^{-2})$.
    Thus, $\p_z (r^{\frac 12} \gnot(t)) \psi \in L^2(\Omega)$.
    Using \eqref{eq:pz-pr-pt} again,
    \begin{equation*} 
    \begin{split}
        \p_{zz} \left(r^{\frac 12}\gnot(t)\right) = - \frac 12 r^{-\frac 32} \frac{t}{(1+t^2)^{\frac 12}} & \left[ \frac{t}{2(1+t^2)^{\frac12}} \gnot(t) + (1+t^2)^{\frac12} \gnot'(t)\right] \\
        & + r^{-\frac 32} (1+t^2)^{\frac12} \p_t  \left[ \frac{t}{2(1+t^2)^{\frac12}} \gnot(t) + (1+t^2)^{\frac12} \gnot'(t)\right].
    \end{split}
    \end{equation*}
    Using \eqref{eq:px-pr-pt},
    \begin{equation*}
    \p_x \left( r^{\frac 12}\gnot(t)\right)= r^{-\frac 52} \left[ \frac{(1+t^2)^{\frac12}}{6} \gnot(t) - \frac{t(1+t^2)^{\frac 32}}{3} \gnot'(t)\right]. \end{equation*} 
    By \cref{lem:G-decay}, $\gnot'(t) = O(|t|^{-3})$.
    Hence, $|t| \gnot(t) = O(|t|)$ and $|t|^4 \gnot'(t) = O(|t|)$ so, assuming additionally that $\supp \psi \subset \mathbf{1}_{r_- < r < r_+}$, one concludes that $\p_x (r^{\frac 12} \gnot(t)) \psi \in L^2(\Omega)$.
    
    Eventually, using \eqref{eq:pz-pr-pt},
    \begin{equation} \label{eq:dxz-r12Gt} \p_{xz} (r^{\frac12}\gnot(t)) = r^{-\frac72} \left( - \frac t 4 \gnot(t) - \frac 1 6 \gnot'(t) (1+t^2)(1+3t^2) - \frac t 3 (1+t^2)^2 \gnot''(t) \right).
    \end{equation}
    
    
    \step{$Z^0$ estimates on $\busing^0$.}
    By Step 1, $\busing^0$ and $\p_{zz} \busing^0$ belong to $L^2(\Omega)$.
    Since $z\p_x \busing^0 = \bfzero + \p_{zz} \busing^0$ and $\bfzero \in L^2(\Omega)$, we infer that $z \p_x \busing^0 \in L^2(\Omega)$. 
    Hence $\busing^0 \in Z^0(\Om)$.
    
    \step{Lack of $H^1_x H^1_z$ estimate for $\busing^0$.}
    Recalling \eqref{eq:dxz-r12Gt},
    \begin{equation}\label{dxdzusing}
        \p_x \p_z \busing^0 = 
        r^{-\frac72} h(t) \chi +  \p_x (r^{\frac12} \gnot(t)) \p_z \chi + \p_z (r^{\frac12} \gnot(t) \p_x \chi),
    \end{equation}
    where, by \eqref{eq:dxz-r12Gt}, the function $h$ is given by
    \begin{equation*}
    h(t)=- \frac t 4 \gnot(t) - \frac 1 6 \gnot'(t) (1+t^2)(1+3t^2) - \frac t 3 (1+t^2)^2 \gnot''(t) .
    \end{equation*}
Using \eqref{eq:gk'} together with the relation $\gl_k(-\infty)= 9^{-\frac{1}{6}-k}/2$, we find that as $t\to -\infty$,
\begin{equation*}
\begin{aligned} 
\gnot(t)= &a + \frac{b}{t^2} + \frac{c}{t^3} + O(t^{-4}),\\
\gnot'(t)=& - \frac{2b}{t^3} - \frac{3c}{t^4} + O(t^{-5}),\\
\gnot''(t)=&\frac{6b}{t^4} + \frac{12c}{t^5}+ O(t^{-6}),
\end{aligned} 
\end{equation*}
and the coefficients $a,b,c$ are defined by $a= \gnot(-\infty)$, $-2b= a/2$, and $-3c=3 c_0 \gl_{-1}(-\infty)$. We infer that as $t\to -\infty$,
\begin{equation*}
h(t)= \frac{3c}{2} - \frac{12 c}{3} + O(t^{-1})\sim \frac{5}{2}c_0 \gl_{-1}(-\infty)\neq 0.
\end{equation*}
Hence $h\neq 0$.

    The last two terms in the right-hand side of \eqref{dxdzusing} belong to $L^2(\Omega)$ according to the previous computations. Since $h \neq 0$, the $L^2$ norm of the first term is bounded from below by
    \begin{equation*} 
    c \int_0^{r_-} r^{-7} r^3 \dd r = + \infty.
    \end{equation*}
    and thus $\p_x\p_z \busing^0\notin L^2(\Om)$.
\end{proof}

Actually, we have the following regularity on the profiles $\busing^i$, which is slightly better than $Z^0$.

\begin{lem} \label{lem:busing-h56}
    For all $\sigma<\frac12$,  $\busing^i\in H^{\frac{2+\sigma}{3}}_x L^2_z \cap L^2_x H^{2+\sigma}_z \hookrightarrow H^{\frac{2+\sigma}{3}}_x L^2_z \cap H^{\frac{\sigma}{3}}_x H^2_z$ and this is optimal.
    More precisely, $\busing^i \notin H^{\frac 56}_x L^2_z \cap H^{\frac 16}_x H^2_z$.
\end{lem}

\begin{proof}
    The proof follows from an easy scaling argument. 
    We start with the $z$ derivative and focus on $\busing^0$.
    Dropping the index 0 in $r_0$ and $t_0$ as in the previous proof, we have, using \eqref{eq:r-t} and \cref{def:sing-profiles}, and setting $\chi(x,z) := \chi_0(x_0+x,z)$,
    \begin{equation*}
    \busing^0(x_0+x,z)= x^{\frac16} \varphi\left(\frac{z}{x^{\frac13}}\right) \chi(x,z),
    \end{equation*}
    where $\varphi(t)= (1+t^2)^{\frac14} \gnot(t)$.
    Therefore,
    \begin{equation*}
        \p_z^2 \busing^0(x_0+x,z) = x^{-\frac12} \varphi''\left(\frac{z}{x^{\frac13}}\right) \chi + 2 x^{-\frac16} \varphi'\left(\frac{z}{x^{\frac13}}\right) \chi_z + x^{\frac16} \varphi \left(\frac{z}{x^{\frac13}}\right) \chi_{zz}.
    \end{equation*}
    We focus on the regularity of the first term, which is the most singular. 
    We have, for any $\sigma>0$,
    \begin{equation*}
    \left\|  x^{-\frac12} \varphi''\left(\frac{z}{x^{\frac13}}\right) \chi(x,z)\right\|_{L^2_x H^\sigma_z}^2\leq \int \frac{1}{x}\frac{1}{|z-z'|^{1+2\sigma} }\left( \varphi''\left( \frac{z}{x^{\frac13}}\right)  -  \varphi''\left( \frac{z'}{x^{\frac13}}\right)\right)^2\dd x \dd z \dd z'.
    \end{equation*}
    Changing variables in the above integral, we get
    \begin{equation*}
    \left\|  x^{-\frac12} \varphi''\left(\frac{z}{x^{\frac13}}\right) \chi(x,z)\right\|_{L^2_x H^\sigma_z}^2\lesssim \| \varphi''\|_{H^\sigma(\R)}^2 \int_0^{x_1-x_0} |x|^{-\frac{2}{3} - \frac{2\sigma}{3}}\dd x.
    \end{equation*}
    The integral in the right-hand side is finite if and only if $\sigma<\frac12$.
    Moreover, $\|\varphi''\|_{H^\sigma(\R)}^2 \leq \|\varphi''\|_{H^1(\R)}^2$.
    From the definition of $\varphi$ and the decay bounds of \cref{lem:G-decay}, we infer that $\varphi'' \in H^1(\R)$.
    This shows that $\busing^i\in L^2_x H^{2+\sigma}_z$ for $\sigma<\frac12$.
    
    The bound in $H^{\frac{2+\sigma}{3}}_x L^2_z $ is obtained similarly and left to the reader.

    Conversely, if one had $\busing^i \in H^{\frac 56}_x L^2_z \cap H^{\frac 16}_x H^2_z$, by the fractional trace theorem \cite[Equation (4.7), Chapter 1]{lions1969quelques}, one would have $\busing^i \in C^0_z(H^{2/3}_x)$.
    In particular, $\busing^i(\cdot,0) \in H^{2/3}(x_0,x_1)$.
    But, in a neighborhood of $x=0$, $\busing^i(x_0+x,0) = \gnot(0) x^{\frac 16}$ with $\gnot(0) \neq 0$.
    One checks that $x \mapsto x^{\frac 1 6} \in H^s(0,1)$ if and only if $s < 2/3$, which completes the proof.
\end{proof}

Eventually, we introduce the following $2 \times 2$ nonsingular matrix which translates the fact that $\busing^0$ and $\busing^1$ are indeed independent elementary solutions related with the non-satisfaction of the orthogonality constraints associated with $\overline{\ell^0}$ and $\overline{\ell^1}$.
We will use this reference matrix multiple times in the sequel for perturbations of this shear flow situation.

\begin{lem} \label{lem:barM}
    Let $\bfzero$, $\bfun$ as in \cref{lem:busing} and $\overline{\Phi^0}$, $\overline{\Phi^1}$ as in \cref{lem:def-dual-shear}.
    The matrix 
    \nomenclature[OLM]{$\overline{M}$}{Invertible matrix relating the singular solutions $\busing^i$ with the dual profiles $\overline{\Phi^j}$}
    \begin{equation*} 
    \overline{M}:=\left(\int_\Om \p_x \bfj\;  \overline{\Phi^i}\right)_{0\leq i,j\leq 1}\in \cM_2(\R)
    \end{equation*}
    is invertible.
\end{lem}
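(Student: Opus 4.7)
The strategy is to identify $\overline{M}$ with the matrix $A$ of orthogonality conditions $A_{ij} := \overline{\ell^i}(\bfj,0,0)$, and then to exploit the singular nature of $\busing^0, \busing^1$ together with \cref{prop:linear-flux-X1} to show that $\ker A = \{0\}$.

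\smallskip

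\noindent\emph{Identification $\overline{M} = A$.} Unwinding \cref{def:ell-shear} for the data $(\bfj, 0, 0)$ yields
\[
\overline{\ell^i}(\bfj, 0, 0) = \overline{M}_{ij} + \int_{\Sigma_0} \bfj(x_0, \cdot)\, \overline{\Phi^i} - \int_{\Sigma_1} \bfj(x_1, \cdot)\, \overline{\Phi^i},
\]
so the main step is to show that $\bfj$ vanishes identically on $\Sigma_0 \cup \Sigma_1$. On the side $\Sigma_k$ with $k \neq j$, this is immediate since $\supp \bfj \subset B((x_j, 0), \bar{R})$ with $\bar{R} < (x_1-x_0)/2$. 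On the side $\Sigma_j$ itself, the explicit formula \eqref{eq:f0} reduces the task to checking that $v_j$ and $\p_z v_j$ vanish on $\Sigma_j$. Both claims follow from the asymptotic behavior of $\gnot$ at $+\infty$ (note that the variable $t_j$ tends to $+\infty$ as $(x,z) \to \Sigma_j$ from the interior): the limit $\gnot(+\infty) = 0$ from \cref{prop:rGk} gives $v_j|_{\Sigma_j} = 0$, while the decay $t\, \p_t \gnot(t) \to 0$ extracted from \cref{lem:G-decay}, inserted into \eqref{eq:pz-pr-pt}, gives $\p_z v_j|_{\Sigma_j} = 0$. I expect this trace verification to be the principal obstacle of the proof.

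\smallskip

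\noindent\emph{Invertibility of $A$.} Given $(c_0, c_1) \in \ker A$, set $F := c_0 \bfzero + c_1 \bfun$. Then $F \in C^\infty(\overline{\Om})$ vanishes in neighborhoods of $(x_0, 0)$ and $(x_1, 0)$ and is supported away from $\{z = \pm 1\}$, so the hypotheses of \cref{prop:linear-flux-X1} applied to $(F, 0, 0)$ are satisfied (in particular, the associated data $\Delta_i(z) = F(x_i, z)/z$ is smooth on $(-1,1)$ and vanishes near $z = \pm 1$). Since $\overline{\ell^i}(F, 0, 0) = 0$ by hypothesis, \cref{prop:linear-flux-X1} guarantees that the unique weak solution to the associated shear problem belongs to $H^1_xH^1_z$; by linearity and the uniqueness of \cref{p:shear-X0}, that weak solution coincides with $u := c_0 \busing^0 + c_1 \busing^1$. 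The supports of $\busing^0$ and $\busing^1$ are disjoint by the choice of $\bar{R}$, so multiplying $u$ by a smooth cutoff equal to $1$ near $(x_i, 0)$ and vanishing near $(x_{1-i}, 0)$ shows that each $c_i \busing^i$ lies in $H^1_xH^1_z$. Since $\busing^i \notin H^1_xH^1_z$ by \cref{lem:busing}, this forces $c_0 = c_1 = 0$. Hence $\ker A = \{0\}$ and $\overline{M} = A$ is invertible.
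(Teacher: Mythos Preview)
Your proof is correct and follows exactly the paper's route: a kernel vector $(c_0,c_1)$ forces $c_0\busing^0+c_1\busing^1\in H^1_xH^1_z$ via \cref{prop:linear-flux-X1}, and localizing near each $(x_i,0)$ contradicts $\busing^i\notin H^1_xH^1_z$ from \cref{lem:busing}. Your explicit verification that $\bfj|_{\Sigma_j}=0$ fills in a detail the paper absorbs into the phrase ``the boundary data are null''; note only that both $\busing^i$ are built from the single profile $\gnot$, so your ``$v_j$'' should be read as $v_0$ in shifted (and, for $i=1$, reflected) coordinates centered at $(x_j,0)$.
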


\begin{proof}
    Let $c \in \R^2$ such that $\overline{M} c=0$.
    Then, for $j = 0,1$,
    \begin{equation*}
    \int_\Om \p_x (c_0 \bfzero + c_1 \bfun) \overline{\Phi^j}=0.
    \end{equation*}
    Thus, the source term for the function $c_0 \busing^0 + c_1 \busing^1$ satisfies the orthogonality conditions \eqref{eq:compat-shear} (note that in this case, the boundary data are null). 
    It then follows from \cref{p:shear-WP-Z1} that $c_0 \busing^0 + c_1 \busing^1\in H^1_xH^1_z$.
    Localizing in the vicinity of $(x_i, 0)$, we infer that $c_i \busing^i \in H^1_xH^1_z$, which, since $\busing^i \notin H^1_x H^1_z$ (by \cref{lem:busing}), implies that $c_i= 0$.
    Therefore, $c = 0$ and $\overline{M}$ is invertible.
\end{proof}

\begin{coro}[Decomposition into singular profiles]
    \label{coro:decomposition-profile}
    Let $(f,\delta_0,\delta_1) \in \cLin_K$ and $u\in Z^0(\Omega)$ be the unique solution to \eqref{eq:shear-z}. 
    Then there exists two real constants $c_0, c_1$ and a function $\ureg\in Z^1(\Om)$, as defined in \eqref{eq:Z1}, such that 
    \begin{equation*} 
    u= c_0 \busing^0 + c_1 \busing^1 + \ureg.
    \end{equation*}
\end{coro}

\begin{proof}
    We recall the definition of the matrix $\overline{M}$ from \cref{lem:barM}. Since $\overline{M}$ is invertible, we may define $c=(c_0, c_1)$ such that
    \begin{equation} \label{def:c}
    \overline{M} c = \begin{pmatrix}
        \overline{\ell^0}(f,\delta_0,\delta_1) \\
        \overline{\ell^1}(f,\delta_0,\delta_1)
    \end{pmatrix}.
    \end{equation}
    Let $\bfzero$ and $\bfun$ as in \cref{lem:busing}.
    By  construction, the triplet $(f-c_0 \bfzero-c_1 \bfun,\delta_0, \delta_1)$ satisfies the orthogonality conditions from \cref{p:shear-WP-Z1}. 
    It follows that the solution 
    $\ureg$ to
    \begin{equation*}
        \begin{cases}
            z\p_x \ureg- \p_{zz}\ureg = f-c_0 \bfzero-c_1 \bfun & \text{in }\Om,\\
            u_{\reg|\Sigma_i}=\delta_i,\\
            u_{\reg|z=\pm 1}=0
        \end{cases}
    \end{equation*}
    satisfies $\ureg\in H^1_x(H^1_z)$. 
    Thus, estimate \eqref{eq:apriori-uz1} of \cref{p:shear-apriori-Z1} ensures that $\p_x \ureg\in Z^0(\Omega)$, i.e.\ $\ureg\in Z^1$.
    Now, $u$ and $ \ureg + c_0 \busing^0 + c_1 \busing^1$ both belong to $Z^0(\Omega)$ and satisfy system \eqref{eq:shear-z}. 
    By the uniqueness result of \cref{p:shear-X0}, the result follows.
\end{proof}

\cref{thm:shear-decomp} follows easily from \cref{coro:decomposition-profile}. 
Indeed, one easily checks from \eqref{eq:def-XB-intro} and \eqref{eq:def-HK} that $\cX_B \hookrightarrow \cLin_K$.
Moreover, by \cref{p:z0-l2-h23}, $Z^0 \hookrightarrow H^{2/3}_x L^2_z \cap L^2_x H^2_z$ and, by \cref{lem:Zsigma-Qsigma}, $Z^1 \hookrightarrow \qone$ (defined in \eqref{eq:def-Q1-intro}).
The rest of the conclusions on $\busing^i$ are derived in \cref{lem:busing}.

\begin{rmk}
    The constants $c_0,c_1$ from \cref{coro:decomposition-profile} depend (linearly) on $u$, but do not depend on the choice of the truncation functions $\chi_i$.
    Indeed, if $\chi_0',\chi_1'$ is another truncation, associated with constants $c_0',c_1'$, then applying \cref{coro:decomposition-profile} twice yields
    \begin{equation*}
    c_0 \busing^0 + c_1 \busing^1 - c_0' (\busing^0)' - c_1' (\busing^1)'\in Z^1.
    \end{equation*}
    Therefore, in a small neighborhood $V_i=\chi_i^{-1}(\{1\})\cap (\chi_i')^{-1}(\{1\})$ of $(x_i,0)$, we obtain
    \begin{equation*}
    (c_i-c_i')r_i^{\frac12} \gnot(t_i) \in H^1_xH^1_z(V_i), 
    \end{equation*}
    and therefore $c_i=c_i'$.
\end{rmk}

As already claimed in \cref{rmk:Phij-saut}, we can also prove a related decomposition result for the dual profiles $\overline{\Phi^j}$ defined in \cref{lem:def-dual-shear}.
Here, the decomposition \emph{always} involves a singular part.

\begin{coro}
    \label{cor:decomp-Phij}
    Let $(c_0,c_1) \in \R^2 \setminus \{0\}$.
    There exists $(d_0,d_1) \in \R^2 \setminus \{0\}$ and $\Phi_\reg \in Z^1$, as defined in \eqref{eq:Z1}, such that
    \begin{equation} \label{eq:Phic-shear-dec}
        c_0 \overline{\Phi^0} + c_1 \overline{\Phi^1} 
        = 
        (-c_0 z + c_1) \mathbf{1}_{z > 0} \zeta(z) 
        + d_0 \busing^0(x,-z) + d_1 \busing^1(x,-z) + \Phi_\reg,
    \end{equation}
    where $\zeta$ is a smooth cut-off function, equal to $1$ near $z = 0$ and compactly supported in $(-1,1)$.
\end{coro}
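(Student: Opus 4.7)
The plan is to subtract from $\Phi := c_0 \overline{\Phi^0} + c_1 \overline{\Phi^1}$ an elementary piecewise-smooth lift that reproduces exactly its jumps across $\{z=0\}$, and then to apply \cref{coro:decomposition-profile} to the jump-free remainder after reversing the horizontal convection. Since $\Phi$ satisfies $[\Phi]_{z=0} = c_1$ and $[\p_z \Phi]_{z=0} = -c_0$, the lift $L(x,z) := (-c_0 z + c_1)\mathbf{1}_{z>0}\zeta(z)$ reproduces both jumps, using $\zeta(0) = 1$ and $\zeta'(0) = 0$. Because $L$ does not depend on $x$, the remainder $R := \Phi - L$ is continuous with continuous $z$-derivative across $\{z=0\}$ and solves in $\Om$ (distributionally)
\begin{equation*}
-z \p_x R - \p_{zz} R = g_R := \p_{zz}L \cdot \mathbf{1}_{z>0},
\end{equation*}
with a right-hand side that is smooth on $\overline{\Om}$ and supported away from $\{z=0\}\cup\{z=\pm 1\}$, thanks to the flatness of $\zeta$ at $0$ and its compact support inside $(-1,1)$.

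The next step is to reduce to the forward problem by the reflection $S(x,z) := R(x,-z)$, which turns $-z\p_x - \p_{zz}$ into $z\p_x - \p_{zz}$. The boundary data of $\Phi$ yield $S|_{\Sigma_0} = 0$, $S|_{\Sigma_1}(z) = -(c_0 z + c_1)\zeta(-z)$ and $S|_{z=\pm 1}=0$; the transformed source is $\tilde g(x,z) := g_R(x,-z)$. A direct computation, again exploiting the flatness of $\zeta$ at $0$, shows that the boundary quantities $\tilde\Delta_i(z) = (\tilde g(x_i,z) + \p_z^2 \tilde\delta_i(z))/z$ of \eqref{eq:def-Di} vanish identically, so that all the regularity and compatibility requirements of \cref{coro:decomposition-profile} are met. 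That corollary then produces $S = d_0 \busing^0 + d_1 \busing^1 + S_\reg$ with $S_\reg \in \qone$, and undoing the reflection yields \eqref{eq:Phic-shear-dec} with $\Phi_\reg(x,z) := S_\reg(x,-z) \in \qone$.

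The main obstacle is proving $(d_0,d_1)\neq(0,0)$, which I handle by contradiction: if $d_0 = d_1 = 0$, then $R = \Phi_\reg \in \qone \hookrightarrow L^\infty_x(W^{2,\infty}_z)$ by \cref{lem:Q0121-embed}. Since $g_R$ does not depend on $x$, differentiating the equation for $R$ in $x$ yields that $\p_x R \in L^2_x(H^1_0(-1,1))$ solves the homogeneous backward equation $-z\p_x v - \p_{zz} v = 0$ on all of $\Om$; tracing the equation for $R$ along the inflow boundaries $\{x_0\}\times(-1,0)$ and $\{x_1\}\times(0,1)$ (using the explicit form of $R$ there and $g_R|_{\{x_0\}\times(-1,0)}=0$) shows that $\p_x R$ also vanishes on these two segments. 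The uniqueness analog of \cref{p:shear-X0} after the reflection $x \mapsto x_0 + x_1 - x$ then forces $\p_x R \equiv 0$, so that $R$ depends only on $z$. Combined with the boundary values $R(x_0,\cdot)|_{(-1,0)} = 0$ and $R(x_1,\cdot)|_{(0,1)} = (c_0 z - c_1)\zeta(z)$, this gives $R(z) = 0$ for $z \in (-1,0)$ and $R(z) = (c_0 z - c_1)\zeta(z)$ for $z \in (0,1)$, which are incompatible with the continuity of $R$ and $\p_z R$ across $\{z=0\}$ unless $(c_0,c_1) = (0,0)$, yielding the contradiction.
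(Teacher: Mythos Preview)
Your proof is correct and follows the same route as the paper: subtract the elementary lift $(-c_0z+c_1)\mathbf{1}_{z>0}\zeta(z)$ to cancel the jumps, reflect in $z$ to turn the backward operator into the forward one, check that the resulting source and boundary data meet the hypotheses of \cref{coro:decomposition-profile} (in particular $\widetilde{\Delta}_0=\widetilde{\Delta}_1=0$), and read off the decomposition.

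The only genuine difference is in the argument for $(d_0,d_1)\neq(0,0)$. The paper invokes \cref{prop:linear-flux-X1}: since $\p_x g_c=0$ and $\Delta_0=\Delta_1=0$, the orthogonality conditions $\overline{\ell^j}(g_c,0,\delta_1)=0$ reduce to $\p_z^j\delta_0(0)-\p_z^j\delta_1(0)=0$, and with $\delta_0=0$, $\delta_1(0)=-c_1$, $\delta_1'(0)=-c_0$ this forces $(c_0,c_1)=(0,0)$. You instead argue directly: if the singular coefficients vanish then $R\in\qone$, and since the source is $x$-independent, $\p_x R$ is a weak solution of the homogeneous backward problem with vanishing inflow data (which you verify by tracing the equation on the two inflow segments), hence $\p_x R\equiv 0$ by uniqueness; the resulting $z$-only profile is then forced by the lateral traces to violate continuity at $z=0$ unless $(c_0,c_1)=(0,0)$. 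Both arguments are short and exploit exactly the same boundary values; the paper's version is slightly more economical because it reuses the orthogonality machinery already set up, while yours is more self-contained.
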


\begin{proof}
    Using the same decomposition as in \cref{lem:def-dual-shear}, set
    \begin{equation*}
    \Psi^c := c_0 \overline{\Phi^0} + c_1 \overline{\Phi^1} - (-c_0 z + c_1) \mathbf{1}_{z > 0} \zeta(z).
    \end{equation*}
    Then $\widetilde{\Psi^c}(x,z) := \Psi^c(x,-z)$ is the solution to
    \begin{equation*}
    \begin{cases}
        z \p_x \widetilde{\Psi^c} - \p_{zz} \widetilde{\Psi^c} = g_c & \text{in } \Om, \\
        \Psi^c(x_0,z) = 0 & \text{for } z \in (0,1), \\
        \Psi^c(x_1,z) = (-c_0 z - c_1) \zeta(-z) & \text{for } z \in (-1,0), \\
        \Psi^c_{|z=\pm 1}=0,
    \end{cases}
    \end{equation*}
    where $g_c = (c_1 \zeta''(-z) - 2 c_0 \zeta'(-z) + c_0 z \zeta''(-z)) \mathbf 1_{z<0}$.
    Thus, \eqref{eq:Phic-shear-dec} follows from \cref{coro:decomposition-profile}, applied with $f = g_c \in C^\infty(\overline{\Om})$, $\delta_0 = \Delta_0 = 0$ and $\delta_1(z) = (-c_0 z - c_1) \zeta(-z)$ and $\Delta_1 = 0$.
    
    It remains to prove that $(d_0,d_1) \neq (0,0)$.
    By \cref{p:shear-WP-Z1}, $\widetilde{\Psi^c} \in H^1_x H^1_z$ if and only if $\overline{\ell^j}(g_c,0,\delta_1) = 0$ for $j = 0,1$.
    By \cref{def:ell-shear}, since $\p_x g_c = 0$ and $\Delta_0 = \Delta_1 = 0$,
    \begin{equation*}
        \overline{\ell^j}(g_c,0,\delta_1) = 0
        \Longleftrightarrow \p_z^j\delta_0(0) - \p_z^j\delta_1(0) = 0.
    \end{equation*}
    Since $\delta_0 = 0$ and $\delta_1(0) = -c_1$ and $\delta_1'(0) = - c_0$, $(d_0,d_1) = (0,0)$ if and only if $\widetilde{\Psi^c} \in H^1_x H^1_z$, if and only if $(c_0,c_1) = 0$.
\end{proof}

\begin{rmk} \label{rmk:16-shear}
    Using \cref{cor:decomp-Phij} and the regularity result \cref{lem:busing-h56} on $\busing^i$, we see that $f \mapsto \overline{\ell^j}(f,0,0) = \int_\Omega \p_x f \overline{\Phi^j}$ is not only continuous on $H^1_x L^2_z$ but also on $H^\sigma_x L^2_z$ for every $\sigma > \frac 16$.
    We will encounter a related threshold of tangential regularity in \cref{lem:shear-Hs}.
\end{rmk}

Using the decomposition of the dual profiles, we can show that the orthogonality conditions are also independent when considering only variations of the inflow boundary data.

\begin{prop}
    \label{cor:free_ellj-sansf}
    The linear forms $\overline{\ell^0}$ and $\overline{\ell^1}$ are independent on $\{0\} \times C^\infty_c(\Sigma_0) \times C^\infty_c(\Sigma_1)$.
\end{prop}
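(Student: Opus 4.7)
The plan is to reduce the independence statement on $\{0\}\times C^\infty_c(\Sigma_0)\times C^\infty_c(\Sigma_1)$ to an affine-trace property of $\Phi^c := c_0\overline{\Phi^0}+c_1\overline{\Phi^1}$, and then to contradict this property using the singular structure of the dual profiles exposed by \cref{cor:decomp-Phij}. First, I note that when $f=0$ and $\delta_i\in C^\infty_c(\Sigma_i)$, the pointwise terms $\p_z^j\delta_i(0)$ in \cref{def:ell-shear} vanish (since $\delta_i$ is supported away from $z=0$), and $z\Delta_i=\p_z^2\delta_i$. Hence
\begin{equation*}
c_0\overline{\ell^0}(0,\delta_0,\delta_1)+c_1\overline{\ell^1}(0,\delta_0,\delta_1)=\int_{\Sigma_0}\p_z^2\delta_0\,\Phi^c_{\rvert\Sigma_0} - \int_{\Sigma_1}\p_z^2\delta_1\,\Phi^c_{\rvert\Sigma_1},
\end{equation*}
and vanishing of this linear form for all admissible $(\delta_0,\delta_1)$ is equivalent to $\p_z^2(\Phi^c_{\rvert\Sigma_i})=0$ in $\mathcal D'(\Sigma_i)$ for $i=0,1$; equivalently, each trace is affine on its interval.

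Assuming $(c_0,c_1)\neq 0$, I will then apply \cref{cor:decomp-Phij} to get $(d_0,d_1)\neq 0$ and $\Phi_\reg\in\qone$ with
\begin{equation*}
\Phi^c(x,z)=(-c_0 z+c_1)\mathbf 1_{z>0}\zeta(z)+d_0\busing^0(x,-z)+d_1\busing^1(x,-z)+\Phi_\reg(x,z).
\end{equation*}
Taking the trace on $\Sigma_0$ and choosing the cut-offs $\chi_i$ from \cref{def:sing-profiles} with disjoint supports, the term $d_1\busing^1(x_0,-z)$ vanishes identically on $(0,1)$, the affine piece $(-c_0 z+c_1)\zeta$ is smooth, and the trace of $\Phi_\reg$ on $\Sigma_0$ lies (by interpolation of the embeddings from \cref{lem:Q0121-embed} together with a standard trace theorem between $L^2_x H^5_z$ and $H^{5/3}_x L^2_z$) in a space embedded in $W^{2,\infty}(0,1)$. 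The key contribution is $d_0\busing^0(x_0,-z)$: for $z>0$ small, the angular variable $t_0=(-z)|x-x_0|^{-1/3}$ tends to $-\infty$ as $x\to x_0^+$ while $\gnot(-\infty)=1$ by \cref{prop:rGk}, so the trace behaves exactly as $d_0\, z^{1/2}$ near $z=0^+$.

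The main obstacle is then to exploit the identity $\p_z^2(\Phi^c_{\rvert\Sigma_0})=0$ in $\mathcal{D}'((0,1))$: it would force $d_0\,\p_z^2[z^{1/2}]=-\tfrac{d_0}{4}z^{-3/2}$ to be cancelled by a locally bounded contribution near $z=0^+$, which is impossible unless $d_0=0$. Symmetrically on $\Sigma_1$, where $\busing^1(x_1,-z)\sim|z|^{1/2}$ near $z=0^-$ and $\busing^0(x_1,-z)$ vanishes, one obtains $d_1=0$, contradicting $(d_0,d_1)\neq 0$. The delicate points that deserve a careful check are the precise asymptotic behavior of $\busing^i(x_i,-z)$ near $z=0$ (to make sure the prefactor in front of $z^{1/2}$ is genuinely $d_i$ and not zero), and the trace regularity of $\Phi_\reg$ on $\Sigma_i$, which must be regular enough (at least $W^{2,\infty}$) to ensure its second derivative cannot absorb a $z^{-3/2}$ singularity; everything else is bookkeeping.
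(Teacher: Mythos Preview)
Your proposal is correct and follows essentially the same approach as the paper: reduce to the vanishing of $\p_z^2(\Phi^c_{\rvert\Sigma_i})$, invoke the decomposition of \cref{cor:decomp-Phij}, and observe that the $d_i\,z^{1/2}$ contribution produces a non-integrable second derivative that cannot be cancelled by the regular part. The only cosmetic differences are that the paper argues on a single side (assuming WLOG $d_0\neq 0$ and working on $\Sigma_0$) rather than on both, and that it only uses $\Phi_{\reg\rvert\Sigma_0}\in H^2(\Sigma_0)$ (from $\qone\hookrightarrow H^1_xH^2_z$), contrasting $z^{-3/2}\notin L^2(0,\bar z)$ with $\p_{zz}\Phi_\reg(x_0,\cdot)\in L^2$, whereas you invoke the stronger (but still valid, via $\qone\hookrightarrow H^{1/2+\eps}_x H^{7/2-3\eps}_z$) embedding into $W^{2,\infty}$.
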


\begin{proof}
    By contradiction, let $(c_0,c_1) \in \R^2 \setminus \{ 0 \}$ such that, for every $\delta_0 \in C^\infty_c(\Sigma_0)$ and $\delta_1 \in C^\infty_c(\Sigma_1)$,
    \begin{equation*}
        c_0 \overline{\ell^0}(0,\delta_0,\delta_1) + c_1 \overline{\ell^1}(0,\delta_0,\delta_1) = 0.
    \end{equation*}
    Let $(d_0,d_1) \in \R^2 \setminus \{0\}$, $\zeta$ and $\Phi_\reg \in Z^1$ be given by \cref{cor:decomp-Phij}.
    By symmetry, assume that $d_0 \neq 0$.
    Then, by \cref{def:ell-shear}, for every $\delta_0 \in C^\infty_c(\Sigma_0)$, defining $\Delta_0(z):=\delta_0''(z)/z$
    \begin{equation*}
        \begin{split}
            0 & = c_0 \overline{\ell^0}(0,\delta_0,0) + c_1 \overline{\ell^1}(0,\delta_0,0) \\
            & = \int_{\Sigma_0} z \Delta_0 \left[ (-c_0 z + c_1) \zeta(z) + d_0 \busing^0(x_0,-z) + \Phi_\reg (x_0, z)\right]\\
            &=\int_{\Sigma_0} \delta_0'' \left[ (-c_0 z + c_1) \zeta(z) + d_0 \busing^0(x_0,-z) + \Phi_\reg  (x_0, z)\right].
        \end{split}
    \end{equation*}
    Let $\bar{z} > 0$ small enough, one can ensure that $\zeta \equiv 1$ on $(0,\bar{z})$ and $\busing^0(x_0,-z) = z^{\frac12} \gnot(-\infty) = z^{\frac12}$ (see \cref{def:sing-profiles} and \cref{prop:rGk}).
    Since $Z^1 \hookrightarrow H^1_x H^2_z$, $\Phi_{\reg\rvert\Sigma_0} \in H^2(\Sigma_0)$.
    If $\supp \delta_0 \subset (0,\bar{z})$ for $\bar{z} > 0$, integrating by parts yields
    \begin{equation*}
        0 = d_0 \int_0^1 \left[ - \frac14 z^{-\frac32} + \varphi(z) \right] \delta_0(z), 
    \end{equation*}
    where $\varphi(z) := \p_{zz} \Phi_\reg(x_0,z) \in L^2(\Sigma_0)$.
    Since $z \mapsto z^{-\frac32}$ does not belong to $L^2(0,\bar{z})$ but $\varphi$ does, one easily deduces that there exists $\delta_0 \in C^\infty_c((0,\bar{z}))$ such that the right-hand side is non-zero, reaching a contradiction. 
\end{proof}

Let us conclude this section with an easy consequence of the decomposition result from \cref{coro:decomposition-profile}, which will be used in \cref{sec:vorticity}.

\begin{coro}[Single orthogonality condition for localized solutions]
    \label{coro:ortho-localized}
    There exists a couple $(a_0, a_1) \in \R^2\setminus\{(0,0)\}$ such that the following result holds.
    
    Let $(f,\delta_0,\delta_1) \in \cLin_K$ and let $u\in Z^0(\Omega)$ be the unique solution to \eqref{eq:shear-z}.
    Assume that there exists $0<r<\min (x_1-x_0, 1)$ such that $\supp u \subset B((x_1,0), r)^c$. 
    Then $u\in Z^1(\Om)$ if and only if
    \begin{equation*}
    \left(a_0 \overline{\ell^0} + a_1 \overline{\ell^1}\right) (f,\delta_0,\delta_1)=0.
    \end{equation*}
\end{coro}

\begin{proof}
    Let us choose the cut-off function $\chi_1$ from \cref{def:sing-profiles} such that $\supp u \cap \supp \chi_1 = \emptyset$.
    According to \cref{coro:decomposition-profile}, there exists $(c_0,c_1)\in \R^2$ and $\ureg\in Z^1(\Om)$ such that $u=c_0 \busing^0 + c_1 \busing^1 + \ureg$. 
    Multiplying this identity by $\chi_1$, we infer that $c_1 \busing^1 \chi_1 =- \ureg \chi_1 \in Z^1(\Om)$.  
    \cref{lem:busing} then entails that $c_1=0$. 
    
    Therefore $u \in Z^1(\Om)$ if and only if $c_0=0$. We then recall \eqref{def:c}, and we denote by $(a_0, a_1)$ the two coefficients in the first line of $\overline{M}^{-1}$. 
    The result follows.
\end{proof}
	
\subsection{Interpolation and fractional regularity}
\label{sec:shear-frac}

For further purposes, we will also need some fractional regularity results. Their proof relies on interpolation arguments, and therefore on the explicit expressions of the singular profiles. 
Due to a subtle technical difficulty, the proof of these results are postponed to \cref{sec:interpolation}.

\begin{prop} 
    \label{lem:shear-Hs}
    Let $\sigma \in (0,1) \setminus \{ 1/6, 1/2 \}$. 
    Let $f \in H^\sigma_x L^2_z$, $\delta_0\in H^2(\Sigma_0)$, $\delta_1\in H^2(\Sigma_1)$ such that $\delta_0(1)=\delta_1(-1)=0$.
    \begin{itemize}
        \item If $\sigma > 1/6$, assume that $\overline{\ell^0}(f,\delta_0,\delta_1) = \overline{\ell^1}(f,\delta_0,\delta_1) = 0$.
        \item If $\sigma > 1/2$, assume also that $\Delta_i\in \mathscr H^1_z(\Sigma_i)$ and $\Delta_1(-1)=\Delta_0(1)=0$ (recall \eqref{eq:def-Di}).
    \end{itemize}
    The unique strong solution $u \in Z^0(\Omega)$ to \eqref{eq:shear-z}  satisfies $u \in Z^\sigma(\Omega) := [Z^0(\Omega), Z^1(\Omega)]_\sigma$, \nomenclature[FZ1s]{$Z^\sigma$}{Interpolation space $[Z^0,Z^1]$ for fractional Pagani regularity}%
    with
    \begin{equation} \label{eq:estimate-zsigma-shear}
        \| u \|_{Z^\sigma} \lesssim \| f \|_{H^\sigma_x L^2_z} + \|\delta_0\|_{H^2} + \|\delta_1\|_{H^2} + \mathbf 1_{\sigma>1/2} \left( \| \Delta_0\|_{\mathscr H^1_z} + \| \Delta_1\|_{\mathscr{H}^1_z}\right).
    \end{equation}
\end{prop}

\begin{rmk}
    The case $\sigma = 1/6$ is not covered in the above result. 
    This critical level of regularity corresponds to the maximal continuity of the orthogonality conditions.
    Such critical levels are excluded from the abstract interpolation results on which we rely (see \cref{lem:lofstrom}).
    In this case, one would expect a similar result to hold, but with a supplementary norm on the data, in the spirit of \cite{MR2387839,MR1964285}.
    The case $\sigma = 1/2$ is also excluded, but it would be possible to include it provided one introduces an appropriate additional norm.
\end{rmk}

\begin{rmk}
    The regularity assumptions on the $\delta_i$'s are not optimal and could be weakened.
\end{rmk}

We also obtain the following analogue of \cref{coro:ortho-localized} in fractional regularity. 

\begin{coro}[Single orthogonality condition for localized solutions in fractional regularity]
    \label{coro:ortho-localized-frac}
    Let $(a_0, a_1) \in \R^2\setminus\{(0,0)\}$ be the couple from \cref{coro:ortho-localized}.
    Let $\sigma \in (1/6,1) \setminus\{1/2\}$. 
    Let $f \in H^\sigma_x L^2_z$, $\delta_0\in H^2(\Sigma_0)$ such that $\delta_0(1)=0$. For $\sigma>1/2$, assume also that $\Delta_0\in \mathscr H^1_z(\Sigma_0)$ and $\Delta_0(1)=0$.  Let $u\in Z^0(\Omega)$ be the unique solution to \eqref{eq:shear-z} associated with $(f,\delta_0, 0)$.

    Assume that there exists $0<r<\min (x_1-x_0, 1)$ such that $\supp u \subset B((x_1,0), r)^c$. 
    Then $u\in Z^\sigma(\Om)$ if and only if
    \begin{equation*}
    \left(a_0 \overline{\ell^0} + a_1 \overline{\ell^1}\right) (f,\delta_0,0)=0,
    \end{equation*}
    and in this case
    \begin{equation*}
    \|u\|_{Z^\sigma}\lesssim \|f\|_{H^\sigma_x L^2_z} + \|\delta_0\|_{H^2(\Sigma_0)}+ \mathbf 1_{\sigma>1/2}\| \Delta_0\|_{\mathscr H^1_z(\Sigma_0)}.
    \end{equation*}
\end{coro}

\begin{proof}
    The proof follows the same structure as the $Z^1$ case. 

    Using \cref{lem:shear-Hs}, we first prove an analogue of the decomposition result \cref{coro:decomposition-profile} for source terms $f \in H^\sigma_x L^2_z$ with $\sigma \in (1/6,1)$, where the conclusion is that $u_\reg \in Z^\sigma$.

    The conclusion then stems from the fact that $\busing^0 \notin Z^\sigma$.
    Indeed, by \cref{lem:Zsigma-Qsigma}, for $\sigma \geq 1/6$, $Z^\sigma \hookrightarrow H^{\frac 5 6}_x L^2_z \cap H^{\frac 1 6}_x H^2_z$.
    But, from \cref{lem:busing-h56}, $\busing^0 \notin H^{\frac 5 6}_x L^2_z \cap H^{\frac 1 6}_x H^2_z$.
\end{proof}

\newpage
\section{A first nonlinear example in kinetic theory}
\label{sec:Fokker-Planck}

In this section, we explain how the linear theory of \cref{sec:shear} can be used in a simple nonlinear context.
Before moving on to nonlinear examples from fluid mechanics in \cref{sec:Burgers,sec:Prandtl} (which involve additional difficulties), we encourage the reader to start by reading this section where we set up the basics of our method to construct perturbative solutions to semilinear or quasilinear problems despite orthogonality conditions. 
In particular, we formulate a black-box abstract result in \cref{sec:abstract} which we will use in the sequel.

\subsection{Description of the model and main result}

As an example, we will show how one can build regular solutions to a stationary nonlinear system of Vlasov--Poisson--Fokker--Planck type, set on a bounded interval. 
For the sake of readability, we will focus on the following system:
\begin{equation}\label{eq:VPFP}
\begin{cases}
    z \p_x u + E[u] \p_z u - \p_{zz} u = f, \\
    u_{\rvert \Sigma_i} = \delta_i, \\
    u_{\rvert z=\pm 1}=0,
\end{cases}
\end{equation}
where $E[u]$ is an electric force deriving from a potential $V[u]$ satisfying a Poisson equation:
\begin{equation}\label{eq:potential}
    E = \p_x V \quad \text{where} \quad
    \begin{cases}
    \p_{xx} V(x) = \int_{-1}^1 u(x,z)\dd z\qquad\text{for } x \in (x_0,x_1),\\
    \p_x V_{\rvert x=x_0} = 0.
    \end{cases}
\end{equation}
In this toy model, the term $E[u] \p_z u$ corresponds to a semilinear contribution, which is easily estimated since explicit integration of \eqref{eq:potential} and the Cauchy--Schwarz inequality yield
\begin{equation} \label{eq:bound-E}
    \| E[u] \|_{L^\infty(x_0,x_1)} \lesssim \| E[u] \|_{H^1(x_0,x_1)} \lesssim \| u \|_{L^2(\Om)}.
\end{equation}

\begin{rmk}
    Our toy kinetic model \eqref{eq:VPFP}-\eqref{eq:potential} departs from classical kinetic models such as the one studied in \cite{hwang2019vlasov} in the following ways:
    \begin{itemize}
        \item As mentioned before, the variable $z$ is more commonly denoted by~$v$ and represents the velocity of the particles.
        We keep the notation $z$ by consistency with the remainder of the paper.
        \item Usually, even if the position variable $x$ lives in a bounded domain, the velocity variable $z$ lives in $\R$ so that particles can take arbitrary speeds.
        Since our motivation is to understand what happens near the critical line $\{ z = 0 \}$ we focus here on the region $z \in [-1,1]$.
        We expect that our techniques can be applied to the unbounded case to obtain similar results, provided that one works in the appropriate functional spaces to encode decay as $|z| \to \infty$.
        \item One could also enforce a non-zero Neumann boundary condition for the potential $V$ at the left endpoint $x=x_0$, namely $\p_x V_{\rvert x = x_0} = g_0 \in \R$ as in \cite{hwang2019vlasov}.
        This is a straightforward adaptation of the results presented below.
    \end{itemize}
\end{rmk}

The goal of the next paragraphs is to prove the following counterparts of \cref{p:pagani-shear} and \cref{thm:shear-Z1} concerning the linear model \eqref{eq:shear-z} for our nonlinear toy model.
We will work with the following spaces of data triplets:
\nomenclature[FHFP]{$ \cH_{FP}$}{Hilbert space with norm \eqref{def:norm-FP}  of data triplets $ (f,\delta_0,\delta_1)$ for the kinetic toy model}
\begin{equation}
    \label{eq:def-H_FP}
    \cH_{FP} := \left\{ (f,\delta_0,\delta_1) \in \cLin_K ; \quad 
    \delta_i'(z)/z \in \mathscr{H}^1_z(\Sigma_i) \text{ and } 
    \delta_i'((-1)^i) = 0 \text{ for } i \in \{ 0,1 \} \right\}
\end{equation}
with the norm
\begin{equation}\label{def:norm-FP}
    \| (f,\delta_0,\delta_1) \|_{\cH_{FP}} := \| (f,\delta_0,\delta_1) \|_{\cLin_K} + \| \delta_0'(z)/z \|_{\mathscr{H}^1_z} + \| \delta_1'(z)/z \|_{\mathscr{H}^1_z},
\end{equation}
where we recall that the space $\mathscr{H}^1_z$ is defined in \eqref{eq:H-pagani-k} and the space $\cLin_K$ in \eqref{eq:def-HK}. We also define
\begin{equation*}
    \cH_{FP,\operatorname{sg}}^\perp := \{ (f,\delta_0,\delta_1) \in \cH_{FP} ; \quad 
    \overline{\ell^0}(f,\delta_0,\delta_1) = \overline{\ell^1}(f,\delta_0,\delta_1) = 0 \}.
\end{equation*}

\begin{thm}
    \label{prop:VPFP}
    There exists a constant $\eta > 0$, and a Lipschitz submanifold $\cM_{FP}$ of $\cH_{FP}$ of codimension~2, containing $0$ and included in the ball of radius $\eta$ in $\cH_{FP}$, modeled on $\cH_{FP,\operatorname{sg}}^\perp$ and tangent to it at $0$ (see \cref{rmk:VPFP-tangent}), such that the following statements hold:
    \begin{enumerate}
        \item \label{item:prop:VPFP-1} 
        For all $(f,\delta_0,\delta_1) \in L^2(\Omega) \times \mathscr{H}^1_z(\Sigma_0) \times \mathscr{H}^1_z(\Sigma_1)$ with $\delta_0(1) = \delta_1(-1) = 0$ such that
        \begin{equation} \label{eq:fL2+d0+d1<eta}
            \| f \|_{L^2} + \| \delta_0 \|_{\mathscr{H}^1_z} + \| \delta_1 \|_{\mathscr{H}^1_z} \leq \eta,
        \end{equation}
        system \eqref{eq:VPFP}-\eqref{eq:potential} has a solution $u\in Z^0(\Omega)$ satisfying
        \begin{equation} \label{eq:VPFP-Z0}
            \| u \|_{Z^0} \lesssim \| f \|_{L^2} + \| \delta_0 \|_{\mathscr{H}^1_z} + \| \delta_1 \|_{\mathscr{H}^1_z}
        \end{equation}
        and which is unique in a neighborhood of $0$ in $Z^0(\Omega)$.
        
        \item \label{item:prop:VPFP-2} 
        For all $(f, \delta_0, \delta_1) \in \cH_{FP}$ such that 
        \begin{equation*}
            \|(f,\delta_0,\delta_1)\|_{\cH_{FP}} \leq \eta,
        \end{equation*}
        the locally unique solution $u\in Z^0(\Omega)$ to \eqref{eq:VPFP}-\eqref{eq:potential} enjoys $Z^1(\Omega)$ regularity if and only if $(f, \delta_0,\delta_1) \in \cM_{FP}$, which corresponds to two nonlinear orthogonality conditions.
        
        For such data, one has
        \begin{equation} \label{eq:VPFP-Z1}
            \| u \|_{Z^1} \lesssim \| (f,\delta_0,\delta_1) \|_{\cH_{FP}}.
        \end{equation}
    \end{enumerate}
\end{thm}

\begin{rmk}
The nonlinearity of the Vlasov--Poisson--Fokker--Planck system \eqref{eq:VPFP} is sufficiently mild to allow for a theory of weak solutions, leading to the first statement of \cref{prop:VPFP}.
The Prandtl system in the vicinity of the recirculation zone enjoys the same feature, accounting for the first part of \cref{thm:prandtl}.
However, the nonlinearity in the Burgers system \eqref{eq:eq0-uux} is stronger, and prevents us from proving the analogue of the first statement of the above theorem.

\end{rmk}

\subsection{Well-posedness theory with low regularity}

We prove in this subsection \cref{item:prop:VPFP-1} of \cref{prop:VPFP}, which corresponds to the well-posedness theory at regularity $Z^0$, and is therefore a nonlinear counterpart of \cref{p:pagani-shear}.

\begin{lem}[Existence of $Z^0$ solutions of \eqref{eq:VPFP}-\eqref{eq:potential}]
    There exists $\eta > 0$ such that, for any $(f,\delta_0,\delta_1) \in L^2(\Omega) \times \mathscr{H}^1_z(\Sigma_0) \times \mathscr{H}^1_z(\Sigma_1)$ with $\delta_0(1) = \delta_1(-1) = 0$ satisfying \eqref{eq:fL2+d0+d1<eta}, there exists a solution $u \in Z^0(\Om)$ to \eqref{eq:VPFP}-\eqref{eq:potential} with \eqref{eq:VPFP-Z0}.
\label{lem:VPFP-Z0}
\end{lem}

\begin{proof}
    Let $(f,\delta_0,\delta_1) \in L^2(\Omega) \times \mathscr{H}^1_z(\Sigma_0) \times \mathscr{H}^1_z(\Sigma_1)$ with $\delta_0(1) = \delta_1(-1) = 0$ satisfying \eqref{eq:fL2+d0+d1<eta}
    for some $\eta > 0$ small enough to be chosen later.

    \begin{itemize}
        \item \textbf{Definition of the sequence.}
        We construct a sequence by setting $u_0 := 0$ and, for all $n\in \N$, we define $u_{n+1} \in Z^0(\Omega)$ by induction as the solution to
        \begin{equation*} 
        \begin{cases}
            z \p_x u_{n+1}- \p_{zz} u_{n+1}= f - E_n \p_z u_n, \\
            (u_{n+1})_{\rvert \Sigma_i} = \delta_i, \\
            (u_{n+1})_{\rvert z=\pm 1}=0,
        \end{cases}
        \end{equation*}
        where $E_n := E[u_n]$.
        At each step, by \eqref{eq:bound-E}, $E_n \in L^\infty(x_0,x_1)$.
        Hence, since $u_n \in Z^0(\Om)$, $f - E_n \p_z u_n \in L^2(\Om)$, so the existence of $u_{n+1} \in Z^0(\Om)$ follows from \cref{p:pagani-shear}.

        \item \textbf{Uniform bound in $Z^0$.}
        Let us prove by induction that $\|u_n\|_{Z^0} \leq 2 C_P \eta$ for all $n\in \N$, where $C_P$ is the constant in Pagani's estimate \eqref{estimate-pagani-Z0}, provided that $\eta$ is small enough.
        The statement is true for $n=0$. 
        For $n\geq 0$, by \eqref{eq:bound-E}, $\| E_n \|_{L^\infty} \lesssim \| u_n \|_{L^2} \lesssim \eta$.
        As a consequence, it follows from \cref{p:pagani-shear} that
        \begin{equation*}
            \| u_{n+1}\|_{Z^0}
            \leq C_P(\| f \|_{L^2} + \| \delta_0 \|_{\mathscr{H}^1_z} + \| \delta_1 \|_{\mathscr{H}^1_z} + \|E_n\|_\infty \|\p_z u_n\|_{L^2} )\leq C_P \eta + C \eta^2,
        \end{equation*}
        for some $C$ depending only on $\Om$. 
        Therefore, if $C\eta<C_P$, the bound propagates by induction.

        \item \textbf{Convergence.}
        Now, let $w_n:= u_{n+1}-u_n$. 
        Then, for $n \geq 1$, $w_n$ is a solution to
        \begin{equation*}
            \begin{cases}
                z \p_x w_n  - \p_{zz} w_n = - (E_n - E_{n-1}) \p_z u_{n-1} - E_n \p_z (u_n-u_{n-1}), \\
                (w_n)_{\rvert \Sigma_i} = 0, \\
                (w_n)_{\rvert z = \pm 1} = 0.
            \end{cases}
        \end{equation*}
        By \eqref{eq:bound-E}, $\| E_n - E_{n-1} \|_{L^\infty} \lesssim \| u_n - u_{n-1} \|_{L^2}$ and $\| E_n \|_{L^\infty} \lesssim \|u_n\|_{L^2}$. 
        Hence, by \cref{p:pagani-shear}, 
        \begin{equation*}
            \|w_n\|_{Z^0} 
            \lesssim
            \| (E_n - E_{n-1}) \p_z u_{n-1} \|_{L^2} + \| E_n \p_z (u_n-u_{n-1}) \|_{L^2}
            \lesssim \eta \|w_{n-1}\|_{Z^0}
        \end{equation*}
        and thus $(u_n)_{n\in \N}$ is a Cauchy sequence in $Z^0(\Om)$ provided that $\eta$ is small enough.
        Passing to the limit as $n\to \infty$, we obtain a strong solution $u \in Z^0$ with $\|u\|_{Z^0} \leq 2 C_P \eta$ to \eqref{eq:VPFP}-\eqref{eq:potential}.

        Eventually, the uniform bound propagated on the sequence also passes to the limit and implies \eqref{eq:VPFP-Z0}.
        \qedhere
    \end{itemize}
\end{proof}

\begin{lem}[Uniqueness of $Z^0$ solutions of \eqref{eq:VPFP}-\eqref{eq:potential}]
    There exists $\eta > 0$ such that, for any $(f,\delta_0,\delta_1) \in L^2(\Omega) \times \mathscr{H}^1_z(\Sigma_0) \times \mathscr{H}^1_z(\Sigma_1)$, \eqref{eq:VPFP}-\eqref{eq:potential} has at most one solution $u \in Z^0(\Om)$ such that $\|u\|_{Z^0} \leq \eta$.
\end{lem}

\begin{proof}
    Let $(f,\delta_0,\delta_1) \in L^2(\Omega) \times \mathscr{H}^1_z(\Sigma_0) \times \mathscr{H}^1_z(\Sigma_1)$ and $u, u' \in Z^0(\Omega)$ be two solutions to \eqref{eq:VPFP}-\eqref{eq:potential}.
    Then $w := u - u' \in Z^0(\Omega)$ is a solution to
    \begin{equation} \label{eq:VFPP-w}
        \begin{cases}
            z \p_x w - \p_{zz} w = (E[u']-E[u]) \p_z u' - E[u] \p_z w, \\
            w_{\rvert \Sigma_i} = 0, \\
            w_{\rvert z=\pm 1}=0.
        \end{cases}
    \end{equation}
    Multiplying \eqref{eq:VFPP-w} by $w$, integrating by parts and using the boundary conditions and $\p_z E[u] = 0$, we obtain
    \begin{equation*}
        \int_\Omega (\p_z w)^2 \leq \int_\Omega \left|(E[u']-E[u]) \p_z u' w\right|.
    \end{equation*}
    By \eqref{eq:bound-E}, $\|E[u']-E[u]\|_{L^\infty} \lesssim \|w\|_{L^2}$.
    Hence, since $w_{\rvert z = \pm 1} = 0$, Poincaré's inequality entails that
    \begin{equation*}
        \| w \|_{L^2}^2 
        \lesssim \| \p_z w \|_{L^2}^2 
        \lesssim \| E[u'] - E[u] \|_{L^\infty} \| \p_z u' \|_{L^2} \| w \|_{L^2}
        \lesssim \| \p_z u' \|_{L^2} \| w \|_{L^2}^2.
    \end{equation*}
    Hence, there exists $C_1 > 0$ (depending only on $\Omega$) such that
    \begin{equation} \label{eq:VFPP-w-leq-w}
        \| w \|_{L^2}^2 \leq C_1 \| \p_z u' \|_{L^2} \| w \|_{L^2}^2.
    \end{equation}
    If $C_1 \| \p_z u' \|_{L^2} < 1$, \eqref{eq:VFPP-w-leq-w} implies $w = 0$, so uniqueness holds in the ball of radius $1/C_1$ of $Z^0(\Omega)$.
\end{proof}

\subsection{Nonlinear orthogonality conditions for higher regularity}
\label{sec:VPFP-constr-Z1}

We now prove \cref{item:prop:VPFP-2} of \cref{prop:VPFP}, which corresponds to the well-posedness theory at regularity~$Z^1$, under orthogonality conditions, and is therefore a nonlinear counterpart of \cref{thm:shear-Z1}.

\begin{lem} \label{lem:VPFP-biorth}
    There exists $(f^k,\delta_0^k,\delta_1^k) \in \cH_{FP}$ such that, for $j,k\in\{0,1\}$, $\overline{\ell^j}(f^k,\delta_0^k,\delta_1^k) = \mathbf{1}_{j=k}$.
\end{lem}

\begin{proof}
    As \cref{lem:biorth}, this follows from \cref{free:f}. 
\end{proof}

\begin{prop} \label{p:VPFP-constr-Z1}
    There exist $\eta > 0$ and maps $U_{FP} : B_\eta \to Z^1(\Om)$ and $(\nu_{FP}^0,\nu_{FP}^1) : B_\eta \to \R^2$, where $B_\eta$ is the ball of radius $\eta$ in $\cH_{FP}$ such that, for any $(f,\delta_0,\delta_1) \in B_\eta$, $u := U_{FP}(f,\delta_0,\delta_1) \in Z^1(\Om)$ and $\nu^j := \nu_{FP}^j(f,\delta_0,\delta_1)$ obey the equation
    \begin{equation} \label{FP-nu}
        \begin{cases}
            z \p_x u + E[u] \p_z u -\p_{zz} u = f + \nu^0 f^0 + \nu^1 f^1, \\
            u_{\rvert \Sigma_i} = \delta_i + \nu^0 \delta_i^0 + \nu^1 \delta_i^1, \\
            u_{\rvert z = \pm 1} = 0
        \end{cases}
    \end{equation}
    where the triplets $(f^k,\delta_0^k,\delta_1^k)$ for $k \in \{0,1\}$ are defined in \cref{lem:VPFP-biorth}.
    Furthermore,  $u$ and $\nu$ satisfy the estimate
    \begin{equation} \label{eq:VPFP-unu-leq-Xi}
        \| u \|_{Z^1} + |\nu^0| + |\nu^1| \lesssim \|(f,\delta_0,\delta_1) \|_{\cH_{FP}}
    \end{equation}
    and the orthogonality conditions 
    \begin{equation} \label{eq:VPFP-nu-limit}
        \nu^j = - \overline{\ell^j}(f- E[u] \p_z u, \delta_0,\delta_1)\quad \text{for }j \in \{0,1\}.
    \end{equation}
\end{prop}

\begin{proof}
    Let $(f,\delta_0,\delta_1) \in \cH_{FP}$ with $\|(f,\delta_0,\delta_1)\|_{\cH_{FP}} \leq \eta$ small enough to be chosen later on.
    We modify our iterative scheme to construct $Z^1$ solutions using \cref{thm:shear-Z1} and accommodate for the two orthogonality conditions at each step.
    
    \begin{itemize}
        \item \textbf{Definition of the sequence.}
        More precisely, we take $u_0 := 0$ and, for $n \in \N$, given $u_n \in Z^1(\Omega)$ such that $u_n\vert_{\Sigma_i}\in \mathscr{H}^1_z(\Sigma_i)$, we define $u_{n+1} \in Z^1(\Om)$ as the solution to
        \begin{equation} \label{eq:VPFP-un-Z1}
            \begin{cases}
                z \p_x u_{n+1}- \p_{zz} u_{n+1}= f - E_n \p_z u_n + \nu_{n+1}^0 f^0 + \nu_{n+1}^1 f^1, \\
                (u_{n+1})_{\rvert \Sigma_i} = \delta_i + \nu_{n+1}^0 \delta^0_i + \nu_{n+1}^1 \delta^1_i, \\
                (u_{n+1})_{\rvert z=\pm 1}=0,
            \end{cases}
        \end{equation}
        where $E_n := E[u_n]$, the triplets $(f^k,\delta_0^k,\delta_1^k)$ for $k \in \{0,1\}$ are defined in \cref{lem:VPFP-biorth} and
        \begin{equation} \label{eq:VPFP-nu}
            \nu_{n+1}^j := - \overline{\ell^j} (f-E_n\p_z u_n, \delta_0,\delta_1).
        \end{equation}
        This choice ensures that the two orthogonality conditions
        \begin{equation*}
            \overline{\ell^j} \big(
            f - E_n \p_z u_n + \nu_{n+1}^0 f^0 + \nu_{n+1}^1 f^1,
            \delta_0 + \nu_{n+1}^0 \delta^0_0 + \nu_{n+1}^1 \delta^1_0,
            \delta_1 + \nu_{n+1}^0 \delta^0_1 + \nu_{n+1}^1 \delta^1_1
            \big) = 0
        \end{equation*}
        are satisfied.
         
        We now verify that the data of \eqref{eq:VPFP-un-Z1} satisfy the assumptions of \cref{thm:shear-Z1}. This mostly follows from the inclusion $\cH_{FP} \subset \cLin_K$.
        It only remains to check that $(- E_n \p_z u_n, 0,0) \in \cLin_K$, i.e.\ that $- E_n \p_z u_n \in H^1_x L^2_z$, $(- E_n \p_z u_n / z)_{\rvert \Sigma_i} \in \mathscr{H}^1_z(\Sigma_i)$ and $(- E_n \p_z u_n / z)(x_i,(-1)^i) = 0$.
         The condition $\p_z u_n(x_i,(-1)^i) = 0$ is guaranteed by the constraint $\delta_i'((-1)^i) = 0$ contained in definition \eqref{eq:def-H_FP} which also entails $(\delta_i^k)'((-1)^i) = 0$.
        We now estimate the norms of $ E_n \p_z u_n$.
First, from the lateral boundary conditions, we derive that
        \begin{equation}\label{est:pzu_Sigmai}
            \begin{split}
            \| \p_z u_{n+1}(x_i,z) / z \|_{\mathscr{H}^1_z(\Sigma_i)}
            & \leq \| \delta_i'(z) / z \|_{\mathscr{H}^1_z(\Sigma_i)}
            + \sum_{k \in \{0,1\}} |\nu^k_{n+1} | \cdot \| (\delta_i^k)'(z) / z \|_{\mathscr{H}^1_z(\Sigma_i)} \\
            & \lesssim \| (f,\delta_0,\delta_1) \|_{\cH_{FP}} + |\nu^0_{n+1}| + |\nu^1_{n+1}|.
            \end{split}
        \end{equation}
Since  $E_n$ does not depend on $z$, we obtain, using  \eqref{eq:bound-E},
        \begin{equation}\label{est:Epzu_sigmai}
            \| E_n(x_i) \p_z u_n(x_i,z) / z \|_{\mathscr{H}^1_z(\Sigma_i)} 
            \leq \| E_n \|_{L^\infty} \| \p_z u_n(x_i,z) / z \|_{\mathscr{H}^1_z(\Sigma_i)},
        \end{equation}
and therefore $(- E_n \p_z u_n / z)_{\rvert \Sigma_i} \in \mathscr{H}^1_z(\Sigma_i)$.
        Moreover,  using  again \eqref{eq:bound-E},
        \begin{equation}\label{est:Epzu-H1}
            \| E_n \p_z u_n \|_{H^1_x L^2_z}
            \lesssim \| E_n \|_{H^1_x} \| \p_z u_n \|_{L^\infty_x L^2_z}
            + \| E_n \|_{L^\infty_x} \| \p_z u_n \|_{H^1_x L^2_z}
            \lesssim \| u_n \|_{L^2} \| u_n \|_{Z^1}.
        \end{equation}
        By \cref{thm:shear-Z1}, we conclude that $u_{n+1} \in Z^1(\Om)$.
      
        \item \textbf{Uniform bound in $Z^1$.}
        Let us prove by induction that there exists a constant $C_1 > 0$ such that, if $\eta$ is small enough, then, for all $n \in \N$,
        \begin{equation*}
            U_n := \| u_n \|_{Z^1} + \sum_{i \in \{0,1\}} \| \p_z u_n(x_i,z) / z \|_{\mathscr{H}^1_z(\Sigma_i)} \leq 2 C_1 \eta.
        \end{equation*}
        This holds for $n = 0$. 
        For $n \in \N$, it follows from \eqref{eq:u-Z1} that
        \begin{equation*}
            \| u_{n+1} \|_{Z^1} \lesssim
            \| (f,\delta_0,\delta_1) \|_{\cLin_K} + |\nu_{n+1}| + \| E_n \p_z u_n \|_{H^1_x L^2_z} + \sum_{i \in \{0,1\}} \| (E_n \p_z u_n)\vert_{\Sigma_i} / z \|_{\mathscr{H}^1_z(\Sigma_i)}.
        \end{equation*}     
        We obtain from \eqref{eq:VPFP-nu} and \cref{lem:continuity-ell-easy} that
        \begin{equation*}
            |\nu^j_{n+1}| \lesssim \| (f,\delta_0,\delta_1) \|_{\cLin_K} + \| E_n \p_z u_n \|_{H^1_x L^2_z} + \sum_{i \in \{0,1\}} \| (E_n \p_z u_n)\vert_{\Sigma_i} / z \|_{\mathscr{H}^1_z(\Sigma_i)}.
        \end{equation*}
Using \eqref{est:pzu_Sigmai}, \eqref{est:Epzu_sigmai}, \eqref{est:Epzu-H1}, we infer that
there exists $C_1 > 0$ (depending only on $\Omega$) such that
        \begin{equation*}
            U_{n+1} \leq C_1 \left( \| (f,\delta_0,\delta_1) \|_{\cH_{FP}} + U_n^2 \right).
        \end{equation*}
        Thus, if $\eta \leq 1 / (4 C_1^2)$, then the bound $U_n \leq 2 C_1 \eta$ propagates by induction.

        \item \textbf{Convergence.}
        As in the low regularity case, we let $w_n := u_{n+1} - u_n$.
        Then, for $n \geq 1$, $w_n$ is now a solution to:
        \begin{equation*}
            \begin{cases}
                \begin{aligned}
                    z \p_x w_n  - \p_{zz} w_n = & - (E_n - E_{n-1}) \p_z u_{n-1} - E_n \p_z (u_n-u_{n-1}) \\
                    & + (\nu^0_{n+1}-\nu^0_n) f^0 + (\nu^1_{n+1}-\nu^1_n) f^1,
                \end{aligned} \\
                (w_n)_{\rvert \Sigma_i} = (\nu^0_{n+1}-\nu^0_n) \delta_i^0 + (\nu^1_{n+1}-\nu^1_n) \delta_i^1, \\
                (w_n)_{\rvert z = \pm 1} = 0.
            \end{cases}
        \end{equation*}
        Using the same type of proof as above, we derive from \eqref{eq:VPFP-nu} and \cref{def:ell-shear} that
        \begin{equation*}
        | \nu^j_{n+1}-\nu^j_n| \lesssim \eta \| w_{n-1} \|_{Z^1}.
        \end{equation*}
        Therefore, estimate \eqref{eq:u-Z1} of \cref{thm:shear-Z1} entails that
        \begin{equation*}
        \| w_n \|_{Z^1} \lesssim \eta \| w_{n-1} \|_{Z^1}.
        \end{equation*}
        Thus $(u_n)_{n\in \N} $ and $(\nu^j_n)_{n\in \N}$ are Cauchy sequences. 
        Passing to the limit, we deduce that there exist $u\in Z^1(\Om)$ and $(\nu^0, \nu^1) \in \R^2$ satisfying \eqref{FP-nu}, \eqref{eq:VPFP-unu-leq-Xi} and \eqref{eq:VPFP-nu-limit}. \qedhere
    \end{itemize}
\end{proof}

\begin{defi}
    For $\eta > 0$ small enough, we define $\cM_{FP}$ as
    \begin{equation} \label{eq:MFP}
    \cM_{FP}:=\{ (f, \delta_0,\delta_1)\in \cH_{FP}; \quad \|(f, \delta_0,\delta_1)\|_{\cH_{FP}} \leq \eta \text{ and } \mathcal \nu_{FP}(f, \delta_0,\delta_1)=(0,0)\}.
    \end{equation}
    By definition, for any $(f, \delta_0,\delta_1) \in \cM_{FP}$, there exists a solution $u\in Z^1(\Om)$ to \eqref{eq:VPFP} (since \eqref{FP-nu} is satisfied with $\nu^0=\nu^1=0$), which satisfies \eqref{eq:VPFP-Z1} thanks to \eqref{eq:VPFP-unu-leq-Xi}.
\end{defi}

\begin{prop} \label{p:VPFP-MFP-necessary}
    There exists $\eta > 0$ such that, for any $(f,\delta_0,\delta_1) \in \cH_{FP}$ and $u \in Z^1(\Omega)$ solution to \eqref{eq:VPFP}-\eqref{eq:potential} satisfying $\|(f,\delta_0,\delta_1)\|_{\cH_{FP}} \leq \eta$ and $\|u\|_{Z^1} \leq \eta$, one has $(f,\delta_0,\delta_1) \in \cM_{FP}$.
\end{prop}

\begin{proof}
    Let $(f,\delta_0,\delta_1) \in \cH_{FP}$ and $u \in Z^1(\Omega)$ be a solution to \eqref{eq:VPFP}-\eqref{eq:potential}. 
    Assume that $\|u\|_{Z^1} \leq \eta$ and $\|(f,\delta_0,\delta_1)\|_{\cH_{FP}} \leq \eta$ for some $\eta > 0$ small enough to be chosen later.

    Since $u \in Z^1(\Omega)$, one has $- E[u] \p_z u \in H^1_x L^2_z$.
    Thus, viewing \eqref{eq:VPFP} as a linear equation with source term $f - E[u] \p_z u$, \cref{thm:shear-Z1} implies that
    \begin{equation*}
        \overline{\ell^j} (f - E[u] \p_z u, \delta_0, \delta_1) = 0.
    \end{equation*}
    Now, let $(\tilde{u}, \nu^0, \nu^1) \in Z^1(\Om) \times \R^2$ be the solution to \eqref{FP-nu} constructed from $(f,\delta_0,\delta_1)$ in \cref{p:VPFP-constr-Z1}.
    By \eqref{eq:VPFP-nu-limit},
    \begin{equation*}
        \nu^j = - \overline{\ell^j} (f - E[\tilde{u}] \p_z \tilde{u}, \delta_0, \delta_1).
    \end{equation*}
    Combining both equalities leads to
    \begin{equation*}
        | \nu^0 | + | \nu^1 | \lesssim (\|u\|_{Z^1} + \| \tilde u\|_{Z^1})\| u - \tilde{u} \|_{Z^1}.
    \end{equation*}
    Therefore, writing the system satisfied by $w := u - \tilde{u}$ and applying estimate \eqref{eq:u-Z1} of \cref{p:shear-WP-Z1} leads to
    \begin{equation*}
        \| w \|_{Z^1} \lesssim \eta \| w \|_{Z^1}.
    \end{equation*}
    If $\eta > 0$ is small enough, this implies that $w = 0$, so $\nu^0 = \nu^1 = 0$, and $(f,\delta_0,\delta_1) \in \cM_{FP}$.
\end{proof}

\begin{rmk}[An alternative approach]
    Another potential proof of \cref{item:prop:VPFP-2} of \cref{prop:VPFP} could be the following. 
    Consider the map
    \begin{equation*}
        \nu^j_{FP} : (f,\delta_0,\delta_1)\in \cH_{FP}\mapsto -\overline{\ell^j}(f,\delta_0,\delta_1)+\overline{\ell^j}(E[u]\p_z u,0,0)\in \R^2,
    \end{equation*}
    where $u\in Z^0$ is the unique solution to system \eqref{eq:VPFP}-\eqref{eq:potential}, provided by \cref{lem:VPFP-Z0}.
    
    Since $u\in Z^0$, $E[u] \p_z u \in H^{1/3}_x L^2_z$, and therefore $ \nu^j_{FP}$ is well-defined thanks to \cref{rmk:16-shear}. We then set $\cM_{FP}:=\{(f,\delta_0,\delta_1)\in \cH_{FP} ; \enskip  \nu^j_{FP}(f,\delta_0,\delta_1)=0\}$. 
    Then for all $(f,\delta_0,\delta_1) \in \cH_{FP}$, $u$ is a solution to an equation of the type $z\p_x -\p_{zz} u =g$, where the right-hand side $g$ belongs to $H^{1/3}_x L^2_z$ and satisfies orthogonality conditions. 
    It follows from the interpolation result \cref{lem:shear-Hs} and \cref{lem:Zsigma-Qsigma} that $u\in Z^{1/3} \hookrightarrow H^1_x L^2_z \cap H^{1/3}_x H^2_z$, and therefore $E[u] \p_z u\in H^{2/3}_x L^2_z$. 
    Bootstrapping twice the same argument, we eventually infer that $u\in Z^1$.
    
    However this argument is based on the existence of $Z^0$ solutions of the nonlinear problem without any orthogonality condition. 
    For the Burgers equation, the nonlinearity is too strong for such a theory of weak solutions to be available. 
    Therefore, in order to unify the presentation, we have chosen to present a different proof, based on a modification of the iterative scheme.
\end{rmk}

\subsection{Regularity and tangent space of the manifold}
\label{sec:manifold-FP}

We now give another description of the set $\cM_{FP}$ defined in \eqref{eq:MFP}, which we use to prove that it is indeed a Lipschitz submanifold of $\cH_{FP}$ of codimension 2, modeled on $\cH_{FP,\operatorname{sg}}^\perp$, and we describe its tangent space at the origin.
\nomenclature[OAN]{$\Xi$}{Shorthand for a data triplet $\Xi = (f,\delta_0,\delta_1).$}
Throughout this paragraph, we denote by $\Xi=(f,\delta_0,\delta_1)$ an element of $\cH_{FP}$. 

We recall that there exist $\Xi^0,\Xi^1\in \cH_{FP}$ such that $\overline{\ell^j}(\Xi^k)=\mathbf 1_{j=k}$ (see \cref{lem:VPFP-biorth}), and such that $\cH_{FP,\operatorname{sg}}^\perp := \ker \overline{\ell^0} \cap \ker \overline{\ell^1} \cap \cH_{FP} = (\R \Xi^0 + \R \Xi^1)^{\bot}$.
For every $\Xi  \in \cH_{FP}$, one has the decomposition
\begin{equation} \label{eq:Xi-decomposition}
    \Xi = \Xi^\perp + \langle \Xi ; \Xi^0 \rangle_{\cH_{FP}} \Xi^0 + \langle \Xi ; \Xi^1 \rangle_{\cH_{FP}} \Xi^1,
\end{equation}
where $\Xi^\perp \in \cH_{FP,\operatorname{sg}}^\perp$ and the linear maps $\Xi \mapsto \Xi^\perp$ and $\Xi \mapsto \langle \Xi^k ; \Xi \rangle_{\cH_{FP}}$ are continuous.
 
\begin{lem}
    For $\eta > 0$ small enough, the set $\cM_{FP}$ defined in \eqref{eq:MFP} is equal to
    \begin{equation} \label{eq:MFP'}
        \widetilde{M}_{FP} := \left\{ \widetilde{\Xi} \in \cH_{FP}; \quad \|\widetilde{\Xi}\|_{\cH_{FP}} \leq \eta \text{ and } \langle \widetilde{\Xi} ; \Xi^j \rangle = \nu_{FP}^j(\widetilde{\Xi}^\perp) \text{ for } j \in \{0,1\} \right\}.
    \end{equation}
\end{lem}

\begin{proof}
    We proceed by double inclusion.

   $\bullet$  Let $\widetilde{\Xi} \in \widetilde{M}_{FP}$.
    Consider the solution $(u,\nu^0,\nu^1) \in Z^1(\Om) \times \R^2$ constructed for the data $\widetilde{\Xi}^\perp$ in \cref{p:VPFP-constr-Z1}.
    Then $u \in Z^1(\Om)$ is a solution to \eqref{eq:VPFP}-\eqref{eq:potential} with data $\widetilde{\Xi}^\perp + \nu^0_{FP}(\widetilde{\Xi}^\perp) \Xi^0 + \nu^1_{FP}(\widetilde{\Xi}^\perp) \Xi^1$.
    Since $\widetilde{\Xi} \in \widetilde{M}_{FP}$, we infer from \eqref{eq:Xi-decomposition} that $u \in Z^1(\Om)$ is actually a solution with data $\widetilde{\Xi}$.
    Thus, \cref{p:VPFP-MFP-necessary} implies that $\widetilde{\Xi} \in \cM_{FP}$.

   $\bullet$  Let $\Xi \in \cM_{FP}$.
    We introduce
    \begin{equation*}
        \widetilde{\Xi} := \Xi^\perp + \nu^0_{FP}(\Xi^\perp) \Xi^0 + \nu^1_{FP}(\Xi^\perp) \Xi^1,
    \end{equation*}
    which can be thought of as a good projection of $\Xi$ on $\widetilde{\cM}_{FP}$ since $\widetilde{\Xi}^\perp = \Xi^\perp$ and $\widetilde{\Xi} \in \widetilde{\cM}_{FP}$.
    Let $u, \widetilde{u} \in Z^1(\Om)$ denote the solutions constructed in \cref{p:VPFP-constr-Z1} from $\Xi$ and $\Xi^\perp$.
    For $k \in \{0,1\}$, we also introduce the coefficients $\mu^k := \nu^k_{FP}(\Xi^\perp) - \langle \Xi ; \Xi^k \rangle_{\cH_{FP}}$, which characterize how far $\Xi$ is from $\widetilde{\cM}_{FP}$.
    Then $w := \widetilde{u} - u$ belongs to $Z^1(\Om)$ with
    \begin{equation*}
        \| w \|_{Z^1} \lesssim \eta
    \end{equation*}
    and is a solution to 
    \begin{equation} \label{eq:VPFP-w-diff-tilde}
        \begin{cases}
            z \p_x w - \p_{zz} w = E[u] \p_z u - E[\tilde{u}] \p_z \tilde{u} + \mu^0 f^0 + \mu^1 f^1, \\
            w_{\rvert \Sigma_i} = \mu^0 \delta^0_i + \mu^1 \delta^1_i, \\
            w_{\rvert z = \pm 1} = 0.
        \end{cases}
    \end{equation}
    By \cref{thm:shear-Z1}, since $w\in Z^1(\Om)$, the following orthogonality conditions are satisfied for $j=0,1$:
    \begin{equation} \label{eq:VPFP-orth-necessary}
        \begin{split}
            0 & = \overline{\ell^j} (E[u] \p_z u - E[\tilde{u}] \p_z \tilde{u} + \mu^0 f^0 + \mu^1 f^1, \mu^0 \delta_0^0 + \mu^1 \delta_0^1,
            \mu^0 \delta_1^0 + \mu^1 \delta_1^1  )
            \\
            & = \overline{\ell^j} (E[u] \p_z u - E[\tilde{u}] \p_z \tilde{u}, 0,0)
            + \mu^j.
        \end{split}
    \end{equation}
    Moreover, since
    \begin{equation*}
        \| E[u] \p_z u - E[\tilde{u}] \p_z \tilde{u} \|_{H^1_x L^2_z} \lesssim \eta \| w \|_{Z^1}
    \end{equation*}
    we infer from \eqref{eq:VPFP-orth-necessary} that
    \begin{equation*}
        | \mu^j | \lesssim \eta \| w \|_{Z^1}.
    \end{equation*}
    Applying estimate \eqref{eq:u-qone}  to \eqref{eq:VPFP-w-diff-tilde}, we obtain
    \begin{equation*}
        \| w \|_{Z^1} \lesssim \eta \| w \|_{Z^1} + |\mu^0| + |\mu^1|\lesssim \eta \|w\|_{Z^1}.
    \end{equation*}
    For $\eta > 0$ small enough, this entails that $w = 0$ and $\mu^0 = \mu^1 = 0$, so that $\Xi \in \widetilde{\cM}_{FP}$.
\end{proof}

\begin{lem}
    The maps $U_{FP}$ and $\nu_{FP}$ of \cref{p:VPFP-constr-Z1} are Lipschitz-continuous.
\end{lem}

\begin{proof}
    Taking two triplets $\Xi, \Xi' \in \cH_{FP}$, one can consider the constructed sequences $u_n, u_n' \in Z^1(\Om)$ and $\nu_n, \nu'_n \in \R^2$ from \eqref{eq:VPFP-un-Z1}.
    Then, for $n \geq 1$, $w_n := u_n - u_n'$ is the solution to
    \begin{equation*}
        \begin{cases}
            \begin{aligned}
                z \p_x w_n  - \p_{zz} w_n = (f-f') & - E[w_{n-1}] \p_z u_{n-1} - E[u'_{n-1}] \p_z w_{n-1} \\
                & + (\nu^0_n-\nu'^0_n) f^0 + (\nu^1_n-\nu'^1_n) f^1,
            \end{aligned} \\
            (w_n)_{\rvert \Sigma_i} = (\delta_i - \delta_i') + (\nu^0_n-\nu'^0_n) \delta_i^0 + (\nu^1_n-\nu'^1_n) \delta_i^1, \\
            (w_n)_{\rvert z = \pm 1} = 0,
        \end{cases}
    \end{equation*}
    where, from \eqref{eq:VPFP-nu} and \cref{def:ell-shear},
    \begin{equation*}
        | \nu_n-\nu'_n| \lesssim \| \Xi - \Xi' \|_{\cH_{FP}} + \eta \| w_{n-1} \|_{Z^1}.
    \end{equation*}
    Thus, we obtain from \cref{thm:shear-Z1} that
    \begin{equation*}
        \| w_n \|_{Z^1} \lesssim \| \Xi - \Xi' \|_{\cH_{FP}} + \eta \| w_{n-1} \|_{Z^1}.
    \end{equation*}
    For $\eta$ small enough, we obtain at the limit that
    \begin{equation*}
        \| u - u' \|_{Z^1} + |\nu-\nu'| \lesssim \| \Xi - \Xi' \|_{\cH_{FP}},
    \end{equation*}
    which concludes the proof.
\end{proof}

\begin{rmk}
    \label{rmk:VPFP-tangent}
    Since we only proved Lipschitz regularity for the map $\nu_{FP}$, \eqref{eq:MFP} (and equivalently \eqref{eq:MFP'}) \emph{a priori} only defines a Lipschitz manifold.
	Hence, it is difficult to define tangent spaces to $\cM_{FP}$.
	Nevertheless, one can say that $\cH_{FP,\operatorname{sg}}^\perp$ is tangent to $\cM_{FP}$ at $0$ in the following weak senses:
    \begin{itemize}
        \item For $\Xi \in \cM_{FP}$, $\operatorname{d} ( \Xi, \cH_{FP,\operatorname{sg}}^\perp ) \lesssim \| \Xi \|^2_{\cH_{FP}}$.
        \item For every $\Xi^\bot \in \cH_{FP,\operatorname{sg}}^\perp$, for $\eps \in \R$ small enough, $\operatorname{d} (\eps \Xi^\bot, \cM_{FP}) \lesssim \eps^2$.
    \end{itemize}
    Both facts are straightforward consequences of the equivalent definitions \eqref{eq:MFP} and \eqref{eq:MFP'} and of the estimate 
    \begin{equation*}
        | \nu^j(\Xi) + \overline{\ell^j}(\Xi) | \lesssim \| \Xi \|_{\cH_{FP}}^2
    \end{equation*}
    which follows from \eqref{eq:VPFP-nu-limit} and \eqref{eq:VPFP-unu-leq-Xi}. 
\end{rmk}

\begin{rmk}
    It is likely that similar techniques can be used to prove that $\cM_{FP}$ has in fact more regularity (say $C^1$ for example) and characterize its tangent spaces in a neighborhood of the origin by computing the orthogonality conditions associated with the linearized problems around small enough solutions $u \in Z^1(\Om)$, but this is not our focus here. 
\end{rmk}

\subsection{A general formalization}
\label{sec:abstract}

The construction deployed in \cref{sec:VPFP-constr-Z1} can be seen as a particular case (see \cref{rmk:abs-VPFP}) of a more general approach to construct solutions to semilinear or quasilinear equations in the presence of orthogonality conditions, in a perturbative regime.
We give here a statement in an abstract framework which we will use in the following sections for the Burgers and Prandtl systems.

Our abstract result is related with general results for semilinear problems associated with Fredholm operators with negative index, such as the ones of \cite[Chapter 11, Section 4.2.3]{Volpert-Vol1}. 
However, the approach in this reference consists in modifying parameters in the nonlinearity to ensure the orthogonality conditions, while we focus on constructing a submanifold of data for which the nonlinear problem has a regular solution.

We intend to construct solutions to problems of the form $L u = N(\Xi, u)$, where $u \in \cZ$ (the space of solutions), $\Xi \in \cX$ (the space of data for the nonlinear problem), $N : \cX \times \cZ \to \cLin$ is the nonlinearity, with values in $\cLin$ (the space of source terms $\Theta \in \cLin$ for the linear problem $L u = \Theta$).

To avoid investigating the $C^1$ dependency of the solutions to our nonlinear systems on the data, we use a version of the implicit function theorem for functions which are not $C^1$ but only ``strongly Fréchet-differentiable at a point''.
We refer the reader to \cite[Chapter 25]{zbMATH01059780}.

\begin{defi}
    \label{def:strong-Frechet}
    Let $E, F$ be Banach spaces, $f : E \to F$ and $x^* \in E$.
    We say that $f$ is \emph{strongly Fréchet-differentiable} at $x^*$ when there exists a continuous linear map $L : E \to F$ such that
    \begin{equation*}
        \| f(x_1) - f(x_2) - L (x_1 - x_2) \|_F = \underset{x_1, x_2 \to x^*}{o} \left( \| x_1 - x_2 \|_{E} \right).
    \end{equation*}
\end{defi}

The following implicit function theorem is proved in \cite[Paragraph 25.13]{zbMATH01059780}.

\begin{lem}
    \label{lem:TFI-Strong-Frechet}
    Let $E_1, E_2, F$ be Banach spaces and $f : E_1 \times E_2 \to F$ such that $f(0,0) = 0$.
    Assume that $f$ is strongly Fréchet-differentiable at $(0,0)$ and that $\p_2 f(0,0) : E_2 \to F$ is a linear isomorphism.
    Then there exists a Lipschitz-continous map $g:E_1\to E_2$ defined in a  neighborhood of $0 \in E_1$ such that, for every $(x,y)$ in a neighborhood of $(0,0) \in E_1 \times E_2$, $f(x,y) = 0$ if and only if $y = g(x)$.
    Moreover, $g$ is strongly Fréchet-differentiable at $0$ and $Dg(0) = - (\p_2 f(0,0))^{-1} \p_1 f(0,0)$.
\end{lem}

Our general result is the following:

\begin{thm} \label{thm:abstract}
    Let $\cLin, \cX, \cZ$ be Banach spaces and $d \in \N$. 
    Let $\ell : \cLin \to \R^d$ and $L : \cZ \to \cLin$ be continuous linear maps.
    Let $N$ be a (nonlinear) map from $\cX \times \cZ$ to $\cLin$ such that $N(0,0) = 0$.
    Assume that
    \begin{enumerate}[i)] 
        \item \label{item:abs-1}
        for all $\Theta \in \cLin$, the equation $L u = \Theta$ has a unique solution $u \in \cZ$ if and only if $\Theta \in \ker \ell$, which moreover satisfies $\| u \|_{\cZ} \lesssim \| \Theta \|_{\cLin}$;
        \item \label{item:abs-2} 
        $N$ is strongly Fréchet-differentiable at $(0,0)$ and $\p_u N(0,0) = 0$, i.e.\ there exists a continuous linear map $\p_\Xi N(0,0) : \cX \to \cLin$ such that, as $\Xi,\Xi' \in \cX$ and $u, u' \in \cZ$ go to $0$,
        \begin{equation}
            \label{eq:N-magic}
            \| N(\Xi,u) - N(\Xi',u') - (\p_\Xi N(0,0)) (\Xi-\Xi') \|_{\cLin} = o \big( 
            \|\Xi-\Xi'\|_{\cX} + \| u - u' \|_{\cZ} \big);
        \end{equation}
        \item \label{item:abs-3} 
        $\ell_N := \ell \circ \p_\Xi N(0,0)$ is onto from $\cX$ to $\R^d$.
    \end{enumerate}
    Then there exists a local Lipschitz submanifold $\cM$ of $\cX$, modeled on $\ker \ell_N$ (of codimension $d$) and tangent to it at $0$, such that, for any $\Xi \in \cX$ small enough, the equation $L u = N(\Xi,u)$ has a solution $u \in \cZ$ if and only if $\Xi \in \cM$.
    Such a solution satisfies $\|u\|_{\cZ} \lesssim \|\Xi\|_{\cX}$ and is unique.
\end{thm}

\begin{proof}
    Using \cref{item:abs-3}), we fix $\Xi^1, \dotsc, \Xi^d \in \cX$ such that 
    $\ell^j_N(\Xi^k) = \mathbf{1}_{j=k}$, and we set
     $\Theta^k := \p_\Xi N(0,0) \Xi^k$.
    We could then mimic the iterative scheme of \cref{sec:VPFP-constr-Z1} by defining sequences $u_n \in \cZ$ and $\nu_n \in \R^d$ such that
\begin{equation*}
L u_{n+1}= N(\Xi + \sum_k \nu^k_{n+1} \Xi^k, u_n).
\end{equation*}
        Instead, we provide a shorter proof directly relying on the bundled result \cref{lem:TFI-Strong-Frechet}.

Let $\Xi \mapsto \Xi^\perp$ be the linear continuous projection from $\cX$ to $\ker \ell_N$ parallel to the space $\operatorname{span} \; (\Xi^1, \dotsc, \Xi^d) $, i.e.\ $\Xi^\perp= \Xi -\sum_{j=1}^d \ell^j_N(\Xi) \Xi^j$.
        Let $f : \ker \ell_N \times (\cZ \times \R^d) \to \cLin$ defined by
    \begin{equation*}
        f(\Xi^\perp, (u, a)) := L u - N (\Xi^\perp + a_1 \Xi^1 + \dotsb + a_d \Xi^d, u).
    \end{equation*}
    By \cref{item:abs-2}) and continuity of $L$ on $\cZ$, $f$ is strongly Fréchet-differentiable at $(0,0)$.
    Moreover, $\p_2 f(0,0) : (u, a) \mapsto L u - a_1 \Theta^1 - \dotsb - a_d \Theta^d$ is a linear isomorphism from $\cZ \times \R^d$ to $\cLin$ by \cref{item:abs-1}) and continuity of $\ell$ on $\cLin$.
    Indeed, given $h \in \cLin$, setting $a^h := - \ell(h)$ and $u^h \in \cZ$ the solution to $L u^h = h + a_1^h \Theta^1 + \dotsb + a_d^h \Theta^d$, one has $\p_2 f(0,0) (u^h, a^h) = h$ and $\|u_h\|_{\cZ} \lesssim \|h\|_{\cLin}$, $|a^h| \lesssim \|h\|_{\cLin}$.
    
    Hence, the implicit function theorem of \cref{lem:TFI-Strong-Frechet} yields the existence of Lipschitz-continuous functions $(U, \mu) : \ker \ell_N \to \cZ \times \R^d$ such that, for every $\Xi^\bot  \in \ker \ell_N$, $u \in \cZ$ and $a\in \R^d$ small enough, 
    \begin{equation*}
    L u = N (\Xi^\perp + a_1 \Xi^1 + \dotsb + a_d \Xi^d, u)
    \end{equation*}
    if and only if $a=\mu(\Xi^\perp)$ and  $u = U(\Xi^\perp)$. From there, we infer that for all $\Xi\in \cX$ and $u\in \cZ$ small enough,
        $Lu = N(\Xi,u)$ if and only if $\ell_N(\Xi) = \mu(\Xi^\perp)$ and $u = U(\Xi^\perp)$.
    Thus the conclusions of the theorem hold provided that we set
    \begin{equation} \label{eq:MN}
        \cM := \left\{ \Xi \in \cX ; \quad 
        \|\Xi\|_{\cX} \leq \eta \text{ and } \ell_N(\Xi) = \mu(\Xi^\perp) \right\}.
    \end{equation}
    Indeed, \eqref{eq:MN} corresponds to the graph characterization of a local Lipschitz submanifold of $\cX$ containing~$0$ and modeled on $\ker \ell_N$; therefore of codimension $d$ by \cref{item:abs-3}).
    Eventually, $\mu$ is strongly Fréchet-differentiable at $0$ and, since $Dg(0) = - (\p_2 f(0,0))^{-1} \p_1 f(0,0)$ with the notation of \cref{lem:TFI-Strong-Frechet}, we obtain that $D\mu(0) = -\ell \circ \p_\Xi N(0,0)= -\ell_N$ so $D\mu(0) = 0$ on $\ker \ell_N$, which justifies the claim that $\cM$ is tangent to $\ker \ell_N$ at $0$.
\end{proof}


\begin{rmk} \label{rmk:abs-VPFP}
    \cref{item:prop:VPFP-2} of \cref{prop:VPFP} can be recovered as a particular case of \cref{thm:abstract} with the following setting:
    \begin{itemize}
        \item $\cX = \cLin = \cH_{FP}$ defined in \eqref{eq:def-H_FP};
        
        \item the solution space
        \begin{equation*}
            \cZ := \Big\{ u \in Z^1(\Om) ; 
            \quad u_{\rvert z = \pm 1} = 0, 
            \quad (\p_z u(x_i,z)) / z \in \mathscr{H}^1_z(\Sigma_i), 
            \quad \p_z u(x_i,(-1)^i) = 0 \Big\},
        \end{equation*}
        with
        \begin{equation*}
            \| u \|_{\cZ} := \| u \|_{Z^1} + \sum_{i \in \{0,1\}} \| (\p_z u(x_i,z)) / z \|_{\mathscr{H}^1_z};
        \end{equation*}
        
        \item $L : \cZ \to \cX$ defined by $L u := (z \p_x u - \p_{zz} u, u_{\rvert \Sigma_0}, u_{\rvert \Sigma_1})$, for which one easily checks that the assumption \cref{item:abs-1}) of \cref{thm:abstract} is satisfied thanks to \cref{thm:shear-Z1};
        
        \item $N : \cX \times \cZ \to \cLin$ defined by $N(\Xi,u) := (f, \delta_0, \delta_1) - (E[u] \p_z u, 0, 0)$.
        In particular, one has $\p_\Xi N(0,0) = \operatorname{Id}$.
        To check that $N$ takes values in $\cLin = \cH_{FP} \subset \cLin_K$, we must check that $E[u](x_i) \p_z u (x_i,(-1)^i) = 0$, which follows from the fact that, for $u \in \cZ$, $\p_z u(x_i,(-1)^i) = 0$. 
        We now check that $N$ satisfies \cref{item:abs-2}) of \cref{thm:abstract}.
        
        First, for $u, u' \in Z^1(\Om)$, by \eqref{eq:bound-E},
        \begin{equation*}
            \begin{split}
            \| E[u] \p_z u - E[u'] \p_z u' \|_{H^1_x L^2_z}
            & \leq \| E[u-u'] \p_z u \|_{H^1_x L^2_z} + \| E[u'] \p_z (u-u') \|_{H^1_x L^2_z} \\
            & \lesssim \| u - u' \|_{L^2} \| \p_z u \|_{H^1_x L^2_z} + \|u'\|_{L^2} \| \p_z(u-u') \|_{H^1_x L^2_z} \\
            & \lesssim (\|u\|_{Z^1}+\|u'\|_{Z^1}) \|u-u'\|_{Z^1}.
            \end{split}
        \end{equation*}
        Second, one similarly checks that
        \begin{equation*}
            \| (E[u] \p_z u - E[u'] \p_z u')(x_i,z) / z \|_{\mathscr{H}^1_z(\Sigma_i)} \lesssim (\|u\|_{\cZ}+\|u'\|_{\cZ}) \| u - u' \|_{\cZ}.
        \end{equation*}
        Hence, we conclude that
        \begin{equation*}
            \begin{split}
            \| (E[u] \p_z u - E[u'] \p_z u', 0, 0) \|_{\cH_{FP}}
            & = 
            \| (E[u] \p_z u - E[u'] \p_z u', 0, 0) \|_{\cLin_K} \\
            & \lesssim (\|u\|_{\cZ}+\|u'\|_{\cZ}) \| u - u' \|_{\cZ}
            \end{split}
        \end{equation*}
        so that estimate \eqref{eq:N-magic} is satisfied.
        
        \item $d = 2$, $\ell := (\overline{\ell^0}, \overline{\ell^1})_{\rvert \cH_{FP}}$ defined in \cref{def:ell-shear}, continuous on $\cH_{FP}$ by \cref{lem:continuity-ell-easy} and $\cH_{FP} \hookrightarrow \cLin_K$, satisfying $\ell_N(\cX) = \ell(\cX) = \R^2$ by \cref{lem:VPFP-biorth} and $\p_\Xi N(0,0) = \operatorname{Id}$.
    \end{itemize}
\end{rmk}

\newpage
\section{A viscous Burgers equation}
\label{sec:Burgers}
    
In this section, we consider the following nonlinear parabolic forward-backward system, which can be envisioned as a kind of stationary Burgers equation with transverse viscosity: 
\begin{equation} \label{eq:yuuxuyy-bis}
    \begin{cases}
        u \p_x u - \p_{yy} u = f,\\
        u_{\rvert \Sigma_i} = \fu_{\rvert \Sigma_i} + \delta_i, \\
        u_{\rvert y = \pm 1} = \fu_{\rvert y = \pm 1}.
    \end{cases}
\end{equation}
As detailed in the introduction, the perturbation $(f,\delta_0,\delta_1)$ is small and we look for solutions $u$ which are close to the shear flow $\fu(x,y) := y$, which corresponds to $(f,\delta_0,\delta_1) = (0,0,0)$.
Thanks to the nonlinear change of variables described in the Introduction and detailed in \cref{sec:Burgers-change}, the local well-posedness of \eqref{eq:yuuxuyy-bis} can be proved using the formalism of \cref{sec:abstract} (see \cref{sec:Burgers-WP-new-var,sec:Burgers-main-proof}).

\subsection{A nonlinear change of variables}
\label{sec:Burgers-change}

As is classical for problems with free boundaries, we perform a change of variables which straightens the critical curve $\{ u = 0 \}$.
Heuristically, we swap the roles of the vertical coordinate~$y$ and the unknown~$u$, the latter becoming the vertical coordinate, and the former the unknown of the new PDE.
Keeping in mind that we are looking for perturbative solutions with $u$ close enough to $\fu$ (in particular $\|u_y - 1 \|_{L^\infty} \ll 1$), we  change the vertical coordinate $y$ into $z$, defined as
\begin{equation*}
    z(x,y) := u(x,y). 
\end{equation*}
The new unknown $Y(x,z)$ is defined by the implicit relation
\begin{equation} \label{eq:u-to-Y}
    u(x,Y(x,z)) = z.
\end{equation}
In particular, thanks to the boundary conditions $u_{\rvert y = \pm 1} = \fu_{\rvert y = \pm 1} = \pm 1$, one checks that the domain $(x,y) \in \Omega = [x_0,x_1] \times [-1,1]$ is indeed mapped to $(x,z) \in \Omega$, and one still has $Y_{\rvert z = \pm 1} = \pm 1$.
Similarly, if $\delta_i(0) = 0$ and $\delta_i((-1)^i) = 0$, the inflow boundary regions $\Sigma_i$ are also left invariant by this change of variable.

\begin{rmk}
    More rigorously, given $u$ defined on $\Omega$ and close enough to $\fu$ (for example in $H^1_x H^2_z$ topology), for each $x \in [x_0,x_1]$, the map $y \mapsto u(x,y)$ is a $C^1$ monotone increasing bijection from $[-1,1]$ to itself, and the implicit definition \eqref{eq:u-to-Y} is equivalent to setting
    \begin{equation} \label{eq:Y=inverse}
        Y(x,z) := (u(x,\cdot))^{-1}(z).
    \end{equation}
\end{rmk}

From \eqref{eq:u-to-Y}, we successively derive the relations
\begin{equation} \label{eq:CDV-u-Y}
    \begin{aligned}
        \p_y u (x, Y(x,z))&= \frac{1}{\p_z Y(x,z)},\\
        \p_x u (x,Y(x,z))&= - \p_x Y(x,z) \p_y u(x,Y(x,z))= - \frac{\p_x Y(x,z)}{\p_z Y(x,z)},\\
        \p_{yy} u(x,Y(x,z))&= - \frac{\p_{zz} Y(x,z)}{(\p_z Y(x,z))^3}.
    \end{aligned}
\end{equation}
These identities lead to the following PDE for $Y$:
\begin{equation} \label{eq:PDE-Burgers-Y}
    z \p_x Y - (\p_z Y)^{-2} \p_{zz} Y = - \p_z Y f(x,Y).
\end{equation}
Moreover, by \eqref{eq:Y=inverse}, denoting by $(\cdot + \delta_i(\cdot))^{-1}$ the functional inverse of the function $z\mapsto z + \delta_i(z)$ and letting 
\nomenclature[OAVGB]{$\Upsilon[\delta_i]$}{Lateral boundary data for the Burgers system after the change of variables}
\begin{equation}\label{eq:def-tilde-delta_i}
    \Upsilon[\delta_i] (z) := z - (\cdot + \delta_i(\cdot))^{-1}(z),
\end{equation}
we have $Y(x,z) = z - \Upsilon[\delta_i](z)$ for $(x,z) \in \Sigma_i$, where we used 
$\delta_i(0) = 0$ and $\delta_i((-1)^i) = 0$.

Therefore, we obtain the system
\begin{equation} \label{eq:Y-B-nonlinear}
    \begin{cases}
        z \p_x Y - (\p_z Y)^{-2} \p_{zz} Y = - \p_z Y f(x,Y), \\
        Y_{\rvert \Sigma_i} = z - \Upsilon[\delta_i](z), \\
        Y_{\rvert z = \pm 1} = \pm 1.
    \end{cases}
\end{equation}
Eventually, to make the perturbative nature of this system explicit, we write $Y(x,z) = z - \keta(x,z)$, which leads to the system
\begin{equation} \label{eq:Burgers-keta}
    \begin{cases}
        z \p_x \keta - \p_{zz} \keta = N_B(f,\keta), \\
        \keta_{\rvert \Sigma_i} = \Upsilon[\delta_i], \\
        \keta_{\rvert z = \pm 1} = 0
    \end{cases}
\end{equation}
where the nonlinearity is given by
\nomenclature[OLNB]{$N_B$}{Nonlinearity associated with the Burgers-type system}
\begin{equation} \label{eq:NB}
    N_B(f,\keta) := \frac{\p_z \keta (2 - \p_z \keta)}{(1-\p_z\keta)^2} \p_{zz} \keta + (1 - \p_z \keta) f(x,z-\keta).
\end{equation}
We prove the well-posedness of \eqref{eq:Burgers-keta} in \cref{sec:Burgers-WP-new-var} and use it to prove \cref{thm:burgers} in \cref{sec:Burgers-main-proof}.

\begin{rmk} \label{rk:semilinear}
    The initial PDE $u \p_x u - \p_{yy} u = f$ is quasilinear.
    After the change of variables described in this paragraph, we obtain system \eqref{eq:Y-B-nonlinear}, which is still a quasilinear one (since the viscosity in front of $\p_{zz} \keta$ depends on $\keta$).
    However, we know from \cref{sec:shear} that, for the linear problem $z \p_x u - \p_{zz} u = f$,  there is no loss of derivative in the vertical direction.
    This key point allows us to apprehend \eqref{eq:Y-B-nonlinear} under the form \eqref{eq:Burgers-keta}, treating this nonlinearity perturbatively as the first term of $N_B$ in \eqref{eq:NB}.
    The fact that there is no loss of vertical derivative explains why we will be able to prove in the following paragraph that the nonlinearity $N_B$ satisfies the mild estimates of \cref{thm:abstract}.
    This would not have been possible in the initial form $u \p_x u - \p_{yy} u = f$, since the linear theory involves a loss of $\frac 13$ derivative in the horizontal direction. 
\end{rmk}

\subsection{Well-posedness in the new variables}
\label{sec:Burgers-WP-new-var}

We now prove the following well-posedness result with $Z^1(\Om)$ regularity under two orthogonality conditions for system \eqref{eq:Burgers-keta}.
Let
\nomenclature[FHB]{$\cLin_B$}{Hilbert space of data triplets for the solvability of the linearized Burgers system}
\begin{align}
    \label{eq:def-HB}
    \cLin_B &:= \left\{ (f,\delta_0,\delta_1) \in \cLin_K ; \quad \delta_i''(z)/z \in \mathscr{H}^1_z(\Sigma_i), \quad \delta_i''((-1)^i) = 0 \right\}, \\
    \label{eq:def-XB}
    \cX_B &:= \left\{ (f,\delta_0,\delta_1) \in \cLin_B ; \quad f \in H^1_x H^2_z, \quad f_{\rvert \Sigma_i} = 0, \quad \delta_i \in H^5(\Sigma_i), \quad \delta_i(0) = \delta_i''(0) = 0 \right\},
\end{align}
where we recall that the spaces $\cLin_K$ and $ \mathscr{H}^1_z$ are defined in \eqref{eq:def-HK} and \eqref{eq:H-pagani-k} respectively, 
with the norms
\begin{align}
    \| (f,\delta_0,\delta_1) \|_{\cLin_B} & : =\| (f,\delta_0,\delta_1) \|_{\cLin_K} + \| \delta_0''(z)/z \|_{\mathscr{H}^1_z} + \| \delta_1''(z)/z \|_{\mathscr{H}^1_z}, \\
    \| (f,\delta_0,\delta_1) \|_{\cX_B} & : =\| (f,\delta_0,\delta_1) \|_{\cLin_B} + \| f \|_{H^1_x H^2_z} + \| \delta_0 \|_{H^5} + \|\delta_1 \|_{H^5}.
\end{align}
The restriction that $f_{\rvert \Sigma_i} = 0$ lightens the exposition but could be partially relaxed.
The space $\cX_B$ of \eqref{eq:def-XB} is the same as the one defined in \eqref{eq:def-XB-intro} in the introduction.

Our result on system \eqref{eq:Burgers-keta} is the following:
\begin{prop}
    \label{prop:burgers}
    There exists $\eta > 0$ and a local Lipschitz submanifold $\cM_B$ of $\cX_B$ included in the ball of radius $\eta$, modeled on $\cX_B \cap \ker (\overline{\ell^0}, \overline{\ell^1})$ (of codimension 2) and tangent to it at $0$ such that, for every $(f,\delta_0,\delta_1) \in \cX_B$ such that $\|(f,\delta_0,\delta_1))\|_{\cX_B} \leq \eta$, \eqref{eq:Burgers-keta} has a solution $\keta \in Z^1(\Om)$ if and only if $(f,\delta_0,\delta_1) \in \cM_B$.
    Such solutions are unique and satisfy $\|\keta\|_{Z^1} \lesssim \|(f,\delta_0,\delta_1)\|_{\cX_B}$.
\end{prop}

\begin{proof}
    Our strategy is to apply the same nonlinear argument as for our kinetic theory toy model (see \cref{sec:Fokker-Planck}).
    Before moving on to the formal proof using the abstract \cref{thm:abstract}, let us give an heuristic overview of the corresponding concrete nonlinear scheme.

    \bigskip
    
    \textbf{Heuristic overview of the nonlinear scheme.}
    We follow the scheme described in \cref{sec:VPFP-constr-Z1}.
    Let $(f,\delta_0,\delta_1) \in \cX_B$ with $\|(f,\delta_0,\delta_1)\|_{\cX_B} \leq \eta$ small enough to be chosen later on.
    We construct a sequence $\keta_n$ of $Z^1(\Om)$ functions using \cref{thm:shear-Z1},  accommodating for the two orthogonality conditions at each step.
    We take $\keta_0 := 0$ and, for $n \in \N$, given $\keta_n \in Z^1(\Omega)$, we define $\keta_{n+1} \in Z^1(\Om)$ as the solution to
    \begin{equation*}
        \begin{cases}
            z \p_x \keta_{n+1}- \p_{zz} \keta_{n+1} = N_B(f,\keta_n) + \nu_{n+1}^0 f^0 + \nu_{n+1}^1 f^1, \\
            (\keta_{n+1})_{\rvert \Sigma_i} = \Upsilon[\delta_i] + \nu_{n+1}^0 \delta^0_i + \nu_{n+1}^1 \delta^1_i, \\
            (\keta_{n+1})_{\rvert z=\pm 1}=0,
        \end{cases}
    \end{equation*}
    where the triplets $(f^k,\delta_0^k,\delta_1^k) \in \cX_B$ for $k \in \{0,1\}$ are such that $\overline{\ell^j}(f^k,\delta_0^k,\delta_1^k) = \mathbf{1}_{j=k}$ and are constructed as in \cref{lem:biorth} and
    \begin{equation*}
        \nu_{n+1}^j := - \overline{\ell^j} (N_B(f,\keta_n),\Upsilon[\delta_0],\Upsilon[\delta_1]).
    \end{equation*}
    This choice ensures that the two orthogonality conditions
    \begin{equation*}
        \begin{split}
        \overline{\ell^j} \big(
        & N_B(f,\keta_n) + \nu_{n+1}^0 f^0 + \nu_{n+1}^1 f^1, \\
        & \Upsilon[\delta_0]+ \nu_{n+1}^0 \delta^0_0 + \nu_{n+1}^1 \delta^1_0, \\
        & \Upsilon[\delta_1] + \nu_{n+1}^0 \delta^0_1 + \nu_{n+1}^1 \delta^1_1
        \big) = 0
        \end{split}
    \end{equation*}
    are satisfied.
    One checks that  \cref{thm:shear-Z1} can be applied, yielding $\keta_{n+1} \in Z^1(\Om)$. One can then prove that $(\keta_n)_{n\in \N}$ is  uniformly bounded by $C\eta$ and is a Cauchy sequence in $Z^1(\Om)$.

    \bigskip

    \textbf{Proof using our abstract toolbox.}
    More precisely, this result follows from \cref{thm:abstract}, applied with the following setting: $\cLin_B$ defined in \eqref{eq:def-HB} and $\cX_B$ defined in \eqref{eq:def-XB},
    \begin{itemize}
        \item the solution space 
        \nomenclature[FZb]{$\cZ_B$}{Solution space for the Burgers system}
        \begin{equation*}
            \begin{split}
                \cZ_B := \Big\{ u \in Z^1(\Om) ; \enskip  u_{\rvert z = \pm 1} = 0, 
                \enskip \p_{zz}u(x_i,z)/z \in \mathscr{H}^1_z(\Sigma_i),
                \enskip \p_{zz} u(x_i,(-1)^i) = 0 \Big\},
            \end{split}
        \end{equation*}
        with
        \begin{equation*}
            \| u \|_{\cZ_B} := \| u \|_{Z^1} + \sum_{i \in \{0,1\}} \| \p_{zz} u(x_i,z) / z \|_{\mathscr{H}^1_z(\Sigma_i)};
        \end{equation*}

\item $d = 2$, $\ell := (\overline{\ell^0}, \overline{\ell^1})_{\rvert \cLin_B}$ defined in \cref{def:ell-shear}, continuous on $\cLin_B$ by \cref{lem:continuity-ell-easy} and $\cLin_B \hookrightarrow \cLin_K$, satisfying $\ell(\cX_B) = \R^2$ by \cref{free:f} since $C^\infty_c(\Om) \times \{0\} \times \{0\} \subset \cX_B$;

        \item $L : \cZ_B \to \cLin_B$ defined by $L u := (z \p_x u - \p_{zz} u, u_{\rvert \Sigma_0}, u_{\rvert \Sigma_1})$, for which one easily checks that the assumption \cref{item:abs-1}) of \cref{thm:abstract} is satisfied thanks to \cref{thm:shear-Z1};
        
        \item $N : \cX_B \times \cZ_B \to \cLin_B$ defined by $N(\Xi,\keta) := (N_B(f,\keta), \Upsilon[\delta_0], \Upsilon[\delta_1])$.
        To prove that $N$ takes values in $\cLin_B \subset \cLin_K$, we must check that:
        \begin{enumerate}[a)]
            \item \label{it:Burgers-a} 
            $N_B(f,\keta) \in H^1_x L^2_z$: this follows from \cref{lem:fx-z-phi-LIP} and \cref{lem:Burgers-NB-Lip-H1L2-(uz*uzz)} below;
            \item $\Upsilon[\delta_i]\in \mathscr{H}^1_z(\Sigma_i)$ and $\p_{zz} \Upsilon[\delta_i]/z\in \mathscr{H}^1_z(\Sigma_i)$: this follows essentially from the chain rule, and is proved in \cref{lem:Upsilon-strong-diff} below;
            \item \label{it:Burgers-b}
            $(N_B(f,\keta)/z) \vert_{\Sigma_i} \in \mathscr{H}^1_z(\Sigma_i)$: this follows from \cref{coro:Burgers-NB-boundary} below;
            \item \label{it:Burgers-c} 
            $N_B(f,\keta)(x_i, (-1)^i) = 0$: this property follows from the fact that, for $\Xi \in \cX_B$, $f_{\rvert \Sigma_i} = 0$ and, for $\keta \in \cZ_B$, $\p_{zz} \keta(x_i,(-1)^i) = 0$;
            \item $\Upsilon[\delta_i]((-1)^i)= \p_{zz} \Upsilon[\delta_i]((-1)^i)=0$: this follows from the properties $\delta_i((-1)^i)= \delta_i'' ((-1)^i)$, see also the proof of \cref{lem:Upsilon-strong-diff} below.
        \end{enumerate}
        Eventually, we claim that $N$ is strongly Fréchet-differentiable at $(0,0)$ in the sense of \cref{def:strong-Frechet} with $\partial_u N(0,0) = 0$ and $\partial_\Xi N(0,0) = \operatorname{Id}$, which corresponds to the following estimate, as $\Xi, \Xi' \in \cX_B$ and $\keta, \keta' \in \cZ_B$ go to $0$, 
        \begin{equation}\label{est:N_B-Frechet}
            \begin{split}
            \| (N_B(f,\keta),\Upsilon[\delta_0],\Upsilon[\delta_1]) & - (N_B(f',\keta'),\Upsilon[\delta_0'],\Upsilon[\delta_1']) - (\Xi-\Xi') \|_{\cLin_B} \\ & = o \left( \|\Xi-\Xi'\|_{\cX_B} + \|\keta-\keta'\|_{\cZ_B} \right).
            \end{split}
        \end{equation}
        This follows from \cref{coro:Burgers-Lip-source} for the part involving $N_B$, and from \cref{coro:Burgers-NB-boundary} and \cref{coro:Burgers-Upsilon} for the estimate of the boundary terms.
        \qedhere
    \end{itemize}
\end{proof}
Note also that \cref{item:abs-3} from \cref{thm:abstract} follows from a variant of \cref{lem:biorth} for the space $\cLin_B$, stepping on \cref{free:f} and recalling that $\ell_N=\ell$.

The next subsections are dedicated to the proof of \cref{it:Burgers-a}) and \cref{it:Burgers-b}) and of estimate \eqref{est:N_B-Frechet} above. 
We will repeatedly use the following classical result:

\begin{lem}
    \label{lem:pointwise-H1H2-H1L2}
    The pointwise product is (bilinearly) continuous from $H^1_x H^1_z \times H^1_x L^2_z$ to $H^1_x L^2_z$.
\end{lem}

\subsubsection{Forcing term}

We first derive estimates for the main forcing term $(1-\p_z \keta) f(x,z-\keta(x,z))$ from \eqref{eq:NB}.
We start with an easy one-dimensional lemma:
\begin{lem}
    For $\phi, \psi \in (H^2 \cap H^1_0)(-1,1)$ small enough (so that the changes of variables $z \mapsto z - \phi(z)$ and $z \mapsto z - \psi(z)$ are well-defined on $[-1,1]$) and $f \in H^1(-1,1)$, one has
    \begin{align}
        \label{eq:f-o-phi-L2}
        \| f(z - \phi(z)) \|_{L^2} & \lesssim \| f \|_{L^2}, \\
        \label{eq:f-o-phi-psi-L2}
        \| f(z - \phi(z)) - f(z - \psi(z)) \|_{L^2} & \lesssim \| \p_z f \|_{L^2} \| \phi - \psi \|_{L^\infty}.
    \end{align}
\end{lem}

\begin{proof}
    First, \eqref{eq:f-o-phi-L2} is straight-forward since the Jacobian of the change of variables $z \mapsto z - \phi(z)$ is bounded from below and from above for $\phi$ small enough in $(H^2 \cap H^1_0)(-1,1)$.

    Second, for $z \in [-1,1]$, we write
    \begin{equation*}
        f(z-\phi(z)) - f(z-\psi(z)) = (\psi(z) - \phi(z)) \int_0^1 \p_z f(z - s \phi(z) - (1-s) \psi(z)) \dd s.
    \end{equation*}
    Hence, by Cauchy--Schwarz,
    \begin{equation*}
        \begin{split}
            \| f(z - \phi(z)) - f(z - \psi(z)) \|_{L^2}^2 
            \lesssim \| \phi - \psi \|_{L^\infty}^2 \int_0^1 \| \p_z f ( z - (s \phi(z) + (1-s) \psi(z))) \|_{L^2}^2 \dd s
        \end{split}
    \end{equation*}
    so that \eqref{eq:f-o-phi-psi-L2} follows from \eqref{eq:f-o-phi-L2} applied to $\p_z f$ and $s \phi + (1-s) \psi$.
\end{proof}
The next Lemma states that the main forcing term belongs to $H^1_xL^2_z$:
\begin{lem}
    \label{lem:fx-z-phi-LIP}
    For $\phi, \psi \in H^1_x (H^2_z \cap H^1_0)$ small enough and $f \in H^1_x H^2_z$, one has
    \begin{equation*}
    \begin{aligned}
    \|(1-\p_z \phi) f(x,z-\phi)\|_{H^1_x L^2_z} &\lesssim \| f \|_{H^1_x H^2_z},\\
        \| f(x,z-\phi(x,z)) - f(x,z-\psi(x,z)) \|_{H^1_x L^2_z} &\lesssim \| f \|_{H^1_x H^2_z} \| \phi - \psi \|_{H^1_x H^2_z}.
        \end{aligned}
    \end{equation*}
\end{lem}

\begin{proof}
First, we observe that $\p_z\phi, \p_z\psi \in L^\infty$, and $\p_z f\in L^\infty$. Since
\begin{equation*}
\p_x (f(x,z-\phi))=\p_x f(x,z-\phi) - \p_x \phi \p_z f(x,z-\phi)
\end{equation*}
we infer that $f(x,z-\phi)\in H^1_x L^2_z$. From there, we easily deduce the first estimate.

 We then turn towards the second estimate.   By the chain rule and the triangular inequality, one has
    \begin{equation*}
        \begin{split}
            \| f(x,z-\phi) - f(x,z-\psi) \|_{H^1_x L^2_z} & \lesssim \| f(x,z-\phi) - f(x,z-\psi) \|_{L^2_x L^2_z} \\
            & \quad + \| \p_x f(x,z-\phi) - \p_x f (x,z-\psi) \|_{L^2_x L^2_z} \\
            & \quad + \| (\p_z f(x,z-\phi) - \p_z f(x,z-\psi)) \phi_x  \|_{L^2_x L^2_z} \\
            & \quad + \| \p_z f(x,z-\psi) (\phi_x - \psi_x)  \|_{L^2_x L^2_z}. 
        \end{split}
    \end{equation*}
    By \eqref{eq:f-o-phi-psi-L2}, the first two terms are bounded by $\|\p_z f\|_{L^2} \|\phi-\psi\|_{L^\infty}$ and $\|\p_{xz} f \|_{L^2} \|\phi-\psi\|_{L^\infty}$.
    
    For the third term, using \eqref{eq:f-o-phi-psi-L2},
    \begin{equation*}
        \begin{split}
        \| (\p_z f(x,z-\phi) & - \p_z f(x,z-\psi)) \phi_x  \|_{L^2_x L^2_z} \\
        & \leq \| \p_z f(x,z-\phi) - \p_z f(x,z-\psi) \|_{L^\infty_x L^2_z} \| \phi_x \|_{L^2_x L^\infty_z} \\
        & \lesssim \| \p_{zz} f \|_{L^\infty_x L^2_z} \| \phi-\psi\|_{L^\infty}\| \phi_x \|_{L^2_x L^\infty_z}.
        \end{split}
    \end{equation*}
    For the fourth term, using \eqref{eq:f-o-phi-L2},
    \begin{equation*}
        \begin{split}
        \| \p_z f(x,z-\psi) (\phi_x - \psi_x)  \|_{L^2_x L^2_z} 
        & \leq \| \p_z f(x,z-\psi) \|_{L^\infty_x L^2_z} \| \phi_x - \psi_x \|_{L^2_x L^\infty_z} \\
        & \lesssim \| \p_z f \|_{L^\infty_x L^2_z} \| \phi_x - \psi_x \|_{L^2_x L^\infty_z}.
        \end{split}
    \end{equation*}
    Gathering these inequalities concludes the proof using usual Sobolev embeddings.
\end{proof}

We then prove the strong Fréchet differentiability at $(0,0)$ of the main forcing term:
\begin{lem}
    \label{lem:Burgers-NB-Lip-H1L2-(uz*f(u))}
    For $\phi_1, \phi_2 \in H^1_x (H^2_z \cap H^1_0)$ small enough and $f_1, f_2 \in H^1_x H^2_z$, 
    \begin{equation*}
        \begin{split}
           & \| (1-\p_z \phi_1) f_1(x,z-\phi_1) - (1-\p_z \phi_2)  f_2(x,z-\phi_2) - (f_1 - f_2) \|_{H^1_x L^2_z} \\ 
            & \lesssim\left(\|f_1\|_{H^1_x H^2_z} + \|f_2\|_{H^1_x H^2_z} + \|\phi_1\|_{H^1_x H^2_z}+ \|\phi_2\|_{H^1_x H^2_z}\right) \big( \| \phi_1 - \phi_2 \|_{H^1_x H^2_z} + \| f_1 - f_2 \|_{H^1_x H^2_z} \big).
        \end{split}
    \end{equation*}
\end{lem}

\begin{proof}
    First, we write
    \begin{equation*}
        \begin{split}
            f_1(x,z-\phi_1) -  f_2(x,z-\phi_2) - (f_1 - f_2) 
            & = (f_1 - f_2)(x,z-\phi_1) - (f_1 - f_2)(x,z-0) \\
            & \quad + f_2(x,z-\phi_1) - f_2(x,z-\phi_2).
        \end{split}
    \end{equation*}
    Applying \cref{lem:fx-z-phi-LIP} to both lines, we have
    \begin{equation*}
         \begin{split}
        \| f_1(x,z-\phi_1) & -  f_2(x,z-\phi_2) - (f_1 - f_2) \|_{H^1_x L^2_z} \\
        & \lesssim \| f_1 - f_2 \|_{H^1_x H^2_z} \| \phi_1 - 0 \|_{H^1_x H^2_z} + \| f_2 \|_{H^1_x H^2_z} \| \phi_1 - \phi_2 \|_{H^1_x H^2_z}, 
         \end{split}
    \end{equation*}
    which allows to conclude the proof thanks to \cref{lem:pointwise-H1H2-H1L2}.
\end{proof}

\subsubsection{Nonlinear viscous term}

We derive estimates for the main nonlinear viscous term $\frac{\p_z \keta (2 - \p_z \keta)}{(1- \p_z \keta)^2} \p_{zz} \keta$ from \eqref{eq:NB}.

\begin{lem} \label{lem:g(phi_z)}
    Let $g : \R \to \R$ be a $C^3$ function in a neighborhood of $0$ with $g(0) = 0$.
    Then the map $G : H^1_x H^2_z \to H^1_x H^1_z$ given by $G(\phi) := g(\phi_z)$ is well-defined and Lipschitz-continuous in a neighborhood of $0$, and satisfies $G(0) = 0$.
\end{lem}

\begin{proof}
    First, for $\phi, \psi \in H^1_x H^2_z$ small enough,
    \begin{equation*}
        \| g(\phi_z) - g(\psi_z) \|_{L^\infty} 
        \leq \| g' \|_{L^\infty} \| \phi_z - \psi_z \|_{L^\infty} 
        \lesssim \| \phi_z - \psi_z \|_{H^1_x H^1_z}. 
    \end{equation*}
    Second, for $\phi \in H^1_x H^2_z$ small enough,
    \begin{equation*}
        \p_{xz} \left( g(\phi_z) \right) = g'(\phi_z) \phi_{xzz} + g''(\phi_z) \phi_{xz}\phi_{zz}.
    \end{equation*}
    Hence, for $\phi,\psi \in H^1_x H^2_z$ small enough, using that $g \in C^3$ and decomposing the difference, one obtains
    \begin{equation*}
        \| \p_{xz} \left( g(\phi_z) - g(\psi_z) \right) \|_{L^2} \lesssim \| \phi-\psi\|_{H^1_x H^2_z},
    \end{equation*}
    which concludes the proof.
\end{proof}

The next lemma proves that the nonlinear viscous term belongs to $H^1_x L^2_z$:
\begin{lem}
    \label{lem:Burgers-NB-Lip-H1L2-(uz*uzz)}
    For $\phi, \psi \in H^1_x H^2_z$ small enough, one has
    \begin{equation*}
        \left\| \frac{\p_z \phi (2 - \p_z\phi)}{(1 - \p_z\phi)^2} \p_{zz} \phi - \frac{\p_z \psi (2 - \p_z\psi)}{(1 - \p_z\psi)^2} \p_{zz} \psi \right \|_{H^1_x L^2_z} \lesssim \left(\|\phi\|_{H^1_x H^2_z} + \|\psi\|_{H^1_x H^2_z}\right) \| \phi - \psi \|_{H^1_x H^2_z}.
    \end{equation*}
\end{lem}

\begin{proof}
    Since $\p_{zz}$ is Lipschitz-continuous from $H^1_x H^2_z$ to $H^1_x L^2_z$, by \cref{lem:pointwise-H1H2-H1L2}, the result follows from the Lipschitz continuity of $\phi \mapsto \p_z\phi (2 - \p_z\phi) (1 - \p_z\phi)^{-2}$ from $H^1_x H^2_z$ to $H^1_x H^1_z$, which is a consequence of \cref{lem:g(phi_z)} with $g(s) := s (2-s)(1-s)^{-2}$.
\end{proof}

Gathering \cref{lem:Burgers-NB-Lip-H1L2-(uz*f(u))} (for the part involving $f$) and \cref{lem:Burgers-NB-Lip-H1L2-(uz*uzz)} (for the quadratic part involving $\tY$ only), we obtain the Fréchet differentiability of $N_B$ at $(0,0)$:

\begin{coro}
    \label{coro:Burgers-Lip-source}
    For $\Xi=(f,\delta_0,\delta_1)$, $\Xi'=(f',\delta_0',\delta_1')\in \cX_B$ and $\tY, \tY'\in \cZ_B$ small enough,
    \begin{equation*}
        \| N_B(f, \tY) - N_B(f',\tY') - (f-f')\|_{H^1_x L^2_z} 
        = o\left( \| \Xi-\Xi'\|_{\cX_B} + \|\tY-\tY'\|_{\cZ_B}\right).
    \end{equation*}
\end{coro}

\subsubsection{Boundary contribution of the nonlinearity}

We now derive estimates concerning the $\mathscr{H}^1_z(\Sigma_i)$ contribution of $N_B(f,\keta)$.

\begin{lem}
    \label{lem:H1z=>zpsi-Linf}
    For $\psi \in \mathscr{H}^1_z(0,1)$, one has $z \psi \in L^\infty(0,1)$ with $\| z \psi \|_{L^\infty} \lesssim \| \psi \|_{\mathscr{H}^1_z}$.
\end{lem}

\begin{proof}
    Let $\psi \in \mathscr{H}^1_z(0,1)$.
    First, $\psi \in H^1(1/2,1)$ and one has $|\psi(1)| \lesssim \| \psi \|_{\mathscr{H}^1_z}$.
    Thus, for $z_0 \in (0,1)$,
    \begin{equation*}
        |\psi(z_0)| \leq |\psi(1)| + \int_{z_0}^1 |\psi_z| \leq |\psi(1)| + |\ln z_0|^{\frac 12} \| z^{1/2} \psi_z \|_{L^2(0,1)}
        \lesssim |\ln z_0|^{\frac 12} \| \psi \|_{\mathscr{H}^1_z},
    \end{equation*}
    which proves that $|\ln z|^{-\frac12} \psi \in L^\infty$, so that, in particular, $z \psi \in L^\infty$.
\end{proof}

\begin{lem}
    \label{lem:Burgers-NB-Lip-Bord}
    Let $g : \R \to \R$ be a $C^2$ function in a neighborhood of $0$ with $g(0) = 0$.
    Let $\cE := \{ \psi \in L^2(0,1) ; \enskip \psi_{zz} / z \in \mathscr{H}^1_z \}$ with the associated canonical norm.
    There exists $\eta > 0$ small enough such that, for $\phi, \psi \in \cE$ with $\|\phi\|_{\cE} \leq \eta$ and $\|\psi\|_{\cE} \leq \eta$,
    \begin{equation*}
        \left\| \frac{g(\phi_z) \phi_{zz}}{z} - \frac{g(\psi_z) \psi_{zz}}{z} \right\|_{\mathscr{H}^1_z}
        \lesssim (\|\phi\|_{\cE} + \|\psi\|_{\cE}) \| \phi-\psi \|_{\cE}.
    \end{equation*}
\end{lem}

\begin{proof}
    First, $\cE \hookrightarrow H^2 \hookrightarrow W^{1,\infty}$ and, thanks to \cref{lem:H1z=>zpsi-Linf}, $\cE \hookrightarrow W^{2,\infty}$.
    We write
    \begin{equation*}
        \frac{g(\phi_z) \phi_{zz}}{z} - \frac{g(\psi_z) \psi_{zz}}{z}
        =
        (g(\phi_z) - g(\psi_z)) \frac{\phi_{zz}}{z}
        + g(\psi_z) \frac{\phi_{zz}-\psi_{zz}}{z}.
    \end{equation*}
    For the first term, we have
    \begin{equation*}
        \left\| (g(\phi_z) - g(\psi_z)) \frac{\phi_{zz}}{z} \right\|_{\mathscr{H}^1_z}
        \lesssim \| \phi_z - \psi_z \|_{L^\infty} \| \phi_{zz} / z \|_{\mathscr{H}^1_z}
        + \| \phi_{zz}/z \|_{\mathscr{L}^2_z} \| \p_z (g(\phi_z)-g(\psi_z)) \|_{L^\infty}
    \end{equation*}
    where
    \begin{equation*}
        \begin{split}
        \| \p_z (g(\phi_z)-g(\psi_z)) \|_{L^\infty}
        & \leq \| (g'(\phi_z) - g'(\psi_z)) \phi_{zz} \|_{L^\infty}
        + \| g'(\phi_z) (\phi_{zz} - \psi_{zz}) \|_{L^\infty} \\
        & \lesssim \|  \phi_z - \psi_z \|_{L^\infty} \| \phi_{zz} \|_{L^\infty} +  \|\phi_{zz}-\psi_{zz}\|_{L^\infty}.
        \end{split}
    \end{equation*}
    For the second term,
    \begin{equation*}
        \left\| g(\psi_z) \frac{\phi_{zz}-\psi_{zz}}{z} \right\|_{\mathscr{H}^1_z}
        \lesssim \| g(\psi_z) \|_{L^\infty} \| (\phi_{zz} - \psi_{zz})/z \|_{\mathscr{H}^1_z} + \| (\phi_{zz}-\psi_{zz})/z \|_{\mathscr{L}^2_z} \| g'(\psi_z) \psi_{zz} \|_{L^\infty}.
    \end{equation*}
    Hence, the claimed estimate follows from the embedding $\cE \hookrightarrow W^{2,\infty}$.
\end{proof}

We infer that the boundary contribution of the nonlinearity is Fréchet-differentiable at $(0,0)$:
\begin{coro}
    \label{coro:Burgers-NB-boundary}
    For $\Xi=(f,\delta_0,\delta_1)$, $\Xi'=(f',\delta_0',\delta_1')\in \cX_B$ and $\tY, \tY'\in \cZ_B$ small enough, $z^{-1} N_B(f, \tY)\vert_{\Sigma_i}\in \mathscr H^1_z(\Sigma_i)$ and $z^{-1} N_B(f', \tY')\vert_{\Sigma_i}\in \mathscr H^1_z(\Sigma_i)$ and
    \begin{equation*}
        \left\| \frac{N_B(f, \tY) - N_B(f',\tY')}{z} \right\|_{\mathscr H^1_z(\Sigma_i)} = o\left( \|\tY-\tY'\|_{\cZ_B}\right). 
    \end{equation*}
\end{coro}

\begin{proof}
    Since $\Xi \in \cX_B$, $f\vert_{\Sigma_i} = 0$.
    Thus
    \begin{equation*}
        N_B(f, \tY)\vert_{\Sigma_i} = g(\p_z \tY \vert_{\Sigma_i}) \p_{zz} \tY\vert_{\Sigma_i} \quad \text{with} \quad g(a)=\frac{a(2-a)}{(1-a)^2}.
    \end{equation*}
    The result follows from \cref{lem:Burgers-NB-Lip-Bord}, noting that, for $\tY \in \cZ_B$, $\tY\vert_{\Sigma_i} \in \cE$ of \cref{lem:Burgers-NB-Lip-Bord}.
\end{proof}

\subsubsection{Contribution of the inversion of the boundary data}

We now move on to estimates concerning the Fréchet-differentiability of the map $\Upsilon$ of \eqref{eq:def-tilde-delta_i}.

\begin{lem}
    \label{lem:H1z-from-H2}
    For $\phi \in H^2(0,1)$ such that $\phi(0) = 0$,
    \begin{equation*}
        \| \phi(z) / z \|_{H^1} \lesssim \| \phi \|_{H^2}.
    \end{equation*}
\end{lem}

\begin{proof}
    Writing a second-order Taylor expansion, one has
    \begin{equation*}
        \phi(z) = z \phi'(0) + \int_0^z (z-s) \phi''(s) \dd s.
    \end{equation*}
    Thus
    \begin{equation*}
        \frac{\dd}{\dd z} \left( \frac{\phi(z)}{z} \right) = \frac{1}{z^2} \int_0^z s \phi''(s) \dd s,
    \end{equation*}
    from which the conclusion follows by the Hardy inequality of \cref{lem:Hardy-45}.
\end{proof}

\begin{lem}
    \label{lem:Upsilon-strong-diff}
    Consider the spaces
    \begin{align}
        \cE_1 & := \{ \delta \in \mathscr{H}^1_z(0,1) ; \enskip \delta''(z)/z \in \mathscr{H}^1_z(0,1), \enskip \delta(0) = \delta(1) = \delta''(0) = \delta''(1) =0 \}, \\
        \cE_2 & := \{ \delta \in H^5(0,1) ; \enskip \delta(0) = \delta( 1) = \delta''(0) = \delta''(1) =0 \}.
    \end{align}
    Then the map $\Upsilon[\delta](z) := z - (\cdot + \delta(\cdot))^{-1}(z)$ as in \eqref{eq:def-tilde-delta_i} is well-defined for $\delta$ small enough and strongly Fréchet-differentiable at $0$ from $\cE_2$ to $\cE_1$.
    More precisely, for $\delta, \eta \in \cE_2$ small enough,
    \begin{equation}
        \label{eq:Upsilon-strong-diff}
        \| \Upsilon[\delta] - \Upsilon[\eta] - (\delta - \eta) \|_{\cE_1} \lesssim\left( \|\delta\|_{\cE_2} + \|\eta\|_{\cE_2}\right) \| \delta - \eta \|_{\cE_2}.
    \end{equation}
\end{lem}

\begin{proof}
    \step{We first check that $\Upsilon$ is well-defined.}
    Since $\cE_2 \hookrightarrow W^{1,\infty}$, $\widetilde{\delta} := \Upsilon[\delta]$ is well-defined for $\delta \in \cE_2$ small enough, and the boundary conditions $\delta(0) = \delta(1) = 0$ of $\cE_2$ entail that $\widetilde{\delta}(0) = \widetilde{\delta}(1) = 0$.
    
    Moreover, one has
    \begin{equation} \label{eq:delta-to-tilde}
        \widetilde{\delta}(z) = \delta(z - \widetilde{\delta}(z)).
    \end{equation}
    From this relation, we derive that
    \begin{equation} 
        \label{eq:tilde-delta''}
        \widetilde{\delta}'(z) = \frac{\delta'}{1+\delta'}(z - \widetilde{\delta}(z)) 
        \quad \text{and} \quad 
        \widetilde{\delta}''(z) = \frac{\delta''}{(1+\delta')^3}(z - \widetilde{\delta}(z))
    \end{equation}
    which ensures that $\widetilde{\delta}''(0) = \widetilde{\delta}''(1) = 0$ since $\delta''(0) = \delta''(1) = 0$.

    \step{We prove the strong Fréchet-differentiability at $0$.}
    To control the $\cE_1$ norm, it suffices to control the $L^2$ norm and the $\mathscr{H}^1_z$ norm of the quotient $\p_{zz} (\cdot) / z$.
    For $\delta, \eta \in \cE_2$ by \eqref{eq:delta-to-tilde},
    \begin{equation} \label{eq:E1-E2-decomp-L2}
        (\widetilde{\delta} - \widetilde{\eta})(z) = (\delta - \eta) (z - \widetilde{\delta}) + (\eta (z - \widetilde{\delta}) - \eta(z - \widetilde{\eta})).
    \end{equation}
    Hence
    \begin{equation*}
        \| \widetilde{\delta} - \widetilde{\eta} \|_{L^\infty} \leq \| \delta - \eta \|_{L^\infty} + \| \p_z \eta \|_{L^\infty} \| \widetilde{\delta} - \widetilde{\eta} \|_{L^\infty}.
    \end{equation*}
    In particular, for $\eta$ small enough in $\cE_2$,
    \begin{equation*}
        \| \widetilde{\delta} - \widetilde{\eta} \|_{L^\infty}
        \leq 2 \| \delta - \eta \|_{L^\infty}.
    \end{equation*}
    Thus, applying estimate \eqref{eq:f-o-phi-psi-L2} to \eqref{eq:E1-E2-decomp-L2}, we obtain
    \begin{equation}
        \label{eq:E1-E2-estim-L2}
        \begin{split}
            \| (\widetilde{\delta} - \widetilde{\eta}) - (\delta - \eta) \|_{L^2} 
            & \leq \| (\delta - \eta) (\cdot - \widetilde{\delta}) - (\delta - \eta)(\cdot) \|_{L^2} + \| \eta (\cdot - \widetilde{\delta}) - \eta(\cdot - \widetilde{\eta}) \|_{L^2} \\
            & \lesssim \| \p_z (\delta - \eta) \|_{L^2} \| \widetilde{\delta} \|_{L^\infty} + \| \p_z \eta \|_{L^2} \| \widetilde{\delta} - \widetilde{\eta} \|_{L^\infty} \\
            & \lesssim \| \p_z (\delta - \eta) \|_{L^2} \| \delta \|_{L^\infty} + \| \p_z \eta \|_{L^2} \| \delta - \eta \|_{L^\infty} \\
            & \lesssim \left( \| \delta\|_{H^1} +  \| \eta\|_{H^1}\right) \| \delta - \eta \|_{H^1}.
        \end{split}
    \end{equation}
    We now move to the estimate of the $\mathscr{H}^1_z$ norm of the quotient $\p_{zz}(\cdot)/z$.
    By \cref{lem:H1z-from-H2} (which even yields an $H^1$ estimate, not only $\mathscr{H}^1_z$), since all our functions have null second derivative at~$0$, it suffices to obtain an $H^4$ estimate.
    Differentiating~\eqref{eq:tilde-delta''} twice, we obtain
    \begin{equation*}
        \p_z^4\widetilde{\delta}(z) = \frac{\p_z^4 \delta}{(1+\p_z \delta)^5} (z - \widetilde{\delta}(z)) + \text{lower order terms.}
    \end{equation*}
    Decomposing the difference in a similar manner as in \eqref{eq:E1-E2-estim-L2} and applying \eqref{eq:f-o-phi-psi-L2}, one can prove
    \begin{equation*}
        \| (\widetilde{\delta} - \widetilde{\eta}) - (\delta - \eta) \|_{H^4} =  \left( \| \delta  \|_{H^5}  + \|  \eta \|_{H^5}\right)\| \delta - \eta \|_{H^5}. 
    \end{equation*}
    Together with \cref{lem:H1z-from-H2}, this concludes the proof of \eqref{eq:Upsilon-strong-diff}.
\end{proof}

\begin{coro}
    \label{coro:Burgers-Upsilon}
    For $\Xi=(f,\delta_0,\delta_1)$, $\Xi'=(f',\delta_0',\delta_1')\in \cX_B$ small enough,
    \begin{equation*}
            \| (0,\Upsilon[\delta_0],\Upsilon[\delta_1]) - (0,\Upsilon[\delta_0'],\Upsilon[\delta_1']) - (0,\delta_0-\delta_0',\delta_1-\delta_1')\|_{\cH_B}
            = o\left( \| \Xi-\Xi'\|_{\cX_B} \right).
    \end{equation*}
\end{coro}

\begin{proof}
    Recalling that, for $\Xi = (f,\delta_0,\delta_1) \in \cX_B$, $f\vert_{\Sigma_i} = 0$, this is a direct consequence of \cref{lem:Upsilon-strong-diff} and the definitions \eqref{eq:def-HB} and \eqref{eq:def-XB} of $\cH_B$ and $\cX_B$.
\end{proof}

\subsection{Reverse change of variables}
\label{sec:Burgers-main-proof}

\begin{proof}[Proofs of \cref{thm:burgers} and \cref{p:necessity-ortho}]
    It only remains to prove that the change of variables of \cref{sec:Burgers-change} is justified in both directions.

    First, given $(f,\delta_0,\delta_1) \in \cM_B$, let $\tY \in Z^1$ be the solution to \eqref{eq:Burgers-keta} given by \cref{prop:burgers} and let $Y(x,z) := z - \tY(x,z)$ the associated solution to \eqref{eq:PDE-Burgers-Y}.
    By \cref{prop:burgers}, $\|Y\|_{Z^1} \lesssim 1+\|(f,\delta_0,\delta_1)\|_{\cX_B}$.
    By \cref{lem:Zsigma-Qsigma}, $\|Y\|_{\qone} \lesssim 1+ \|(f,\delta_0,\delta_1)\|_{\cX_B}$.
    Since $Y$ is a solution to \eqref{eq:PDE-Burgers-Y}, we have
    \begin{equation*}
        \p_{zz} Y = (\p_z Y)^2 (z \p_x Y) - (\p_z Y) f(x,Y(x,z)).
    \end{equation*}
    We check that the right-hand side is $L^2_x H^1_z$, from which we deduce that $\p_z^3 Y \in L^2$. Repeating this argument, we find that the right-hand side of the above equation is in fact
     $L^2_x H^2_z$ and that
    \begin{equation*}
        \| \p_z^4 Y \|_{L^2} \lesssim \| Y \|_{Z^1} + \| f \|_{L^2_x H^2_y}.
    \end{equation*}
    Thus, $Y \in \qone \cap L^2_x H^4_z$ and
    \begin{equation*}
        \| Y(x,z) - z \|_{\qone} + \| Y(x,z) - z \|_{L^2_x H^4_z} \lesssim \| (f,\delta_0,\delta_1) \|_{\cX_B}.
    \end{equation*}
    By \cref{lem:inverse-qone}, \eqref{eq:u-to-Y} defines a $u \in \qone \cap L^2_x H^4_y$ such that
    \begin{equation*}
        \| u(x,y) - y \|_{\qone} + \| u(x,y) - y \|_{L^2_x H^4_y} \lesssim \| (f,\delta_0,\delta_1) \|_{\cX_B}.
    \end{equation*}
    In particular, since both $\p_y u$ and $\p_z Y$ are continuous functions on $\Omega$ with $\| \p_y u - 1 \|_{L^\infty} \ll 1$ and $\| \p_z Y - 1 \|_{L^\infty} \ll 1$, the computations of \cref{sec:Burgers-change} hold.
    Thus, we have constructed a $u \in \qone$ solution to \eqref{eq:yuuxuyy-bis}.
    This proves the existence claim of \cref{thm:burgers}.

    Reciprocally, to prove the claim of \cref{thm:burgers} concerning the uniqueness of the solution to~\eqref{eq:yuuxuyy-bis} and the one of \cref{p:necessity-ortho} concerning the necessity of the nonlinear orthogonality conditions $(f,\delta_0,\delta_1) \in \cM_B$, we must perform the reasoning in the other direction.
    Let $(f,\delta_0,\delta_1) \in \cX_B$ small enough, and let $u \in \qone$ be a solution to \eqref{eq:yuuxuyy} such that $\|u\|_{\qone}\ll 1$.
    Writing the PDE as
    \begin{equation*}
        \p_y^2 u = u \p_x u - f
    \end{equation*}
    we obtain that $u \in \qone \cap L^2_x H^4_y$.
    By \cref{lem:inverse-qone}, \eqref{eq:u-to-Y} defines a function $Y \in \qone \cap L^2_x H^4_z$ such that
    \begin{equation*}
        \| Y(x,z)-z \|_{\qone} + \| Y(x,z) -z \|_{L^2_x H^4_z} \lesssim \| u(x,y) - y \|_{\qone} + \| u(x,y) - y  \|_{L^2_x H^4_z} \ll 1.
    \end{equation*}
    In particular, since both $\p_y u$ and $\p_z Y$ are continuous functions on $\Omega$ with $\| \p_y u - 1 \|_{L^\infty} \ll 1$ and $\| \p_z Y - 1 \|_{L^\infty} \ll 1$, the computations of \cref{sec:Burgers-change} hold.
    Thus, $Y$ is a solution to \eqref{eq:PDE-Burgers-Y}.
    Since $Y \in \qone \cap L^2_x H^4_z$, we have $Y \in H^1_x H^2_z$.
    From the equation \eqref{eq:PDE-Burgers-Y}, we recover that $z \p_x (\p_x Y) \in L^2$.
    Thus $Y \in Z^1(\Om)$.
    Hence, the conclusions of \cref{prop:burgers} apply: $Y$ is unique and $(f,\delta_0,\delta_1) \in \cM_B$.
\end{proof}

\newpage
\section{The Prandtl system in the recirculation zone}
\label{sec:Prandtl}

Let us now  continue our analysis of nonlinear parabolic forward-backward systems by considering the Prandtl equation in the vicinity of a recirculating flow $(\ufs,\vfs)$, revisiting the results of Iyer and Masmoudi from \cite{IM2023,IM2022}.
Throughout this section, the index $t$ stands for `top' and the index~$b$ for `bottom'. 
We refer to \cref{sec:intro-main-Prandtl} of the introduction for the assumptions on $(\ufs,\vfs)$.

We consider the system
\begin{equation} 
\label{prandtl}
\begin{cases}
	u u_x + v u_y - u_{yy}= -\p_x p + f & \text{ in }\Om_P,\\
	u_x + v_y =0 & \text{ in }\Om_P,\\
\end{cases}
\end{equation}
where the pressure gradient $\p_x p$ is the one associated with $(\ufs, \vfs)$, and where we recall that the domain $\Omega_P$ is defined by
\begin{equation}
    \label{eq:def-OmP}
    \Omega_P := \{(x,y)\in (x_0, x_1)\times \R_+;\enskip \gamma_b (x) < y<\gamma_t(x)\}.
\end{equation}
This system is	endowed with the boundary conditions \eqref{CL-Prandtl-bottom}-\eqref{CL-Prandtl-top}-\eqref{CL-Prandtl-lateral}, which we now recall for the reader's convenience:
\begin{equation}\label{BC-Prandtl-compact}
\begin{cases} 
u\vert_{y=\gamma_b}= z_b,\quad \p_y u\vert_{y=\gamma_b}=\p_y \ufs\vert_{y=\overline{\gamma_b}} + \delta_b,\quad v\vert_{y=\gamma_b}= \vfs\vert_{y=\overline{\gamma_b}} + v_b & \text{ (bottom BC),}\\
u\vert_{y=\gamma_t}= z_t,\quad \p_y u\vert_{y=\gamma_t}=\p_y \ufs\vert_{y=\overline{\gamma_t}} + \delta_t, & \text{ (top BC),}\\
u\vert_{\Sigma_i^P}= \ufs\vert_{\Sigma_i^P} + \delta_i & \text{ (lateral BC).}
\end{cases}
\end{equation}
We recall that the lines $\{y=\gamma_j(x)\}$ for $j\in \{t,b\}$, which are level sets of the function $u$, are free boundaries which are expected to lie in the vicinity of the level sets $\{y=\overline{\gamma_j}(x)\}$ of the function~$\ufs$. 
We refer to the introduction  for further comments on these boundary conditions.
See \cref{fig:omega-prandtl} for a sketch of the geometry of the domain.

\begin{figure}
    \centering
    \includegraphics{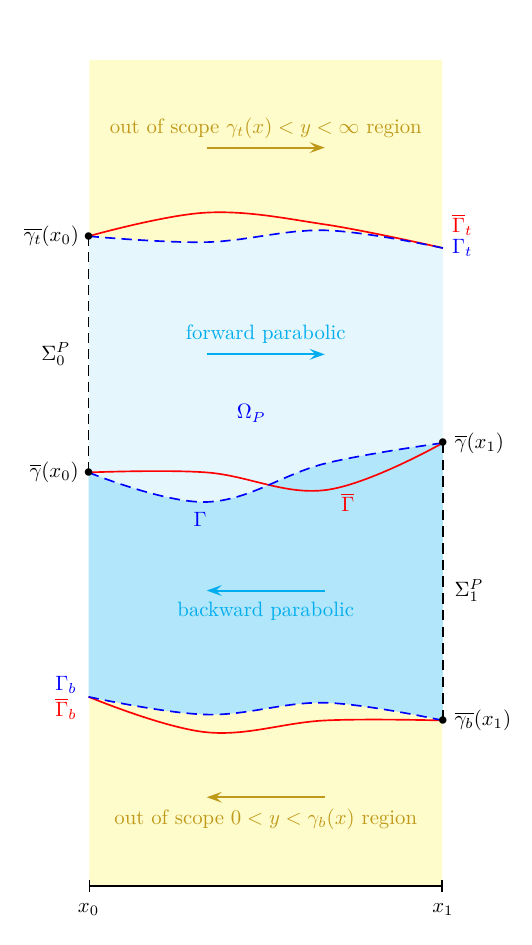}
    \caption{Fluid domain $\Omega_P$ defined in \eqref{eq:def-OmP} with free top and bottom boundaries $\Gamma_t$ and $\Gamma_b$, and fixed inflow boundaries $\Sigma^P_0$ and $\Sigma^P_1$.}
    \label{fig:omega-prandtl}
\end{figure}

The source term $f$ in \eqref{prandtl} is a small regular perturbation of the pressure term. From the physical point of view, it is relevant to consider perturbations which depend only on $x$, since the right-hand side in the Prandtl system is the trace of the pressure gradient of some outer Euler flow on the boundary. 
However, the analysis is essentially unchanged if we allow $f$ to depend on the vertical variable $y$, and therefore in the following $f$ will be a smooth function depending on both $x$ and $y$, for the sake of generality.

Our analysis in this section follows the one from \cref{sec:Burgers}.
We first perform in \cref{sec:change-variables-Prandtl} a nonlinear change of variables in order to straighten the free boundary $\{(x,y); \ u(x,y)=0\}$. 
The whole analysis then takes place in these new variables. 
One remarkable point lies in the fact that the linear problem associated with the Prandtl system is similar to, but slightly different from the one for the Burgers equation.
In fact, the linear problem associated with the \emph{vorticity} studied in \cref{sec:vorticity} has the same structure as~\eqref{eq:shear}.
Retrieving the velocity from the vorticity in \cref{sec:Prandtl-reconstruct} gives rise to an additional orthogonality condition. 
Moreover, since the vorticity plays the same role as the function $u$ from \cref{sec:Burgers}, it turns out that the Prandtl system is actually \emph{more regular} than the Burgers equation~\eqref{eq:eq0-uux}: indeed, there is a gain of one vertical derivative (corresponding to  a vertical integration of the velocity) between Burgers and Prandtl. 
This will allow us to construct solutions with a minimal requirement of regularity, and just one orthogonality condition.
We construct solutions to the nonlinear problem in the new variables in \cref{sec:Prandtl-abstract}, and  conclude the proof of \cref{thm:prandtl} in \cref{sec:WP-Prandtl}.

We recall that we focus here on the behavior of the system in the vicinity of the curve $\{u=0\}$.
When studying the system in the whole infinite strip $(x_0,x_1) \times \R_+$, special care must be taken to ``glue together'' the different zones. As explained in \cite{IM2022},  information flows from bottom to top.
The analysis of the system in the vicinity of the lower boundary and for large values of $y$ requires specific tools, which go beyond the scope of the present paper.
We refer the interested reader to \cite{IM2023,IM2022} for the study of the Prandtl system in the whole domain, and for a description of the difficulties associated with the interplay between the different zones.
We also present in \cref{sec:strategy-Prandtl-bande-infinie} a potential strategy to construct a solution to the Prandtl system in the whole infinite strip, stepping on the analysis of the present paper. 
In particular, we explain why the analysis of the system in an infinite vertical domain may call for an assumption on the horizontal size of the domain $x_1-x_0$: in \cite{IM2022}, the well-posedness of the system holds when $|x_1-x_0|$ is either small, or outside a countable set (corresponding to the zeros of an analytic function). No such assumption is required when the Prandtl system is studied in the recirculation zone only, see \cref{thm:prandtl} or \cref{prop-Prandtl-Y} below. 
Let us also recall that our purpose here is merely to present, in a unified framework, different forward-backward problems. 
Therefore we will put an emphasis on the specific features associated with the Prandtl system in the recirculation zone $\Omega_P$, and on the similarities and differences with the Burgers type system \eqref{eq:eq0-uux} studied in \cref{sec:Burgers}.

\subsection{Nonlinear change of variables}
\label{sec:change-variables-Prandtl}

At this stage, we assume that a smooth solution to \eqref{prandtl} exists in order to write the equation in a form that is more amenable to mathematical analysis.
We will come back on the justification of the computations below in \cref{sec:WP-Prandtl}.

As in \cref{sec:Burgers-change}, we change variables by setting $(x,z)=(x, u(x,y))$, where $u$ is the unknown tangential velocity. 
This maps the unknown domain $\Om_P = \{ \gamma_b(x) < y < \gamma_t(x) \}$ depending on the solution $u$ (since the lines $\gamma_b$ and $\gamma_t$ are defined by $u(x,\gamma_j(x)) = z_j$ for $j \in \{b,t\}$) to the fixed rectangular domain $(x_0,x_1) \times (z_b,z_t)$. 

We denote by $(x, Y(x,z))$ the diffeomorphism such that $u(x, Y(x,z))=z$. 
As a consequence, we have the same relations \eqref{eq:CDV-u-Y} between the derivatives of $u$ and $Y$ as for the Burgers case.
The top and bottom boundary conditions become $Y(x,z_j) = \gamma_j(x)$ for $j \in \{b,t\}$.

Furthermore, integrating the divergence-free condition and using \eqref{CL-Prandtl-bottom},
\begin{equation*}
    \begin{split}
        v(x,Y(x,z))&= v\vert_{\Gamma_b}  -\int_{\gamma_b(x)}^{Y(x,z)} \p_x u(x,y')\dd y'\\
    	&=\vfs\vert_{\overline{\Gamma_b}}+ v_b+ \int_{z_b}^{z} \frac{\p_x Y(x,z')}{\p_z Y(x,z')} \p_z Y(x,z')\dd z'\\
    	&= \vfs\vert_{\overline{\Gamma_b}} +  v_b+ \int_{z_b}^{z} \p_x Y(x,z')\dd z'.
    \end{split}
\end{equation*}
Replacing this expression and \eqref{eq:CDV-u-Y} into \eqref{prandtl} and evaluating the equation at $y=Y(x,z)$, we find that
\begin{equation*} 
-\frac{1}{\p_z Y} \left[ z \p_x Y - \int_{z_b}^z \p_x Y - \vfs\vert_{y=\overline{\gamma_b}} -  v_b \right] + \frac{1}{(\p_z Y)^3} \p_z^2 Y= - \p_x p + f(x,Y(x,z)).
\end{equation*}
Let us now denote by $\Yfs$ the function such that $\ufs(x,\Yfs(x,z))=z$. 
\nomenclature[OLYFS]{$\Yfs$}{Inverse function of the reference flow $\ufs$}
Following the same computations as above, this function satisfies
\begin{equation*}
-\frac{1}{\p_z \Yfs} \left[ z \p_x \Yfs - \int_{z_b}^z \p_x \Yfs - \vfs\vert_{\overline{\Gamma_b}} \right] + \frac{1}{(\p_z \Yfs)^3} \p_z^2 \Yfs= - \p_x p.
\end{equation*}
Let $\tY:= \Yfs - Y$. 
Then
\begin{equation*}
    \begin{split}
        \frac{1}{(\p_z \Yfs)^2} \p_z^2 \Yfs  -\frac{1}{(\p_z Y)^2} \p_z^2 Y 
        & = \p_z \left(\frac{1}{\p_z Y} - \frac{1}{\p_z \Yfs} \right)\\
        & = \p_z \left( \frac{\p_z \tY}{(\p_z \Yfs)^2}\right) 
        + \p_z \left( \frac{(\p_z \tY)^2}{(\p_z \Yfs)^2 \p_z Y} \right).
    \end{split}
\end{equation*}
We obtain eventually the following very simple  equation
\begin{equation} 
    \label{eq:Prandtl-changement-var}
    z \p_x \tY - \int_{z_b}^z \p_x \tY(x,z')\dd z'  - \p_x p \p_z \tY -  \p_z \left( \frac{\p_z \tY}{(\p_z \Yfs)^2}\right) = g(x,z)
\end{equation}
where
\begin{equation}
    \label{eq:Prandtl-g}
    g(x,z) := f(x, Y(x,z)) \p_z (\Yfs- \tY) - v_b(x) +  \p_z \left( \frac{(\p_z \tY)^2}{(\p_z \Yfs)^2 \p_z Y} \right).
\end{equation}
The top and bottom boundary conditions \eqref{CL-Prandtl-top} and \eqref{CL-Prandtl-bottom} become, for $j\in \{t,b\}$,
\nomenclature[OAVPb]{$\BCP^b[\delta_b]$}{Boundary data at the bottom for the Prandtl system in the new variables}
\nomenclature[OAVPt]{$\BCP^t[\delta_t]$}{Boundary data at the top  for the Prandtl system in the new variables}
\begin{equation}
    \label{cond:tY-top-bottom}
    \begin{split}
        \p_z \tY(x,z_j)
        & = \p_z \Yfs(x,z_j) - \p_z Y(x,z_j) \\
        & = \frac{1}{\p_y \ufs (x,\overline{\gamma_j}(x)) } - \frac{1}{\p_y \ufs (x,\overline{\gamma_j}(x)) + \delta_j(x)} 
        =: \BCP^j[\delta_j] (x).
    \end{split}
\end{equation}
The unknown function $\gamma_j$ can be retrieved from $\tY$ by 
\begin{equation*}
    \gamma_j(x) = Y(x,z_j) = \Yfs(x,z_j) - \tY(x,z_j) = \overline{\gamma_j}(x) - \tY(x,z_j).
\end{equation*}
We still denote by $\Sigma_0$ and $\Sigma_1$ the lateral boundaries, i.e.\ $\Sigma_0=\{x_0\}\times (0, z_t)$, $\Sigma_1=\{x_1\}\times (z_b,0)$.
In order to simplify the definition of the functional spaces for the lateral boundary data, we assume that $\delta_0(\gfs(x_0))=\delta_1(\gfs(x_1))=\delta_0(\overline{\gamma_t}(x_0))=\delta_1(\overline{\gamma_b}(x_1))=0$.
The lateral boundary conditions \eqref{CL-Prandtl-lateral} are then given by the implicit equation
\begin{equation*}
z= \ufs(x_i, Y(x_i,z)) + \delta_i (Y(x_i,z))\quad \text{on } \Sigma_i,
\end{equation*}
which becomes, after noticing that $\ufs (x_i, \cdot ) + \delta_i$ is strictly increasing on $\Sigma_i^P$ and therefore bijective from $\Sigma_i^P$ to $\Sigma_i$, 
\nomenclature[OAViP]{$\BCP^i[\delta_i]$}{Boundary data on $\Sigma_i$ for the Prandtl system in the new variables}
\begin{equation}\label{cond:tY-Sigma-i}
    \tY(x_i,z) = \Yfs(z) - \left(\ufs(x_i, \cdot) + \delta_i\right)^{-1} (z) =: \BCP^i[\delta_i]\quad \text{on } \Sigma_i.
\end{equation}
For further purposes, we note that the function $\BCP^j[\delta_j]$ (resp.\ $\BCP^i[\delta_i]$) has the same regularity and size as $\delta_j$ (resp.\ $\delta_i$). 

\begin{rmk}
	When $\ufs(x,y)=y$ (linear shear flow), \eqref{eq:Prandtl-changement-var} simply becomes, at main order
	\begin{equation*}
	z \p_x \tY + \widetilde{V} - \p_z^2 \tY = g,
	\end{equation*}
	where $\widetilde{V}=-\int_{z_b}^z \p_x \tY$. 
	Differentiating this equation with respect to $z$, and setting $W := \p_z \tY$ ($W$ is the vorticity in our new variables) we find
	\begin{equation*}
	z \p_x W - \p_{z}^2 W= \p_z g.
	\end{equation*}
	Therefore, when we consider the Prandtl equation in the vicinity of the linear shear flow, the equation for the vorticity in the new variables is \eqref{eq:shear}.
	We retrieve here the following fact, which was already identified by Iyer and Masmoudi in \cite{IM2022}: the Prandtl system in vorticity form is very close to \eqref{eq:shear}. This will also be central in our analysis below.
\end{rmk}

Let us now state our main result on system \eqref{eq:Prandtl-changement-var}. 
Since we will state two results within different regularity frameworks, we will work with two different functional spaces for the data.
Note that since the boundaries $\gamma_b$, $\gamma_t$ are free, we allow the function $f$ to be defined on a domain that is possibly larger, in the vertical direction, than the reference domain $\{(x,y)\in (x_0, x_1)\times (0, +\infty), \ \overline{\gamma_b}(x)< y <\overline{\gamma_t}(x)\}$.
Hence, in order to simplify the statements, we assume that $f$ is defined in the whole infinite strip $(x_0,x_1)\times (0, +\infty)$.

\begin{itemize}
	\item In the low regularity setting, we choose an index $\sigma\in (0, 1/6)$. Our functional space will be
\nomenclature[FXs]{$\cX^\sigma$}{Banach space of data $(f, \delta_0,  \delta_1, \delta_t,\delta_b, v_b)$ with low regularity for the Prandtl problem, defined in \eqref{def:X-sigma-prandtl}}	
  \begin{equation}\label{def:X-sigma-prandtl}
	\begin{aligned} 
	\cX^\sigma=&\Big\{ (f, \delta_0,  \delta_1, \delta_t,\delta_b, v_b)\in  H^\sigma_x H^2_z\times H^4(\Sigma_0^P)\times H^4(\Sigma_1^P)\times H^2(x_0,x_1)^2 \times H^\sigma(x_0,x_1),\\
    &\qquad \delta_0(\gfs(x_0))=\delta_1(\gfs(x_1))=\delta_0(\overline{\gamma_t}(x_0))=\delta_1(\overline{\gamma_b}(x_1))=0,\\
        &\qquad f\in  L^4_x H^3_z\cap H^{\frac{1}{2}+\sigma}_x H^1_z\cap L^\infty_x W^{2,\infty}_z,\  (x-x_0)(x-x_1)\p_x \p_z f\in L^2,\\
        &\qquad\BCP^t[\delta_t](x_0)=\p_z \BCP^0[\delta_0](z_t),\ \BCP^b[\delta_b](x_1)=\p_z \BCP^1[\delta_1](z_b)
        \Big\}
	\end{aligned}
	\end{equation}
	which we endow with its canonical norm. 
 
	\item  In the high regularity setting, our functional space will be
 \nomenclature[FXs1]{$\cX^1$}{Banach space of data $(f, \delta_0,  \delta_1, \delta_t,\delta_b, v_b)$ with higher regularity for the Prandtl problem, defined in \eqref{def:X-1-prandtl} }	
	\begin{equation}\label{def:X-1-prandtl}
	\begin{aligned} 
	 \cX^1:=&\Big\{(f, \delta_0,  \delta_1, \delta_t,\delta_b, v_b)\in  H^1_x H^3_z  \times H^6(\Sigma_0^P)\times H^6(\Sigma_1^P)\times H^2(x_0,x_1)^2 \times H^1(x_0,x_1),\\
&\qquad f\vert_{\Sigma^P_i}=0,\quad \delta_0(\overline{\gamma_t}(x_0))=\delta_1(\overline{\gamma_b}(x_1))=\p_z^k \delta_i(\gfs(x_i))=0\ \forall k\in \{0,\dotsc, 3\},\\
 &\qquad \BCP^t[\delta_t](x_0)=\p_z \BCP^0[\delta_0](z_t),\ \BCP^b[\delta_b](x_1)=\p_z \BCP^1[\delta_1](z_b),\\
        &\qquad \Delta_0(z_t)=\p_x  \BCP^t[\delta_t](x_0),\ \Delta_1(z_b)=\p_x \BCP^b[\delta_b](x_1)\Big\},
          \end{aligned}\end{equation}
    where
    \begin{equation*}\begin{aligned} 
    	\Delta_i := \frac{1}{z}\p_z^2\Big[
    	&\frac{( \p_z\BCP^i[\delta_i])^2}{(\p_z \Yfs(x_i,\cdot))^2 (\p_z \Yfs(x_i,\cdot ) - \p_z \BCP^i[\delta_i])}
    	+ \frac{\p_z \BCP^i[\delta_i]}{(\p_z \Yfs(x_i,\cdot))^2} + \p_x p(x_i) \BCP^i[\delta_i] \Big]\\
    	= \frac{1}{z}\p_z^2\Big[
    	&  \frac{\p_z \BCP^i[\delta_i] }{\p_z \Yfs(x_i,z)(\p_z  \Yfs(x_i,\cdot) -\p_z \BCP^i[\delta_i])} + \p_x p(x_i)  \BCP^i[\delta_i] \Big].
	\end{aligned}
	\end{equation*}
   	Once again, we endow $\cX^1$ with its canonical norm. 
    The assumptions on $f$, $\delta_0$ and $\delta_1$ could be relaxed slightly: in particular, it is not compulsory to assume that $\delta_0$ and $\delta_1$ vanish up to order three near $z=0$, or that $f$ vanishes on the lateral boundary. However this simplifies the formulation of some compatibility conditions.  
\end{itemize}

Our result is the following:
\begin{prop}\label{prop-Prandtl-Y}
	Let $(\ufs, \vfs)$ be a smooth solution to \eqref{prandtl} on $(x_0,x_1)\times (0, +\infty)$ such that $\p_y \ufs>0$ on $\{\overline{\gamma_b}(x) \leq y \leq \overline{\gamma_t}(x),\ x\in [x_0, x_1]\}$.
	Let $\sigma\in (0, 1/6)$. 
    There exists $\eta>0$ and $z_0>0$  such that if $|z_b|, z_t\leq z_0$, the following result holds.
	
	\begin{itemize}
		\item There exists a manifold $\cM_\sigma\subset \cX^\sigma$, of codimension 1 within the ball of radius $\eta$ in $\cX^\sigma$, such that \eqref{eq:Prandtl-changement-var}-\eqref{cond:tY-Sigma-i}-\eqref{cond:tY-top-bottom} has a solution  in $H^{\frac{2}{3} + \sigma}_x H^1_z\cap H^\sigma_x  H^{3}_z$ if and only if $(f,\delta_0,\delta_1, \delta_b,\delta_t, v_b)\in \cM_\sigma$.
        
		This solution, if it exists, is unique.
		
		\item There exists a manifold $\cM_1\subset \cX^1$, of codimension 3 within the ball of radius $\eta$ in $\cX^1$,
		such that \eqref{eq:Prandtl-changement-var}-\eqref{cond:tY-Sigma-i}-\eqref{cond:tY-top-bottom}  has a solution  in $H^{5/3}_x H^1_z\cap H^1_x H^{3}_z$ if and only if $(f,\delta_0,\delta_1, \delta_b,\delta_t, v_b)\in \cM_1$.
	\end{itemize}
\end{prop}

The proof of \cref{prop-Prandtl-Y} is similar to the one of \cref{thm:burgers}. 
We construct a solution to~\eqref{eq:Prandtl-changement-var} thanks to an iterative scheme (or equivalently, thanks to the abstract \cref{thm:abstract}), relying on several important observations:
\begin{itemize}
	\item First, the left-hand side of \eqref{eq:Prandtl-changement-var} depends \emph{linearly} on $\tY $, and the right-hand side depends smoothly 
 on $\tY$. 
    This nice feature stems directly from our change of variables. Note also that our choice of boundary conditions \eqref{CL-Prandtl-bottom}-\eqref{CL-Prandtl-top}, which are slightly unusual when we formulate them on the unknown function $u$, are in fact designed so that they become classical boundary conditions in the variable $\tY$. 
    Indeed, the top and bottom boundaries in the $z$ variable are now fixed (and flat), and the boundary condition for $\tY$ on these boundaries is merely a Neumann condition (so a Dirichlet condition for the vorticity $\p_z \tY$).
	
	\item Second, as mentioned above, the vorticity $\p_z \tY$ satisfies an equation with a very nice structure.
    More precisely, setting
	\begin{equation*}
	\begin{aligned} 
	\alpha(x,z)&:= \frac{1}{(\p_z \Yfs(x,z))^2}>0,\\
	\beta(x)&:=-\p_x p,
	\end{aligned}
	\end{equation*}
	and differentiating \eqref{eq:Prandtl-changement-var} with respect to $z$, we find that $W := \p_z \tY$ is a solution to
    \begin{equation}
        \label{eq:vorticity-Prandtl}
        \begin{cases}
            z \p_x W + \beta \p_z W - \p_z^2 (\alpha  W)=\p_z g & \quad \text{in } (x_0,x_1)\times (z_b,z_t), \\
            W\vert_{\Sigma_i} = \p_z \BCP^i[\delta_i] & \quad \text{for } i\in \{0,1\},\\
            W\vert_{z=z_j} = \BCP^j[\delta_j] &\quad \text{for } j\in \{t,b\}.
        \end{cases}
    \end{equation}
	The coefficients $\alpha$ and $\beta$ are smooth and depend only on the underlying flow $(\ufs,\vfs)$.
	Furthermore, $\inf \alpha>0$ in $(x_0,x_1)\times (z_b,z_t)$ by assumption.
	Hence the structure of system~\eqref{eq:vorticity-Prandtl} is very similar to the one of~\eqref{eq:shear}, albeit with variable coefficients.
	The smallness condition on $z_b$ and $z_t$ ensures that we have nice \emph{a priori} estimates for \eqref{eq:vorticity-Prandtl} (see \cref{lem:WP-Z0-vorticity} below).
	
	\item Eventually, we observe that, from \eqref{eq:Prandtl-g},
	\begin{equation*}
        \p_z g = \p_y f(x, Y)  (\p_z \Yfs - W)^2 + f(x,Y) (\p_z^2 \Yfs - \p_z W) + \p_z^2 \left(\frac{W^2}{(\p_z \Yfs)^2 (\p_z \Yfs -W)}\right).
	\end{equation*}
	In order to design a convergent iterative scheme for \eqref{eq:vorticity-Prandtl}, it is necessary to work in a functional space controlling the $L^\infty$ norm of $W$ (for example to ensure that the denominator does not vanish, or that the application $W\mapsto \p_z^2 (W^2)\in L^2$ is Lipschitz continous).
    Having $W\in Z^0$ is not sufficient as we barely miss the embedding in $L^\infty$ (see \cref{rmk:Z0-Linf}). 
    However, the functional space $W\in H^{\frac{2}{3}+\sigma}_x L^2_z\cap H^\sigma_x H^2_z$, with $\sigma$ strictly positive and small, will be suitable for our purposes. 
    This is in sharp contrast with the nonlinear scheme for the Burgers system, for which we also needed that $\p_z \tY = W \in L^\infty$ but for which the function $\tY$ (rather than $\p_z \tY = W$)  was a solution to~\eqref{eq:shear}.
    Therefore, having $W \in L^\infty$ required $\tY \in H^{\frac{2}{3}+\sigma}_x L^2_z\cap H^\sigma_x H^2_z$ for some $\sigma>1/3$. 
    Such a regularity requires two orthogonality conditions (see \cref{lem:shear-Hs}).
    This gain of one derivative in the vertical variable (corresponding to a gain of $1/3$ of derivative in the horizontal variable) allows us to get rid of two of the orthogonality conditions, leading to the first statement of \cref{prop-Prandtl-Y}.
\end{itemize}

\subsection{The linearized vorticity equation}
\label{sec:vorticity}

This section is devoted to the analysis of system~\eqref{eq:vorticity-Prandtl}, for a given source term $\p_z g \in L^2(\Om)$. 
Adapting and stepping on the analysis of \cref{sec:shear}, we prove the existence and uniqueness of solutions in $Z^0(\Om)$. 
We also exhibit necessary and sufficient conditions for higher regularity. 

For the sake of simplicity, within this section, we denote by $\Omega$ the rectangle $(x_0,x_1)\times (z_b,z_t)$, which is a slight abuse of notation since $(z_b,z_t) \neq (-1,1)$. 
We still denote by $\Sigma_0 = \{x_0\}\times (0, z_t)$ and $\Sigma_1 = \{x_1\}\times (z_b,0)$ the lateral boundaries.

\begin{lem}[Well-posedness of the linear vorticity equation]
    \label{lem:WP-Z0-vorticity}
    Let $\alpha \in C^2(\overline{\Om})$ and $\beta \in L^\infty(x_0, x_1)$. 
    Assume that there exists $\lambda>0$ such that
	\begin{equation}\label{ellip-alpha}
        \forall (x,z) \in \Om, \quad
	       \frac{1}{\lambda}\leq \alpha (x,z)\leq \lambda.
	\end{equation}
    There exists $z_0>0$, depending only on $\alpha$, such that if $|z_b|, z_t \leq z_0$, the following result holds.
	
	Let $h\in L^2(\Om)$, $w_t, w_b\in H^{3/4}(x_0, x_1)$, and $w_i \in \mathscr H^1_z(\Sigma_i)$. 
    Assume that the compatibility conditions $w_t(x_0)= w_0(z_t)$, $w_b(x_1)=w_1(z_b)$ are satisfied.
	
	Consider the system
	\begin{equation} 
	\label{vorticity-model}
	\begin{cases}
		z \p_x W + \beta \p_z W - \p_z^2(\alpha  W)= h &\text{ in }\Om,\\
		W\vert_{\Sigma_i}= w_i & \text{ for }i\in \{0,1\},\\
		W\vert_{z=z_j}= w_j & \text{ for }j\in \{t, b\}.
	\end{cases}
	\end{equation}
	Then \eqref{vorticity-model} has a unique solution $W\in Z^0(\Om)$, which moreover satisfies
	\begin{equation*}
	   \| W\|_{Z^0}\leq C \left( \|h\|_{L^2} + \|w_t\|_{H^{3/4}} + \|w_b\|_{H^{3/4}} + \|w_0\|_{\mathscr H^1_z(\Sigma_0)} + \|w_1\|_{\mathscr H^1_z(\Sigma_1)}\right),    
	\end{equation*}
	where the constant $C$ depends only on $\lambda$, $\|\beta\|_\infty$ and $\| \p_z \alpha\|_{\infty}$.
\end{lem}

\begin{proof}
	According to \cite[Theorem 2.1]{Pagani2} it is sufficient to prove the result when $w_t=w_b=w_0=w_1=0$, since one could lift these boundary conditions for the given regularity. 
    
    In this case, we note that since $\p_z \beta=0$, we have the $L^2_x H^1_z$ energy estimate
    \begin{equation*}
        \int_\Om \alpha (\p_z W)^2 \leq \| h \|_{L^2} \|W\|_{L^2} + \| \p_z \alpha \|_{L^\infty} \| W \|_{L^2} \| \p_z W \|_{L^2}.
    \end{equation*}
	If $|z_b|, z_t\leq z_0$, then $\|W\|_{L^2(\Om)} \leq z_0  \| \p_z W \|_{L^2(\Om)}$. 
    As a consequence, if $z_0\leq 1/(2\lambda\|\p_z \alpha\|_\infty)$, we obtain $\|W\|_{L^2_x H^1_z} \lesssim \|h\|_{L^2}$. 
    From there, following the same arguments as in \cref{p:shear-X0}, we infer that there exists a solution $W\in \cB$ to \eqref{vorticity-model} satisfying $\|W\|_{\cB} \lesssim \|h\|_{L^2}$.
    The uniqueness of this solution is proved in \cref{sec:proof-uniqueness}.
    Eventually, we see $W \in \cB$ as the solution to
    \begin{equation*}
         z \p_x W - \p_z (\alpha \p_z W) = h - \beta \p_z W + \p_z (\p_z \alpha W)  
    \end{equation*}
    where the right-hand side belongs to $L^2(\Om)$ since $\beta \in L^\infty$ and $\alpha \in C^2(\overline{\Om})$.
    Since $\alpha \in C^2(\overline{\Om})$, applying Pagani's result \cite[Theorem 5.1]{Pagani2} to the operator $z \p_x - \p_z(\alpha \p_z \cdot)$ which is in conservative form, we obtain that $W \in Z^0$ and $\|W\|_{Z^0} \lesssim \|h\|_{L^2} + \|W\|_{\cB}$.
\end{proof}

We now rely on the analysis of \cref{sec:shear} in order to identify two necessary and sufficient orthogonality conditions for higher regularity. 
Let us first remark that the only potential singular points are $(x_0,0)$ and $(x_1,0)$. 
Indeed, we recall that $z\p_x W\in L^2(\Om)$, and therefore $W\in H^1_xL^2_z(\{|z|\geq z_0\})$ for all $z_0>0$. 
Regularity away from the lateral boundaries is ensured by the following lemma.

\begin{lem}
	\label{lem:reg-away-sing-pts}
 
	Let $\alpha \in C^3(\overline{\Om})$ satisfying \eqref{ellip-alpha} and $\beta \in C^1([x_0,x_1])$.	
    There exists $z_0>0$, depending only on $\alpha$, such that if $|z_b|, z_t \leq z_0$, the following result holds.
	
	Let $h\in L^2(\Om)$ such that $(x-x_0)(x-x_1)\p_x h \in L^2$. Let $w_t,w_b\in H^2(x_0,x_1)$ and $w_i\in H^2(\Sigma_i)$ such that the compatibility conditions $w_t(x_0)= w_0(z_t)$, $w_b(x_1)=w_1(z_b)$ are satisfied.
	
	Let $W\in Z^0$ be the unique solution to \eqref{vorticity-model}. 
    Then $(x-x_0)(x-x_1) \p_x  W \in Z^0$.
\end{lem}

The proof is postponed to \cref{sec:proof-lem:reg-away-sing-pts}, in order not to burden this section. 
We are now ready to state our orthogonality conditions for system \eqref{vorticity-model}.
To that end, for $\alpha \in C^4(\overline{\Om})$,  $\beta \in C^1([x_0,x_1])$, $\sigma\in (0,1]$, we introduce the space\nomenclature[FHsab]{$\mathcal{H}^\sigma_{\alpha,\beta}$}{Space of data tuples \eqref{def:H-alpha-beta} for the vorticity equation \eqref{vorticity-model} at regularity $\sigma$}
\begin{equation}\label{def:H-alpha-beta}
    \begin{aligned}
        \cH_{\alpha,\beta}^\sigma:=\Big\{ &(h,w_0,w_1,w_t,w_b)\in H^{\sigma}_x L^2_z\times H^2(\Sigma_0)\times H^2(\Sigma_1)\times H^2(x_0,x_1)^2,\\
        &\qquad (x-x_0)(x-x_1)\p_x h \in L^2,\quad w_t(x_0)= w_0(z_t),\  w_b(x_1)=w_1(z_b),\\
         &\qquad \text{and}\ \Delta_i\in \mathscr{H}^1_z(\Sigma_i)\text{ if }\sigma>1/2,\\
        &\qquad\text{and } \Delta_0(z_t)= \p_x w_t(x_0),\  \Delta_1(z_b)=\p_x w_b(x_1)\text{ if }\sigma>1/2,\\
        &\qquad\text{where }\Delta_i:=\frac{1}{z} \left( h(x_i,\cdot) + \p_z^2 (\alpha(x_i, \cdot )w_i)-\beta(x_i)\p_z w_i\right) \Big\}.
    \end{aligned}
\end{equation}

We now state a proposition extending the results of \cref{sec:shear} to equations with smooth variable coefficients:
\begin{prop}
	\label{lem:ortho-Prandtl}
	Let $\alpha \in C^4(\overline{\Om})$ satisfying \eqref{ellip-alpha} and $\beta \in C^1([x_0,x_1])$.
	There exist two  linear forms $\widehat{\ell^0}$, $\widehat{\ell^1}$, continuous on $\cH_{\alpha,\beta}^\sigma$ for all $\sigma\in (1/6, 1]$, such that the following result holds.
	
	\begin{itemize}
		\item Let $\sigma\in (0,1/6)$, and let  $(h,w_0,w_1,w_t,w_b) \in \cH_{\alpha,\beta}^\sigma$.
		Let $W\in Z^0$ be the unique solution to~\eqref{vorticity-model}.
		
		Then $W\in Z^\sigma =[Z^0,Z^1]_\sigma\hookrightarrow H^{\frac{2}{3}+ \sigma}_x L^2_z \cap H^\sigma_x H^{2}_z \hookrightarrow L^\infty(\Om)$, and
\begin{equation*}
    \| W\|_{Z^\sigma}\lesssim \|h\|_{H^\sigma_x L^2_z} + \|(x-x_0)(x-x_1) \p_x h\|_{L^2}+ \sum_{j\in \{b,t\} }\|w_j\|_{H^2(x_0,x_1)} + \sum_{i\in\{0,1\}} \|w_i\|_{H^2(\Sigma_i)} .
\end{equation*}
        
		\item Let $\sigma\in (1/6,1]\setminus\{1/2\}$, and let $(h,w_0,w_1,w_t,w_b) \in \cH_{\alpha,\beta}^\sigma$.
        Let $W\in Z^0$ be the unique solution to \eqref{vorticity-model}.
        Then $ W\in Z^\sigma$ if and only if
		\begin{equation*}
		\widehat{\ell^0}(h,w_0,w_1,w_t,w_b)=\widehat{\ell^1}(h,w_t,w_b,w_0,w_1)=0,
		\end{equation*}
		and in that case
		\begin{equation*}\begin{aligned}
		      \| W\|_{Z^\sigma}\lesssim &\|h\|_{H^\sigma_x L^2_z} + \|(x-x_0)(x-x_1) \p_x h\|_{L^2}\\&+ \sum_{j\in \{b,t\} }\|w_j\|_{H^2(x_0,x_1)} + \sum_{i\in\{0,1\}} \|w_i\|_{H^2(\Sigma_i)}  + \|\Delta_i\|_{\mathscr H^1_z(\Sigma_i)}.
		\end{aligned}\end{equation*}
	\end{itemize}
\end{prop}

\begin{proof}
    We start with the first statement, and we take $\sigma \in (0,1/6)$ fixed.

	 \step{Lifting the top and bottom boundary conditions.}
In order to use the theory from \cref{sec:shear}, which is stated with homogeneous Dirichlet boundary conditions at the top and bottom, we first
		lift the latter.  We change         $W$ into $W-\rho(z-z_t) w_t - \rho(z-z_b) w_b$, where $\rho\in C^\infty_c(\R)$ is such that $\rho\equiv 1$ in a neighbourhood of zero, and $\supp \rho \subset (-r,r)$ for some $r<\min(|z_b|,z_t)/2$. This changes the source term $h$ into 
  \begin{equation*}
  h-\sum_{j\in \{t,b\}} (z\p_x w_j \rho(z-z_j) + \beta w_j \rho'(z-z_j) - w_j\p_z^2(\alpha \rho(z-z_j)),
  \end{equation*}
   which belongs to $H^\sigma_x L^2_z$, and the boundary condition $w_0$ (resp.\ $w_1$) into $w_0- w_0(z_t) \eta(z-z_t)$ (resp.\ $w_1- w_1(z_b) \eta(z-z_b)$), which belongs to $H^2(\Sigma_0)$ (resp.\ $H^2(\Sigma_1)$). With a slight abuse of notation, we still denote by $W$ the unknown function, and by $(h,w_0, w_1,0,0)$ the data. Note that this operation does not affect the compatibility conditions in the corners.

    \step{Localization in the vicinity of the singular points.}
    We then localize horizontally the solution in the vicinity of $x_0$ and $x_1$. 
    We only treat the localization in the vicinity of $x_0$ since the other boundary is identical.
    Let $\chi_0\in C^\infty_c(\R)$ be such that $\chi_0\equiv 1$ in a neighborhood of $x_0$, and $\supp \chi_0\subset B(x_0,r)$ for some small $0<r< (x_1-x_0)/2$.
	Then $W_0:=W\chi_0(x)$ is a solution to
	\begin{equation} 
        \label{eq:W0}
    	z\p_x W_0 + \beta \p_z W_0- \p_z^2(\alpha  W_0) = h \chi_0 + z W \p_x \chi_0 .
	\end{equation}
	Since $W\in Z^0$, $W \in H^{2/3}_x L^2_z$, so the right-hand side belongs to $H^\sigma_x L^2_z$. 
    We then localize the coefficient $\alpha$. 
    Let $\alpha_0(z):=\alpha(x_0,z)$. 
    Then
	\begin{equation}
	    \label{eq:W_0-localized}
        \begin{split}
	        z\p_x W_0 -\p_z^2(\alpha_0(z) W_0) = h \chi_0 & + z W \p_x \chi_0 - \beta \p_z W \chi_0  \\
	        & - \p_z^2((\alpha_0-\alpha)  W_0).
	    \end{split}
	\end{equation}
	On the support of $\chi_0$, there exists a constant $C$ such that $|\alpha_0 - \alpha| \leq C |x -x_0|$ and $(\alpha-\alpha_0)/(x-x_0)$ is a $C^3$ function of $(x,z)$.
    According to \cref{lem:reg-away-sing-pts}, $(\alpha_0-\alpha) \p_z^2 W_0 \in H^1_x L^2_z$.
    Hence, the right-hand side of \eqref{eq:W_0-localized} belongs to $H^\sigma_x L^2_z$.
    Note furthermore that $W_0$ vanishes on $\{z=z_t\}$ and $\{z=z_b\}$ thanks to the first step.

	\step{Vertical change of variables to work with constant coefficients.}
	In order to use the theory from \cref{sec:shear}, we now change the vertical coordinate so that the equation in the new variables is formulated thanks to the Kolmogorov operator.
    More precisely, we set $W_0(x,z) = \omega_0(x,\tz)$	where~$\tz$ is a function of $z$ such that $\tz(0)=0$. 
    We have
    \begin{equation*}
        \p_z^2 (\alpha_0 W_0) = \alpha_0 (\tz')^2 \p_{\tz}^2 \omega_0 + (\alpha_0 \tz'' + 2 \p_z \alpha_0 \tz') \p_{\tz} \omega_0 + (\p_z^2 \alpha_0) \omega_0.
    \end{equation*}
    We first choose the function $\tz$ so that $\tz(0)=0$ and
	\begin{equation*}
	    \frac{z}{\alpha_0(z) (\tz'(z))^2 } = \tz,
        \quad 
    	\text{i.e.}
    	\quad
    	\tz'(z) = \sqrt{\frac{z}{\alpha_0(z) \tz(z)}}.
	\end{equation*}
	Explicit resolution for $z > 0$ yields (with a similar formula for $z < 0$):
	\begin{equation}
	    \label{def:tz}
    	\tz(z) = \left( \frac{3}{2} \int_0^z \sqrt{\frac{t}{\alpha_0(t)}}\dd t\right)^{2/3}.
	\end{equation}
	It can be easily checked that the function $\tz$ thus defined has the same regularity as $\alpha_0$ on $(z_b,z_t)$ and that $C^{-1} \leq \tz' \leq C$ for some positive constant $C$.
    Moreover, $(\alpha_0(0))^{\frac 13} \zeta(z) \sim  z$ as $z \to 0$.
	
    The function $\omega_0$ then solves
	\begin{equation}
	    \label{eq:omega0}
        \tz \p_x \omega_0- \p_{\tz}^2 \omega_0 = s_0(x,\zeta)
	\end{equation}
	where $s_0 \in H^\sigma_x L^2_z$.
    Furthermore, $\omega_0$ is supported in the vicinity of $(x_0,0)$. 
    We denote by $\mu_0$ the lateral boundary condition on $\Sigma_0$ in the new vertical variable, i.e.\ $\mu_0(\tz(z))= \chi_0(x_0,z) w_0(z)$. 
    Note that $\mu_0$ and $w_0$ enjoy the same regularity, so that $\mu_0\in H^2(\Sigma_0)$.
	
	\step{Small fractional regularity.}
	We now consider \eqref{eq:omega0}, whose right-hand side belongs to $H^\sigma_x L^2_z$. 
	The equation is endowed with homogeneous data on $\{z=z_t\}\cup\{z=z_b\}\cup \Sigma_1$, and with $H^2$ data on $\Sigma_0$ satisfying a compatibility condition at $(x_0,z_t)$.
	Using \cref{lem:shear-Hs} and \cref{lem:Zsigma-Qsigma}, we infer that $\omega_0 \in Z^\sigma \hookrightarrow H^{\frac{2}{3} + \sigma}_x L^2_\tz\cap H^\sigma_x H^{2}_\tz$, and thus $W_0$ enjoys the same regularity.
	Performing a similar change of variables near $(x_1,0)$, we deduce that $W\in Z^\sigma$. 
 This completes the proof of the first statement from \cref{lem:ortho-Prandtl}.
	
	\step{Identification of the orthogonality conditions.}
	Let us now assume that $\sigma \in (1/6, 1/3]$ and $h\in H^\sigma_x L^2_z$.
	The right-hand side of \eqref{eq:omega0} now belongs to $H^\sigma_x L^2_z$. 
    Furthermore, in a neighborhood of $\tz=0$, 
    \begin{equation*}
    s_0(x_0,\zeta)=\frac{1}{\alpha_0 (z)(\zeta'(z))^2}\left[h(x_0,z) - \beta(x_0) w_0'(z) + (\alpha_0 \zeta''+ 2 \alpha_0'\zeta') (z)\p_\zeta \mu_0 (\zeta) + \alpha_0''(z) \mu_0(\zeta) \right],
    \end{equation*}
    where the primes always denote derivatives with respect to $z$.
    Using this equality together with the identity $\p_\zeta \mu_0(\zeta) = w_0'(z)/\zeta'(z)$, we find, after some tedious but straightforward computations, and for $\tz$ in a neighborhood of zero,
    \begin{equation*}
        \p_\tz^2 \mu_0(\zeta) + s_0(x_0,\zeta) = \frac{\zeta(z)}{z} \Delta_0(z)
    \end{equation*}
    Hence $(\p_{\tz}^2 \mu_0 + s_0(x_0,\cdot)) / \tz \in \mathscr{H}^1_{\tz}(\Sigma_0)$.
    Note also that the compatibility conditions in the corners are satisfied. 
	We then apply \cref{coro:ortho-localized-frac} to \eqref{eq:omega0} whose right-hand side is in $H^{\sigma}_x L^2_\zeta$. 
    We infer that if
	\begin{equation*}
	   (a_0 \overline{\ell^0} + a_1 \overline{\ell^1}) ( s_0, \mu_0, 0)=0, 
	\end{equation*}
	then $\omega_0 \in Z^\sigma \hookrightarrow H^{\frac{2}{3} + \sigma}_x L^2_\tz\cap H^\sigma_x H^{2}_\tz$ by \cref{lem:Zsigma-Qsigma}. 
    Similarly, $\omega_1 \in Z^{\sigma}$, so $W \in Z^{\sigma}$.

    For $\sigma\geq 1/3$ and $\sigma\neq 1/2$, we  use a bootstrap argument.
    Going back to \eqref{eq:W0}, we now know that the right-hand side is in $H^{\min (\sigma,2/3)}_x L^2_z$, so that we can apply \cref{coro:ortho-localized-frac} to \eqref{eq:omega0} whose right-hand side belongs to $H^{\min (\sigma,2/3)}_x L^2_\zeta$.
    This implies that $W \in Z^{\min (\sigma,2/3)}$.
    We then repeat this procedure one last time if $\sigma\geq 2/3$.

	Setting
	\begin{equation}\label{def:hat-ell-0}
	\widehat{\ell^0}(h,w_0,w_1,w_t,w_b)= (a_0 \overline{\ell^0} + a_1 \overline{\ell^1}) ( \lambda \omega_0 + s_0, \mu_0, 0),
	\end{equation}
	and defining in a similar fashion the linear form $\widehat{\ell^1}$ associated with the regularity in the vicinity of $(x_1,0)$,
	we obtain the desired result.

 Eventually, it follows from  the definition of $\widehat{\ell^0}$ in \eqref{def:hat-ell-0} and from \cref{rmk:16-shear} that the linear forms $\widehat{\ell^j}$ are continuous on $\cH^\sigma_{\alpha,\beta}$ for all $\sigma>1/6$.
\end{proof}

\begin{lem}
    \label{lem:indep-ortho-Prandtl-1}
	The two linear forms $\widehat{\ell_0},\widehat{\ell_1}: \cH_{\alpha,\beta}^1\to \R$ defined in \cref{lem:ortho-Prandtl} are independent. 
    Furthermore, there exist $g^0, g^1\in C^\infty_c (\Om)$ such that
	\begin{equation*}
	\widehat{\ell^j}(g^i, 0, 0,0,0)=\delta_{i,j}\quad \forall i,j\in \{0,1\}.
	\end{equation*}
\end{lem}

\begin{proof}
	We begin with the following remark. Following the notations of the proof of  \cref{lem:ortho-Prandtl} above, we set $\alpha_i(z):=\alpha(x_i,z)$,. With the same change of variables as in Step 3 of the proof (see \eqref{def:tz}), we define
	\begin{equation*}
	U^0(x,z)= \busing^0 (x,\tz_0).
	\end{equation*}
	Then
	\begin{equation*}
	z\p_x  U^0 + \beta \p_z  U^0 - \p_z^2(\alpha_0 U^0)= \alpha_0  (\tz_0')^2 \overline{f^0} + \lambda_0 \busing^0 (x,\tz_0) + \gamma_0 \p_{\tz_0}  \busing^0 (x,\tz_0).
	\end{equation*}
	for some smooth functions $\lambda_0$, $\gamma_0$ depending on $\beta$ and $\alpha$. The right-hand side therefore belongs to $H^{1/3}_x L^2_z\cap H^1_x L^2_z ((x-x_0)^2(x-x_1)^2)$. Furthermore $U^0$ vanishes on $\Sigma_0\cup \Sigma_1\cup \{z=z_b\} \cup \{z=z_t\}$.

	Of course we may perform the same procedure around $(x_1,0)$, and we define a function $U^1(x,z)$, localized in a neighborhood of $(x_1,0)$ and with the same regularity as $\busing^1$, such that
	\begin{equation*}
	z\p_x U^1 + \beta \p_z U^1 -\p_z^2(\alpha_1  U^1)\in H^{1/3}_x L^2_z.
	\end{equation*}
	Note that $U^0$ and $U^1$ vanish on $\Sigma_0\cup \Sigma_1 \cup \{z=z_b\}\cup \{z=z_t\}$. 
	Now, for $i=0,1$, let
	\begin{equation*}
	h^i:= z\p_x U^i + \beta \p_z U^i - \p_z^2(\alpha  U^i).
	\end{equation*}
	By construction, $h_i$ and $U_i$ are localized in the vicinity of $(x_i,0)$, and $h^i\in H^{1/3}_x L^2_z$, $(x-x_0)(x-x_1)\p_x h_i\in L^2$. Furthermore $h_i\vert_{\Sigma_0\cup \Sigma_1}=0$. As a consequence, recalling the definition of $\widehat{\ell^0}$ and $\widehat{\ell^1}$ (see \eqref{def:hat-ell-0} together with \cref{coro:ortho-localized-frac}), we infer that
	\begin{equation*}
	\widehat{\ell^0}(h_1,0,0,0,0)=\widehat{\ell^1}(h_0,0,0,0,0)=0.
	\end{equation*}
	Now, assume that $c_0\widehat{\ell^0} + c_1 \widehat{\ell^1}=0$ for some $(c_0,c_1)\in \R^2$. We deduce from the above equalities that
	\begin{equation*}
	\widehat{\ell^0}(c_0h_0 + c_1 h_1, 0,0,0,0)= c_0 \widehat{\ell^0}(h_0,0,0,0,0)= (c_0\widehat{\ell^0} + c_1\widehat{\ell^1})(h_0,0,0,0,0)=0,
	\end{equation*}
	and similarly $\widehat{\ell^1}(c_0h_0 + c_1 h_1, 0,0,0,0)=0$. 
 Using \cref{lem:ortho-Prandtl}, we infer that 
  $c_0U^0 + c_1 U^1\in  Z^{1/3} \hookrightarrow H^1_x L^2_z \cap H^{1/3}_x H^2_z$. 
	Since $U^i$ has the regularity of $\busing^i$ and is localized in the vicinity of $(x_i,0)$, it follows from \cref{lem:busing-h56} that $c_0=c_1=0$.
	
	Note that the above argument also ensures that $\widehat{\ell^i}(h^i,0,0,0,0)\neq 0$. Hence, up to a multiplication by a constant, we may always assume that $\widehat{\ell^j}(h^i,0,0,0,0)=\delta_{i,j}$.
	Let us now take, for $\eps>0$ small, $h^i_\eps\in C^\infty_c({\Om})$ such that $\| h^i_\eps-h^i\|_{H^{1/3}_x L^2_z}\leq \eps$ and $\| (x-x_0)(x-x_1)\p_x (h^i-h^i_\eps)\|_{L^2}\leq \eps$. Then, since the linear forms $\widehat{\ell^j}$ are continuous on $\cH_{\alpha,\beta}^{1/3}$, we obtain
	$|\widehat{\ell^j}( h^i_\eps, 0,0,0,0) - \delta_{i,j}| \lesssim \eps$. As a consequence, there exists $a^0_\eps, a^1_\eps, b^0_\eps, b^1_\eps$  such that
	\begin{equation*}
	\begin{aligned}
	\widehat{\ell^0}(a^0_\eps h^0_\eps + a^1_\eps h^1_\eps, 0,0,0,0)= \widehat{\ell^1}(b^0_\eps h^0_\eps + b^1_\eps h^1_\eps, 0,0,0,0)=1,\\
	\widehat{\ell^0}(b^0_\eps h^0_\eps + b^1_\eps h^1_\eps, 0,0,0,0)= \widehat{\ell^1}(a^0_\eps h^0_\eps + a^1_\eps h^1_\eps, 0,0,0,0)=0,
	\end{aligned} 
	\end{equation*}
	and $|a^0_\eps-1| , |b^1_\eps-1|\lesssim \eps$, $|a^1_\eps|, | b^0_\eps|\lesssim \eps$. The result follows, taking $g^0=a_\eps^0 h^0_\eps + a_\eps^1 h^1_\eps$ and $g^1= b^0_\eps h^0_\eps + b^1_\eps h^1_\eps$.
\end{proof}

\subsection{Reconstructing the velocity from the vorticity}
\label{sec:Prandtl-reconstruct}

Let $(g, \widetilde{\delta}_0,\widetilde{\delta}_1)\in L^2_x H^1_z\times H^3(\Sigma_0)\times H^3(\Sigma_1) $, $w_t,w_b\in H^2(x_0, x_1) $. 
Assume that $\p_z \widetilde{\delta_1}(z_b)=w_b(x_1)$, $\p_z \widetilde{\delta_0}(z_t)=w_t(x_0)$.
According to \cref{lem:WP-Z0-vorticity}, there exists a unique solution $W\in Z^0$ to \eqref{vorticity-model} with $h = \p_z g$ and $w_i=\p_z \widetilde{\delta}_i$.
The purpose of this subsection is to construct a solution to the system
\begin{equation} \label{eq:tY-model}
\begin{cases}
	z\p_x \tY - \int_{z_b}^z \p_x \tY + \beta \p_z \tY - \p_z(\alpha \p_{z}\tY)= g & \text{in }\Om,\\
	\tY\vert_{\Sigma_i}= \widetilde{\delta}_i & \text{for }i\in \{0,1\},\\
	\p_z\tY\vert_{z=z_j}= w_j& \text{for }j\in \{t,b\}.\\
\end{cases}
\end{equation}
We therefore set, for $(x,z)\in \Om$,
\begin{equation}\label{def:tY}
\tY(x,z):=\widetilde{\gamma_b}(x)  + \int_{z_b}^z W(x,z')\dd z',
\end{equation}
where the function $\widetilde{\gamma_b}$ solves the differential equation
\begin{equation} \label{def:gamma_b}
    \begin{aligned}
        & z_b \p_x \widetilde{\gamma_b}  + (\beta -\p_z \alpha(\cdot, z_b))w_b - \alpha(x,z_b) \p_z W(x,z_b) = g(x,z_b), \\
        & \widetilde{\gamma_b}(x_1) =  \widetilde{\delta_1}(z_b).
    \end{aligned}
\end{equation}
Since $W\in Z^0$, the trace $\p_z W(\cdot, z_b)$ belongs to $H^{1/4}(x_0,x_1)$ by \cref{lem:Z0-trace-pz-top}. 
Thus $\widetilde{\gamma_b} \in H^{1}(x_0,x_1)$, 
and $\tY \in H^{2/3}_x H^1_z\cap L^2_x H^3_z\subset C^0(\overline{\Om}).$
Furthermore $\tY \in H^1_x H^1_z (z_b, z_b/2)$.

By construction, we have, in the sense of distributions on $\Om$,
\begin{equation*}
    \p_z \left[ z \p_x \tY - \int_{z_b}^z \p_x \tY(x,z')\dd z'  +\beta \p_z \tY - \p_z(\alpha \p_z \tY) - g \right]=0,
\end{equation*}
and therefore there exists a function $G$ depending only on $x$ such that 
\begin{equation*}
    z \p_x \tY - \int_{z_b}^z \p_x \tY(x,z')\dd z'  +\beta \p_z \tY - \alpha \p_z^2 \tY = g (x,z)+ G(x).
\end{equation*}
The choice of the function $\widetilde{\gamma_b}$ (see \eqref{def:gamma_b}) then ensures that $G \equiv 0$.
By definition of $\tY$, we have $\p_z \tY\vert_{z=z_j}=W\vert_{z=z_j}=w_j$ for $j\in \{t,b\}$.

Let us now investigate the lateral boundary conditions. On $\Sigma_i$, we have
\begin{equation*}
\p_z \tY(x_i,z)=W(x_i, z)= \p_z \widetilde{\delta}_i(z).
\end{equation*}
Hence, in order to ensure that $\tY\vert_{\Sigma_i}= \widetilde{\delta}_i$, it suffices to check that $\tY(x_i,z_i)= \widetilde{\delta}_i(z_i)$ for some $(x_i,z_i)\in \overline{\Sigma_i}$. From there, we treat separately (and differently) the two boundaries $\Sigma_0$ and $\Sigma_1$.

\begin{itemize}
	\item On $\Sigma_1$, we note that $\tY(x_1,z_b)=\widetilde{\delta}_1(z_b)$ by definition of $\widetilde{\gamma_b}$. 
	Therefore $\tY\vert_{\Sigma_1} = \widetilde{\delta}_1$.
	\item On $\Sigma_0$, the situation is different, since 
	$\tY(x_0,0)\neq \widetilde{\delta}_0(0)$ \emph{a priori}.
	Indeed, 
    \begin{equation*}
        \begin{split}
        	\tY(x_0,0)&= \widetilde{\gamma_b}(x_0) + \int_{z_b}^0 W(x_0,z')\dd z'\\
        	&=\frac{1}{z_b}\left( - \int_{x_0}^{x_1} \left( g(x,z_b) + (\p_z \alpha(x,z_b)- \beta(x)) w_b (x)+ \alpha(x,z_b) \p_z W(x,z_b)\right)\dd x\right)
            \\& \quad  + \int_{z_b}^0 W(x_0,z')\dd z' + \widetilde{\delta}_1(z_b).
        \end{split}
    \end{equation*}
\end{itemize}

The right-hand side of the above equality is a linear form in $(g, \widetilde{\delta}_0,\widetilde{\delta}_1, w_t,w_b)$, which leads to the following definition.

\begin{defi}[Additional linear form for the solvability of the Prandtl system]
	\label{def:ell-2-Prandtl}
	Let $(g, \widetilde{\delta}_0,\widetilde{\delta}_1)\in L^2_x H^1_z\times H^3(\Sigma_0)\times H^3(\Sigma_1) $, $w_t,w_b\in H^2(x_0, x_1) $ such that
	$w_t(x_0)=\p_z \widetilde{\delta}_0(z_t)$, $w_b(x_1)=\p_z \widetilde{\delta}_1(z_b)$.
	Let $W\in Z^0$ be the unique solution to \eqref{vorticity-model} with $h=\p_z g$ and $w_i=\p_z \widetilde{\delta}_i$.
	
	The linear form $\ell^2$ is defined by\nomenclature[OLl2]{$\ell^2$}{Additional linear form of \cref{def:ell-2-Prandtl} to reconstruct the velocity from the vorticity}
	\begin{equation*}
	    \begin{split}
        	\ell^2\left(g,\widetilde{\delta_0}, \widetilde{\delta_1}, w_t,w_b\right) &:= -\frac{1}{z_b} \int_{x_0}^{x_1} \left( g(x,z_b) + (\p_z \alpha(x,z_b)- \beta(x)) w_b (x)+ \alpha(x,z_b) \p_z W(x,z_b)\right)\dd x \\& \quad + \int_{z_b}^0 W(x_0,z')\dd z'+ \widetilde{\delta}_1(z_b) - \widetilde{\delta}_0(0). 
	    \end{split}
	\end{equation*}
\end{defi}

The above computations lead to the following result.

\begin{lem}\label{lem:solvability-Prandtl}
	Let $(g, \widetilde{\delta}_0,\widetilde{\delta}_1)\in L^2_x H^1_z\times H^3(\Sigma_0)\times H^3(\Sigma_1) $, $w_t,w_b\in H^2(x_0, x_1) $ such that
	$w_t(x_0)=\p_z \widetilde{\delta}_0(z_t)$, $w_b(x_1)=\p_z \widetilde{\delta}_1(z_b)$. 
	
	Then system \eqref{eq:tY-model} has a solution $\tY \in H^{2/3}_x H^1_z\cap L^2_x H^3_z$ if and only if
	\begin{equation*}
	\ell^2\left(g,\widetilde{\delta_0}, \widetilde{\delta_1}, w_t,w_b\right)=0.
	\end{equation*}
	This solution is given by \eqref{def:tY}, and satisfies the estimate
	\begin{equation*}
	\| \tY\|_{H^{2/3}_x H^1_z} + \| \tY\|_{ L^2_x H^3_z} + \| \p_x\tY\|_{L^2_x H^1_z(\{z < z_b/2\})} \lesssim \|g\|_{L^2_x H^1_z} + \| \widetilde{\delta}_i\|_{H^3(\Sigma_i)} + \| w_j\|_{H^2(x_0,x_1)},
	\end{equation*}
	where we implicitely sum over $i\in \{0,1\}$ and $j\in \{t,b\}$ in the right-hand side.
\end{lem}

\begin{proof}
	First, assume that \eqref{eq:tY-model} has a solution $\tY \in H^{2/3}_x H^1_z\cap L^2_x H^3_z$. 
    Then, $W=\p_z \tY$ is an $L^2_x H^1_z$ solution to \eqref{vorticity-model} with $h = \p_z g$ and $w_i = \p_z \widetilde{\delta}_i$.
    By uniqueness arguments such as in \cref{lem:uniqueness-BG}, it is equal to the unique $Z^0$ solution to \eqref{vorticity-model} constructed in \cref{lem:WP-Z0-vorticity}.
	Furthermore, for $z\neq 0$,
	\begin{equation*}
	z^2 \p_z\left( \frac{\int_{z_b}^{z} \p_x \tY}{z}\right) = g + \p_z(\alpha \p_z \tY) - \beta \p_z \tY \in L^2_x H^1_z.
	\end{equation*}
	It follows that $\p_z\left( \frac{\int_{z_b}^{z} \p_x \tY}{z}\right)\in L^2_x H^1_z(\{z < z_b/2\})$, and thus $\p_x \tY\in L^2_x H^1_z(\{z < z_b/2\})$. 
    In particular, $\p_x \tY\vert_{z=z_b} \in L^2(x_0,x_1)$.

	Taking the trace of \eqref{eq:tY-model} at $z=z_b$, we infer that
	\begin{equation*}
	z_b\p_x \tY\vert_{z=z_b} + (\beta-\p_z \alpha(x,z_b)) w_b -\alpha \p_z W(x,z_b)= g(x,z_b)\quad \text{and}\quad \tY(x_1,z_b)=\widetilde{\delta_1}(z_b).
	\end{equation*}
	Therefore $\tY\vert_{z=z_b}=\widetilde{\gamma_b}$, where $\widetilde{\gamma_b}$ is defined by \eqref{def:gamma_b}.
	Since $\tY(x_0,0)=\widetilde{\delta_0}(0)$, we then deduce that
	\begin{equation*}
	\tY(x_0, z_b) + \int_{z_b}^0 W(x_0,z)\dd z=\widetilde{\delta_0}(0),
	\end{equation*}
	which is precisely the condition $\ell^2(g,\widetilde{\delta_0}, \widetilde{\delta_1}, w_t,w_b)=0$.
	
	Conversely, the above computations ensure that if $\ell^2(g,\widetilde{\delta_0}, \widetilde{\delta_1}, w_t,w_b)=0$, the function defined by \eqref{def:tY} is a solution to \eqref{eq:tY-model}.
\end{proof}

Assume that $\ell^2(g,\widetilde{\delta_0}, \widetilde{\delta_1}, w_t,w_b)=0$, and let $\tY \in H^{2/3}_x H^1_z\cap L^2_x H^3_z$ be the unique solution to \eqref{eq:tY-model}.
For further purposes,  we define the function $\widetilde{\gamma_t}$ by
\begin{equation*}
\widetilde{\gamma_t}(x):= \tY (x,z_t) .
\end{equation*}
Since $\tY\in C^0(\overline{\Om})$, we have
$\widetilde{\gamma_t}(x_0) = \widetilde{\delta_0}(z_t).$

\begin{rmk}
    As we already mentioned, the nonlocal term $v\p_y u$ in the Prandtl equation (which becomes $-\int_{z_b}^z \p_x Y$ in our new variables) creates a flow of information upwards, therefore inducing an asymmetry between $z$ and $-z$. 
	Because of the forward-backward nature of the equation, this results in an asymmetry between the lateral boundaries $\Sigma_0$ and $\Sigma_1$.
    We deal with this issue by introducing an additional orthogonality condition.

    Note that this feature is also present, in a slightly different fashion, in the work of Iyer and Masmoudi \cite{IM2022,IM2023}.
    In their work, the left extremity of the curve $\{ u = 0 \}$ is left as a free parameter, and boundary data on the vorticity are enforced.
\end{rmk}

In order to simplify the future discussion, it will be useful to modify slightly the definition of the linear forms $\ell^i$ for $i \in \{0,1\}$, so that they are defined on the same space as the linear form $\ell^2$.

\begin{defi}
    \label{def:ell-01-Prandtl}
	We denote by ${\ell^i}$ for $i \in \{0,1\}$ the linear forms defined by\nomenclature[OLl01]{$\ell^0,\ell^1$}{Linear orthogonality conditions of \cref{def:ell-01-Prandtl} for the solvability of Prandtl at high regularity}
	\begin{equation*}
	{\ell^i}(g, \widetilde{\delta_0}, \widetilde{\delta_1}, w_t,w_b) := \widehat{\ell^i}(\p_z g, \p_z \widetilde{\delta_0}, \p_z \widetilde{\delta_1}, w_t,w_b).
	\end{equation*}
\end{defi}

\begin{rmk}
	In spite of their similar appearance, the purpose of the orthogonality conditions $\ell^0(g, \widetilde{\delta_0}, \widetilde{\delta_1}, w_t,w_b)=\ell^1( g,  \widetilde{\delta_0},  \widetilde{\delta_1}, w_t,w_b)=0$ on the one hand, and $\ell^2(g, \widetilde{\delta_0}, \widetilde{\delta_1}, w_t,w_b)=0$ on the other hand is quite different. 
    The former are necessary and sufficient conditions for the existence of smooth solutions to the vorticity equation \eqref{vorticity-model}, while the latter is a necessary and sufficient condition for the solvability of system \eqref{eq:tY-model} at a lower level of regularity, corresponding to $Z^0$ solutions of the vorticity equation \eqref{vorticity-model}. 
    In other words, the condition $\ell^2 =0$ is a necessary and sufficient condition to reconstruct $\tY$ from the vorticity.
\end{rmk}

\begin{lem}
	The linear forms ${\ell^0}$, ${\ell^1}$, $\ell^2$ are linearly independent on $C^\infty(\overline{\Om})\times C^\infty_c (\Sigma_0)\times C^\infty_c(\Sigma_1) \times C^\infty_c (x_0,x_1)^2$.
    There exist $\Xi^0, \Xi^1, \Xi^2$ such that, for $i,j\in \{0,1,2\}$,
	\begin{equation*}
	{\ell^i}(\Xi_j)=\delta_{i,j},\qquad \Xi^j\in C^\infty(\overline{\Om})\times C^\infty_c (\Sigma_0)\times C^\infty_c(\Sigma_1) \times C^\infty_c (x_0,x_1)^2.
	\end{equation*}
	One may choose $\Xi^j=(f^j, 0,0,0,0)$, with $f^j\in C^\infty(\overline{\Om})$ such that $f^j\vert_{\Sigma_0\cup \Sigma_1}=0$.
	\label{lem:indep-ortho-Prandtl-2}
\end{lem}

\begin{proof}
	Assume that there exists $(c_0,c_1,c_2)\in \R^3$ such that
	\begin{equation*}
	c_0 \ell^0 + c_1 \ell^1+ c_2 \ell^2=0.
	\end{equation*}
	Let $W\in C^\infty_c(\overline{\Om})$ such that $\supp W \subset [x_0, x_0+\delta] \times [-\delta,  -\delta/2] $ for some small $\delta>0$ such that $\delta<(x_1-x_0)/2$ and $\delta<|z_b|/2$. We further assume that $\int_{z_b}^0 W(x_0,z)\dd z=1$ and $\int_{z_b}^0 z\p_x W(x_0,z)\dd z=0$.
	We set $w_t=w_b=0$, $\widetilde{\delta_0}=\widetilde{\delta_1}=0$, and
	\begin{equation*}
	f^2(x,z):=\int_{z_b}^z \left(z'\p_x W (x,z')+ \beta (x)\p_z W(x,z') - \p_z^2(\alpha(x,z') W(x,z')\right)\dd z'.
	\end{equation*}
	Then by definition, $W$ is a solution to \eqref{vorticity-model} with $h=\p_z f^2$, and with homogeneous boundary data. Note also that $f^2(x_0,0)= \int_{z_b}^0 z\p_x W(x_0,z)\dd z=0$. Therefore $f^2\vert_{\Sigma_0\cup \Sigma_1}=0$. The compatibility conditions from \cref{lem:ortho-Prandtl} are satisfied.
	Since $W$ is smooth, according to \cref{lem:ortho-Prandtl},
	\begin{equation*}
	\widehat{\ell^0}(\p_z f^2, 0, 0,0,0)=\widehat{\ell^1}(\p_z f^2, 0,0,0,0)=0.
	\end{equation*}
	Hence
	\begin{equation*}
	c_2 \ell^2(f^2,0,0,0,0)=0.
	\end{equation*}
	Now, by definition of $\ell^2$ and $f^2$, since $W$ and $f^2$ are identically zero  for $z\leq -\delta$,
	\begin{equation*}
	\ell^2(f^2, 0,0,0,0)=\int_{z_b}^0 W= 1.
	\end{equation*}
	We infer that $c_2=0$. The result then follows from \cref{lem:indep-ortho-Prandtl-1}, taking $f^i=\int_0^z g^i$ for $i=0,1$.
\end{proof}

Gathering the results of \cref{lem:ortho-Prandtl} and \cref{lem:solvability-Prandtl}, we obtain the following statement:

\begin{coro}
    \label{coro-regul-Prandtl}
    Let $\alpha \in C^4(\overline{\Om})$ satisfying \eqref{ellip-alpha} and $\beta \in C^1(x_0,x_1)$.

    \begin{itemize}
        \item Let $\sigma\in (0,1/6)$, and let $g\in H^\sigma_x H^1_z$ such that  $(\p_z g, \p_z \widetilde \delta_0, \p_z \widetilde \delta_1, w_t,w_b)\in \cH_{\alpha,\beta}^\sigma$ defined in \eqref{def:H-alpha-beta}.

        Then \eqref{eq:tY-model} has a  solution $\tY\in H^{\frac{2}{3} + \sigma}_x H^1_z \cap H^\sigma_x H^3_z$ if and only if $\ell^2(g, \widetilde{\delta_0}, \widetilde{\delta_1}, w_t,w_b)=0$, and this solution, if it exists, is unique and satisfies the estimate
        \begin{equation*}
            \begin{split}
           & \|\tY\|_{H^{\frac{2}{3} + \sigma}_x H^1_z \cap H^\sigma_x H^3_z} + \| (x-x_0)(x-x_1)\p_x \p_z^3 \tY\|_{L^2} + \|\p_x \p_z \tY\|_{L^2((x_0,x_1)\times (z_b,z_b/2))} \\
           & \quad \lesssim \|g\|_{H^\sigma_x H^1_z} + \| (x-x_0)(x-x_1)\p_x \p_z g\|_{L^2} + \| w_j \|_{H^2_x}  + \|\widetilde \delta_i\|_{H^3(\Sigma_i)}.
            \end{split}
        \end{equation*}
    
        \item Let $g\in H^1_xH^1_z$, and assume that $(\p_z g, \p_z \widetilde \delta_0, \p_z \widetilde \delta_1, w_t,w_b)\in \cH_{\alpha,\beta}^1$ defined in \eqref{def:H-alpha-beta}.
    
        Assume that $\ell^2(g, \widetilde{\delta_0}, \widetilde{\delta_1}, w_t,w_b)=0$, and let $\tY\in H^{\frac{2}{3} }_x H^1_z \cap L^2_x H^3_z$ be the unique solution to~\eqref{eq:tY-model}. 
        
        Then $\tY \in H^{5/3}_x H^1_z \cap H^1_x H^3_z$ if and only if $\ell^j(g, \widetilde{\delta_0}, \widetilde{\delta_1}, w_t,w_b)=0$ for $j\in \{0, 1\}$, and in this case $\tY$ satisfies the estimate
        \begin{equation*}
            \| \tY\|_{ H^{5/3}_x H^1_z} + \| \tY \|_{ H^1_x H^3_z}
            \lesssim \| g\|_{H^1_x H^1_z}+ \|(\p_z g, \p_z \widetilde \delta_0, \p_z \widetilde \delta_1, w_t,w_b)\|_{\cH_{\alpha,\beta}^1}.
        \end{equation*}
    \end{itemize}
\end{coro}

\begin{rmk}
    The regularity assumptions on $g$ in the first (resp.\ second) statement of the above corollary can be relaxed into $\p_z g\in H^\sigma_x L^2_z$, $(x-x_0)(x-x_1)\p_x \p_z g\in L^2$ and $g\vert_{z=z_b}\in L^2(x_0,x_1)$ (resp.\ $\p_z g\in H^1_x L^2_z$ and $g\vert_{z=z_b}\in H^{2/3}(x_0,x_1)$), but we have kept the above assumptions for the sake of simplicity.
\end{rmk}
 
\subsection{Local nonlinear well-posedness in the new variables}
\label{sec:Prandtl-abstract}
	
We are now ready to prove \cref{prop-Prandtl-Y}. 
The spirit of the proof is very similar to the one of \cref{sec:Burgers}.
In order to avoid repetition, we do not write the iterative scheme, and we rather apply \cref{thm:abstract} directly.
We will work with two different settings:
\begin{enumerate}
    \item \emph{Low regularity setting:} for $\sigma\in (0, 1/6)$ fixed, we take \nomenclature[FZps]{$\cZ^\sigma$}{Solution space for Prandtl at low regularity}\nomenclature[FHps]{$\cLin^\sigma$}{Space of data for Prandtl at low regularity}
    \begin{align*}
        \cZ^\sigma=& \big\{ Y\in H^{\frac{2}{3}+\sigma} _x H^1_z \cap H^\sigma_x H^3_z, \ (x-x_0)(x-x_1)\p_x \p_z^3 Y \in L^2(\Om),\\
        &\qquad \p_x \p_z Y\in L^2((x_0,x_1)\times (z_b,z_b/2)), \\
        & \qquad Y\vert_{\Sigma_i}\in H^3(\Sigma_i),\ \p_z Y\vert_{z=z_j}\in H^2(x_0,x_1)\big\},\\
        \cLin^\sigma=&\Big\{ (f, \widetilde{\delta}_0,  \widetilde{\delta}_1, w_t,w_b)\in H^\sigma_x H^1_z \times H^3(\Sigma_0)\times H^3(\Sigma_1) \times H^2(x_0,x_1)\times H^2(x_0,x_1),\\
        &\qquad (x-x_0)(x-x_1)\p_x \p_z f\in L^2,\\
        &\qquad 
        w_t(x_0)=\p_z \widetilde \delta_0(z_t),\ w_t(x_0)=\p_z \widetilde \delta_0(z_t) \Big\},
   \end{align*}
   and our space of data is the space $\cX^\sigma$ defined in \eqref{def:X-sigma-prandtl}.
    Furthermore, in the low regularity setting, $d=1$ and the linear form $\ell$ coincides with the linear form $\ell^2$ defined in \cref{def:ell-2-Prandtl}.
    
    \item \emph{High regularity setting:} we take\nomenclature[FZp1]{$\cZ^1$}{Solution space for Prandtl at high regularity}\nomenclature[FHp1]{$\cLin^1$}{Space of data for Prandtl at high regularity}
    \begin{align*}
        \cZ^1=& \big\{ Y\in H^{\frac{5}{3}} _x H^1_z \cap H^1_x H^3_z ,\quad
         Y\vert_{\Sigma_i}\in H^5(\Sigma_i),\ \p_z Y\vert_{z=z_j}\in H^2(x_0,x_1)\big\},\\
        \cLin^1=&\Big\{ (f, \widetilde\delta_0, \widetilde \delta_1, w_t,w_b)\in H^1_x H^1_z \times H^5(\Sigma_0)\times H^5(\Sigma_1) \times H^2(x_0,x_1)\times H^2(x_0,x_1),\\
        &\qquad \p_z^k \widetilde\delta_i(0)=0\quad \forall k\in \{0,\cdots, 3\},\quad
         z^{-1}\p_z f(x_i,z)\in \mathscr H^1_z(\Sigma_i),\\
&\qquad w_t(x_0)=\p_z \widetilde \delta_0(z_t),\ w_t(x_0)=\p_z \widetilde \delta_0(z_t), \\
&\qquad \Delta_0(z_t)=\p_x  w_t(x_0),\ \Delta_1(z_b)=\p_x w_b[\delta_b](x_1),\big\}
\end{align*}
where
\begin{equation*}
\Delta_i(z)=\frac{1}{z}\p_z \left[f(x_i,z) + \p_z(\alpha(x_i,z)\p_z \widetilde{\delta}_i )- \beta(x_i)\p_z \widetilde{\delta}_i \right].
\end{equation*}
        Note that 
        \begin{equation*}
        (f, \widetilde\delta_0, \widetilde \delta_1, w_t,w_b)\in\cLin^1\Rightarrow (\p_z f, \p_z \widetilde\delta_0, \p_z\widetilde \delta_1, w_t,w_b)\in \cH_{\alpha,\beta}^1,
        \end{equation*}
where the space  $\cH_{\alpha,\beta}^\sigma$ for $\sigma \in (0,1]$ is defined in  \eqref{def:H-alpha-beta}.
             Our space of data is the space $\cX^1$ defined in \eqref{def:X-1-prandtl}.
        In the high regularity setting, we take $d=3$ and $\ell=(\ell^0,\ell^1,\ell^2)$ defined in \cref{def:ell-01-Prandtl} and \cref{def:ell-2-Prandtl}.
\end{enumerate}

\begin{rmk}
    As in the previous sections, in $\cX^1$, we could also consider source terms $f$ which do not vanish on $\Sigma^P_i$, up to additional technical complications.
\end{rmk}

In both settings, the linear operator $L_P$ is defined as
\begin{equation*}
    L_P \tY := \left( z \p_x \tY - \int_{z_b}^z \p_x \tY + \beta \p_z \tY - \p_z(\alpha \p_z \tY) , \tY\vert_{\Sigma_0}, \tY\vert_{\Sigma_1}, \p_z \tY\vert_{z=z_t}, \p_z \tY\vert_{z=z_b}\right),
\end{equation*}
and the nonlinearity $N$ is defined as
\begin{align*}
    N(\Xi, \tY) & := (N_P(\Xi, \tY), \BCP^0[\delta_0] , \BCP^1[\delta_1],\BCP^t[\delta_t],\BCP^b[\delta_b]), \\
    N_P(\Xi,\tY) & := f(x, \Yfs - \tY) \p_z (\Yfs - \tY) - v_b + \p_z \left(\frac{(\p_z \tY)^2}{(\p_z \Yfs)^2(\p_z \Yfs - \p_z \tY)}\right),
\end{align*}
\nomenclature[OLNP]{$N_P$}{Nonlinearity associated with the Prandtl system}%
where the operators $\BCP^i$, $\BCP^j$ for $i\in \{0,1\}$, $j\in \{t,b\}$ are defined in \eqref{cond:tY-Sigma-i} and \eqref{cond:tY-top-bottom} respectively.

Let us now check that the assumptions of \cref{thm:abstract} are satisfied in the two settings. 
The continuity of $L_P$ from $\cZ^\sigma$ to $\cLin^\sigma$ for $\sigma\in (0,1/6)\cup\{1\}$ is a consequence of the definition of the spaces~$\cZ^\sigma$.
\cref{item:abs-1}) follows from \cref{coro-regul-Prandtl}. 
Furthermore,
\begin{equation*}
N(\Xi, 0)= \left( f(x, \Yfs ) \p_z \Yfs - v_b , \BCP^0[\delta_0] , \BCP^1[\delta_1],\BCP^t[\delta_t],\BCP^b[\delta_b]\right).
\end{equation*}
Hence it is easily checked that $N(\cdot, 0)$ is differentiable at $\Xi=0$, and its (partial) differential is given by
\begin{multline*}
    \p_\Xi N(0,0)(\Xi)= \Big( f(x, \Yfs ) \p_z \Yfs - v_b , \p_z \Yfs (x_0,z) \delta_0(\Yfs(x_0,z)), \p_z \Yfs (x_1,z) \delta_1(\Yfs(x_1,z)),\\ (\p_z \Yfs(x, z_t))^2 \delta_t(x),(\p_z \Yfs(x, z_b))^2 \delta_b(x) \Big).
\end{multline*}
As a consequence, $N_P(\Xi, \tY)- N_P(\Xi', \tY') - \p_\Xi N_P(0,0)(\Xi-\Xi')$ is 
\begin{equation}
    \label{NL-source}
    \begin{split}
        \p_z \Yfs & \left[ \left(f(\cdot,\Yfs - \tY) - f(\cdot, \Yfs)\right) -\left(f'(\cdot,\Yfs - \tY') - f'(\cdot, \Yfs)\right)   \right]\\
        &- \p_z \tY (f(\cdot, \Yfs - \tY)-f'(\cdot, \Yfs - \tY')) -\p_z(\tY-\tY') f'(\cdot, \Yfs - \tY')\\
        &+ \p_z \left(\frac{(\p_z \tY)^2}{(\p_z \Yfs)^2(\p_z \Yfs - \p_z \tY)}\right)- \p_z \left(\frac{(\p_z \tY')^2}{(\p_z \Yfs)^2(\p_z \Yfs - \p_z \tY')}\right).
    \end{split}
\end{equation}

We therefore turn towards the verification of \cref{item:abs-2}) and \cref{item:abs-3}) from \cref{thm:abstract}.

\paragraph{Verification of \cref{item:abs-2}) in the low regularity setting.}

For $\sigma\in (0,1/6)$, let $\Xi,\Xi'\in \cX^\sigma$, and $\tY,\tY'\in \cZ^\sigma$ small enough.
We need to estimate \eqref{NL-source} in $H^\sigma_x H^1_z\cap H^1_x H^1_z ((x-x_0)^2(x-x_1)^2)$.

In order not to burden the proof, we only estimate some of the norms above, and leave the other estimates to the reader. We focus for instance on
\begin{equation*}
\left\| \p_z \Yfs \p_z\left[ \left(f(\cdot,\Yfs - \tY) - f(\cdot, \Yfs)\right) -\left(f'(\cdot,\Yfs - \tY') - f'(\cdot, \Yfs)\right)   \right]\right\|_{H^\sigma_x L^2_z}.
\end{equation*}
Using \cref{lem:prod-hs} in the Appendix, we bound this term by
\begin{equation*}
    \begin{split}
        \| (\p_z \Yfs )^2 & \|_{L^\infty_z(H^{\frac{1}{2} + \sigma}_x)} \Big( \| \p_y (f-f')(x, \Yfs -\tY ) - \p_y (f-f') (x,\Yfs)\|_{H^\sigma_x L^2_z}\\&\qquad\qquad\qquad\qquad\qquad+ \| \p_y f'(x, \Yfs - \tY ) - \p_y f'(x, \Yfs- \tY')\|_{H^\sigma_x L^2_z} \Big)\\
        &+ \| \p_z \Yfs \p_z \tY  \|_{L^\infty_z (H^{\frac{1}{2} + \sigma}_x )} \| \p_y (f-f') (x, \Yfs - \tY) \|_{H^\sigma_x L^2_z}\\
        &+ \| \p_z \Yfs \p_z \tY  \|_{L^\infty_z (H^{\frac{1}{2} + \sigma}_x )} \| \p_y f'(x, \Yfs - \tY) -\p_y f'(x, \Yfs - \tY') \|_{H^\sigma_x L^2_z}\\
        &+ \| \p_z \Yfs (\p_z \tY - \p_z \tY')\|_{L^\infty_z (H^{\frac{1}{2} + \sigma}_x )} \|  \p_y f' (x, \Yfs - \tY')\|_{H^\sigma_x L^2_z}.
    \end{split}
\end{equation*}
Using the fractional trace theorem \cite[Equation (4.7), Chapter 1]{lions1969quelques}, $\cZ^\sigma \hookrightarrow C^1_z (H^{\frac{1}{2} + \sigma}_x)$. Furthermore, since $L^\infty_z (H^{\frac{1}{2} + \sigma}_x )$ is an algebra, 
\begin{equation*}
    \begin{aligned} 
     \| \p_z \Yfs \p_z \tY  \|_{L^\infty_z (H^{\frac{1}{2} + \sigma}_x )} & \lesssim \|\tY\|_{\cZ^\sigma},\\
     \| \p_z \Yfs (\p_z \tY - \p_z \tY')\|_{L^\infty_z (H^{\frac{1}{2} + \sigma}_x )} & \lesssim \|\tY-\tY'\|_{\cZ^\sigma}.
    \end{aligned}
\end{equation*}
There remains to estimate the norms involving $f$ and $f'$. Using \cref{lem:composition-bis} in the Appendix, we infer that
\begin{equation*}
    \begin{split}
        \| \p_y (f-f')(x, \Yfs - \tY ) & - \p_y (f-f') (x,\Yfs)\|_{H^\sigma_x L^2_z} = \left\| \tY \int_0^1 \p_y^2 (f-f') (x, \Yfs - \tau \tY) \dd\tau \right\|_{H^\sigma_x L^2_z}
        \\ & \lesssim 
         \| \tY \|_{H^{2/3}_x H^1_z} \left(\| \p_y^2 (f-f') \|_{H^\sigma_x L^2_z} + \|\p_y^3 (f-f')\|_{L^4_x L^2_y}\right) 
        \\ & \lesssim \|\tY \|_{\cZ^\sigma} \|\Xi-\Xi'\|_{\cX^\sigma}.
    \end{split}
\end{equation*}
In a similar fashion,
\begin{align*}
    \|  \p_y f' (x, \Yfs - \tY')\|_{H^\sigma_x L^2_z}\lesssim &\|\Xi'\|_{\cX^\sigma},\\
    \| \p_y f'(x, \Yfs - \tY ) - \p_y f'(x, \Yfs- \tY')\|_{H^\sigma_x L^2_z} 
    \lesssim & \|\tY - \tY'\|_{\cZ^\sigma} \|\Xi'\|_{\cX^\sigma}.
\end{align*}
The other terms are evaluated in a similar way. For instance, using again \cref{lem:prod-hs} and the embedding $\cZ^\sigma \hookrightarrow C^1_z (H^{\frac{1}{2} + \sigma}_x)$,
\begin{equation*}
    \begin{split}
        \left\| \p_z^3 (\tY-\tY') \frac{\p_z \tY}{(\p_z \Yfs)^2 \p_z (\Yfs - \tY)}\right\|_{H^\sigma_x L^2_z}
        & \lesssim \| \p_z^3 (\tY-\tY')\|_{H^\sigma_x L^2_z} \left\| \frac{\p_z \tY}{(\p_z \Yfs)^2 \p_z (\Yfs - \tY)}\right\|_{L^\infty_z H^{\frac{1}{2} + \sigma}_x}\\
        & \lesssim \|\tY - \tY'\|_{H^\sigma_x H^3_z}\| \p_z \tY\|_{L^\infty_z H^{\frac{1}{2} + \sigma}_x}\\
        & \lesssim \|\tY - \tY'\|_{\cZ^\sigma}\| \tY\|_{\cZ^\sigma},
    \end{split}
\end{equation*}
and, using once again \cref{lem:composition-bis},
\begin{equation*}
    \begin{split}
        \Big\| \p_z^2 \tY (f(x, \Yfs - \tY) & - f(x, \Yfs - \tY')) \Big\|_{H^\sigma_x L^2_z}
        \\ & \lesssim \| \p_z^2 \tY\|_{H^\sigma_x H^1_z} \left\| f(x, \Yfs - \tY) - f(x, \Yfs - \tY')\right\|_{H^{\frac{1}{2}+\sigma}_x L^2_z}
        \\ & \lesssim \| \tY\|_{H^\sigma_x H^3_z} \| \tY - \tY'\|_{H^{\frac{1}{2}+\sigma}_x H^1_z}\left(\|\p_y f\|_{H^{\frac{1}{2}+\sigma}_x L^2_z} + \|\p_y^2 f\|_{L^\infty}\right).
    \end{split}
\end{equation*}

The estimate on the $H^1_x H^1_z ((x-x_0)^2(x-x_1)^2)$  norm follows from similar arguments and is left to the reader.

We then turn towards the estimation of the boundary terms.
\begin{itemize}
    \item For $i \in \{0,1\}$ and $\delta_i, \eta_i \in H^4(\Sigma_i^P)$, we obtain that
    \begin{equation*}
        \left\| \BCP^i[\delta_i] - \BCP^i[\eta_i] - \p_z \Yfs (x_i,z) (\delta_i-\eta_i)(\Yfs(x_i,z))\right\|_{H^3(\Sigma_i)}
        = o \left( \| \delta_i - \eta_i \|_{H^4(\Sigma_i^P)} \right).
    \end{equation*}
    The proof is similar to the one of \cref{lem:Upsilon-strong-diff} for the Burgers case, although slightly less technical because we only need a standard Sobolev estimate here, and slightly more technical because the reference flow is now $\ufs$ instead of the linear shear flow. 

    \item For $j \in \{t,b\}$ and $\delta_j, \eta_j \in H^2(x_0,x_1)$, we obtain that
    \begin{equation*}
        \left\| \BCP^j[\delta_j] - \BCP^j[\eta_j] - (\p_z \Yfs (x,z_j))^2(\delta_j-\eta_j)(x)\right\|_{H^2(x_0,x_1)} = o \left(\| \delta_j - \eta_j \|_{H^2(x_0,x_1)}\right).
    \end{equation*}
    The proof is immediate because the maps $\BCP^j$ defined in \eqref{cond:tY-top-bottom} are in fact of the form $\BCP^j[\delta_j](x) = h_j(x,\delta_j(x))$ where $h_j : (x_0,x_1)\times \R \to \R$ is a smooth function with $h_j(\cdot,0) = 0$.
\end{itemize}
Eventually, we conclude that
\begin{equation*}
    \|N(\Xi,\tY)-N(\Xi',\tY') - \p_\Xi N(0,0)(\Xi-\Xi')\|_{\cLin^\sigma} = o \left( \|\Xi -\Xi'\|_{\cX^\sigma} + \|\tY-\tY'\|_{\cZ^\sigma} \right).
\end{equation*}

\paragraph{Verification of \cref{item:abs-2}) in the high regularity setting.}

The estimates in this case are similar to the low regularity setting and left to the reader. 
They are actually slightly easier since $H^1(x_0,x_1)$ is an algebra, and close to the ones performed for the Burgers system.

The only new estimate bears on the boundary term. More precisely, taking two data tuples $\Xi=(f, \delta_0, \delta_1, \delta_t, \delta_b, v_b)$ and $\Xi'=(f', \eta_0, \eta_1, \eta_t, \eta_b, v_b')$, we need to bound in $\mathscr H^1_z(\Sigma_i)$ the quantity 
\begin{equation*}
    z^{-1}\Big[\p_z (N_P(\Xi,\tY) - N_P(\Xi',\tY') -\p_\Xi N_P(0,0)(\Xi-\Xi'))\vert_{\Sigma_i} \Big].
\end{equation*}
We recall that $f\vert_{\Sigma_i^P}= f'\vert_{\Sigma_i^P}=0$, so that the terms stemming from $f$ and $f'$ in $N_P$ vanish on the boundary.
We therefore consider
\begin{equation}\label{est:bord-Prandtl-3}
    \left\| z^{-1} \p_z^2 \left[ \frac{(\p_z \BCP^i[\delta_i])^2}{(\p_z \Yfs)^2 (\p_z \Yfs - \p_z  \BCP^i[\delta_i])} -  \frac{(\p_z \BCP^i[\eta_i])^2}{(\p_z \Yfs)^2 (\p_z \Yfs - \p_z  \BCP^i[\eta_i])} \right]\right\|_{\mathscr H^1_z}.
\end{equation} 
Since $\p_z^k \delta_i(0)=0 $ for $k\in \{0,\dotsc,3\}$, we have $\p_z^k \BCP^i[\delta_i](z=0)=0 $.
According to \cref{lem:H1z-from-H2}, 
\begin{equation*}
    \begin{split}
        \eqref{est:bord-Prandtl-3} & \lesssim \left\| \frac{(\p_z \BCP^i[\delta_i])^2}{(\p_z \Yfs)^2 (\p_z \Yfs - \p_z  \BCP^i[\delta_i])} -  \frac{(\p_z \BCP^i[\eta_i])^2}{(\p_z \Yfs)^2 (\p_z \Yfs - \p_z  \BCP^i[\eta_i])} \right\|_{H^4(\Sigma_i)}\\
        & \lesssim \Big(  \| \delta_i\|_{H^6(\Sigma_i)} + \| \eta_i\|_{H^6(\Sigma_i)} \Big) 
             \| \eta_i-\delta_i\|_{H^6(\Sigma_i)}. 
    \end{split}
\end{equation*}
We obtain eventually
\begin{equation*}
    \|N(\Xi,\tY)-N(\Xi',\tY') - \p_\Xi N(0,0)(\Xi-\Xi')\|_{\cLin^1}
    = o \left( \|\Xi -\Xi'\|_{\cX^1} + \|\tY-\tY'\|_{\cZ^1} \right).
\end{equation*}

\paragraph{Verification of \cref{item:abs-3}) in the low regularity setting.}

We just need to check that the application $\ell^2\circ \p_\Xi N(0,0)$ is not identically zero. This is actually trivial: take $\Xi= (0,0,0,0,0, v_b)$. 
Then the solution to the vorticity equation is zero, and recalling \cref{def:ell-2-Prandtl},
\begin{equation*}
\ell^2\circ \p_\Xi N(0,0)= -\frac{1}{z_b}\int_{x_0}^{x_1} v_b(x)\dd x.
\end{equation*}
Therefore it suffices to choose $v_b$ such that the above integral is non-zero.

\paragraph{Verification of \cref{item:abs-3}) in the high regularity setting.}

Using \cref{lem:indep-ortho-Prandtl-2}, we take $\Theta^j=(f^j,0,0,0,0)$ such that $\ell^i(\Theta^j)=\delta_{i,j}$ for $0\leq i,j\leq 2$, with $f^i\in C^\infty(\overline{\Om})$ such that $f\vert_{\Sigma_0\cup \Sigma_1}=0$.
We then set $\Xi^j:=(g^j,0,0,0,0,0)$, where
\begin{equation*}
g^j(x,y):=\p_y \ufs(x,y) f^j(x,\ufs(x,y)).
\end{equation*}
Then, by design, $\p_{\Xi}N(0,0)(\Xi^j)=\Theta^j$, so that $\ell^j\circ\p_{\Xi}N(0,0)(\Xi^j)=\delta_{i,j}$.
Furthermore $\Xi^j\in \cX^1$. The result follows.

\paragraph{Conclusion.}

We have checked the assumptions of \cref{thm:abstract} both in the low regularity case $\sigma\in (0,1/6)$ and in the high regularity case $\sigma=1$.
\cref{prop-Prandtl-Y} is now a straightforward consequence of our abstract framework.

\subsection{Well-posedness of the Prandtl system}
\label{sec:WP-Prandtl}

We conclude this section with the proof of \cref{thm:prandtl}, which follows from \cref{prop-Prandtl-Y}.

\paragraph{High regularity case.}
The proof of \cref{thm:prandtl} in the high regularity case corresponding to $(f,\delta_0,\delta_1,\delta_t,\delta_b, v_b)\in \cM_1$ is very similar to the proof for Burgers carried out in \cref{sec:Burgers-main-proof}.
We leave it to the reader.
As in the Burgers case, one uses the equations satisfied by $u$ and $\tY$ to check that they actually also enjoy $L^2_x H^5_y$ regularity and one can prove a lemma similar to \cref{lem:inverse-qone} to prove that the formula $u(x,Y(x,z)) = z$ allows to transfer such a regularity back and forth.

\paragraph{Low regularity case.}
We focus on the case when $(f,\delta_0,\delta_1,\delta_t,\delta_b, v_b)\in \cM_\sigma$ with $\sigma\in (0,1/6)$, and we consider the unique solution $\tY\in \cZ^\sigma$ of \eqref{eq:Prandtl-changement-var}. 
Let $\Omega_P=\{(x,y)\in (x_0, x_1)\times \R, \ \gamma_b(x)< y < \gamma_t(x)\}$, where $\gamma_j(x)=Y(x, z_j)$.
For almost every  $x\in (x_0,x_1)$, $z\mapsto \Yfs(x,z) + \tY(x,z)$ is an $H^3$ diffeomorphism. We note that there exists a constant $\lambda>0$ such that $\lambda^{-1}\leq \p_z Y \leq \lambda$ in $\Om$.
Let us define the reverse change of variables $u$ such that $u(x,(\Yfs+\tY)(x,z))=z$. Classical results ensure that for a.e. $x$, $u(x,\cdot)\in H^3_y$. Furthermore, differentiating the formula  \eqref{eq:CDV-u-Y}, we obtain
\begin{equation*}
\p_y^3 u(x,Y(x,z))= -\frac{\p_z^3 Y(x,z)}{(\p_z Y(x,z))^4} + 3 \frac{(\p_z^2 Y(x,z))^2 }{(\p_z Y(x,z))^5},
\end{equation*}
which ensures that $\p_y^3 u \in L^2(\Omega_P)$. Since
\begin{equation*}
\p_y u(x,y)=\frac{1}{\p_z Y (x,u(x,y))},
\end{equation*}
we also infer that $\p_y u \in L^\infty$ and $\lambda^{-1}\leq \p_y u\leq \lambda$ for some $\lambda >0$.

Additionally, since $(x-x_0)(x-x_1)Y\in H^1_x H^3_z$, we also infer that $(x-x_0)(x-x_1)\p_y^k u (x, Y(x,z))\in H^1_x L^2_y(\Om_P)$ for $0\leq k \leq 3$. From there, we deduce that $(x-x_0)(x-x_1)\p_x\p_y^k u\in L^2(\Om_P)$ for   $0\leq k \leq 3$.
Furthermore, since $z\p_x \p_z Y\in L^2$, we also deduce that $u\p_x \p_y u \in L^2$.
Tracing back the computations at the beginning of \cref{sec:change-variables-Prandtl}, and noticing that $u\in H^1_x H^3_y (\omega)$ for all $\omega\Subset \Om_P$ as well as in the vicinity of $\overline{\Gamma_b}$, we infer that $u$ is a weak solution to the Prandtl system \eqref{prandtl}. This proves the existence of a solution to \eqref{prandtl}-\eqref{BC-Prandtl-compact}.
In order to prove the continuity of $u$, we  observe that for all $(x,y), (x',y')\in \Om_P$, setting $z=u(x,y)$,
\begin{equation*}
    \begin{split}
    |u(x,y)-u(x',y')| & \leq |u(x,y)-u(x',y)| + \|\p_y u\|_{\infty}|y-y'|\\
    &\leq | z - u(x', Y(x,z))| +  \|\p_y u\|_{\infty}|y-y'|\\
    &\leq |u(x', Y(x',z)) - u(x', Y(x,z))| +   \|\p_y u\|_{\infty}|y-y'| \\&\leq  \|\p_y u\|_{\infty}\left( |Y(x',z) - Y(x,z)| + |y-y'|\right).
    \end{split}
\end{equation*}
Since $Y\in H^{\frac{2}{3}}_x H^1_z\hookrightarrow C^\alpha$ for some $\alpha>0$, we infer that $u$ is Hölder continuous.

Let us now prove the uniqueness of this solution within the regularity class
\begin{equation*}
\begin{aligned} 
u\in L^2_x H^3_y(\Omega_P),\quad \p_y u \in L^\infty,\quad
(x-x_0)(x-x_1) u \in H^1_x H^3_y(\Omega_P),\quad
u\p_x \p_y u \in L^2(\Omega_P),
\end{aligned}
\end{equation*}
and assuming that $u$ is close to $\ufs$ in the associated norm. Note that this implies in particular that $\p_y u$ is bounded pointwise from above and below. The associated function $Y$ is such that $Y\in L^2_x H^3_z$, $\p_z Y \in L^\infty$, $(x-x_0)(x-x_1) Y \in H^1_x H^3_y$, and $z\p_x \p_z Y\in L^2$. 
In particular, $\p_z Y \in Z^0$. This regularity is sufficient to justify the computations of \cref{sec:change-variables-Prandtl}, and thus $\tY=Y-\Yfs$ is a solution to \eqref{eq:Prandtl-changement-var} in the sense of distributions.
It follows that $\p_z \tY$ is a solution to \eqref{eq:vorticity-Prandtl}, and $\p_zY$ is bounded pointwise from above and below by positive constants. From there, we deduce that $\p_z Y \in \cZ^\sigma$. Applying the first statement of \cref{prop-Prandtl-Y}, we deduce that $(f, \delta_0,\delta_1,\delta_b,\delta_t, v_b)\in \mathcal M_\sigma$.

Now, let $u_1,u_2$ be two solutions of \eqref{prandtl} within the above regularity class, corresponding to solutions $\tY_1,\tY_2$ of \eqref{eq:Prandtl-changement-var}. Let
\begin{equation*}
g_i:=f(x,Y_i)\p_z Y_i - v_b + \p_z \left(\frac{(\p_z \tY_i)^2}{(\p_z \Yfs)^2 \p_z Y_i}\right).
\end{equation*}
Then $W:=\p_z (\tY_1-\tY_2)\in Z^0$ is a solution to \eqref{vorticity-model} with homogeneous boundary data and with a source term $h=\p_z g_1-\p_z g_2$.
Therefore, multiplying the equation by $W$ and integrating by parts, we obtain
\begin{equation*}
\int \alpha |\p_z W|^2 \leq C (\|g_1-g_2\|_{L^2}^2 + \| W\|_{L^2}^2),
\end{equation*}
where the constant $C$ depends only on the underlying flow $\ufs$. As in the proof of \cref{lem:WP-Z0-vorticity}, for $|z_b|, z_t\leq z_0$, we infer that
\begin{equation*}
\| W\|_{L^2_x H^1_z}\lesssim \|g_1-g_2\|_{L^2}.
\end{equation*}
From there, using equation \eqref{vorticity-model}, we obtain
\begin{equation*}
\|W\|_{\cB} \lesssim \|g_1-g_2\|_{L^2}.
\end{equation*}
Using the formula for $g_i$ above, we deduce that
\begin{align*}
	\|g_1-g_2\|_{L^2} \lesssim & \|\p_y f \|_\infty \|Y_1-Y_2\|_{L^2} \|\p_z Y_1\|_\infty  + \|f\|_\infty \|\p_z(Y_1-Y_2)\|_{L^2}\\
	&+ \|\p_z \tY_1\|_\infty \|\p_z^2(Y_1-Y_2)\|_{L^2} + \|\p_z(Y_1-Y_2)\|_{L^\infty_z(L^3_x)}\|\p_z^2 \tY_2\|_{L^2_z(L^6_x)}
\end{align*}
Setting
\begin{equation*}
\eta:= \| \p_z \tY_1\|_{\infty} + \| \p_z \tY_2\|_{Z^0} + \|f\|_{L^\infty_x W^{1,\infty}_y},
\end{equation*}
and using the embeddings $Z^0 \hookrightarrow L^2_zH^{1/3}_x \hookrightarrow L^2_z(L^6_x)$, $\cB \hookrightarrow C^0_z([z_b,z_t]; H^{1/6}_x) \hookrightarrow L^\infty_z(L^3_x)$ (see \cref{lem:embed-cB-C0z-L3x}), we infer
\begin{equation*}
\|g_1-g_2\|_{L^2}\lesssim \eta \| W\|_{\cB}.
\end{equation*}
Hence we obtain $\|W\|_{\cB} \lesssim \eta \|W\|_{\cB}$, and provided $\eta$ is small enough, $W=0$.

\begin{rmk}
	Note that in the case $\sigma\in (0, 1/6)$, we are not able to transfer completely the fractional horizontal regularity from $Y$ to $u$. 
	Indeed, one can easily check from the formulas in \eqref{eq:CDV-u-Y} that $\p_y^k u (x, Y(x,z))\in H^{\frac{3-k}{3}+ \sigma}_x L^2_z\cap H^\sigma_x H^{3-k}_z$ for $k\in \{1,\cdots , 3\}$. Then, one may try to get some regularity on $u$ by computing
	\begin{equation*}
	\| \p_y u \|_{H^{\frac{2}{3} + \sigma}_x L^2_y}^2 = \| \p_y u \|_{L^2}^2 + \int_{x_0}^{x_1}\int_{x_0}^{x_1} \int_{\R} \mathbf 1_{(x,y)\in \Om_P}\mathbf 1_{(x',y)\in \Om_P} \frac{|\p_yu(x,y)-\p_y u(x',y)|^2}{|x-x'|^{\frac{7}{3} + 2 \sigma}}\:\dd x\:\dd x'\: \dd y.
	\end{equation*}
	It is quite natural to change variables in the second integral in the right-hand side by setting $y=Y(x,z)$, the associated jacobian being bounded from above and below, and to split the resulting integral into
	\begin{equation*}\begin{aligned}
	&\int_{x_0}^{x_1}\int_{x_0}^{x_1} \int_{z_b}^{z_t} \frac{|\p_yu(x,Y(x,z))-\p_yu(x',Y(x',z))|^2}{|x-x'|^{\frac{7}{3} + 2 \sigma}}\:\dd x\dd x'\dd z \\&+ \int_{x_0}^{x_1}\int_{x_0}^{x_1} \int_{z_b}^{z_t}\frac{|\p_yu(x',Y(x',z))-\p_yu(x',Y(x,z))|^2}{|x-x'|^{\frac{7}{3} + 2 \sigma}}\dd x\dd x'\dd z.\end{aligned}
	\end{equation*}
	The first integral above is bounded by $\|\p_y u (x, Y(x,z))\|_{H^{2/3+ \sigma}_x L^2_z}^2$. As for the second integral, if $\p_y u$ were Lipschitz continuous with respect to $y$ (or even Hölder continuous with some suitable exponent), we would bound this integral by $\|Y\|_{H^{2/3+ \sigma}_x L^2_z}^2$.
	But unfortunately, this Lipschitz regularity does not hold in general.
	However, thanks to the regularity result far from the lateral boundaries from \cref{lem:reg-away-sing-pts}, we have sufficient regularity on $u$ to ensure uniqueness.
\end{rmk}

\subsection{Potential strategy in a whole infinite strip}
\label{sec:strategy-Prandtl-bande-infinie}

In this paragraph, we sketch a potential strategy to solve the Prandtl equation \eqref{prandtl} in the whole infinite strip $(x_0,x_1)\times (0, +\infty)$, based on the previous analysis.
To that end, we first propose a scheme to solve a system with a modified source term (and without any orthogonality condition). Once the solvability of this modified system is understood, the solvability of the original system follows for data within a finite codimensional manifold.

We start from a smooth solution $(\ufs, \vfs)$ to \eqref{prandtl} such that $\ufs(x, \gfs(x))=0$ for some smooth function $\gfs$, and $\ufs(x,y)<0$ (resp.\ $\ufs(x,y)>0$) for $y\in (0, \gfs(x))$ (resp.\ for $y> \gfs(x)$). 
We also have the boundary conditions $\ufs\vert_{y=0}=\vfs\vert_{y=0}=0$, and $\ufs(x,y)\to u_\infty(x)$ as $y\to \infty$, where $u_\infty u_\infty' = -\p_x p$.
As before, we fix two small  numbers $z_b<0<z_t$, such that there exist smooth lines $\{y=\overline{\gamma_j}(x)\}$ with $\ufs(x, \overline{\gamma_j}(x))=z_j$.
We consider perturbations $\Theta:=(\delta_0, \delta_1, f)\in H^k(0, + \infty)\times H^k(0, +\infty) \times H^k((x_0, x_1)\times (0, +\infty))$ for some sufficiently large~$k$, and for simplicity, we also assume that $\delta_i$ vanishes at $\gfs(x_i)$.
We then define an application $\mathcal A: (\Theta; {\gamma_b}, \delta_t)\mapsto (\gamma_b', \delta_t')$ in the following way:
\begin{enumerate}
	\item We solve the Prandtl system in the domain $\{(x,y)\in (x_0, x_1)\times (0, +\infty), \ y<{\gamma_b}(x)\}$ in the vicinity of the flow $(\ufs, \vfs)$, with source term $-\p_x p + f$ and boundary data
	\begin{equation*}
	\begin{aligned} 
	u\vert_{x=x_1}= &\ufs\vert_{x=x_1} + \delta_1,\\
	u\vert_{y=0}=v\vert_{y=0}=&0,\\
	u\vert_{y={\gamma_b}(x)}=& z_b.
	\end{aligned}
	\end{equation*}
	In (the interior of) this domain, $\ufs<0$, and therefore the system is backward parabolic. Hence we expect that it is solvable (see \cite{MR1697762}). A possible way to solve it could be to introduce the ``von Mise type good unknown'' from \cite{IM2022}.
	
	Assuming that the above system is solvable, we set $v_b:=v\vert_{y={\gamma_b}} - \vfs\vert_{y={\gamma_b}}$, and $\delta_b:=\p_y u\vert_{y={\gamma_b}} - \p_y \ufs\vert_{y={\gamma_b}}$. 
	Note that there are typically compatibility conditions which are necessary to ensure the existence of smooth solutions of this system. We leave this issue aside in the present discussion. 
    The compatibility conditions are automatically ensured if $f$ is supported in $(x_0+\delta_x, x_1-\delta_x)$ for $|\delta_x|\ll 1$, and  if $\delta_0$ (resp.\ $\delta_1$) is compactly supported in $(\gamma_P(x_0), \overline{\gamma_t}(x_0))$ (resp.\ $(\overline{\gamma_b}(x_1), \gamma_P(x_1)$).
	
	\item We then consider the Prandtl system in the recirculating zone. More precisely, using the analysis of the previous subsections, we construct a solution to 
	\begin{equation*}
	\begin{aligned} 
	u u_x + v u_y - \p_{yy} u &= - \p_x p + f -\left( \nu^0 f^0 + \nu^1 f^1 + \nu^2 f^2\right)(x, u(x,y))\p_y u(x,y) \\
	u_x + v_y &=0,\\
	\end{aligned} 
	\end{equation*}
	together with the boundary conditions \eqref{CL-Prandtl-bottom}, \eqref{CL-Prandtl-top}, \eqref{CL-Prandtl-lateral}, in which the bottom data $v_b, \delta_b$ are provided by the first step.
	Note that the new free boundary $\{y=\gamma_b'(x):=Y(x,z_b)\}$, with the notations of the previous sections, is different from the boundary $\{y= {\gamma_b}(x)\}$ \emph{a priori}. The coefficients $(\nu^0,\nu^1, \nu^2)$ are Lipschitz functions of the data $(f, \delta_0,\delta_1)$ and ensure that the associated solution $u$ belongs to $H^{5/3}_x H^1_y \cap H^1_x H^3_y$. 
	Note that the structure of the right-hand side is designed so that the equation in the variables $(x,z)$ is
	\begin{equation*}
        \begin{split}
        	z\p_x Y - \int_{z_b}^z \p_x Y - \frac{1}{(\p_z Y)^2}\p_z^2 Y & = \left(\p_x p - f(x,Y)\right) \p_z Y + \vfs\vert_{y=\overline{\gamma_b}} \\
            & \quad + v_b + \nu^0 f^0 + \nu^1 f^1 + \nu^2 f^2.
        \end{split}
	\end{equation*}
	Let $\mathcal V(f, \delta_0, \delta_1; {\gamma_b}, \delta_t)$ denote the quantity $v\vert_{y=\gamma_t(x)}$, where $\gamma_t(x)=Y(x,z_t)$. 
    The boundary $\{y=\gamma_t(x)\}$ will be the lower boundary of the upper domain considered in the next step, but is not a variable of the implicit function argument.
	
	\item Eventually, we solve the Prandtl system in $\{(x,y)\in (x_0, x_1)\times (0, +\infty), \ y>{\gamma_t}(x)\}$ in the vicinity of the flow $(\ufs, \vfs)$, with source term $-\p_x p + f$ and boundary data
	\begin{equation*}
	\begin{aligned} 
	u\vert_{x=x_0}= &\ufs\vert_{x=x_0} + \delta_0,\\
	u\vert_{y=\gamma_t(x)}=&z_t,\\
	v\vert_{y=\gamma_t(x)}= &\mathcal V(f, \delta_0, \delta_1; {\gamma_b}, \delta_t),\\
	\lim_{y\to \infty } u(x,y)=&u_\infty(x).
	\end{aligned}
	\end{equation*}
	This system is now forward parabolic. It can be solved with the tools of \cite{MR1697762}. Note that $\inf \ufs>0$ in the upper domain, so that the system is in fact non degenerate after a suitable change of variables.
	We then define $\delta_t'=\p_y u\vert_{y=\gamma_t(x)}$.

\end{enumerate}
Eventually, we set $\mathcal A(\Theta; {\gamma_b}, \delta_t)= (\gamma_b', \delta_t')$.
The first question which needs to be solved is the following:
\begin{center}
	\emph{  For every $\Theta\in H^k(0, + \infty)\times H^k(0, +\infty) \times H^k((x_0, x_1)\times (0, +\infty))$ such that $\|\Theta\|\leq \delta$, \\Find $({\gamma_b}, \delta_t)$ such that $\mathcal A(\Theta; {\gamma_b}, \delta_t)=({\gamma_b}, \delta_t)$.}
\end{center}

For $\Theta=0$, by definition of the application $\mathcal A$, one has $\mathcal A(0; \overline{\gamma_b}, \p_y \ufs\vert_{y=\overline{\gamma_t}})= (\overline{\gamma_b}, \p_y \ufs\vert_{y=\overline{\gamma_t}})$. 
Hence a possible strategy could be to apply an implicit function theorem, in the spirit of \cite{dalibard2017nonlinear} or \cref{lem:TFI-Strong-Frechet}.
This requires to prove the invertibility of the function $d_{({\gamma_b}, \delta_t)} \mathcal A(0; \overline{\gamma_b}, \p_y \ufs\vert_{y=\overline{\gamma_t}}) - \mathrm{Id}$. In turn, this requires to prove the well-posedness of a linearized type Prandtl system 
(or of three coupled linearized Prandtl systems) in the infinite strip $(x_0, x_1)\times (0, +\infty)$.
Such a result may typically involve restrictions on the size of the domain, as the following toy example demonstrates.  Let $a\in L^\infty((x_0, x_1)\times \R)$. Consider the forward-backward system
\begin{equation*}
\begin{cases}
	z\p_x u - \p_{zz} u - au=0 &\text{ in }(x_0,x_1)\times \R,\\
	u(x_0,z)=0 &\text{ for } z>0,\\
	u(x_1,z)=0 &\text{ for } z<0.
\end{cases}
\end{equation*}
Let us assume that there exists a solution with high enough decay for $|z|\gg 1$; our purpose is to prove that such a  solution is identically zero. To that end, we multiply the above system by $u \rho$ where $\rho(x,z) := \exp(-(x-x_0)z/(x_1-x_0))$, and perform integrations by parts. We obtain
\begin{equation*}
    \begin{split}
        \frac{1}{2(x_1-x_0)}\int_{x_0}^{x_1}\int_\R & z^2 u^2 \rho \dd x\dd z+ \int_{x_0}^{x_1}\int_\R(\p_z u)^2 \rho \dd x\dd z
        \\&= - \int_{x_0}^{x_1}\int_\R a u^2 \rho \dd x\dd z+ \frac{1}{2}\int_{x_0}^{x_1}\int_\R \left( \frac{x-x_0}{x_1-x_0}\right)^2 u^2  \rho \dd x\dd z.
    \end{split}
\end{equation*}
For $|z|\geq 2(\|a\|_\infty +1)^{1/2} (x_1-x_0)^{1/2}  $, the two terms in the right-hand side can be absorbed in the left-hand side. On the other hand, for $|z| \leq 2(\|a\|_\infty +1)^{1/2} (x_1-x_0)^{1/2} $ and $|x_1-x_0|\leq 1$, the weight $ \exp\left(- \frac{x-x_0}{x_1-x_0} z\right)$ is bounded from above and below. We then use the inequality
\begin{equation*}
\| \phi\|_{L^2_z}\lesssim \|z\phi\|_{L^2_z} + \|\p_z \phi\|_{L^2_z}
\end{equation*}
for any $\phi\in H^1(\R)$ such that $z\phi\in L^2(\R)$. The proof of the inequality follows from arguments similar to the ones of \cref{lem:psi-zpsi-psi''} and is left to the reader. We infer that for $x_1-x_0$ small enough, the only decaying solution to the above system is $\phi\equiv 0$. For $x_1-x_0$ large, the situation is not so clear. These considerations could be seen as a toy example of why  Iyer and Masmoudi in \cite{IM2022} need to exclude a ``resonant set'' of lengths $x_1-x_0$ for which non trivial solutions of a system similar to the one above may exist.

\newpage
\section{Interpolation estimate for the linear shear flow problem}
\label{sec:interpolation}

\newcommand{\iX}{\mathcal{Y}}
\newcommand{\iXL}{\mathcal{Y}_1^{\overline{\ell}}}
\newcommand{\iN}{I}

This section is devoted to the proof of \cref{lem:shear-Hs}, which is used in particular in the construction of weak solutions for the Prandtl system (see \cref{lem:ortho-Prandtl}). 
The idea is to interpolate between the $Z^0$ estimate from \cref{p:pagani-shear}, and the $Z^1$ estimate from \cref{p:shear-WP-Z1}.
However, because of the orthogonality conditions, justifying that the interpolate space for the source terms is the expected one turns out to be quite complicated.

We introduce the following spaces for the source terms
\nomenclature[FY]{$\iX_0$}{Notation for $L^2(\Omega)$ during discussions on interpolation}
\nomenclature[FY]{$\iX_1$}{Notation for $H^1_x L^2_y$ with $f_{\rvert\Sigma_0\cup\Sigma_1} =0$ during discussions on interpolation}
\begin{align} 
	\label{eq:cX0}
	\iX_0 & := \{ f \in L^2(\Om) \}, \\
	\label{eq:cX1}
	\iX_1 & := \left\{ f \in H^1_x L^2_y ; \enskip f_{\rvert \Sigma_0 \cup \Sigma_1} = 0 \right\}
\end{align}
endowed with their usual norms and 
\begin{equation} \label{def:cal-Y}
\iXL:= \left\{ f \in \iX_1 ; \enskip \overline{\ell^0}(f,0,0)=\overline{\ell^1}(f,0,0)=0\right\},
\end{equation}
endowed with the norm of $\iX_1$, where $ \overline{\ell^0}$ and $ \overline{\ell^1}$ are defined in \cref{def:ell-shear}.
Since  $ \overline{\ell^0}$ and $ \overline{\ell^1}$ are continuous for the $H^1_x L^2_y$ norm, $\iXL$ is a closed subspace of $\iX_1$.

We wish to interpolate between $\iX_0$ and $\iXL$.
Using classical interpolation theory, one can determine $\iX_{\sigma} := \left[ \iX_0, \iX_1 \right]_{\sigma}$ quite easily (see \cref{lem:x-theta} below).
Nevertheless, there is a difficulty in the determination of the space $[\iX_0,\iXL]_{\sigma}$.
This corresponds to the well-known problem of ``subspace interpolation'', for which we give a short survey in \cref{sec:subspace-interpolation}.

The proof of \cref{lem:shear-Hs} relies on  a careful analysis of the dual profiles $\overline{\Phi^j}$, and in particular on a decomposition of the latter into an explicit singular part and a regular part. 
This decomposition allows us to have quantitative upper and lower bounds on the functions $\tau \mapsto \iN(\tau,\overline{\ell^j})$, which play a paramount role in interpolation theory (see \cite{MR1343130} and \cref{sec:lofstrom} below). 
	
The organization of this section is as follows. 
We start by introducing the theory of subspace interpolation, and associated notations in \cref{sec:subspace-interpolation}.
We then turn towards the proof of \cref{lem:shear-Hs} in \cref{sec:interpolation-shear}, illustrating how the general theory can be applied for our problem, thanks to the knowledge of the singular profiles of \cref{sec:radial}.

\subsection{A primer on subspace interpolation}
\label{sec:subspace-interpolation}

Using interpolation theory in a context where constraints are enforced on the data comes with a specific difficulty, known as ``subspace interpolation''.
In this subsection, we give a short introduction and set up notations and a lemma that will be used in the next subsections.

\subsubsection{An introduction to subspace interpolation}
\label{sec:subspace}

Let us start by a short introduction to the topic of subspace interpolation and the associated difficulty.
This difficulty is \emph{not} linked with the difference between complex and real interpolation methods.
Indeed, it occurs even in the case of ``quadratic'' interpolation between separable Hilbert spaces, for which all methods construct the same interpolation spaces (see \cite[Remark~3.6]{CWHM2015} and \cite[Section~3.3, item~(4)]{CWHM2022} based on the initial geometric argument of \cite{MR1289759}).

\paragraph{Setting of the problem}
Let $\iX_0$ and $\iX_1$ denote two Banach spaces with a dense continuous embedding $\iX_1 \hookrightarrow \iX_0$.
Let $\iX_\sigma := [\iX_0,\iX_1]_\sigma$, for $\sigma \in (0,1)$, say for the complex method to fix ideas.
Let $\ell$ be a continuous linear form on $\iX_1$, which is however unbounded on $\iX_0$, and define its kernel $\iX_1^\ell := \{ f \in \iX_1 ; \enskip \ell(f) = 0 \}$, which is a closed subspace of $\iX_1$.
The question of ``subspace interpolation'' consists in  determining the relation between $\iX_\sigma$ and $[\iX_0,\iX_1^\ell]_\sigma$.
This question of course admits a straightforward generalization to the case of a finite number of orthogonality conditions.

Generally, one checks that the closure of $[\iX_0,\iX_1^\ell]_\sigma$ in $\iX_\sigma$ is either a subspace of codimension~1, when $\ell$ is continuous on $\iX_\sigma$, or the whole of $\iX_\sigma$, when $\ell$ is unbounded on $\iX_\sigma$.
In the former case, there is no guarantee that $[\iX_0,\iX_1^\ell]_\sigma$ itself is closed in $\iX_\sigma$ (or, equivalently, that the associated norms are equivalent on $[\iX_0,\iX_1^\ell]_\sigma$).
The first systematic occurrence of this question seems to date back to \cite[Problem 18.5, Chapter 1]{LM68}, which claims that a major difficulty to use interpolation theory is that ``\emph{l'interpolé de sous-espaces fermés n'est pas nécessairement un sous-espace fermé dans l'interpolé}'' (the interpolation space between closed subspaces is not necessarily a closed subspace in the interpolation space), and asks for sufficient conditions for $[\iX_0,\iX_1^\ell]_\sigma$ to be closed in~$\iX_\sigma$.

\begin{rmk}
    When $\ell$ is continuous for the topology of $\iX_0$, there is no difficulty.
	Indeed, one checks that, for every $\sigma \in (0,1)$, $[\iX_0,\iX_1^\ell]_\sigma = \{ f \in \iX_\sigma ; \enskip \ell(f) = 0 \}$, endowed with the topology of $\iX_\sigma$, for which $\ell$ is continuous (see e.g.\ the related result \cite[Theorem 13.3, Chapter 1]{LM68}).
\end{rmk}

\paragraph{Some examples}
The best known and most simple example of such a phenomenon, introduced in \cite[Theorem 11.7, Chapter 1]{LM68} concerns the construction of the space $H^{1/2}_{00}(0,1)=[L^2(0,1),H^1_0(0,1)]_{1/2}$.
It is known that $H^{1/2}_{00}(0,1)$ is not closed in $H^{1/2}(0,1)$ and that the associated norm involves a non-equivalent ``additional term''.

In \cite{wallsten1988remarks}, using real interpolation between $L^1$ and $L^\infty$, Wallstén constructed examples illustrating that this pathological behavior is not limited to exceptional values of the interpolation parameter, since there exist constraints for which it occurs for every $\sigma \in (0,1)$.

\paragraph{Short survey of known results}
Precising earlier results of Löfström \cite{MR1343130,lofstrom},
Ivanov and Kalton proved in \cite{MR1834861} that, in the general case, there exist two thresholds $0 \leq \sigma_0 \leq \sigma_1 \leq 1$ such that:
\begin{itemize}
	\item when $0 < \sigma < \sigma_0$, $[\iX_0,\iX_1^\ell]_\sigma = \iX_\sigma$, with equivalent norms,
	\item when $\sigma_0 \leq \sigma \leq \sigma_1$, the norm on $[\iX_0,\iX_1^\ell]_\sigma$ is not equivalent to the one on $\iX_\sigma$,
	\item when $\sigma_1 < \sigma < 1$, $[\iX_0,\iX_1^\ell]_\sigma$ is a closed subspace of codimension 1 in $\iX_\sigma$.
\end{itemize}
In the first case, $\ell$ is unbounded on $\iX_\sigma$ (the constraint does not make sense).
In the second and third cases, $\ell$ admits a continuous extension to $\iX_\sigma$ and the closure of $[\iX_0,\iX_1^\ell]_\sigma$ in $\iX_\sigma$ is of codimension~1.

This classification has generalizations to the case of multiple constraints (see \cite{MR3300954}), potentially involving multiple pathological intervals, associated with each constraint.

In the difficult regime $\sigma_0 \leq \sigma \leq \sigma_1$, more precise results \cite{MR2387839,MR1964285} allow the computation of the ``additional norm'' stemming from the presence of the constraints.

The recent work \cite{MR4431930} considers a kind of dual problem, by computing interpolation spaces between $\iX_0$ and $\iX_1 \oplus \R \omega$, where $\omega$ is a singular function of $\iX_0 \setminus \iX_1$, whose singularity is expressed in polar coordinates.
In this work, $\sigma_0 = \sigma_1$.
This is also our case below, and our dual profiles also involve singular parts which are expressed in radial-like coordinates, as constructed in \cref{sec:radial}.

\subsubsection{A variant of a criterium due to Löfström}
\label{sec:lofstrom}

To prove \cref{lem:shear-Hs}, we will rely on an abstract interpolation result proved by Löfström in~\cite{MR1343130}.
Let $\iX_0$ and $\iX_1$ denote two Hilbert spaces with a dense continuous embedding $\iX_1 \hookrightarrow \iX_0$.

For $f \in \iX_1$ and $\tau \in (0,1)$, let
\begin{equation} \label{eq:norm-t}
	\| f \|_\tau^2 := \| f \|_{\iX_0}^2 + \tau^2 \| f \|_{\iX_1}^2.
\end{equation} 
This notation is actually inspired by \cite{MR1834861}, as \cite{MR1343130} uses instead the quantity $\max \left(\| f \|_{\iX_0}, \tau \| f \|_{\iX_1}\right)$.
Since $\|f\|_\tau / \sqrt{2} \leq \max \left(\| f \|_{\iX_0}, \tau \| f \|_{\iX_1}\right) \leq \|f\|_\tau$, both can be used equivalently.

Given a linear form $\ell$ on $\iX_1$, one defines, for $\tau \in (0,1)$,
\begin{equation} \label{eq:N}
	\iN(\tau,\ell) := \sup_{f \in \iX_1 \setminus \{ 0 \}} \frac{\ell(f)}{\|f\|_\tau}.
\end{equation}
As $\tau \to 0$, upper bounds on $\iN(\tau,\ell)$ are linked with the boundedness of $\ell$ on intermediate spaces between $\iX_0$ and $\iX_1$, while lower bounds on $\iN(\tau,\ell)$ are linked with the non-degeneracy of $\ell$ on these spaces.
In particular, one has the following result, which is a reformulation of \cite[Theorem~2]{MR1343130} in the particular case of two constraints having the same ``order''.

\begin{lem} \label{lem:lofstrom}
	Let $\iX_0$ and $\iX_1$ denote two Hilbert spaces with a dense continuous embedding $\iX_1 \hookrightarrow \iX_0$.
	Let $\ell^0$, $\ell^1$ be two linear forms on $\iX_1$.
	Assume that there exists $C_\pm > 0$ and $\bar{\sigma} \in (0,1)$  such that, for every $(c_0,c_1) \in \mathbb{S}^1$ and every $\tau \in (0,1)$,
	\begin{equation} \label{eq:lof-c0c1}
		C_- \tau^{-\bar{\sigma}} \leq \iN(\tau, c_0\ell^0 + c_1 \ell^1) \leq C_+ \tau^{-\bar{\sigma}}.
	\end{equation}
	As in \cref{sec:subspace}, let $\iX_1^\ell := \{ f \in \iX_1 ; \enskip \ell^0(f) = \ell^1(f) = 0 \}$ and, for $\sigma \in (0,1)$, $\iX_\sigma := [\iX_0,\iX_1]_\sigma$, for the complex interpolation method.
	Then,
	\begin{itemize}
		\item for every $\sigma \in (0,\bar{\sigma})$, $[\iX_0,\iX_1^\ell]_\sigma = \iX_\sigma$, with equivalent norms,
		
		\item for every $\sigma \in (\bar{\sigma},1)$, the linear forms $\ell^0$ and $\ell^1$ have continuous extensions to $\iX_\sigma$ and $[\iX_0,\iX_1^\ell]_\sigma = \{ f \in \iX_\sigma ; \enskip \ell^0(f) = \ell^1(f) = 0 \}$, endowed with the norm of $\iX_\sigma$.
	\end{itemize}
\end{lem}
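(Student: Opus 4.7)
The plan is to reduce the statement directly to \cite[Theorem 2]{MR1343130}, which gives an abstract interpolation criterion for finitely many linear constraints of prescribed ``order''. In Löfström's conventions, the governing functional is $J(\tau,f) = \max(\|f\|_{\mathcal{X}_0},\tau\|f\|_{\mathcal{X}_1})$ and the associated dual quantity is the norm of $\ell$ with respect to $J(\tau,\cdot)$. Since $J(\tau,f) \leq \|f\|_\tau \leq \sqrt{2}\, J(\tau,f)$, the definition \eqref{eq:N} yields the same function $\tau \mapsto N(\tau,\ell)$ as in \cite{MR1343130} up to a universal constant. The hypothesis \eqref{eq:lof-c0c1}, uniform in $(c_0,c_1) \in \mathbb{S}^1$, then translates literally into the two-sided sharp estimate of order $\bar\sigma$ that Löfström requires for a family of constraints sharing a common order. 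The first alternative ($\sigma < \bar\sigma$) and the second alternative ($\sigma > \bar\sigma$) are precisely the two regimes separated by Löfström's threshold.

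For the reader's convenience, we sketch the mechanism underlying the two cases. The upper bound in \eqref{eq:lof-c0c1} combined with the definition \eqref{eq:N} gives, for every $f \in \mathcal{X}_1$ and every decomposition $f = f_0 + f_1$ with $f_0 \in \mathcal{X}_0$ and $f_1 \in \mathcal{X}_1$,
\begin{equation*}
    |\ell^j(f_1)| \lesssim \tau^{-\bar\sigma}\bigl(\|f_1\|_{\mathcal{X}_0} + \tau\|f_1\|_{\mathcal{X}_1}\bigr).
\end{equation*}
Taking the infimum in $\tau$ and using the $K$-functional characterization of $\mathcal{X}_\sigma$, one obtains a continuous extension of each $\ell^j$ to $\mathcal{X}_\sigma$ for $\sigma > \bar\sigma$, and the boundedness fails when $\sigma < \bar\sigma$ (any continuous extension would violate the lower bound on $N$). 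This already gives one inclusion in each case.

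The converse inclusion in the case $\sigma > \bar\sigma$ is the crux. Here the lower bound in \eqref{eq:lof-c0c1}, applied uniformly over $(c_0,c_1) \in \mathbb{S}^1$, produces by duality a two-dimensional biorthogonal system at each scale $\tau$: there exist $g_\tau^0, g_\tau^1 \in \mathcal{X}_1$ with $\ell^i(g_\tau^j) = \mathbf{1}_{i=j}$ and $\|g_\tau^j\|_\tau \lesssim \tau^{\bar\sigma}$. Given $f \in \mathcal{X}_\sigma$ satisfying $\ell^0(f)=\ell^1(f)=0$ and a near-optimal decomposition $f = f_0 + f_1$, we define a corrected decomposition $\widetilde{f}_1 := f_1 - \ell^0(f_1) g_\tau^0 - \ell^1(f_1) g_\tau^1 \in \mathcal{X}_1^\ell$ and $\widetilde{f}_0 := f - \widetilde{f}_1$. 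Since the constraints vanish on $f$, $\ell^i(f_1) = -\ell^i(f_0)$ and each $|\ell^i(f_0)|$ is controlled by $\tau^{-\bar\sigma}\|f_0\|_{\mathcal{X}_0}$ using the extended continuity from the previous paragraph. One then checks that $J(\tau,\widetilde{f}_0,\widetilde{f}_1) \lesssim J(\tau, f_0, f_1)$, so $f \in [\mathcal{X}_0,\mathcal{X}_1^\ell]_\sigma$ with the same norm up to a constant. In the case $\sigma < \bar\sigma$, the same correction argument, now without any extension constraint, shows that every $f \in \mathcal{X}_\sigma$ belongs to $[\mathcal{X}_0,\mathcal{X}_1^\ell]_\sigma$.

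The main obstacle is the passage from a single constraint (as classically treated) to the two simultaneous constraints $\ell^0,\ell^1$: one needs the biorthogonal pair $(g_\tau^0,g_\tau^1)$ with uniform control, and the uniformity over $(c_0,c_1) \in \mathbb{S}^1$ in \eqref{eq:lof-c0c1} is exactly what allows a quantitative Gram-Schmidt construction of this pair. Once this bookkeeping is handled, the rest of the argument is the standard $K$-functional manipulation, which is why we prefer to cite \cite{MR1343130} rather than reprove it.
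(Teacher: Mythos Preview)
Your proposal and the paper's proof both reduce to \cite[Theorem~2]{MR1343130}, so the overall strategy is the same. The difference lies in what is actually verified before invoking L\"ofstr\"om. Your claim that the hypothesis \eqref{eq:lof-c0c1} ``translates literally'' into L\"ofstr\"om's setting is an overstatement: L\"ofstr\"om's formulation for several constraints is \emph{hierarchical}, requiring the ``strongly independent basis'' condition $N(\tau,\ell^1)\le C\,N_0(\tau,\ell^1)$, where $N_0$ is computed over $\ker\ell^0$. The paper's entire proof is devoted to deriving this hierarchical condition from the symmetric assumption \eqref{eq:lof-c0c1}: one represents $\ell^j=\langle g^j_\tau,\cdot\rangle_\tau$ via Riesz, introduces the cosine $R_\tau=\langle g^0_\tau/\|g^0_\tau\|_\tau,\,g^1_\tau/\|g^1_\tau\|_\tau\rangle_\tau$, and uses the uniform lower bound over $(c_0,c_1)\in\mathbb{S}^1$ to bound $1-R_\tau^2$ away from zero. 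Your Gram--Schmidt remark is essentially the same idea, so you have identified the right mechanism, but you should not describe the passage as automatic.

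A second, smaller point: in your $K$-functional sketch, the bound ``$|\ell^i(f_0)|\lesssim\tau^{-\bar\sigma}\|f_0\|_{\mathcal{X}_0}$'' is not available, since $\ell^i$ is unbounded on $\mathcal{X}_0$. The correct route is to bound $|\ell^i(f_1)|\,\|g_\tau^i\|_\tau\lesssim\tau^{-\bar\sigma}\|f_1\|_\tau\cdot\tau^{\bar\sigma}=\|f_1\|_\tau$ directly, without passing through $f_0$. This does not affect your conclusion since you ultimately defer to \cite{MR1343130}, but the sketch as written is not quite right.
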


\begin{rmk}
	\cref{lem:lofstrom} does not say anything on $[\iX_0,\iX_1^\ell]_{\sigma}$ for the critical value $\sigma = \bar{\sigma}$.
	In fact, with the notations of \cite{MR1834861} mentioned above, one has $\sigma_0 = \sigma_1 = \bar{\sigma}$, so the norm of $[\iX_0,\iX_1^\ell]_{\bar{\sigma}}$ is not equivalent to the norm of $\iX_{\bar{\sigma}}$.
\end{rmk}

\begin{rmk}
	In assumption \eqref{eq:lof-c0c1}, it is important to consider arbitrary linear combinations of the two linear forms $\ell^0$ and $\ell^1$.
	It would not be sufficient to assume \eqref{eq:lof-c0c1} with $(c_0,c_1) = (1,0)$ and $(c_0,c_1)=(0,1)$.
	Indeed, the lower bound of this condition ensures that the two linear forms remain sufficiently independent on the intermediate spaces.
	We state here a formulation giving a symmetrical role to $\ell^0$ and $\ell^1$, whereas \cite{MR1343130} uses a hierarchical formulation.
	We prove below that our formulation indeed implies Löfström's one.
\end{rmk}

\begin{proof}[Proof of \cref{lem:lofstrom}]
	This is an application of \cite[Theorem 2]{MR1343130}.
	By \eqref{eq:lof-c0c1} with $(c_0,c_1) = (1,0)$ and $(c_0,c_1)=(0,1)$, both $\ell^0$ and $\ell^1$ have ``order'' $\bar{\sigma}$ in Löfström's vocabulary.
	Therefore, there only remains to check that they form a ``strongly independent basis'', i.e.\ that there exists $C > 0$ such that, for every $\tau \in (0,1)$,
	\begin{equation} \label{eq:Nt-N0t}
		\iN(\tau,\ell^1) \leq C \iN_0(\tau,\ell^1),
	\end{equation}
	where
	\begin{equation} \label{def:N0t}
		\iN_0(\tau,\ell^1) := \sup \left\{ \frac{\ell^1(f)}{\|f\|_\tau} ; \enskip f \in \iX_1 \setminus \{0\} \text{ and } \ell^0(f) = 0 \right\}.
	\end{equation}
	Let $\tau \in (0,1)$.
	Denote by $\langle \cdot, \cdot \rangle_\tau$ the scalar product associated with the norm $\|\cdot\|_\tau$ on $\iX_1$.
	By the Riesz representation theorem, there exists $g^0_\tau,  g^1_\tau \in \iX_1$ such that $\ell^j = \langle g^j_\tau, \cdot \rangle_\tau$.
	In particular $\iN(\tau,\ell^j) = \|g^j_\tau\|_\tau$.
	Moreover, by \eqref{def:N0t}, $\iN_0(\tau,\ell^1)$ is the supremum of $\ell^1$ on the intersection of $\ker \ell^0$ with the unit ball in $\iX_1$ for the norm $\|\cdot\|_\tau$.
	Thus, a natural candidate to bound $\iN_0(\tau,\ell^1)$ from below is the orthogonal projection of $g^1_\tau / \|g^1_\tau\|_\tau$ on $\ker \ell^0$, namely,
	\begin{equation*}
		f_\tau^1 := \frac{g^1_\tau}{\|g^1_\tau\|_\tau} - R_\tau \frac{g^0_\tau}{\|g^0_\tau\|_\tau}
		\quad \text{where} \quad
		R_\tau := \left\langle \frac{g^1_\tau}{\|g^1_\tau\|_\tau} , \frac{g^0_\tau}{\|g^0_\tau\|_\tau} \right\rangle_\tau.
	\end{equation*}
	In particular $\|f^1_\tau\|_\tau = (1-R_\tau^2)^{\frac12}$ and $\ell^0(f^1_\tau) = \left\langle g^0_\tau, f^1_\tau \right\rangle = 0$.
	Thus
	\begin{equation}
		\label{eq:N0-low}
		\iN_0(\tau,\ell^1) \geq \frac{\langle f^1_\tau, g^1_\tau \rangle_\tau}{\|f^1_\tau\|_\tau}
		= (1-R_\tau^2)^{\frac12} \|g^1_\tau\|_\tau
		= (1-R_\tau^2)^{\frac12} \iN(\tau,\ell^1).
	\end{equation}
	Thus, to prove \eqref{eq:Nt-N0t}, it is sufficient to prove that the ratio $R_\tau^2$ is bounded away from $1$.
	By \eqref{eq:lof-c0c1}, for every $(c_0,c_1) \in \mathbb{S}^1$,
	\begin{equation}\label{encadrement-cg-tau}
		C_- \tau^{-\bar{\sigma}} \leq \| c_0 g^0_\tau + c_1 g^1_\tau \|_\tau \leq C_+ \tau^{-\bar{\sigma}}.
	\end{equation}
	In particular,
	\begin{equation} \label{eq:gjt}
		C_- \tau^{-\bar{\sigma}} \leq \| g^j_\tau \|_\tau \leq C_+ \tau^{-\bar{\sigma}}.
	\end{equation}
	By homogeneity, from \eqref{encadrement-cg-tau}, for every $(c_0,c_1) \in \R^2$,
	\begin{equation*}
		C_-^2 \tau^{-2\bar{\sigma}} (c_0^2+c_1^2) \leq c_0^2 \| g^0_\tau \|_\tau^2 + c_1^2 \| g^1_\tau \|_\tau^2 + 2 c_0 c_1 \langle g^0_\tau, g^1_\tau \rangle_\tau \leq C_+^2 \tau^{-2\bar{\sigma}} (c_0^2+ c_1^2).
	\end{equation*}
	Substituting $c_j \gets c_j / \|g^j_\tau\|_\tau$ and using \eqref{eq:gjt} leads to the fact that, for every $(c_0,c_1) \in \R^2$,
	\begin{equation*}
		\rho^2 (c_0^2+c_1^2) \leq c_0^2 + c_1^2 + 2 R_\tau c_0 c_1 \leq \rho^{-2} (c_0^2+c_1^2), 
	\end{equation*}
	where $\rho := C_- / C_+$.
	In particular, using $(c_0,c_1) = (1,1)$ and $(1,-1)$ yields $\rho^2 \leq 1+R_\tau$ and $\rho^2 \leq 1-R_\tau$.
	Hence, \eqref{eq:N0-low} proves that
	\begin{equation*}
		\iN_0(\tau,\ell^1) \geq \rho^2 \iN(\tau,\ell^1),
	\end{equation*}
	which implies \eqref{eq:Nt-N0t} with $C = \rho^{-2}$.
	So $\ell^0$ and $\ell^1$ form a ``strongly independent basis'' and \cref{lem:lofstrom} follows from \cite[Theorem 2]{MR1343130}.
\end{proof}
	
\subsection{Interpolated theory in the case of the linear shear flow}
\label{sec:interpolation-shear}

In this subsection, we consider the problem \eqref{eq:shear-z} at the linear shear flow, with vanishing boundary data.
We proved in \cref{sec:shear-strong} (see \cref{p:pagani-shear}) that, when $f \in L^2_x L^2_z$, the solutions to this problem have $Z^0$ regularity, and in \cref{sec:shear-ortho} (see \cref{p:shear-WP-Z1}) that they have $Z^1$ regularity when $f \in H^1_x L^2_z$ and the two orthogonality conditions \eqref{eq:compat-shear} are satisfied.
Here, we establish an interpolated theory for the problem \eqref{eq:shear-z} with source terms $f \in H^\sigma_x L^2_z$, $\sigma \in (0,1)$, see \cref{lem:shear-Hs}.
This interpolated theory involves the difficulty exposed in \cref{sec:subspace-interpolation}.
We define $\iX_0$, $\iX_1$ and $\iXL$ by \eqref{eq:cX0}, \eqref{eq:cX1} and \eqref{def:cal-Y} respectively, endowed
with their usual norms.
For $\sigma \in (0,1)$, let $\iX_\sigma := [\iX_0,\iX_1]_\sigma$. The identification of the space~$\iX_\sigma$ is classical and provided by the following lemma:

\begin{lem} \label{lem:x-theta}
	Let $\sigma \in (0,1)$.
	Let $\iX_\sigma := [\iX_0,\iX_1]_\sigma$, for the complex interpolation method.
	\begin{itemize}
		\item When $\sigma \in (0,1/2)$, $\iX_{\sigma} = H^\sigma_x L^2_y$.
		\item When $\sigma = 1/2$, recalling that $\Om_\pm=\Om\cap \{\pm y >0\}$
		\begin{equation*}
			\| f \|_{\iX_{1/2}}^2 \approx \| f \|_{H^{1/2}_x L^2_y}^2+ \int_{\Om_+} \frac{f^2(x,y)}{|x-x_0|} \dd x\dd y+ \int_{\Om_-} \frac{f^2(x,y)}{|x-x_1|} \dd x\dd y.
		\end{equation*}
		\item When $\sigma \in (1/2,1)$, $\iX_{\sigma} = \{ f \in H^\sigma_x L^2_y ; \enskip f_{\rvert \Sigma_0 \cup \Sigma_1 = 0} \}$, with the usual norm.
	\end{itemize}
\end{lem}

\begin{proof}
	This follows from classical interpolation theory for intersections (see \cite[Theorem 13.1 and Equation (13.4), Chapter 1]{LM68}), and from (one-sided versions of) the equality $H^{1/2}_{00}(x_0,x_1) = [H^1_0(x_0,x_1), L^2(x_0,x_1)]_{\frac 12}$ (see \cite[Theorem 11.7, Chapter 1]{LM68}) .
\end{proof}

In order to extend the theory of \cref{sec:shear} to fractional tangential regularity, we start by identifying the spaces $[\iX_0,\iXL]_\sigma$.
More precisely, we prove the following characterization.

\begin{lem} \label{lem:X0X1-shear}
	Let $\iX_0$, $\iX_1$ and $\iXL$ be given by \eqref{eq:cX0}, \eqref{eq:cX1} and \eqref{def:cal-Y} respectively.
	Then,
	\begin{itemize}
		\item For every $\sigma \in (0,1/6)$, $[\iX_0,\iXL]_\sigma = \iX_\sigma$ with equivalent norms.
		\item For every $\sigma \in (1/6,1)$, the linear forms $\overline{\ell^0}$ and $\overline{\ell^1}$ admit continuous extensions to $\iX_\sigma$ and
		\begin{equation*}
			[\iX_0,\iXL]_\sigma = \left\{ f \in \iX_\sigma ; \enskip \overline{\ell^0}(f,0,0) = \overline{\ell^1}(f,0,0) = 0 \right\},
		\end{equation*}
		endowed with the norm of $\iX_\sigma$.
	\end{itemize}
\end{lem}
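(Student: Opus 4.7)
The plan is to apply the abstract subspace interpolation result of Löfström (Lemma~\ref{lem:lofstrom}) with the critical threshold $\bar\sigma = 1/6$. The value of $\bar\sigma$ is dictated by the radial singular profile $\busing^i = r_i^{1/2} \gnot(t_i)\chi_i$ constructed in \cref{def:sing-profiles}: under the anisotropic scaling $(x-x_i, z) \mapsto (\lambda^3(x-x_i), \lambda z)$ which preserves the operator $z\p_x - \p_{zz}$, one has $r_i \mapsto \lambda r_i$, so $\busing^i$ has homogeneity $1/2$. Since the source term $\bfi \in L^2$ associated to $\busing^i$ in \cref{lem:busing} has homogeneity $-3/2$ (and hence $H^1_x$ homogeneity $-9/2$), the quotient scaling exponent for the pair $(\mathcal X_0, \mathcal X_1)$ is exactly $(-3/2) - (-4/2) \cdot (1 - 1/6) - (-9/2) \cdot (1/6) = 1/6$. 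Before applying Löfström's criterion, one checks that $\overline{\mathcal{X}_1}$ is dense in $\mathcal{X}_0$: given $f \in L^2(\Omega)$ and $\epsilon >0$, take $g \in C^\infty_c(\Omega)$ close to $f$ in $L^2$, and use two source terms of the form $f^{(s)}_i$ below (for $s$ small) to cancel $\overline{\ell^0}(g)$ and $\overline{\ell^1}(g)$ while adding arbitrarily small $L^2$ mass (this is possible because these functions have $L^2$ norm bounded but $\overline{\ell^j}$-value of order $s^{-1/2}$).

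For the lower bound, fix $(c_0,c_1)\in \mathbb S^1$. Since the matrix $\overline M$ from \cref{lem:barM} is invertible, there exists $(a_0,a_1)\neq 0$ with $\overline M^T(a_0,a_1) = (c_0,c_1)$. For $s \in (0,1)$, set
\begin{equation*}
    f^{(s)}_i(x,z) := s^{-2} \bfi\!\left(x_i + s^{-3}(x-x_i),\, s^{-1}z\right), \qquad f^{(s)} := a_0 f^{(s)}_0 + a_1 f^{(s)}_1,
\end{equation*}
which, up to a fixed multiplicative cutoff to ensure $f^{(s)}\in \mathcal X_1$, has support concentrated near $\{(x_i,0)\}$. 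A direct change of variables yields $\|f^{(s)}\|_{\mathcal X_0}\sim 1$ and $\|f^{(s)}\|_{\mathcal X_1}\sim s^{-3}$. Plugging into $\overline{\ell^j}(f^{(s)},0,0) = \int_\Omega \p_x f^{(s)}\,\overline{\Phi^j}$ and substituting $\overline{\Phi^j}$ by its decomposition from \cref{cor:decomp-Phij}, the contributions of the lift and of $\Phi_\reg \in \qone$ are $O(1)$, while the singular terms $\busing^i(\cdot,-\cdot)$ produce a factor $s^{1/2}$ from the homogeneity of $r_i^{1/2}$. After the Jacobian $s^4$ and the derivative $\p_x f^{(s)}$ which contributes $s^{-5}$, this gives $(c_0\overline{\ell^0}+c_1\overline{\ell^1})(f^{(s)}) \sim s^{-1/2}$ as $s\to 0$. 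Choosing $s = \tau^{1/3}$ balances $\tau\|f^{(s)}\|_{\mathcal X_1} \sim 1$, so that $\|f^{(s)}\|_\tau \sim 1$ and $N(\tau,c_0\overline{\ell^0}+c_1\overline{\ell^1}) \gtrsim \tau^{-1/6}$, uniformly in $(c_0,c_1)\in \mathbb S^1$.

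For the upper bound, fix $f\in \mathcal X_1$ and estimate $\overline{\ell^j}(f,0,0)=\int_\Omega \p_x f\, \overline{\Phi^j}$ by splitting $\overline{\Phi^j}$ as lift $+$ singular $+$ regular. The regular $\qone$ part and the lift (which is independent of $x$, so contributes only boundary traces on $\Sigma_i$ that vanish for $f\in\mathcal X_1$) are controlled by $\|f\|_{\mathcal X_0}\leq \|f\|_\tau$. For each singular piece $d_i \busing^i(x,-z)$, localize the integral in a ball of radius $\rho$ around $(x_i,0)$ and rescale via $x-x_i = \rho^3 \tilde x,\ z = \rho \tilde z$. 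The rescaled profile is exactly $\rho^{1/2}$ times a fixed $L^2$ function, and the rescaled $f$ satisfies $\|\tilde f\|_{L^2}\le \rho^{-2}\|f\|_{L^2}$ and $\|\p_{\tilde x}\tilde f\|_{L^2}\le \rho\|\p_x f\|_{L^2}$. Optimizing the cut-off radius $\rho$ to balance these two bounds (leading to the choice $\rho^3 \sim \tau$, i.e.\ $\rho \sim \tau^{1/3}$) yields $|\int \p_x f \cdot \busing^i(\cdot,-\cdot)| \lesssim \tau^{-1/6}\|f\|_\tau$.

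The main obstacle is precisely this \emph{sharp} upper bound $N(\tau,\ell)\lesssim \tau^{-1/6}$: the continuity of $\overline{\ell^j}$ on $H^\sigma_x L^2_z$ for every $\sigma > 1/6$ (\cref{rmk:16-shear}) only produces $N(\tau,\ell)\lesssim \tau^{-1/6-\varepsilon}$, insufficient for Löfström's criterion. The sharpness must come from the explicit radial structure of the singular profiles $\busing^i$ together with the anisotropic scaling of the operator, rather than from generic interpolation inequalities. Once both bounds are established uniformly in $(c_0,c_1)\in\mathbb S^1$, \cref{lem:lofstrom} immediately delivers the two conclusions of \cref{lem:X0X1-shear}.
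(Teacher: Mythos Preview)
Your overall strategy matches the paper's: apply \cref{lem:lofstrom} with $\bar\sigma=1/6$, decompose $c_0\overline{\Phi^0}+c_1\overline{\Phi^1}$ via \cref{cor:decomp-Phij} into a lift, singular pieces $d_i\busing^i(\cdot,-\cdot)$, and a regular remainder $\Phi_\reg\in\qone$, and then estimate the singular contributions by the anisotropic scaling. The execution of both bounds, however, has genuine gaps.

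\emph{Upper bound.} Your claim that ``the rescaled profile is exactly $\rho^{1/2}$ times a fixed $L^2$ function'' is only true on the unit ball $\{\tilde r<1\}$: the profile $v_0=r^{1/2}\gnot(t)$ grows at infinity and is not in $L^2$ of the half-plane. You never treat the region $\{r_i>\rho\}$, and a sharp cutoff would in any case leave an uncontrolled surface term on $\{r_i=\rho\}$. The paper (\cref{lem:dxf-using-above}) uses a smooth cutoff $\eta(r/\tau^{1/3})$: inside, Cauchy--Schwarz against $\|\p_x f\|_{L^2}$ gives $\rho^{5/2}\|\p_x f\|_{L^2}$; outside, one integrates by parts in $x$ (no boundary term, since $f_{\rvert\Sigma_i}=0$ and $\gnot(+\infty)=0$) and computes explicitly $\|\p_x[r^{1/2}\gnot(t)(1-\eta)]\|_{L^2}\sim\rho^{-1/2}$ using \eqref{eq:px-pr-pt} and the decay of \cref{lem:G-decay}. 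The two pieces balance at $\rho=\tau^{1/3}$.

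\emph{Lower bound.} The rescaling of $\bfi$ is natural and your scaling exponents are correct, but after rescaling only the singular part of $\overline{\Phi^j}$ survives, and the leading $s^{-1/2}$ coefficient is $\sum_i a_i d_i A_i$ with $A_i:=\int\p_x\bfi(x,z)\,v_0(x-x_i,-z)\,\mathrm{d}x\,\mathrm{d}z$ a fixed constant independent of $(c_0,c_1)$. The invertibility of $\overline M$ in \cref{lem:barM} concerns integrals against the \emph{full} $\overline{\Phi^j}$ (including $\Phi_\reg$, which drops out in the limit $s\to 0$) and does not imply $A_i\neq 0$. Since for a given $(c_0,c_1)$ you only know $(d_0,d_1)\neq 0$ but not which component is nonzero, you actually need \emph{both} $A_0\neq 0$ and $A_1\neq 0$; this is not verified. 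The paper (\cref{lem:dxf-using-below}) avoids the issue entirely by building test functions directly in polar coordinates, $h_\tau=\eta(r/\tau^{1/3})H(t)$, and choosing $H,\eta$ so that the leading term equals an explicit nonzero product of one-dimensional integrals.
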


\begin{rmk}
	The threshold at $\sigma = 1/6$ is consistent with the observation of \cref{rmk:16-shear} that the maps $\overline{\ell^j}(\cdot,0,0)$ are bounded on $H^\sigma_x L^2_z$ for every $\sigma > 1/6$.
\end{rmk}

For $\tau \in (0,1)$, we use the notations of the previous paragraph, in particular the norm $\|\cdot\|_\tau$ of~\eqref{eq:norm-t} and the function $\iN(\tau,\cdot)$ of \eqref{eq:N}, with $\iX_0$ and $\iX_1$ defined as above.

To derive the estimates required to apply \cref{lem:lofstrom}, two strategies would be possible.
Both rely on the explicit knowledge of the singular radial solutions constructed in \cref{sec:radial}, which are involved in the orthogonality conditions. 
First, one could impose periodic boundary conditions on~$f$, compute a 2D Fourier-series representation of (an extension by parity of) the singular profiles and estimate the functions $\iN$ working in the Fourier space.
Such a frequency-domain approach is carried out in \cite{MR2387839}, assuming some appropriate asymptotic decay of the Fourier transform of the profile defining the orthogonality condition.
We choose a second strategy, which stays in the spatial domain, and involves estimates using cut-off functions whose space-scale are linked with the parameter $\tau$. 
This strategy is related to the one used in \cite{MR4431930} and inspired by the links between the $K$ functional of real interpolation theory and the notions of modulus of continuity and modulus of smoothness of functions (see e.g.\ \cite{johnen1977equivalence}).

To prove \cref{lem:X0X1-shear}, we intend to apply \cref{lem:lofstrom}.
Hence, we need to bound from below and from above the functions $\iN(\tau,\overline{\ell^j})$.
By \cref{def:ell-shear}, $\overline{\ell^j}(f,0,0) = \int_\Omega \p_x f \overline{\Phi^j}$.
As highlighted in \cref{cor:decomp-Phij}, the profiles $\overline{\Phi^j}$ can be decomposed as the sum of a singular radial part, an $x$-independent part, and a regular part.
The singular radial part is the one that will be dominating the behavior of the orthogonality conditions.
Thus, we start by two lemmas concerning estimates from above and from below for integrals of the form $\int_\Omega (\p_x f) \busing^i$, before moving to the general case.

\begin{lem}
	\label{lem:dxf-using-above}
	Let $h \in H^1_x L^2_z$ such that $h = 0$ on $\Sigma_0 \cup \Sigma_1$.
	Then, for $\tau \in (0,1)$,
	\begin{equation*}
		\left| \int_\Omega (\p_x h(x,z)) \busing^i(x,-z) \dd x \dd z
		\right| \lesssim \tau^{-1/6} \left(\|h\|_{L^2} + \tau \|\p_x h\|_{L^2}\right),
	\end{equation*}
	where $\busing^i$ is defined in \cref{def:sing-profiles}.
\end{lem}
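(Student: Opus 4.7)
The plan is to exploit the anisotropic scaling of the singular profile $\busing^i(x,-z)$, which behaves like $r_i^{1/2}$ in the polar-like coordinates $(r_i, t_i)$ of \cref{def:sing-profiles}, via a scale-dependent cut-off. The reflection $z \mapsto -z$ does not affect the anisotropic scale $|x-x_i| \sim r_i^3$, $|z| \sim r_i$; it merely exchanges the upper and lower halves. Introducing a cut-off $\chi_\lambda(x,z)$, equal to $1$ on $\{r_i \leq \lambda\}$ and supported in $\{r_i \leq 2\lambda\}$ (with $\lambda \leq \bar R/2$, so that $\chi_i \equiv 1$ on its support), I will split
\[
\int_\Omega \p_x h(x,z) \, \busing^i(x,-z) \dd x \dd z = I_{\mathrm{near}} + I_{\mathrm{far}},
\]
where $I_{\mathrm{near}}$ includes the factor $\chi_\lambda(x,-z)$ and $I_{\mathrm{far}}$ the factor $1-\chi_\lambda(x,-z)$, and eventually optimize by taking $\lambda = \tau^{1/3}$ (assuming $\tau$ small enough; otherwise the estimate is trivial via $\|\busing^i\|_{L^2}\lesssim 1$).

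For the near part, a direct Cauchy--Schwarz gives $|I_{\mathrm{near}}| \leq \|\p_x h\|_{L^2} \|\chi_\lambda \busing^i\|_{L^2}$, and the $(r,t)$ change of variables with Jacobian $3r^3/(1+t^2)^2$ (see \eqref{eq:det-J}) yields
\[
\|\chi_\lambda \busing^i\|_{L^2}^2 \lesssim \int_0^{2\lambda} r^4 \dd r \int_\R \frac{\gnot(t)^2}{(1+t^2)^2} \dd t \lesssim \lambda^5,
\]
using the boundedness of $\gnot$ from \cref{prop:rGk}. For the far part, I integrate by parts in $x$. The boundary terms split as $h(x_i,z)\busing^i(x_i,-z)(1-\chi_\lambda(x_i,-z))$; they vanish because $h|_{\Sigma_0\cup\Sigma_1}=0$ handles the half of $\{x=x_i\}$ corresponding to $\Sigma_i$, while the reflection makes $\busing^i(x_i, -z)$ vanish on the complementary half (since $\busing^0|_{\Sigma_0}=0$ forces $\busing^0(x_0,-z)=0$ for $z<0$, and symmetrically for $i=1$, while $\busing^i(x_{1-i},-z)=0$ by support of $\chi_i$). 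This leaves
\[
|I_{\mathrm{far}}| \leq \|h\|_{L^2}\bigl(\|(1-\chi_\lambda)\p_x \busing^i\|_{L^2} + \|(\p_x\chi_\lambda)\busing^i\|_{L^2}\bigr).
\]

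From \eqref{eq:px-r12-L0} one has $\p_x(r^{1/2}\gnot(t)) = r^{-5/2} H(t)$ with $H(t) = O((1+t^2)^{1/2})$ thanks to the decay $t^3\gnot'(t)=O(1)$ from \cref{lem:G-decay}, so
\[
\|(1-\chi_\lambda)\p_x \busing^i\|_{L^2}^2 \lesssim \int_\lambda^{\bar R}\!\!\int_\R r^{-5}\cdot\frac{H(t)^2 r^3}{(1+t^2)^2} \dd t\, \dd r \lesssim \int_\lambda^{\bar R} r^{-2} \dd r \lesssim \lambda^{-1}.
\]
For the commutator, writing $\p_x\chi_\lambda = \chi_\lambda'(r)\cdot (1+t^2)^{1/2}/(3r^2)$ via \eqref{jacobian} with $|\chi_\lambda'(r)|\lesssim \lambda^{-1}$ supported on $r\sim \lambda$, the same change of variables yields $\|(\p_x\chi_\lambda)\busing^i\|_{L^2}^2 \lesssim \lambda^{-1}$. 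Combining, $|I_{\mathrm{near}}|+|I_{\mathrm{far}}| \lesssim \lambda^{5/2}\|\p_x h\|_{L^2} + \lambda^{-1/2}\|h\|_{L^2}$, and setting $\lambda = \tau^{1/3}$ gives exactly $\tau^{5/6}\|\p_x h\|_{L^2} + \tau^{-1/6}\|h\|_{L^2} = \tau^{-1/6}(\|h\|_{L^2} + \tau\|\p_x h\|_{L^2})$. The main technical point is the bookkeeping of boundary terms under the $z\mapsto -z$ reflection combined with the inflow conditions on both $h$ and $\busing^i$; the rest reduces to routine $(r,t)$ integration using the explicit size of $\busing^i$ and its derivative.
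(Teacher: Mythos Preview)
Your proof is correct and follows essentially the same approach as the paper: both split the integral via a radial cut-off at scale $\lambda=\tau^{1/3}$ (the paper writes this as $\tau^\alpha$ with $\alpha=1/3$), apply Cauchy--Schwarz on the near piece to get $\lambda^{5/2}\|\p_x h\|_{L^2}$, integrate by parts on the far piece using the same boundary-term cancellations, and estimate $\p_x\busing^i$ in $L^2$ on $\{r\gtrsim\lambda\}$ via the polar-like coordinates and the decay of $\gnot'$ to get $\lambda^{-1/2}\|h\|_{L^2}$. The only cosmetic difference is that the paper separates out the term where $\p_x$ hits the fixed cut-off $\chi_i$ in $\busing^i$ (which is harmless since $\p_x\chi_i$ is supported away from the singularity), a contribution you absorb implicitly into your $\|(1-\chi_\lambda)\p_x\busing^i\|_{L^2}$ bound.
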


\begin{proof}
	By symmetry, it is sufficient to prove the result with $i = 0$, which we assume from now on, and we drop the indexes $i=0$ on $r_i$ and $t_i$ involved in \cref{def:sing-profiles}.
	We also let $\chi(x,z) := \chi_i(x,-z)$ of \cref{def:sing-profiles} and $\gl(t) := \gnot(-t)$, where $\gnot$ is defined in \cref{prop:rGk}.
	With these notations
	\begin{equation} \label{eq:busing0-rgchi}
		\busing^0(x,-z) = r^{\frac 12} \gl(t) \chi(x,z).
	\end{equation}
	In particular, since $\gnot(+\infty) = 0$, $\busing^0(x_0,-z) = 0$ for $z \in (-1,0)$. 
	We split the integral to be estimated depending on whether $r \leq \thr$ or $r \geq \thr$, where $\alpha > 0$ is to be chosen later.
	Let $\eta \in C^\infty(\R;[0,1])$ such that $\eta(s) \equiv 1$ for $s \leq 1$ and $\eta(s) \equiv 0$ for $s \geq 2$.
	
	\step{Estimate in the region: $r \leq \thr$.}
	By Cauchy--Schwarz,
	\begin{equation*}
		\left| \int_\Omega \p_x h \cdot r^{\frac12} \gl(t) \chi \cdot \eta(r/\thr) \right|
		\leq \|\chi\|_\infty \|\gl\|_\infty \|\p_x h\|_{L^2}
		\left(\int_\Omega r \eta^2(r/\thr) \right)^{\frac 12}.
	\end{equation*}
	Using the polar-like change of coordinates of \eqref{eq:r-t} and \eqref{eq:det-J}, one has
	\begin{equation*}
		\int_\Omega r \eta^2(r/\thr)
		= \int_0^\infty \int_\R \frac{3r^3}{(1+t^2)^2} r \eta^2(r/\thr) \dd r \dd t
		\lesssim (\thr)^5.
	\end{equation*}
	Hence, in this region,
	\begin{equation*}
		\left| \int_\Omega \p_x h \cdot r^{\frac12} \gl(t) \chi \cdot \eta(r/\thr) \right| \lesssim (\thr)^{5/2} \| \p_x h \|_{L^2}.
	\end{equation*}
	
	\step{Estimate in the region: $r \geq \thr$.}
	We intend to integrate by parts in $x$.
	At $x = x_1$, $\busing^0(x,-z) = 0$ for $z \in (-1,1)$ because $\chi = 0$.
	At $x = x_0$, when $z > 0$, $h = 0$ by assumption, and, when $z < 0$, $\busing^0(x,-z) = 0$ as recalled above.
	Hence, there is no boundary term and
	\begin{equation*}
		\int_\Omega \p_x h \cdot r^{\frac12} \gl(t) \chi \cdot (1-\eta(r/\thr))
		=
		- \int_\Omega h \p_x \left( \chi \cdot r^{\frac12} \gl(t) (1-\eta(r/\thr)) \right).
	\end{equation*}
	First, one easily bounds
	\begin{equation*}
		\left| \int_\Omega h \p_x \chi \cdot r^{\frac12} \gl(t) (1-\eta(r/\thr)) \right|
		\leq \| h \|_{L^2} \|\gl\|_\infty \|\p_x\chi\|_{L^2}\max_\Omega r^{\frac12}
		\lesssim \| h \|_{L^2}.
	\end{equation*}
	For the second term, when $\p_x$ hits on the function expressed in $(r,t)$ coordinates, we use the derivative formula \eqref{eq:px-pr-pt}:
	\begin{equation*}
		\begin{split}
			\int_\Omega h \chi \p_x \left( r^{\frac12} \gl(t) (1-\eta(r/\thr)) \right)
			& = \int_\Omega h \chi \frac{(1+t^2)^{\frac12}}{3r^2} \gl(t) \p_r \left(r^{\frac12} (1-\eta(r/\thr))\right) \\
			& - \int_\Omega h \chi \frac{t(1+t^2)^{\frac32}}{3r^3} r^{\frac12} (1-\eta(r/\thr)) \p_t \gl(t).
		\end{split}
	\end{equation*}
	We bound both terms using the Cauchy--Schwarz inequality and the polar-like change of coordinates \eqref{eq:r-t} with Jacobian determinant \eqref{eq:det-J}.
	In particular, on the one hand,
	\begin{equation*}
		\begin{split}
			\int_\Omega \frac{1+t^2}{9r^4} \gl^2(t) & \left(\p_r \left(r^{\frac12} (1-\eta(r/\thr))\right)\right)^2
			\\&= \int_0^\infty \int_\R \frac{3r^3}{(1+t^2)^2} \frac{1+t^2}{9r^4} \gl^2 \left(\p_r \left(r^{\frac12} (1-\eta(r/\thr))\right)\right)^2 \dd t\dd r\\
			& \lesssim \int_0^\infty \left( r^{-1} (1-\eta(r/\thr))^2 + r (\eta'(r/\thr))^2 / (\thr)^2 \right) \frac{\dd r}{r} \\
			& = (\thr)^{-1} \int_1^{\infty} \left(s^{-1} (1-\eta(s))^2 + s (\eta'(s))^2\right) \frac{\dd s}{s} \lesssim (\thr)^{-1}.
		\end{split}
	\end{equation*}
	On the other hand,
	\begin{equation*}
		\begin{split}
			\int_\Omega \frac{t^2(1+t^2)^3}{9r^6} r & (1-\eta(r/\thr))^2 (\p_t \gl(t))^2
			\\&= \int_0^\infty \int_\R \frac{3r^3}{(1+t^2)^2} \frac{t^2(1+t^2)^3}{9r^6} r (1-\eta(r/\thr))^2 (\p_t \gl(t))^2 \dd t\dd r \\
			& \lesssim \left( \int_\R t^2(1+t^2) (\p_t\gl(t))^2 \dd t\right) \int_0^\infty \frac{1}{3r^2} (1-\eta(r/\thr)) \dd r\\
			& \lesssim (\thr)^{-1}
		\end{split}
	\end{equation*}
	by the integrability property $t^3 \p_t \gl(t) = O(1)$ of \cref{lem:G-decay}.
	
	Thus, gathering the estimates in this region proves that
	\begin{equation*}
		\left| \int_\Omega \p_x h \cdot r^{\frac12} \gl(t) \chi \cdot (1-\eta(r/\thr)) \right|
		\lesssim (\thr)^{-\frac 12} \|h\|_{L^2}.
	\end{equation*}
	Gathering the estimates in both regions and choosing $\alpha = 1/3$ concludes the proof. 
\end{proof}

\begin{lem}
	\label{lem:dxf-using-below}
	There exists a family $(h^i_\tau)_{\tau \in (0,1)}$ of non-zero, smooth, compactly supported functions on $\Omega$ such that, as $\tau \to 0$, 
	\begin{equation*}
		\left| \int_\Omega (\p_x h^i_\tau(x,z)) \busing^i(x,-z) \dd x \dd z
		\right| \gtrsim \tau^{-1/6} \left(\|h^i_\tau\|_{L^2} + \tau \|\p_x h^i_\tau\|_{L^2} 
\right),
	\end{equation*}
	and $\int_\Om \p_x h^j_\tau \busing^i=0$ for $j \neq i$, where $\busing^i$ is defined in \cref{def:sing-profiles}.
\end{lem}

\begin{proof}
	As in the previous lemma, by symmetry, it is sufficient to prove the result with $i = 0$, which we assume from now on, and we drop the indexes $i=0$ on $r_i$ and $t_i$ involved in \cref{def:sing-profiles}.
	We also let $\chi(x,z) := \chi_i(x,-z)$ of \cref{def:sing-profiles} and $\gl(t) := \gnot(-t)$, where $\gnot$ is defined in \cref{prop:rGk}.
	With these notations, one has \eqref{eq:busing0-rgchi}.
	
	Let $\alpha > 0$.
	Let $H \in C^\infty_c(\R;[-1,1])$ and $\eta \in C^\infty_c(\R;[-1,1])$ such that $\supp \eta \subset (1/2,3/2)$.
	For $\tau \in (0,1)$, we define
	\begin{equation*}
		h_\tau := \eta(r/\thr) H(t).
	\end{equation*}
	By the support properties of $H$ and $\eta$, one checks that $h_\tau$ is both smooth and compactly supported in $\Omega$.
	Moreover, it is non-zero if $H \neq 0$ and $\eta \neq 0$.
	
	Let $\tau>0$ be sufficiently small such that the support of $h_\tau$ is included in the region where $\chi \equiv 1$. Note that with this choice, we also have $\int_\Om \p_x h_\tau \busing^1=0$. 
	Then, using the formula \eqref{eq:px-pr-pt} for $\p_x$ and the determinant \eqref{eq:det-J},
	\begin{equation*}
		\begin{split}
			 \int_\Omega & \p_x h_\tau \busing(x,-z)\dd x\dd z \\
			& = 
			\int_0^\infty \int_\R r^{\frac12} \gl(t) \left( 
			\frac{(1+t^2)^{\frac 12}}{3r^2} (\thr)^{-1} \eta'(r/\thr) H(t) - \frac{t(1+t^2)^{\frac32}}{3r^3} \eta(r/\thr) H'(t) \right)  \frac{3r^3}{(1+t^2)^2}\dd t\; \dd r\\
			& = \int_0^\infty \int_\R \frac{r^{\frac12} \gl(t)}{(1+t^2)^{\frac32}}  \left( 
			r (\thr)^{-1} \eta'(r/\thr) H(t) - t(1+t^2) \eta(r/\thr) H'(t)
			\right) \dd t\; \dd r \\
			& = (\thr)^{3/2} \int_\R \frac{H(t) \gl(t)}{(1+t^2)^{\frac32}} \dd t \int_0^\infty s^{3/2} \eta'(s) \dd s
			- (\thr)^{3/2} \int_\R \frac{tH'(t) \gl(t)}{(1+t^2)^{\frac12}} \dd t \int_0^\infty s^{1/2} \eta(s) \dd s.
		\end{split}
	\end{equation*}
	Note that since $\eta\in C^\infty_c((0, + \infty))$, 
	\begin{equation*}
	\int_0^\infty s^{3/2} \eta'(s) \dd s = - \frac{3}{2} \int_0^\infty s^{1/2} \eta(s)\dd s.
	\end{equation*}
	We claim that we may choose  $\eta$ and $H$ such that
	\begin{equation*}
	\int_0^\infty s^{1/2} \eta(s)\dd s=\frac{3}{2}\int_\R \frac{H(t) \gl(t)}{(1+t^2)^{\frac32}} \dd t +  \int_\R \frac{tH'(t) \gl(t)}{(1+t^2)^{\frac12}} \dd t=1.
	\end{equation*}
	The claim for $\eta$ is obvious. As for $H$, we assume that $\supp H \subset (0, + \infty)$ and we write the sum of integrals as
	\begin{equation*}
	\begin{aligned}
	&\int_\R H(t) \left[ \frac{3}{2} \frac{ \gl(t)}{(1+t^2)^{\frac32}}  - \frac{d}{dt}\left(\frac{t\gl(t)}{(1+t^2)^{\frac12}}\right) \right]  \dd t\\
	=& -\int_0^\infty H(t) \frac{d}{dt}\left( \frac{t\gl(t)}{(1+t^2)^{1/2}} t^{-3/2} (1+t^2)^{3/4}\right) t^{3/2} (1+t^2)^{-3/4} \dd t.
	\end{aligned}
	\end{equation*}
	Since $\gl(t)\neq C t^{1/2} (1+t^2)^{-1/4}$ on $\R_+$, the claim for $H$ follows.

	The above choice of $\eta$ and $H$  ensures that
	\begin{equation*}
		\int_\Omega \p_x h_\tau \cdot r^{\frac12} \gl(t) \chi =- (\thr)^{3/2}.
	\end{equation*}
	Using once again the formula \eqref{eq:px-pr-pt} for $\p_x$ and the change of coordinates of Jacobian \eqref{eq:det-J}, one obtains
	that $\|h_\tau\|_{L^2} \lesssim (\thr)^2$ and $\|\p_x h_\tau\|_{L^2} \lesssim 1/\thr$.
	Similarly, using \eqref{eq:pz-pr-pt} to compute $\p_z^3 h_\tau$ and the same technique, $\|\p_z^3 h_\tau\|_{L^2} \lesssim 1 / \thr$.
	Thus, choosing $\alpha = 1/3$ leads to
	\begin{equation*}
		\|h_\tau\|_{L^2} + \tau \|\p_xh_\tau\|_{L^2} + \tau \|\p_z^3 h_\tau\|_{L^2}
		\lesssim (\thr)^2,
	\end{equation*}
	which concludes the proof.
\end{proof}

We are now ready to prove \cref{lem:X0X1-shear}.

\begin{proof}[Proof of \cref{lem:X0X1-shear}]
	This is an application of \cref{lem:lofstrom} with $\bar{\sigma} = 1/6$.
	Therefore, we need to find constants $C_\pm > 0$ such that, for every $\tau \in (0,1)$ and $(c_0,c_1) \in \mathbb{S}^1$,
	\begin{equation*}
		C_- \tau^{-1/6} \leq \iN(\tau, c_0 \overline{\ell^0} + c_1 \overline{\ell^1}) \leq C_+ \tau^{-1/6}.
	\end{equation*}
	Let $(c_0,c_1) \in \mathbb{S}^1$ and $f \in \iX_1$.
	By \cref{def:ell-shear},
	\begin{equation*}
		\overline{\ell^j}(f,0,0) = \int_\Omega \p_x f \overline{\Phi^j},
	\end{equation*}
	where $\overline{\Phi^j}$ is the solution to \eqref{eq:Phij-shear}.
	
	By \cref{cor:decomp-Phij}, there exists $(d_0,d_1) \in \R^2 \setminus \{ 0 \}$ such that
	\begin{equation} \label{eq:c0l0c1l1-pxf}
		\begin{split}
			c_0 \overline{\ell^0}(f,0,0) + c_1 \overline{\ell^1}(f,0,0) 
			& = \int_\Omega \p_x f
			(d_0 \busing^0(x,-z) + d_1 \busing^1(x,-z)) \\
			& + \int_\Omega \p_x f \left( \Phi_\reg + (c_1 - z c_0) \chi(z) \mathbf{1}_{z>0} \right),
		\end{split}
	\end{equation}
	where $\Phi_\reg \in Z^1$.
	By linearity, $d_0$, $d_1$ and $\Phi_\reg$ are uniformly bounded for $(c_0,c_1) \in \mathbb{S}^1$.
	The first term corresponds to the one studied in \cref{lem:dxf-using-above} and \cref{lem:dxf-using-below}.
	We want to integrate by parts in the second term.
	Since $f \in \iX_1$, $f_{\rvert \Sigma_0 \cup \Sigma_1} = 0$.
	At $x = x_0$ and $z \in (-1,0)$, $\overline{\Phi^j} = 0$ by \eqref{eq:Phij-shear}.
	Moreover, $\busing^0(x_0,-z) = 0$ because $\gnot(+\infty) = 0$ and $\busing^1(x_0,-z) = 0$ because $\busing^1$ is compactly supported near $(x_1,0)$.
	Hence, $\Phi_\reg(x_0,z) + (c_1 - z c_0) \chi(z) \mathbf{1}_{z>0} \equiv 0$ on $(-1,0)$.
	The same conclusion holds at $x = x_1$ and $z \in (0,1)$.
	Thus, we can integrate by parts with no boundary term and the second term is estimated as
	\begin{equation}\label{bound-interpol-reg}
		\left| \int_\Omega \p_x f \left( \Phi_\reg + (c_1 - z c_0) \chi(z) \mathbf{1}_{z>0} \right) \right| \leq \| f \|_{L^2} \| \p_x \Phi_\reg \|_{L^2}.
	\end{equation}
	
	\step{Bound from above.}
	For $\tau \in (0,1)$, using \cref{lem:dxf-using-above} and \eqref{bound-interpol-reg},
	\begin{equation*}
		|c_0 \overline{\ell^0}(f,0,0) + c_1 \overline{\ell^1}(f,0,0)|
		\lesssim \tau^{-1/6} \left( \|f\|_{L^2} + \tau \|f\|_{\iX_1} \right).
	\end{equation*}
	
	\step{Bound from below.}
	For $\tau \in (0,1)$, let $f_\tau := h_\tau$, where $h_\tau$ is constructed in \cref{lem:dxf-using-below}, which ensures that $f_\tau$ is compactly supported in $\Omega$ so satisfies $(f_\tau)_{\rvert \Sigma_0 \cup \Sigma_1} = 0$.
	Substituting in~\eqref{eq:c0l0c1l1-pxf} and integrating by parts yields
	\begin{equation*}
		c_0 \overline{\ell^0}(f_\tau,0,0) + c_1 \overline{\ell^1}(f_\tau,0,0)
		= -\int_\Omega h_\tau \p_x \Phi_\reg + \sum_{i \in \{0,1\}} d_i \int_\Omega (\p_x h_\tau) \busing^i(x,-z).
	\end{equation*}
	By \cref{cor:decomp-Phij} and linearity, $\min( |d_0|, |d_1|)$ is uniformly bounded from below.
	We choose $h_\tau$ as either $h^0_\tau$ or $h^1_\tau$ of \cref{lem:dxf-using-below} accordingly.
	Thus, by \cref{lem:dxf-using-below}, as $\tau \to 0$,
	\begin{equation*}
		\begin{split}
			|c_0 \overline{\ell^0}(f_\tau,0,0) + c_1 \overline{\ell^1}(f_\tau,0,0)| 
			& \gtrsim 
			\tau^{-1/6} \left( \| h_\tau \|_{L^2} + \tau \| h_\tau \|_{\iX_1} \right) - C
			\|h_\tau\|_{L^2} \| \p_x \Phi_\reg \|_{L^2} \\
			& \gtrsim 
			\tau^{-1/6} \left( \| h_\tau \|_{L^2} + \tau \| h_\tau \|_{\iX_1} \right) \\
			& = \tau^{-1/6} \left( \| f_\tau \|_{L^2} + \tau \| f_\tau \|_{\iX_1} \right)
		\end{split}
	\end{equation*}
	for $\tau > 0$ sufficiently small.
	This concludes the proof.
\end{proof}

To conclude this section, we turn towards the proof of \cref{lem:shear-Hs}.

\begin{proof}[Proof of \cref{lem:shear-Hs}]
	\step{Case $\delta_0=\delta_1=0$ and $\mathbf 1_{\sigma>1/2}f_{|\Sigma_i}=0$.}
	By \cref{p:pagani-shear}, for every $f \in L^2(\Omega)$, there exists a unique solution $u \in Z^0(\Omega)$ to \eqref{eq:shear-z} with $\delta_0 = \delta_1 = 0$ and $\|u\|_{Z^0} \lesssim \|f\|_{L^2}$.
	By \cref{p:shear-WP-Z1} and \cref{p:shear-apriori-Z1}, for every $f \in H^1_x L^2_z$ such that $f_{\rvert \Sigma_0 \cup \Sigma_1} = 0$ (so that $\Delta_0 = \Delta_1 = 0$) and $\overline{\ell^0}(f,0,0) = \overline{\ell^1}(f,0,0) = 0$, this solution satisfies $u \in Z^1(\Omega)$ with $\|u\|_{Z_1} \lesssim \|f \|_{H^1_x L^2_z}$.
	Hence, by interpolation, the mapping $f \mapsto u$ is bounded from $[\iX_0,\iXL]_\sigma$ to $Z^\sigma(\Omega)$.
	Moreover, by \cref{lem:x-theta} and \cref{lem:X0X1-shear}, when $\sigma \in (0,1) \setminus \{1/6,1/2\}$, $[\iX_0,\iXL]_\sigma = H^\sigma_x L^2_z$ (with null boundary conditions on $\Sigma_0 \cup \Sigma_1$ when $\sigma > 1/2$, and null linear forms constraints when $\sigma > 1/6$).
    This proves estimate \eqref{eq:estimate-zsigma-shear} in the case of vanishing boundary data.
	
	\step{Arbitrary boundary data.} 
    When $\delta_0$ and $\delta_1$ are arbitrary, we extend them to $(-1,1)$ in such a way that the extension belongs to $H^2_0(-1,1)$.
    We then lift the boundary data by setting $u_l(x,z)=\chi(x-x_0) \delta_0 + \chi(x-x_1)\delta_1$, with $\chi\in C^\infty_c(\R)$, supported in $B(0, (x_1-x_0)/2)$, and equal to 1 in a neighborhood of zero. 
    This introduces a source term $f_l=z\p_x u_l -\p_{zz} u_l\in H^1_xL^2_z$ in the equation, whose trace on $\Sigma_i$ is $-\delta_i''$, so that the trace of $f-f_l$ on $\Sigma_i$ is $z\Delta_i$. 
    When $\sigma<1/2$, we immediately obtain the desired result thanks to the previous step.
    
	For $\sigma> 1/2$, we first note that, since $u, u_l \in Z^1(\Om)$, by \cref{p:shear-WP-Z1},
	\begin{equation*}
	\overline{\ell^j}(f- f_l,0,0)=0.
	\end{equation*}
	We further decompose $f-f_l$ into $f-f_l = z \Delta_0 \chi(x-x_0) + z \Delta_1 \chi(x-x_1) + g_l$, where $g_l \in H^\sigma_x L^2_z$ is such that $g_{l|\Sigma_0\cup \Sigma_1}=0$.
    Using \cref{free:f} we construct $h_l \in C^\infty_c(\Om)$ such that
    \begin{equation*}
        \|h_l\|_{H^1_xL^2_z} \lesssim \|\Delta_0\|_{\mathscr H^1(\Sigma_0)} + \|\Delta_1\|_{\mathscr H^1(\Sigma_1)}
    \end{equation*}
    and 
    \begin{equation*}
        \overline{\ell^j}(z \Delta_0 \chi(x-x_0) + z \Delta_1 \chi(x-x_1) + h_l,0,0)=\overline{\ell^j}(g_l- h_l,0,0)=0.
    \end{equation*}
	We then apply the result of the first step to the system with source term $g_l - h_l$ (which vanishes on $\Sigma_0\cup \Sigma_1$) and homogeneous boundary data, and the result of \cref{p:shear-WP-Z1} to the system with source term $z \Delta_0 \chi(x-x_0) + z \Delta_1 \chi(x-x_1) + h_l$ and homogeneous boundary data, using the conditions $\Delta_0(1) = \Delta_1(-1) = 0$.
    This concludes the proof.
\end{proof}


\newpage
\appendix

\addtocontents{toc}{\protect\setcounter{tocdepth}{1}}

\section{Uniqueness of weak solutions for linear problems}
\label{sec:proof-uniqueness}

The purpose of this section is to prove the following uniqueness result, which is a slight generalization to the case of variable coefficients of the uniqueness result of \cite[Section 5]{BG} for \eqref{eq:shear-z}.

\begin{lem}
    \label{lem:uniqueness-BG}
    Let $\Om=(x_0,x_1)\times (z_b, z_t)$, where $x_0 < x_1$ and $z_b<0<z_t$.
    Let $\alpha \in C^2(\overline{\Om})$ such that $\inf \alpha > 0$ and $\beta \in L^\infty(\Om)$.
    Assume that one of the two following  conditions is satisfied:
    \begin{itemize}
    \item either $\|\beta\|_\infty\ll 1$ and $\|\p_z \alpha\|_\infty \ll 1$,
    \item or $|z_b|, z_t \leq z_0$, for some small constant $z_0$ depending only on $\alpha.$
    \end{itemize}
    Let $g\in L^2_xH^{-1}_z$, $\delta_0\in \mathscr L^2_z(\Sigma_0)$, $\delta_1\in \mathscr L^2_z(\Sigma_1)$.
    There exists at most one weak solution $U\in L^2_x H^1_0$ to
    \begin{equation} \label{eq:U}
    \begin{cases}
        z\p_x U + \beta \p_z U - \p_{zz}(\alpha U)=g, \\
        U\vert_{\Sigma_0}= \delta_0,\\
          U\vert_{\Sigma_1}= \delta_1,\\
        U\vert_{z=z_t}=U\vert_{z=z_b}=0.
    \end{cases}
    \end{equation}
\end{lem}

The proof follows the arguments of Baouendi and Grisvard in~\cite{BG}, which concern the case of the model equation \eqref{eq:shear-z}.
For the reader's convenience, we recall the main steps of the proof here, and adapt them to the present (slightly different) context.
The proof involves the spaces $\cB$ defined in \eqref{def:cB} and $\cA := \cB \cap H^1(\Om)$.

Note that if $U\in L^2((x_0,x_1) , H^1_0(z_b,z_t))$ is a weak solution to \eqref{eq:U}, then $U\in \cB$. Indeed, it follows from the weak formulation  that for any $V\in H^1_0(\Om)$,
\begin{equation*}
\left\langle z\p_x U, V\right\rangle_{L^2(H^{-1}), L^2(H^1_0)}= - \int_\Om \p_z (\alpha U) \p_z V - \int_\Om \beta \p_z U V +\left\langle g, V\right\rangle_{L^2(H^{-1}), L^2(H^1_0)}.
\end{equation*}
By density, this formula still holds for $V\in L^2_x(H^1_0)$, and therefore $z\p_x U\in L^2_x(H^{-1}_z)$.

We then recall the following result from \cite{BG}:
\begin{lem}
	\label{lem:tracesBG}
	The set $\cA$ is dense in $\cB$.
	Furthermore, there exists a  constant $C$ depending only on~$\Om$, such that for $i \in \{0,1\}$,
	\begin{equation*}
	\forall v\in \cA,\quad \int_{z_b}^{z_t} |z| \; |v(x_i, y)|^2\dd y\leq C \|v\|_{\cB}^2.
	\end{equation*}
	As a consequence, the applications
	\begin{equation*}
	v\in \cA\mapsto v_{|x=x_i}\in \mathscr{L}^2_z(z_b,z_t)
	\end{equation*}
	can be uniquely extended into continuous applications on $\cB$.
\end{lem}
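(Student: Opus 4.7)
The plan is to adapt the approach of Baouendi and Grisvard \cite{BG}, who established both claims in a closely related setting. For the density of $\mathcal{A}$ in $\mathcal{B}$, I would use mollification in the $x$-variable. Given $v \in \mathcal{B}$, first extend it to a function $\widetilde{v}$ on an enlarged strip $(x_0 - \delta, x_1 + \delta) \times (-1,1)$ by even reflection across $x = x_0$ and $x = x_1$; this extension preserves both the $L^2_x H^1_0$-regularity in $z$ and the property $z\partial_x \widetilde{v} \in L^2_x H^{-1}_z$, as reflection in $x$ and multiplication by $z$ commute. Convolving $\widetilde{v}$ in $x$ with a standard mollifier $\rho_\varepsilon$ then produces $v_\varepsilon := (\widetilde{v} *_x \rho_\varepsilon)|_\Omega$, which takes values in $H^1_0(-1,1)$ and is smooth in $x$, hence belongs to $H^1(\Omega) \cap \mathcal{B} = \mathcal{A}$; classical properties of mollification yield $v_\varepsilon \to v$ in $\mathcal{B}$.

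For the trace estimate, I fix $v \in \mathcal{A}$ and focus on $x = x_0$ by symmetry. The key observation is that the auxiliary function $w := zv$ lies in $L^2_x H^1_0(-1,1) \cap H^1_x H^{-1}_z$, because $\partial_x w = z\partial_x v$. By the classical trace theorem for Gelfand-triple evolution spaces (see e.g.\ \cite[Chapter 3]{LM68}), $w(x_0, \cdot) \in L^2(-1,1)$ with $\|w(x_0, \cdot)\|_{L^2_z}^2 \lesssim \|v\|_{\mathcal{B}}^2$. This gives the $z^2$-weighted trace bound $\int z^2 v^2(x_0, z)\,dz \lesssim \|v\|_{\mathcal{B}}^2$, which is however strictly weaker than the desired $|z|$-weighted estimate in a neighborhood of $z = 0$.

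To upgrade to the $|z|$-weighted bound, I would use an integration-by-parts argument combined with the decomposition $v = v_o + v_e$ into its odd and even parts in $z$. Both parts still lie in $\mathcal{B}$ with norms bounded by $\|v\|_{\mathcal{B}}$, since the even/odd decomposition is a bounded projection on $L^2_x H^1_0$ and on $L^2_x H^{-1}_z$. For the odd part, the multiplier $\operatorname{sgn}(z)\,\phi(x)\,v_o$, with $\phi \in C^\infty([x_0, x_1])$ satisfying $\phi(x_0) = 1$ and $\phi(x_1) = 0$, belongs to $L^2_x H^1_0(-1,1)$ because the Dirac produced by $\partial_z \operatorname{sgn}(z)$ at the origin is annihilated by $v_o(x, 0) = 0$. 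The pairing $\langle z\partial_x v_o, \operatorname{sgn}(z)\,\phi\,v_o\rangle$ equals $\tfrac{1}{2}\int_\Omega |z|\,\phi\,\partial_x(v_o^2)$; integrating by parts in $x$ produces $\int_{-1}^1 |z|\,v_o^2(x_0, z)\,dz$ on the left-hand side, which is then bounded by $\|v\|_{\mathcal{B}}^2$ via the $L^2_x H^{-1}_z$ versus $L^2_x H^1_0$ duality. For the even part, where $v_e(x, 0)$ is generically nonzero, the direct multiplier fails; one instead passes to a regularization $\eta_\varepsilon$ of $\operatorname{sgn}(z)$, tracks the singular contributions concentrated at $z = 0$ as $\varepsilon \to 0$, and absorbs them using the $L^2$-trace of $w = zv$ already established.

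The extension of the trace operator from $\mathcal{A}$ to $\mathcal{B}$ then follows by density from the trace estimate. The hard part will be the treatment of the even component $v_e$: the regularization must be chosen with just the right scaling to balance the $|z|$-weighted quantity against the $L^2$-trace information on $w = zv$, and this scaling is tight, reflecting the anisotropic degeneracy of the operator $z\partial_x - \partial_{zz}$ near the critical line $\{z = 0\}$. This is the technical heart of the argument in \cite{BG}.
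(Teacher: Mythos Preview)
The paper does not prove this lemma; it simply recalls it from Baouendi--Grisvard \cite{BG}. So there is no in-paper proof to compare against---both you and the authors defer to the original source.

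Your sketch is sound for the density claim (mollification in $x$ after even reflection) and for the odd component of the trace estimate: since $v_o(x,0)=0$, the multiplier $\operatorname{sgn}(z)\,\phi(x)\,v_o$ genuinely lies in $L^2_xH^1_0(-1,1)$, and the integration-by-parts identity
\[
\int_{-1}^1 |z|\,v_o^2(x_0,z)\,dz = -2\langle z\partial_x v_o,\phi\operatorname{sgn}(z)v_o\rangle_{L^2H^{-1},L^2H^1_0} - \int_\Omega |z|\,\phi'\,v_o^2
\]
closes against $\|v_o\|_{\cB}^2$.

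The gap is in your treatment of the even component. Your proposal---regularize $\operatorname{sgn}(z)$ by $\eta_\varepsilon$, pair $z\partial_x v_e$ against $\phi\,\eta_\varepsilon v_e$, and absorb the singular remainder via the $L^2$-trace of $w=zv$---does not close as stated. The $H^1_0$-norm of $\eta_\varepsilon v_e$ contains the term $\|\eta_\varepsilon' v_e\|_{L^2_z}^2$, which for any standard mollifier behaves like $\varepsilon^{-1}\|v_e(\cdot,0)\|_{L^2_x}^2$; hence the duality bound on $\langle z\partial_x v_e,\phi\eta_\varepsilon v_e\rangle$ blows up like $\varepsilon^{-1/2}$. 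The only trace information you have extracted so far is $\int z^2 v^2(x_0,z)\,dz \lesssim \|v\|_{\cB}^2$, and this $z^2$-weight carries no control whatsoever on $v(x_0,z)$ near $z=0$, so it cannot compensate for the divergence. A correct argument for the even part requires a genuinely different mechanism (and it is not clear that the odd/even split is the route taken in \cite{BG} at all); your sketch acknowledges the difficulty but does not supply the missing idea.
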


As a consequence, Baouendi and Grisvard \cite{BG} obtain the following corollary:
\begin{coro}
	\label{coro:trace-BG}
	For all $u,v\in \cB$,
	\begin{equation*}
		\langle z \p_x u, v\rangle_{L^2(H^{-1}), L^2(H^1_0)} +  \langle z \p_x v, u\rangle_{L^2(H^{-1}), L^2(H^1_0)}     =\int_{z_b}^{z_t} (z u v)\vert _{x=x_1} - \int_{z_b}^{z_t} (z u v)\vert _{x=x_0}. 
	\end{equation*}
\end{coro}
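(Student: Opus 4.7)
The identity is a Green-type formula that extends the classical integration by parts to the weighted functional framework $\mathcal{B}$. My plan is to first establish the identity for the dense subspace $\mathcal{A}$ where all quantities have a classical meaning, and then pass to the limit using \cref{lem:tracesBG}.

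\textbf{Step 1: the identity for $u,v \in \mathcal{A}$.} For $u,v \in \mathcal{A} \subset H^1(\Omega)$, the distribution $z\p_x u \in L^2_x(H^{-1}_z)$ actually coincides (pointwise a.e.) with the $L^2$ function $z\p_x u$, so that the pairing reduces to $\int_\Omega z\p_x u \cdot v$, and similarly for the other term. Since $u,v \in H^1(\Omega)$ and both vanish on $z = \pm 1$ (in the trace sense), a standard integration by parts in $x$ yields
\begin{equation*}
    \int_\Omega z(\p_x u) v + \int_\Omega z u (\p_x v) = \int_{-1}^{1} (zuv)(x_1,z)\,\dd z - \int_{-1}^{1}(zuv)(x_0,z)\,\dd z,
\end{equation*}
with no contributions from the horizontal parts of $\p\Omega$. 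This is exactly the desired identity for $u,v \in \mathcal{A}$.

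\textbf{Step 2: continuity of both sides on $\mathcal{B}\times \mathcal{B}$.} The left-hand side pairing is bounded by
\begin{equation*}
    |\langle z\p_x u, v\rangle_{L^2 H^{-1},\,L^2 H^1_0}| \leq \|z\p_x u\|_{L^2_x H^{-1}_z}\,\|v\|_{L^2_x H^1_z} \leq \|u\|_{\mathcal{B}}\,\|v\|_{\mathcal{B}},
\end{equation*}
and similarly for the symmetric term, so the left-hand side is a continuous bilinear form on $\mathcal{B}\times\mathcal{B}$. For the right-hand side, the trace bound of \cref{lem:tracesBG} gives
\begin{equation*}
    \left|\int_{-1}^{1} (zuv)(x_i,z)\,\dd z\right| \leq \|u(x_i,\cdot)\|_{\mathscr{L}^2_z}\,\|v(x_i,\cdot)\|_{\mathscr{L}^2_z} \lesssim \|u\|_{\mathcal{B}}\,\|v\|_{\mathcal{B}},
\end{equation*}
using the Cauchy--Schwarz inequality with the weight $|z|$ split symmetrically between the two factors.

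\textbf{Step 3: density extension.} Let $u,v \in \mathcal{B}$. By \cref{lem:tracesBG}, there exist sequences $(u_n),(v_n) \subset \mathcal{A}$ such that $u_n \to u$ and $v_n \to v$ in $\mathcal{B}$. Writing the identity of Step~1 for $(u_n,v_n)$ and letting $n \to \infty$, the continuity estimates of Step~2 allow us to pass to the limit on both sides: the left-hand side converges to $\langle z\p_x u,v\rangle + \langle z\p_x v, u\rangle$, and, using once more the trace continuity of \cref{lem:tracesBG}, the boundary integrals converge to $\int_{-1}^{1}(zuv)(x_1,z)\dd z - \int_{-1}^{1}(zuv)(x_0,z)\dd z$. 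This concludes the proof.

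No step is genuinely difficult here: the substance of the argument lies entirely in \cref{lem:tracesBG} (density of $\mathcal{A}$ in $\mathcal{B}$ and the trace bound in $\mathscr{L}^2_z$), which is quoted from \cite{BG}. The only small care required is to interpret the distributional pairing $\langle z\p_x u, v\rangle$ correctly and to verify that it coincides with $\int_\Omega z(\p_x u) v$ for regular functions, so that the classical integration by parts formula indeed provides the starting point for the density extension.
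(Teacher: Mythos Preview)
Your proof is correct and follows essentially the same approach as the paper: establish the identity on the dense subspace $\cA$ by classical integration by parts in $x$, then extend to $\cB$ by density using \cref{lem:tracesBG}. The paper's proof is more terse (it simply writes $\int_\Om z\p_x u\, v + z u\,\p_x v = \int_\Om \p_x(zuv)$ and integrates), whereas you spell out the continuity of both sides explicitly, but the argument is the same.
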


\begin{proof}
	Thanks to \cref{lem:tracesBG}, it suffices to prove the identity when $u,v\in \cA$. In that case, the left-hand side is simply
	\begin{equation*}
	\int_\Om z \p_x u v + z u \p_x v = \int_\Om \p_x(z u v).
	\end{equation*}
	The result follows by integration.
\end{proof}

\begin{proof}[Proof of \cref{lem:uniqueness-BG}]
	Let $U\in L^2_x(H^1_0)$ be a weak solution to \eqref{eq:U} with $g=0$ and ${\delta_i}=0$. 
	As mentioned above, $U\in \cB$. 
	According to \cref{coro:trace-BG}, for any $V\in \cB$ such that $V=0$ on $\p\Om \setminus (\Sigma_0\cup \Sigma_1)$,
	\begin{equation*}
	-\langle z \p_x V, U\rangle_{L^2(H^{-1}), L^2(H^1_0)} + \int_\Om (\beta \p_z U V + \alpha_z U \p_z V+ \alpha \p_z U \p_z V)=0.
	\end{equation*}
	Now, let $h\in C^\infty_c (\Om)$ be arbitrary, and let $V\in L^2(H^1_0)$ be a weak solution to
	\begin{equation*}
    \begin{cases}
		-z \p_x V -\p_z(\beta V) -\alpha \p_{zz}V=h,\\
		V_{|\p\Om \setminus (\Sigma_0\cup \Sigma_1)}=0.
	\end{cases}
	\end{equation*}
	(The existence of weak solutions for this adjoint problem is proved in the same way as existence for the direct problem in \cref{p:shear-X0} in the case $\|\beta\|_\infty\ll 1$, $\|\alpha_z\|_\infty\ll 1$,
  and \cref{lem:WP-Z0-vorticity} in the case $|z_b|, z_t$ small).
	
	Then $V\in \cB$, and choosing $U$ as a test function in the variational formulation for $V$,  we obtain
	\begin{equation*}
	\int_{\Om} hU=0.
	\end{equation*}
	Thus $U=0$.
    Uniqueness of weak solutions to \eqref{eq:U} follows.
\end{proof}


\section{Proofs of functional analysis results}
\label{sec:proofs-spaces}

\subsection{An abstract existence principle}

As Fichera in \cite{MR0111931}, we use the following abstract existence principle (see \cite[Theorem~1]{douglas1966majorization}), which allows skipping a viscous regularization scheme.

\begin{lem} \label{douglas}
	Let $\mathscr{H}_1$, $\mathscr{H}_2$ and $\mathscr{H}$ be three Hilbert spaces.
	Let $F_i \in \mathcal{L}(\mathscr{H}_i;\mathscr{H})$ for $i \in \{ 1, 2 \}$.
	Then the following statements are equivalent:
	\begin{itemize}
		\item $\operatorname{range} F_1 \subset \operatorname{range} F_2$,
		\item There exists a constant $C > 0$ such that 
		\begin{equation} \label{coercive}
			\forall h \in \mathscr{H}', 
			\quad 
			\| F_1^* h \|_{\mathscr{H}_1'} \leq C \| F_2^* h \|_{\mathscr{H}_2'}.
		\end{equation}
		\item There exists $G \in \mathcal{L}(\mathscr{H}_1;\mathscr{H}_2)$ such that $F_1 = F_2 G$.
	\end{itemize}
	Moreover, when these hold, there exists a unique $G \in \mathcal{L}(\mathscr{H}_1;\mathscr{H}_2)$ such that $\ker G = \ker F_1$, $\operatorname{range} G \subset (\operatorname{range} F_2^*)^\perp$ and $\| G \| = \inf \{ C > 0 ; \enskip \eqref{coercive} \text{ holds} \}$.
\end{lem}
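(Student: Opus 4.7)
\medskip

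\noindent\textbf{Proof plan.} The strategy is to close a cycle of implications $(3) \Rightarrow (1) \Rightarrow (2) \Rightarrow (3)$, the substance of the lemma being the passage $(2) \Rightarrow (3)$ from an \emph{a priori} estimate to an actual factorization. I will then refine the construction to obtain the distinguished operator $G$ with the stated additional properties.

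\medskip

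The implication $(3) \Rightarrow (1)$ is immediate: if $F_1 = F_2 G$, then $F_1(\mathscr{H}_1) = F_2(G(\mathscr{H}_1)) \subset F_2(\mathscr{H}_2)$. To get $(1) \Rightarrow (2)$, I would use the closed graph theorem. Identifying the dual Hilbert spaces with themselves via Riesz, one can view $F_i^*$ as maps into $\mathscr{H}_i$. The map $F_2^* h \mapsto F_1^* h$ is well-defined on $\operatorname{range} F_2^*$ once one checks that $\ker F_2^* \subset \ker F_1^*$; this follows from $(1)$, since $\ker F_2^* = (\operatorname{range} F_2)^\perp \subset (\operatorname{range} F_1)^\perp = \ker F_1^*$. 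This partially defined linear map has closed graph: indeed, if $F_2^* h_n \to u$ in $\mathscr{H}_2'$ and $F_1^* h_n \to v$ in $\mathscr{H}_1'$, then passing to a subsequence for which $h_n \rightharpoonup h$ weakly in $\mathscr{H}'$ (which is possible after reducing $h_n$ modulo $\ker F_2^*$ to a bounded sequence, using that $F_2^*$ is bounded below on a suitable complement), one identifies $u = F_2^* h$ and $v = F_1^* h$, so $v$ is determined by $u$. The closed graph theorem then yields a bounded operator extending this map, which is exactly the inequality $(2)$.

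\medskip

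The heart of the matter is $(2) \Rightarrow (3)$. Assuming $(2)$, I define $T : \operatorname{range} F_2^* \to \mathscr{H}_1'$ by $T(F_2^* h) := F_1^* h$. It is well-defined, because if $F_2^* h = F_2^* h'$ then by $(2)$ applied to $h - h'$ one gets $\|F_1^*(h-h')\|_{\mathscr{H}_1'} \leq C \|F_2^*(h-h')\|_{\mathscr{H}_2'} = 0$, hence $F_1^* h = F_1^* h'$. Moreover $\|T\| \leq C$ on $\operatorname{range} F_2^*$ again by $(2)$. I then extend $T$ by continuity to $\overline{\operatorname{range} F_2^*}$ with the same bound, and by zero on the orthogonal complement $(\overline{\operatorname{range} F_2^*})^\perp = \ker F_2$, obtaining $T \in \mathcal{L}(\mathscr{H}_2'; \mathscr{H}_1')$ with $\|T\| \leq C$ and $T F_2^* = F_1^*$. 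Setting $G := T^* \in \mathcal{L}(\mathscr{H}_1; \mathscr{H}_2)$ (after the Riesz identifications $\mathscr{H}_i'' \cong \mathscr{H}_i$) and taking adjoints in $T F_2^* = F_1^*$ yields $F_2 G = F_1$, which is $(3)$. This also shows $(3) \Rightarrow (2)$ via $\|F_1^* h\| = \|G^* F_2^* h\| \leq \|G\| \, \|F_2^* h\|$, so the constant $C$ can always be taken to be $\|G\|$.

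\medskip

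For the final uniqueness clause and the quantitative identification $\|G\| = \inf\{C : \eqref{coercive}\text{ holds}\}$, the operator $G$ built above already satisfies the two prescribed properties. The kernel condition $\ker G = \ker F_1$ follows from $G = T^*$ and the fact that $T$ vanishes exactly on $\ker F_2$, which translates (via taking adjoints and Riesz) into the statement on ranges. The range condition comes from the fact that $T$ vanishes on $(\overline{\operatorname{range} F_2^*})^\perp$, so $\operatorname{range} T^* \subset \overline{\operatorname{range} F_2^*}$; under the stated identifications this gives the claim. Uniqueness of $G$ satisfying both constraints is obtained by observing that any other factorization $\widetilde{G}$ with $F_2 \widetilde{G} = F_1$ differs from $G$ by an operator with values in $\ker F_2$, so imposing the range condition pins down $G$ on $(\ker F_1)^\perp$, while the kernel condition fixes $G$ on $\ker F_1$. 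Finally, the norm identity follows because on one hand $\|G\| = \|T\| \leq C$ for any admissible $C$, and on the other hand $(3) \Rightarrow (2)$ with constant $\|G\|$, so $\inf C \leq \|G\|$.

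\medskip

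I expect the delicate point to be the construction step $(2) \Rightarrow (3)$: making sure the Riesz identifications are consistent and that the extension of $T$ by zero on $\ker F_2$ is precisely what is needed to obtain the minimal-norm factor $G$ with the prescribed range and kernel constraints. The closed graph argument for $(1) \Rightarrow (2)$ is standard but requires a small care about passing to weakly convergent subsequences modulo the kernel.
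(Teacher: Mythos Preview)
The paper does not prove this lemma; it simply cites it as Theorem~1 of Douglas's 1966 paper on majorization and factorization. Your argument is essentially the standard proof of that theorem, and your step $(2) \Rightarrow (3)$ (defining $T$ on $\operatorname{range} F_2^*$, extending by continuity and then by zero, and setting $G = T^*$) is exactly Douglas's construction.

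There is, however, a genuine gap in your implication $(1) \Rightarrow (2)$. The closed graph theorem does not apply to the map $F_2^* h \mapsto F_1^* h$ on $\operatorname{range} F_2^*$, because this subspace need not be closed. Your workaround, ``reducing $h_n$ modulo $\ker F_2^*$ to a bounded sequence, using that $F_2^*$ is bounded below on a suitable complement'', presupposes that $F_2^*$ is bounded below on $(\ker F_2^*)^\perp$; but this is equivalent to $\operatorname{range} F_2^*$ being closed, which is precisely what may fail. The clean fix is to prove $(1) \Rightarrow (3)$ directly: for each $x \in \mathscr{H}_1$, let $Gx$ be the unique element of $(\ker F_2)^\perp$ with $F_2(Gx) = F_1 x$ (which exists by $(1)$). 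Then $G$ has closed graph---if $x_n \to x$ and $Gx_n \to y$, then $F_2 y = \lim F_2 Gx_n = \lim F_1 x_n = F_1 x$, and $y \in (\ker F_2)^\perp$ since that subspace is closed, so $y = Gx$---hence $G$ is bounded. Then $(3) \Rightarrow (2)$ is immediate with $C = \|G\|$, closing the cycle.

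Two minor points on the final clause. First, in deriving $\ker G = \ker F_1$ you claim $T$ vanishes \emph{exactly} on $\ker F_2$, but its kernel can be larger (whenever $F_1^* h = 0$ with $F_2^* h \neq 0$). The correct argument is that $\operatorname{range} T \subset \overline{\operatorname{range} F_1^*}$ by continuity of the extension, whence $\ker G = \ker T^* = (\operatorname{range} T)^\perp \supset \ker F_1$, while the reverse inclusion is immediate from $F_2 G = F_1$. Second, the range condition you actually establish is $\operatorname{range} G \subset \overline{\operatorname{range} F_2^*} = (\ker F_2)^\perp$, which is Douglas's original condition; the paper's printed condition ``$\operatorname{range} G \subset (\operatorname{range} F_2^*)^\perp$'' appears to be a typo, since as written it would force $F_2 G = 0$ and hence $F_1 = 0$.
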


Indeed, this yields the following weak Lax-Migram result, where the linear right-hand side is assumed to be continuous for the weaker norm.

\begin{lem} \label{lax-weak}
	Let $\mathscr{U}$ and $\mathscr{V}$ be two Hilbert spaces with $\mathscr{V}$ continuously embedded in $\mathscr{U}$.
	Let $a$ be a continuous bilinear form on $\mathscr{U} \times \mathscr{V}$
	and $b$ be a continuous linear form on $\mathscr{U}$.
	Assume that there exists a constant $c > 0$ such that, for every $v \in \mathscr{V}$,
	\begin{equation*}
		a(v,v) \geq c \| v \|_{\mathscr{U}}^2.
	\end{equation*}
	Then, there exists $u \in \mathscr{U}$ such that $\|u\|_\mathscr{U} \leq \frac{1}{c} \|b\|_{\mathcal{L}(\mathscr{U})}$ and, for every $v \in \mathscr{V}$, $a(u,v) = b(v)$.
\end{lem}

\begin{proof}
	Set $\mathscr{H} := \mathcal{L}(\mathscr{V})$, $\mathscr{H}_1 := \mathcal{L}(\mathscr{U})$, $F_1 := \operatorname{Id}$ (from $\mathcal{L}(\mathscr{U})$ to $\mathcal{L}(\mathscr{V})$),  $\mathscr{H}_2 := \mathscr{U}$ and $F_2 : \mathscr{U} \to \mathcal{L}(\mathscr{V})$ defined by $F_2 u := a(u, \cdot)$.
	Then $F_1^* = \operatorname{Id}$ (from $\mathscr{V}$ to $\mathscr{U}$) and $F_2^* v = a(\cdot, v)$.
	Moreover 
	\begin{equation*}
		\| F_2^* v \|_{\mathcal{L}(\mathscr{U})} \geq |a(v,v)| / \| v \|_{\mathscr{U}} \geq c \| v \|_{\mathscr{U}} = c \| F_1^* v \|_{\mathscr{U}}.
	\end{equation*}
	So \eqref{coercive} holds with $C = 1/c$ and \cref{douglas} yields the existence of $G \in \mathcal{L}(\mathcal{L}(\mathscr{U});\mathscr{U})$ such that $F_1 = F_2 G$ and $\| G \| \leq \frac{1}{c}$.
	The conclusions follow by setting $u := G b$.
\end{proof}

\subsection{Product and composition rules in Sobolev spaces}

\begin{lem}[Pointwise multiplication]
	\label{lem:prod-hs}
	Pointwise multiplication is a continuous bilinear map
	
	\begin{itemize}
		\item from $H^{3/2}(-1,1) \times H^{3/2}(-1,1)$ to $H^{3/2}(-1,1)$,
		
		\item from $H^{1/2}(x_0,x_1) \times H^{s}(x_0,x_1)$ to $H^{1/2}(x_0,x_1)$ for any $s > 1/2$,
		
		\item from $H^{1/2}(x_0,x_1)\times H^s(x_0,x_1)$ to $H^{s'}(x_0,x_1)$ for any $s'<\min (s, 1/2)$.
		
		\item from $H^{s}(x_0,x_1)\times H^{s'}(x_0,x_1)$ to $H^{s'}(x_0,x_1)$ for any $s>1/2$, $s\geq s'$.
	\end{itemize}
\end{lem}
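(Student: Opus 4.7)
The plan is to reduce all four statements to a single general multiplication rule on $\mathbb{R}$. Precisely, each claim follows from the classical one-dimensional Sobolev multiplication inequality: for $s_1, s_2, s_3 \geq 0$ with $s_3 \leq \min(s_1, s_2)$ and $s_1 + s_2 - s_3 > 1/2$, one has
\begin{equation*}
    \|uv\|_{H^{s_3}(\mathbb{R})} \lesssim \|u\|_{H^{s_1}(\mathbb{R})} \|v\|_{H^{s_2}(\mathbb{R})}.
\end{equation*}
A direct verification shows that each of the four items falls under this rule, with $(s_1,s_2,s_3)$ equal to $(3/2,3/2,3/2)$, $(1/2,s,1/2)$, $(1/2,s,s')$, $(s,s',s')$ respectively. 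The critical quantity $s_1+s_2-s_3$ equals $3/2$, $s$, $1/2+s-s'$, and $s$, each of which is strictly greater than $1/2$ under the stated hypotheses (using $s > 1/2$ in items 2 and 4, and $s' < \min(s,1/2)$ in item 3).

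The first step is to reduce from the bounded interval $(x_0,x_1)$ to $\mathbb{R}$. I would invoke a Stein-type extension operator, which extends any $u \in H^{s}(x_0,x_1)$ (for $s \geq 0$) into $\tilde u \in H^{s}(\mathbb{R})$ with $\|\tilde u\|_{H^s(\mathbb{R})} \lesssim \|u\|_{H^s(x_0,x_1)}$, and note that $(uv)\rvert_{(x_0,x_1)} = (\tilde u \tilde v)\rvert_{(x_0,x_1)}$. This makes it suffice to prove the four inequalities on the whole line.

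The second step is the multiplication rule on $\mathbb{R}$ itself. The cleanest route is a paraproduct (Bony) decomposition $uv = T_u v + T_v u + R(u,v)$, estimating each piece via standard Littlewood--Paley techniques: the low-high paraproducts are bounded by combining a Bernstein-type $L^\infty$ control of the frequency-localized low-frequency factor with the Sobolev regularity of the high-frequency one, while the remainder $R(u,v)$ is handled using the strict gain $s_1 + s_2 - s_3 > 1/2$. For items 1 and 4, a simpler alternative is available: the Kato--Ponce (fractional Leibniz) inequality
\begin{equation*}
    \|uv\|_{H^{s}(\mathbb{R})} \lesssim \|u\|_{L^\infty}\|v\|_{H^s(\mathbb{R})} + \|u\|_{H^s(\mathbb{R})} \|v\|_{L^\infty},
\end{equation*}
combined with the one-dimensional embedding $H^\sigma(\mathbb{R}) \hookrightarrow L^\infty(\mathbb{R})$ for $\sigma > 1/2$, gives the result directly, since in both items at least one factor lies in such an $H^\sigma$.

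The main obstacle lies in items 2 and 3, where exactly one factor sits at the borderline exponent $1/2$ which fails to embed in $L^\infty$ in one space dimension. The naive Kato--Ponce estimate then requires an $L^\infty$ norm that is not available, so one must truly exploit the paraproduct structure: the strict inequality $s_1+s_2-s_3 > 1/2$ (coming from $s > 1/2$ in item 2 and from $s' < \min(s,1/2)$ in item 3) provides precisely the room needed to absorb the high-high interaction. A self-contained alternative, avoiding Littlewood--Paley machinery, would be to work with the Gagliardo seminorm of $H^{s_3}$, write
$$|uv(x)-uv(y)|^2 \lesssim |u(x)|^2|v(x)-v(y)|^2 + |v(y)|^2|u(x)-u(y)|^2,$$
and split the resulting double integrals into near-diagonal and far-from-diagonal pieces, using the Hölder embedding $H^{s_2}(\mathbb{R}) \hookrightarrow C^{0,\alpha}$ for some $\alpha > s_3 - 1/2 + \varepsilon$ on the near-diagonal part; this is more computational but remains elementary.
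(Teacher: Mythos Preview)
Your proposal is correct and in fact more detailed than the paper's own treatment: the paper does not give an argument at all but simply cites \cite[Theorem~7.4]{zbMATH07447116}, a general Sobolev multiplication theorem of which all four items are special cases. Your reduction to $\mathbb{R}$ via a Stein extension followed by the standard one-dimensional rule $H^{s_1}\times H^{s_2}\hookrightarrow H^{s_3}$ under $s_3\le\min(s_1,s_2)$ and $s_1+s_2-s_3>1/2$ is exactly the content of that reference, and your verification of the exponents in each item is accurate. The paraproduct sketch you outline for the borderline items 2 and 3 (where the $H^{1/2}$ factor fails to embed in $L^\infty$) is the standard way such theorems are proved, so you are essentially reproducing the proof of the cited result rather than taking a genuinely different route.
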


\begin{proof}
	These are particular cases of \cite[Theorem~7.4]{zbMATH07447116}. 
\end{proof}

\begin{lem}[Composition of $H^\sigma$ functions]
    \label{lem:composition-bis}
    Let $\sigma \in (0,1/6)$, and let $\Omega_y=(x_0,x_1)\times \R$, $\Omega_z=(x_0,x_1)\times (z_b,z_t)$.
    For $f\in H^\sigma_x H^1_y \cap L^4_x H^1_y(\Omega_y)$ and $Y\in H^{\frac{2}{3}+ \sigma}_x H^1_z(\Omega_z)$, such that $\lambda \leq \p_z Y\leq \lambda^{-1}$ for some positive constant $\lambda$,
     \begin{equation*}
        \| f(x, Y(x,z))\|_{H^\sigma_x L^2_z(\Omega_z)}\lesssim C_{\|Y\|}\left(\|f\|_{H^\sigma_x L^2_y} + \| f\|_{L^4_x H^1_y}\right).
    \end{equation*}
    and
    \begin{equation*}
        \| f(x, Y(x,z))\|_{L^\infty ((z_b,z_t) , H^\sigma(x_0,x_1))}\lesssim C_{\|Y\|}\left(
        \|f\|_{H^\sigma_x H^1_z} + \| f\|_{L^4_x H^1_y}\right) .
    \end{equation*}
In a similar way, if $f\in H^{\frac{1}{2}+\sigma}_x L^2_y\cap L^\infty_x W^{1,\infty}_y (\Omega_y)$,
\begin{equation*}
\|f(x, Y(x,z))\|_{H^{\frac{1}{2}+\sigma}_x L^2_z(\Omega_z)}\lesssim C_{\|Y\|}\left( \|f\|_{H^{\frac{1}{2}+\sigma}_x L^2_z} + \|f\|_{L^\infty_x W^{1,\infty}_z}\right).
\end{equation*}

\end{lem}

\begin{proof}
    Using the classical definition of fractional Sobolev spaces, for all $z\in (z_b, z_t)$,
    \begin{equation*}
        \| f(\cdot, Y(\cdot, z))\|_{H^\sigma(x_0, x_1)}^2 =  \| f(\cdot, Y(\cdot, z))\|_{L^2(x_0, x_1)}^2 + \int_{x_0}^{x_1}  \int_{x_0}^{x_1}  \frac{|f(x,Y(x,z)) - f(x', Y(x',z))|^2}{|x-x'|^{1+2\sigma}}\dd x \dd x'.
    \end{equation*}
We start with the $H^\sigma_x L^2_z$ estimate. Integrating with respect to $z$, the norm of the first term
     is bounded by the square of the $L^2$ norm of $f$ after a change of variable with bounded jacobian. We then decompose the second integral into
\begin{equation*}
\int_{x_0}^{x_1}\!  \int_{x_0}^{x_1}   \frac{|f(x,Y(x,z)) - f(x', Y(x,z))|^2}{|x-x'|^{1+2\sigma}} \mathrm{d} x \, \mathrm{d} x' + \int_{x_0}^{x_1}  \!\int_{x_0}^{x_1}  \frac{|f(x',Y(x,z)) - f(x', Y(x',z))|^2}{|x-x'|^{1+2\sigma}} \mathrm{d} x \, \mathrm{d} x'.
\end{equation*}
Once again, the first integral is bounded by $\|f\|_{H^\sigma_x L^2_z}^2$ after vertical integration. 
As for the second one, using the embedding $H^1(-1,1)\hookrightarrow C^{1/2}$, we have
\begin{equation*}
|f(x',Y(x,z)) - f(x', Y(x',z))|^2\lesssim \|\p_y f(x',\cdot )\|_{L^2_y}^2 | Y(x,z)-Y(x',z)|.
\end{equation*}
Since $Y\in H^{\frac{2}{3}+ \sigma}_x H^1_z$,
\begin{equation*}
    \begin{split}
         \int_{x_0}^{x_1} \int_{x_0}^{x_1} \|\p_y f(x',\cdot )\|_{L^2_y}^2 & \frac{| Y(x,z)-Y(x',z)|}{|x-x'|^{1+2\sigma}}\mathrm{d} x \mathrm{d} x'\\
         & \lesssim \|Y(\cdot, z)\|_{H^{\frac{2}{3}+ \sigma}_x }\left(\int_{x_0}^{x_1}  \int_{x_0}^{x_1} \|\p_y f(x',\cdot )\|_{L^2_y}^4|x-x'|^{\frac{1}{3}-3\sigma} \dd x \dd x'\right)^{1/2}\\
         & \lesssim \|Y\|_{H^{\frac{2}{3}+ \sigma}_x H^1_z} \|f\|_{L^4_x H^1_y}^2.
    \end{split}
\end{equation*}
The first estimate follows. The other ones go along the same lines and are left to the reader.
\end{proof}

\begin{lem}[Composition with a $\qone$ function]
    \label{lem:composition-Q1}
    Let $\phi \in \qone(\Om)$ such that $\phi(x,\pm1) = \pm 1$.
    Assume that there exists $m>0$ such that $\p_z \phi(x,z)\in [m^{-1},m]$.
    Let $\sigma \in [0,1]$.
    There exists $C(m,\sigma)$ such that, for any $g\in H^\sigma_x L^2_y \cap L^2_x(W^{\sigma,4}_y)$,
    \begin{equation}
        \label{in:G-interpol}
        \| g(x,\phi(x,z)) \|_{H^\sigma_x L^2_z} \leq C \left( \| g \|_{H^\sigma_x L^2_z} + (1+\|\phi\|_{\qone}^\sigma) \| g \|_{L^2_x(W^{\sigma,4}_y)} \right).
    \end{equation}
\end{lem}

\begin{proof}
    Throughout the proof, we set $G(x,z) := g(x,\phi(x,z))$.
    First, note that, since the Jacobian of the change of variable $z \mapsto \phi(x,z)$ is bounded from below, for any $p,q \in [1,\infty]$,
    \begin{equation}
        \label{eq:Gpq-gpq}
        \| G \|_{L^p_x L^q_z} \leq m^{\frac{1}{q}} \| g \|_{L^p_x L^q_z}.
    \end{equation}
    In particular $\| G \|_{L^2} \leq m^{\frac{1}{2}} \| g \|_{L^2}$.
    Furthermore, for $\sigma=1$,
    \begin{equation*}
        \p_x G(x,z)= \p_x g(x,\phi(x,z)) + \p_x \phi(x,z) \p_y g(x,\phi(x,z)).
    \end{equation*}
    Hence,
    \begin{equation} \label{eq:pxG}
        \| \p_x G \|_{L^2} \leq \| (\p_x g) \circ \phi \|_{L^2} + \| \p_x \phi \|_{L^\infty_x L^4_z} \| (\p_y g) \circ \phi \|_{L^2_x L^4_z}.
    \end{equation}
    By the ``fractional trace theorem'' \cite[Equation~(4.7), Chapter~1]{LM68},
    \begin{equation*}
        \| \p_x \phi \|_{L^\infty_x H^{1/2}_z}
        \lesssim
        \| \p_x \phi \|_{H^{2/3}_x L^2_z} + \| \p_x \phi \|_{L^2_x H^2_z}
        \lesssim \| \phi \|_{\qone}.
    \end{equation*}
    Hence, we obtain from \eqref{eq:Gpq-gpq} and \eqref{eq:pxG} that
    \begin{equation*}
        \| \p_x G \|_{L^2} \lesssim \| \p_x g \|_{L^2} + \| \phi \|_{\qone} \| g \|_{L^2_x (W^{1,4}_y)}.
    \end{equation*}
    Now, note that the application $g\mapsto G$ is linear.
    By interpolation, we obtain, for any $\sigma\in (0,1)$,
    \begin{equation*}
        \|G\|_{H^\sigma_x L^2_z} \lesssim \|g\|_{H^\sigma_x L^2_z} + (1+\|\phi\|_{\qone}^\sigma) \|g\|_{L^2_x (W^{\sigma,4}_y)},
    \end{equation*}
    which concludes the proof.
\end{proof}

\begin{coro}
    \label{lem:inverse-qone}
    Let $\phi \in \qone \cap L^2_x H^4_z$ such that $\phi(x,\pm1) = \pm 1$ and $\| \phi - z \|_{\qone} + \| \phi - z \|_{L^2_x H^4_z} \ll 1$.
    Let $\psi(x,y)$ be such that $\psi(x,\phi(x,z))=z$ for all $(x,z)\in\Om$.
    Then $\psi \in \qone \cap L^2_x H^4_y$ and 
    \begin{equation*}
        \| \psi - y \|_{\qone} + \| \psi - y \|_{L^2_x H^4_y} \lesssim
        \| \phi - z \|_{\qone} + \| \phi - z \|_{L^2_x H^4_z}.
    \end{equation*}
\end{coro}

\begin{proof}
    In this statement and this proof, we use the variable $y$ as second argument for $\psi$ and $z$ as second argument for $\phi$.
    First, observe that $\p_y \psi(x,y) = 1 / (\p_z \phi(x,\psi(x,y)))$, so that $\| \p_y \psi -1 \|_{L^\infty} \ll \| \p_z \phi - 1 \|_{L^\infty}$.
    In particular the associated changes of vertical variables are well-defined and bounded so that estimates such as \eqref{eq:Gpq-gpq} hold and will be used abundantly.

    \step{Vertical regularity of $\psi$.}
    By the ``fractional trace theorem'' \cite[Equation~(4.7), Chapter~1]{LM68}, for $\phi \in \qone \cap L^2_x H^4_z$, $\phi \in H^1_x H^2_z\cap L^2_x H^4_z\hookrightarrow C^0_x(H^3_z)$.
    In particular $\p_{z}^2 \phi \in L^\infty$. 
    Differentiating the definition $\psi(x,\phi(x,z)) = z$, we obtain the following relations and estimates.
    First, we already said that $\p_y \psi \in L^\infty$.
    Second, $\p_y^2 \psi\in L^2$ since
    \begin{equation*}
        - (\p_z \phi)^2 \p_y^2 \psi\circ \phi = 
        \underbrace{(\p_y \psi\circ \phi )}_{L^\infty} \underbrace{(\p_{zz} \phi)}_{L^\infty}. 
    \end{equation*}
    Third, $\p_y^3 \psi\in L^2$ since
    \begin{equation*}
        - (\p_z \phi)^3 \p_y^3 \psi\circ \phi =
        3 \underbrace{\p_y^2 \psi\circ \phi}_{L^2} \underbrace{\p_z \phi \p_z^2 \phi}_{L^\infty} +
        \underbrace{\p_y \psi\circ \phi}_{L^\infty} \underbrace{\p_z^3 \phi}_{L^2}.
    \end{equation*}
    Fourth, omitting the composition with $\phi$ in every occurence of $\psi$ in order to alleviate the notation,
    \begin{equation*}
        - (\p_z \phi)^4 \p_y^4 \psi = 
        6 \underbrace{\p_y^3 \psi}_{L^2} \underbrace{(\p_z \phi)^2 \p_z^2 \phi}_{L^\infty} 
        + \underbrace{\p_y^2 \psi}_{L^2_x H^1_y} \big( 3 \underbrace{(\p_z^2 \phi)^2}_{L^\infty} + 4 \underbrace{(\p_z \phi)}_{L^\infty} \underbrace{(\p_z^3 \phi)}_{L^\infty_x L^2_z} \big)
        + \underbrace{\p_y \psi}_{L^\infty} \underbrace{\p_z^4 \phi}_{L^2}.
    \end{equation*}
    Remembering that $1/(\p_z \phi) \in L^\infty$, we conclude that
    \begin{equation*}
        \| \psi(x,y) - y \|_{L^2_x H^4_y} \lesssim \| \phi(x,z) - z \|_{\qone} + \| \phi(x,z) - z \|_{L^2_x H^4_z}.
    \end{equation*}

    \step{Integer horizontal regularity of $\psi$.}
    This step uses that $\p_x \phi \in L^\infty_x L^2_z$ which follows from $\phi \in H^{5/3}_x L^2_z$.
    Note however that, even for $\phi \in \qone \cap L^2_x H^4_y$, one does not have $\p_x \phi \in L^\infty$.
    We proceed similarly for the integer horizontal regularity.
    First,
    \begin{equation*}
        - \p_x \psi \circ \phi = \underbrace{\p_y \psi\circ \phi}_{L^\infty} \underbrace{\p_x \phi}_{L^2}. 
    \end{equation*}
    Second,
    \begin{equation*}
        - (\p_z \phi) \p_{xy} \psi\circ \phi
        = 
        \underbrace{\p_y^2 \psi\circ \phi}_{L^2_x H^1_y} \underbrace{\p_z \phi}_{L^\infty} \underbrace{\p_x \phi}_{L^\infty_x L^2_z} + \underbrace{\p_y \psi\circ \phi}_{L^\infty} \underbrace{\p_{xz} \phi}_{L^2}.
    \end{equation*}
    Third,
    \begin{equation}
        \label{eq:pxyy-psi}
        - (\p_z \phi)^2 \p_{xyy} \psi\circ \phi
        = 2 \p_y^2 \psi \circ \phi\; \p_z \phi \; \p_{xz} \phi + h
    \end{equation}
    where, omitting once again the composition with $\phi$ in the derivatives of $\psi$,
    \begin{equation*}
        h = \underbrace{\p_y^3 \psi}_{L^2_x H^1_y} \underbrace{\p_x \phi}_{L^\infty_x L^2_z} \underbrace{(\p_z \phi)^2}_{L^\infty}
        + \underbrace{\p_{xy} \psi}_{L^2} \underbrace{\p_z^2 \phi}_{L^\infty}
        + \underbrace{\p_y^2 \psi}_{L^2_x H^1_y} \underbrace{\p_x \phi}_{L^\infty_x L^2_z} \underbrace{\p_z^2 \phi}_{L^\infty}
        + \underbrace{\p_z \psi}_{L^\infty} \underbrace{\p_{xzz} \phi}_{L^2}.
    \end{equation*}
    From \eqref{eq:pxyy-psi} and $h \in L^2$, we obtain that
    \begin{equation*}
        \begin{split}
        \| \p_{xyy} \psi \|_{L^2} & \lesssim \|h \|_{L^2} + \|\p_{xz} \phi\|_{L^2_x H^1_z} \| \p_{yy} \psi \|_{L^\infty_x L^2_z}
        \\ & \lesssim \|h\|_{L^2} + \| \phi(x,z) - z \|_{H^1_x H^2_z} \| ( \| \p_{yy} \psi \|_{L^2} + \| \p_{xyy} \psi \|_{L^2} ).
        \end{split}
    \end{equation*}
    Hence, using the smallness of $ \| \phi(x,z) - z \|_{H^1_x H^2_z}$, we conclude that
    \begin{equation*}
        \| \psi(x,y) - y \|_{H^1_x H^2_y} \lesssim \| \phi(x,z) - z \|_{\qone} + \| \phi(x,z) - z \|_{L^2_x H^4_z}.
    \end{equation*}

    \step{Fractional horizontal regularity of $\psi$.}
    Eventually, to obtain the $H^{5/3}_x L^2_y$ regularity, we write
    \begin{equation*}
        \p_x \psi(x,y) = - \frac{\p_x \phi}{\p_z \phi} (x, \psi(x,y))
    \end{equation*}
    and we apply \cref{lem:composition-Q1} with $\sigma = 2/3$.
    Let us first assume that $\phi$ is smooth (so that $\psi$ is smooth as well by usual results) and then argue by density.
    Estimate \eqref{in:G-interpol} yields
    \begin{equation*}
        \| \psi_x \|_{H^{2/3}_x L^2_y} 
        \lesssim \| \p_x \phi / \p_z \phi \|_{H^{2/3}_x L^2_z} + (1 + \| \psi \|_{\qone}^{2/3}) \| \p_x \phi / \p_z \phi \|_{L^2_x H^2_z}.
    \end{equation*}
    Since we already know that $\psi$ can be estimated in $H^1_x H^2_y$, we can use (the Peter--Paul version) of Young's inequality to obtain
    \begin{equation*}
        \| \psi_x \|_{H^{2/3}_x L^2_y} 
        \lesssim \| \p_x \phi / \p_z \phi \|_{H^{2/3}_x L^2_z} + \| \p_x \phi / \p_z \phi \|_{L^2_x H^2_z}
        + \| \p_x \phi / \p_z \phi \|_{L^2_x H^2_z}^{3/2}.
    \end{equation*}
    Moreover, one easily proves, using standard product rules, that
    \begin{equation*}
        \| \p_x \phi / \p_z \phi \|_{H^{2/3}_x L^2_z} + \| \p_x \phi / \p_z \phi \|_{L^2_x H^2_z}
        \lesssim \| \phi(x,z) - z \|_{\qone} + \| \phi(x,z) - z \|_{L^2_x H^4_z} \ll 1.
    \end{equation*}
    Hence we obtain
    \begin{equation*}
        \| \psi_x \|_{H^{2/3}_x L^2_y} 
        \lesssim \| \phi(x,z) - z \|_{\qone} + \| \phi(x,z) - z \|_{L^2_x H^4_z}
    \end{equation*}
    when $\phi$ is smooth and $\|\phi(x,z)-z\|_{\qone}+\|\phi(x,z)-z\|_{L^2_x H^4_z} \ll 1$. 
    We conclude by density.
\end{proof}

\subsection{Extension operators}

We start with \cref{p:extension}, which allows extending functions from $Z^0(\Omega)$ to $Z^0(\R^2)$.

\begin{proof}[Proof of \cref{p:extension}]
	Up to translation and rescaling, we can assume that $(x_0,x_1) = (0,1)$.
	
	We start by constructing a continuous horizontal extension operator $P_x$ from $Z^0((0,1)\times(-1,1))$ to $Z^0(\R \times (-1,1))$.
	Let $\chi \in C^\infty(\R;[0,1])$ such that $\chi \equiv 1$ on $(0,1)$ and $\supp \chi \subset (-1,2)$.
	Let $\phi \in Z^0((0,1) \times (-1,1))$.
	For $x \in (-1,2)$ and $z \in (-1,1)$, let
	\begin{align}
		(Q_x \phi)(x,z) & :=
		\begin{cases}
			\phi(- x,z) & \text{if } x \in (-1,0) \\
			\phi(x,z) & \text{if } x \in (0,1), \\
			\phi(2-x,z) & \text{if } x \in (1,2),
		\end{cases} 
		\\
		(P_x \phi)(x,z) & := \chi(x) (Q_x \phi)(x,z).
	\end{align}
	First, $\| P_x \|_{L^2_{x,z} \to L^2_{x,z}} \leq 3$.
	Moreover, $\p^k_z (P_x \phi) = P_x \p^k_z \phi$ for $k \in \{ 1,2 \}$.
	Hence $\| P_x \|_{L^2_x H^2_z \to L^2_x H^2_z} \leq~3$.
	Eventually, $\|z \p_x (Q_x \phi) \|_{L^2((-1,2)\times (-1,1)}\leq 3 \|z\p_x \phi\|_{L^2((0,1)\times(-1,1)}$, so that
 \begin{equation*}\| z \p_x (P_x \phi) \|_{L^2} \leq 3 \| z \p_x \phi \|_{L^2} + 2 \| \chi' \|_{L^\infty} \| \phi \|_{L^2}.\end{equation*}
	Thus $P_x$ defines a continuous extension operator from $Z^0((0,1)\times(-1,1))$ to $Z^0(\R \times (-1,1))$.
	
	We now construct a continuous upwards vertical extension operator $P_+$ from $Z^0(\R \times (-1,1))$ to $Z^0(\R \times (-1,+\infty))$.
	We proceed, as classical (see e.g.\ \cite{babich1953extension}), by considering a weighted linear combination of rescaled reflections.
	For $\phi \in Z^0(\R \times (-1,1))$, $x \in \R$ and $z \in (-1,\infty)$, let
	\begin{align}
		(Q_+ \phi)(x,z) & :=
		\begin{cases}
			\phi(x,z) & \text{if } z \in (-1,1), \\
			3 \phi(x,2-z) - 2 \phi(x,3-2z) & \text{if } z \in (1,2),
		\end{cases} 
		\\
		(P_+ \phi)(x,z) & := \chi_+(z) (Q_+ \phi)(x,z),
	\end{align}
	where $\chi_+ \in C^\infty(\R;[0,1])$ is such that $\chi_+ \equiv 1$ on $(-1,1)$ and $\supp \chi_+ \subset (-2,1 + \frac{1}{4})$.
	The chosen coefficients ensure that both $Q_+ \phi$ and $\p_z (Q_+ \phi)$ are continuous at $z = 1$.
	Hence $P_+ \phi \in L^2_x H^2_z$ and
	\begin{equation*}
		\| P_+ \phi \|_{L^2_x(\R; H^2_z(-1,+\infty))}
		=
		\| P_+ \phi \|_{L^2_x(\R; H^2_z(-1,1))} +
		\| P_+ \phi \|_{L^2_x(\R; H^2_z(1,+\infty))}
		\leq C_+ \| \phi \|_{L^2_x H^2_z},
	\end{equation*}
	for some constant $C_+$ depending only on $\| \chi_+ \|_{W^{2,\infty}}$.
	Moreover, using that $\chi(z) = 0$ for $z > 1 + \frac{1}{4}$,
	\begin{equation*}
		\begin{split}
			\| z \p_x (P_+ \phi) \|_{L^2_x(\R;L^2(1,+\infty))}
			& = 
			\| z \p_x (P_+ \phi) \|_{L^2_x(\R;L^2(1,1+\frac{1}{4}))} \\
			& \lesssim
			\| \p_x \phi \|_{L^2_x(\R;L^2(\frac{1}{2},1))} \\
			& \lesssim
			\| z \p_x \phi \|_{L^2_x(\R;L^2(\frac{1}{2},1))}.
		\end{split}
	\end{equation*}
	Hence $P_+$ is a continuous extension operator from $Z^0(\R \times (-1,1))$ to $Z^0(\R \times (-1,+\infty))$.
	
	The extension for $z < -1$ is performed in a similar fashion and left to the reader.
\end{proof}

\subsection{Embeddings}

We collect in this paragraph various embedding results used throughout the paper.

\subsubsection{Embedding of the Pagani space \texorpdfstring{$Z^0$}{Z0} in \texorpdfstring{$H^{2/3}_xL^2_z$}{H23L2}}

We start with an easy one dimensional inequality.
\begin{lem} \label{lem:psi-zpsi-psi''}
	For $\psi \in C^\infty_c(\R)$,
	\begin{equation*}
		\| \psi \|_{L^2} \lesssim \| z \psi \|_{L^2} + \| \p_{zz} \psi \|_{L^2}.
	\end{equation*}
\end{lem}

\begin{proof}
	On the one hand, for $|z| \geq 1$, 
	\begin{equation*}
		\int_{|z| \geq 1} \psi^2 \leq \| z \psi \|_{L^2}^2.
	\end{equation*}
	On the other hand, for every $(z_0,z) \in (-2,2)$,
	\begin{equation} \label{eq:psi'-zz0}
		|\p_z \psi(z)| \leq |\p_z \psi(z_0)| + 2 \| \p_{zz} \psi \|_{L^2}.
	\end{equation}
	Moreover, by classical Sobolev embeddings,
	\begin{equation*}
		\| \p_z\psi \|_{L^2(1,2)} \lesssim \| \psi \|_{L^2(1,2)} + \|\p_{zz}  \psi \|_{L^2(1,2)}
		\leq \| z \psi \|_{L^2(\R)} + \| \p_{zz} \psi \|_{L^2(\R)}.
	\end{equation*}
	Thus, integrating \eqref{eq:psi'-zz0} for $z_0 \in (1,2)$,
	\begin{equation*}
		\|\p_z\psi\|_{L^\infty(-2,2)} \lesssim \| z \psi \|_{L^2(\R)} + \| \p_{zz} \psi \|_{L^2(\R)}.
	\end{equation*}
	Now, writing $\psi(z) = \psi(z_0) + \int_{z_0}^z \psi'$ and integrating for $z_0 \in (1,2)$ yields
	\begin{equation*}
		\| \psi \|_{L^2(-1,1)} \lesssim \| \psi \|_{L^2(1,2)} + \| z \psi \|_{L^2(\R)} + \| \p_{zz} \psi \|_{L^2(\R)}
		\lesssim \| z \psi \|_{L^2(\R)} + \| \p_{zz} \psi \|_{L^2(\R)},
	\end{equation*}
	which concludes the proof.
\end{proof}

We then turn towards the proof of the key result $Z^0(\R^2)\hookrightarrow H^{2/3}_x L^2_z(\R^2)$.

\begin{proof}[Proof of \cref{p:z0-l2-h23}]
	Let $\psi \in C^\infty_c(\R)$.
	By \cref{lem:psi-zpsi-psi''}, one has
	\begin{equation}
		\label{eq:psi-vert-2-copy}
		\| \psi \|_{L^2} \lesssim \| z \psi \|_{L^2} + \| \p_{zz} \psi \|_{L^2}.
	\end{equation}
	Using standard dimensional analysis arguments (e.g.\ by introducing the rescaled function $\psi_\lambda : z \mapsto \psi(\lambda z)$ for $\lambda > 0$ and optimizing the choice of~$\lambda$), one deduces from~\eqref{eq:psi-vert-2-copy} that
	\begin{equation} \label{eq:psi-vert-2-prod}
		\| \psi \|_{L^2} \lesssim \| z \psi \|_{L^2}^{\frac 23} \| \p_{zz} \psi \|_{L^2}^{\frac 13}.
	\end{equation}
	Let $\phi \in C^\infty_c(\R^2)$.
	Let $\hat{\phi}(\xi, z)$ denote the Fourier-transform of $\phi$ in the horizontal direction.
	Then using \eqref{eq:psi-vert-2-prod} and Hölder's inequality,
	\begin{equation*}
		\begin{split}
			\| \phi \|_{H^{2/3}_x L^2_z}^2
			& = \int_{\R^2} (1+|\xi|^2)^{\frac 23} |\hat{\phi}(\xi, z)|^2 \dd \xi \dd z \\
			& \lesssim \| \phi \|_{L^2}^2 +
			\int_\R |\xi|^{\frac 43}
			\| z \hat{\phi}(\xi,z) \|_{L^2_z}^{\frac 43} \| \p_{zz} \hat{\phi}(\xi,z) \|_{L^2_z}^{\frac 23} \dd \xi \\
			& \lesssim \| \phi \|_{L^2}^2 +
			\left( \int_{\R^2} |\xi|^2 z^2 |\hat{\phi}(\xi,z)|^2 \dd z \dd \xi \right)^{\frac 23} 
			\left( \int_{\R^2} |\p_{zz} \hat{\phi}(\xi,z)|^2 \dd z \dd \xi \right)^{\frac 13} \\
			& \lesssim \| \phi \|_{L^2}^2 +
			\| z \p_x \phi \|_{L^2}^{\frac 43} \| \p_{zz} \phi \|_{L^2}^{\frac 23}.
		\end{split}
	\end{equation*}
	Hence $\| \phi \|_{H^{2/3}_x L^2_z} \lesssim \| \phi \|_{Z^0}$.
	This concludes the proof, by density of $C^\infty_c(\R^2)$ in $Z^0(\R^2)$.
\end{proof}

\subsubsection{Embedding of the Baouendi--Grisvard space $\cB$ in $H^{1/3}_x L^2_z$}

Once again, we start with a one-dimensional inequality of Hardy type.
\begin{lem}
    \label{lem:Hardy-45}
    For $\phi \in L^2(0,1)$,
    \begin{equation}
        \label{eq:Hardy-45}
        \int_0^1 \left( \frac{1}{z^2} \int_0^z s \phi(s) \dd s \right)^2 \dd z \leq \frac 4 5 \|\phi\|_{L^2}^2.
    \end{equation}
\end{lem}

\begin{proof}
    For $z \in (0,1)$, by the Cauchy--Schwarz inequality
    \begin{equation*}
        \left( \int_0^z s \phi(s) \dd s \right)^2
        \leq \left(\int_0^z \phi^2(s) s^{\frac 12} \dd s \right)
        \left(\int_0^z s^{2-\frac 12} \dd s \right)
        = \frac 2 5 z^{\frac 5 2} \left(\int_0^z \phi^2(s) s^{\frac 12} \dd s \right).
    \end{equation*}
    Hence, by Fubini,
    \begin{equation*}
        \begin{split}
            \int_0^1 \left( \frac{1}{z^2} \int_0^z s \phi(s) \dd s \right)^2 \dd z  
            & \leq \frac 2 5 \int_{0}^1 z^{-4+\frac 52} \left(\int_{0}^z \phi^2(s) s^{\frac 12} \dd s \right) \dd z \\
            & = \frac 25 \int_{0}^1 \phi^2(s) s^{\frac 12} \left(\int_{s}^1 z^{-\frac 32} \dd z\right) \dd s \\
            & = \frac 25 \int_{0}^1 \phi^2(s) s^{\frac 12} \left(2 (s^{-\frac 12} - 1) \right)\dd s \\
            & = \frac 4 5 \int_0^1 \phi^2(s) (1- s^{\frac 12})\dd s,
        \end{split}
    \end{equation*}
    which implies \eqref{eq:Hardy-45}.
\end{proof}

We then turn towards the proof of the embedding.

\begin{proof}[Proof of \cref{lem:embed-cB-H13x}]
    \step{Extension to a compactly supported function in $(x_0, x_1)\times \R$.}
    
    Let $u\in L^2((x_0,x_1), H^1_0(-1,1))$ such that $z\p_x u\in L^2((x_0,x_1)), H^{-1}(-1,1))$.
    We first extend $u$ to $(x_0,x_1)\times (-3,3)$ by setting, for all $x\in (x_0,x_1)$ and $z'\in (0,2)$,
    \begin{equation*}\begin{aligned}
    u(x,1+z')&=&-u(x,1-z'),\\
    u(x,-1-z')&=&-u(x,-1+z').
    \end{aligned}
    \end{equation*}
    It is clear that the above extension belongs to $ L^2((x_0,x_1), H^1_0(-3,3))$, and we further extend $u$ by zero on $(x_0,x_1)\times \{z\in \R, |z|\geq 3\}$.
    We then take $\chi\in C^\infty_c(\R)$ such that $\supp \chi\subset (-3/2, 3/2)$, and $\chi\equiv 1$ on $(-1,1)$, and we prove that $u \chi \in \cB ((x_0,x_1)\times \R)$.
    Using a partition of unity, we write $\chi=\chi_{-1} + \chi_0 + \chi_1$, where $\supp \chi_{\pm 1} \subset (\pm 1/2, \pm 3/2)$, and $\supp \chi_0\subset (-3/4, 3/4) $. It is clear that $\chi_0 u \in \cB ((x_0,x_1)\times \R)$, and therefore by symmetry is is sufficient to prove the result for  $\chi_1 u$.
    
    Let us take $\phi \in H^1_0((x_0, x_1)\times \R)$ be arbitrary, and compute
    \begin{equation*}
    -\int_{x_0}^{x_1} \int_{\R} z \chi_1 u \p_x \phi.
    \end{equation*}
    By definition of $u$ on $(x_0,x_1)\times (1,2)$,
    \begin{align*}
    -\int_{x_0}^{x_1} \int_{\R} z \chi_1 u \p_x \phi=&-\int_{x_0}^{x_1}\int_0^1 z u(x,z) \chi_1(z) \p_x \phi(x,z)\dd x\dd z\\
    &+\int_{x_0}^{x_1}\int_0^1 (1+z')u(x,1-z') \chi_1(1+z') \p_x \phi(x,1+z')\dd x\dd z'\\
    =&- \int_{x_0}^{x_1}\int_0^1 zu(x,z) \chi_1(z) \p_x \phi(x,z)\dd x\dd z\\
    &+\int_{x_0}^{x_1}\int_0^1 \frac{(1+z')}{1-z'}  (1-z') u(x,1-z') \chi_1(1+z') \p_x\phi(x,1+z')\dd x\dd z'.
    \end{align*}
    Since $z\p_x u\in L^2((x_0,x_1), H^{-1}(-1,1))$, we may write $z\p_x u = f + \p_z g$, with $f,g\in L^2((x_0,x_1)\times (-1,1))$.
    Then
    \begin{align*}
    &-\int_{x_0}^{x_1} \int_{\R} z \chi_1 u \p_x \phi\\
    =&\int_{x_0}^{x_1}\int_0^1 (f + \p_z g)(x,z) \left[\chi_1(z) \phi(x,z) -   \frac{2-z}{z}\chi_1(2-z) \phi(x,2-z)\right]\dd x\dd z.
    \end{align*}
    The assumptions on $\supp \chi_1$ ensure that the function in brackets belongs to $L^2((x_0,x_1), H^1_0(0,1))$. We conclude that for all $\phi \in H^1_0((x_0, x_1)\times \R)$,
    \begin{equation*}
    \left| -\int_{x_0}^{x_1} \int_{\R} z \chi_1 u \p_x \phi\right| \lesssim \| u\|_{\cB((x_0,x_1)\times (0,1))} \|\phi\|_{L^2_x H^1_z}.
    \end{equation*}
    It follows that $z\chi_1\p_x u \in L^2((x_0,x_1), H^1(\R))$, and
    \begin{equation*}
    \| \chi u \|_{\cB((x_0,x_1)\times \R)}\lesssim \| u\|_{\cB((x_0,x_1)\times (0,1))}.
    \end{equation*}
    
    \step{The vertical anti-derivative of $\chi u$ belongs to $Z^0$.}
    
    We now work with the extension of the previous step, and we set $U:=-\int_{z}^\infty \chi u$. 
    Let us  prove that $U\in Z^0((x_0,x_1)\times \R_+)$. 
    Since $\p_z^2 U= \p_z(\chi u)\in L^2((x_0,x_1)\times \R_+)$, 
    it suffices to prove that $z\p_x U\in L^2((x_0,x_1)\times \R_+)$. Hence we take $\phi\in L^2((x_0,x_1)\times \R_+)$ arbitrary, and we compute, after observing that $U$ is supported in $\{z\leq 3/2\}$,
    \begin{align*}
    \int_{x_0}^{x_1}\int_0^\infty s\p_x U(x,s) \phi(x,s)\dd x\; \dd s 
    &=-\int_{x_0}^{x_1}\int_0^\infty s \left(\int_s^{3/2}\p_x \chi u (x,z)\dd z\right) \phi(x,s)\dd s \dd x\\
    = &- \int_{x_0}^{x_1}\int_0^\infty  \frac{1}{z}\left(\int_0^z s \mathbf 1_{0<s<3/2}\phi(x,s)\dd s\right)  z\p_x \chi u(x,z)\dd x\dd z.
    \end{align*}
    Therefore
    \begin{equation*}
    \left\| s\p_x U\right\|_{L^2((x_0,x_1)\times \R_+)} \lesssim \| \chi u \|_{\cB} \sup_{\substack{\phi \in L^2,\\ \|\phi\|_{L^2}\leq 1} }
    \left\| \frac{1}{z}\left(\int_0^z s \mathbf 1_{0<s<3/2}\phi(x,s)\dd s\right)\right\|_{L^2((x_0,x_1), H^1_0(0, +\infty))}.
    \end{equation*}
    The claim therefore follows from the following result, which is postponed to the third and last step:
    \begin{lem}
        For all $z_0>0$, there exists a constant $C_{z_0}$ such that for all $\psi \in L^2(\R)$,
        \begin{equation} \label{eq:hardy2}
          \left\| \frac{1}{z}\left(\int_0^z s \mathbf 1_{0<s<z_0}\psi (s)\dd s\right)\right\|_{H^1_0(0, +\infty)} \leq C_{z_0}\|\psi\|_{L^2(\R)}.
        \end{equation}
    \end{lem}
    From there, we infer that $U\in Z^0((x_0,x_1)\times \R_+)$, and $\|U\|_{Z^0}\lesssim \|u\|_{\cB}$. 
    Using the embedding $Z^0\hookrightarrow H^{1/3}_x H^1_z$, we deduce that $\p_z U=\chi u\in H^{1/3}_x L^2_z((x_0,x_1)\times \R_+)$. Since $\chi\equiv 1$ on $(-1,1)$, we obtain the desired result.

    \step{Proof of \eqref{eq:hardy2}.}

    First, note that for all $s\in (0, +\infty)$,
    \begin{equation*}
    \left| \int_0^z s \mathbf 1_{0<s<z_0}\psi (s)\dd s\right| \leq C_{z_0} \inf(s^{3/2} , 1) \|\psi\|_{L^2}. 
    \end{equation*}
    Thus we only need to prove that
    \begin{equation*}
    \int_0^{z_0} \left( \frac{1}{z^2} \int_0^z s \psi(s) \dd s \right)^2 \dd z\leq C_{z_0}  \|\psi\|_{L^2}^2.
    \end{equation*}
    This is a rescaling of inequality \eqref{eq:Hardy-45} of \cref{lem:Hardy-45}.
\end{proof}

\section{Unconditional regularity away from lateral boundaries}
\label{sec:proof-lem:reg-away-sing-pts}

\begin{proof}[Proof of \cref{lem:reg-away-sing-pts}]
    We proceed by (horizontal) viscous regularization to obtain uniform estimates which pass to the limit.

    Let us extend the functions $w_0, w_1$ into $H^2$ functions on the whole interval $(z_b,z_t)$, such that $w_0(z_b)=w_b(x_0)$ and $w_1(z_t)=w_t(x_1)$. 
    For $\eps>0$, consider the solution to the elliptic equation
    \begin{equation} \label{eq:W-eps}
    \begin{cases}
    	- \eps \p_x^2 W^\eps - \p_z^2 (\alpha  W^\eps) + z \p_x W^\eps + \beta \p_z W^\eps = h,\\
    	W^\eps\vert_{x=x_i}= w_i,\\
    	W^\eps\vert_{z=z_j}= w_j.
    \end{cases}
    \end{equation}
    Let us recall that $|z_b|, z_t\leq z_0$ for some small constant $z_0$ depending only on $\alpha$.
    Classical results on elliptic equations ensure that if $z_0$ is small enough, \eqref{eq:W-eps} has a unique solution in $H^1(\Om)$ for all $\eps>0$, which satisfies the energy estimate
    \begin{equation} \label{eq:estimate-Weps-1}
        \sqrt{\eps} \|\p_x W^\eps\|_{L^2} + \| \p_z W^\eps\|_{L^2} \lesssim \|h\|_{L^2} + \| w_0\|_{H^2_z} +  \| w_1\|_{H^2_z} + \|w_t\|_{H^2_x} +  \|w_b\|_{H^2_x}.
    \end{equation}
    Hence $W^\eps$ is uniformly bounded in $L^2_x H^1_z$. It follows that $W^\eps\rightharpoonup W$ in $L^2_x H^1_z$, where $W\in Z^0$ is the unique solution to \eqref{vorticity-model}.
    
    Furthermore, since $h\in L^2(\Om)$, the compatibility conditions in the corners of the domain and the fact that $\Om$ is a rectangle ensure that $W^\eps\in H^2(\Om)$ (see \cite[Chapter 4]{MR775683}). 
    Hence $\p_x W^\eps\in H^1(\Om)$ is a weak solution to
    \begin{equation} \label{eq:pxWeps}
        - \eps \p_x^2 \p_xW^\eps - \p_z^2 (\alpha \p_x W^\eps) + z \p_x^2 W^\eps + \beta \p_z\p_x  W^\eps = \p_x h + \p_z^2 (\p_x \alpha  W^\eps) - \p_x \beta \p_z W^\eps.
    \end{equation}
    Without loss of generality, we assume from now on that $w_t=w_b=0$ in order to simplify the computations. 
    This condition can always be satisfied up to a lift of the boundary conditions.
    Let $\rho(x) := (x-x_0) (x_1-x)$.
    We multiply \eqref{eq:pxWeps} by $\rho^2 \p_x W^\eps$, integrate by parts and obtain
    \begin{equation*}
        \begin{split}
        	\eps \int_\Om \rho^2 (\p_x^2 W^\eps)^2 + \int_\Om \alpha \rho^2 (\p_{xz} W^\eps)^2
        	\leq \eps \int_\Om & \p_x (\rho \p_x \rho) (\p_x W^\eps)^2 - \int_\Om \rho^2 \p_x^2 W^\eps (z\p_x W^\eps)\\
        	&+\left( \| \rho \p_x h\|_{L^2} + \|\rho \p_x \beta \p_z W^\eps\|_{L^2}\right) \| \rho \p_x W^\eps\|_{L^2}\\
        	&+ \| \rho \p_z(\p_x \alpha W^\eps)\|_{L^2} \| \rho \p_{xz}  W^\eps\|_{L^2}\\
        	&+ \|\p_z \alpha\|_\infty \| \rho \p_x W^\eps\|_{L^2}  \| \rho \p_{xz} W^\eps\|_{L^2} .
        \end{split}
    \end{equation*}
    The first integral in the right-hand side is uniformly bounded thanks to \eqref{eq:estimate-Weps-1}.
    Let us focus momentarily on the second integral in the right-hand side. Using the equation satisfied by $W^\eps$, we infer
    \begin{equation*}
        \int_\Om \rho^2 \p_x^2 W^\eps (z\p_x W^\eps)
        = 
        \int_\Om \rho^2 \p_x^2 W^\eps \left[h-\beta \p_z W^\eps + \p_z^2 (\alpha W^\eps)+ \eps \p_x^2 W^\eps \right].
    \end{equation*}
    We then perform integration by parts in the right-hand side, which is equal to
    \begin{equation*}
        \begin{split}
            \eps \int_\Om \rho^2 (\p_x^2 W^\eps)^2 & + \int_\Om \rho^2 \alpha (\p_{xz} W^\eps)^2\\
        	&-2 \int \rho \p_x \rho \p_x W^\eps ( h -\beta \p_z W^\eps ) - \int \rho^2 \p_x W^\eps \p_x (h-\beta \p_z W^\eps)\\
        	&+ 2 \int \rho \p_x \rho \p_{xz} W^\eps \p_z(\alpha W^\eps) + \int \rho^2 \p_{xz} W^\eps \left(\p_x (\p_z \alpha W^\eps) + \p_x \alpha \p_z W^\eps\right).
        \end{split}
    \end{equation*}
    Since $\p_z \beta=0$ and $w_t=w_b=0$, we have 
    \begin{equation*}
        \int \rho^2 \p_x W^\eps \beta \p_{xz} W^\eps=0.
    \end{equation*}
    We also recall that $\| \rho \p_x W^\eps\|_{L^2} \leq z_0 \| \rho \p_x \p_z W^\eps\|_{L^2}$.
    Therefore, provided that $z_0$ is small enough, there exists $C > 0$ such that
    \begin{equation*}
       \begin{aligned} 
            \int_\Om \rho^2 \p_x^2 W^\eps (z\p_x W^\eps)
    	    \geq & \eps \int_\Om \rho^2 (\p_x^2 W^\eps)^2  +\frac{1}{2} \int_\Om \rho^2 \alpha (\p_{xz} W^\eps)^2 \\
            & - C  \left( \|h\|_{L^2}^2 + \|W^\eps \|_{L^2_x H^1_z}^2 + \|\rho \p_x h\|_{L^2}\right).
  \end{aligned}
    \end{equation*}
    Gathering all the terms and using the $L^2_x H^1_z$ estimate on $W^\eps$ of \eqref{eq:estimate-Weps-1}, for $z_0$ small enough,
    \begin{equation*}
        \eps \int_\Om \rho^2 (\p_x^2 W^\eps)^2 + \int_\Om \alpha \rho^2 (\p_x \p_z W^\eps)^2
    	\lesssim \|h\|_{L^2}^2 +  \|\rho \p_x h\|_{L^2}^2 +\sum_{i\in \{0,1\}}\| w_i\|_{H^2_z}^2+\sum_{j\in \{t,b\}} \|w_j\|_{H^2_x}^2.
    \end{equation*}
    Hence $\rho \p_{xz} W^\eps$ is uniformly bounded in $L^2$. Passing to the limit, we obtain
    \begin{equation*}
    	\|\rho \p_x W\|_{L^2_x H^1_z} \lesssim \|h\|_{L^2} +  \|\rho \p_x h\|_{L^2} + \sum_{i\in \{0,1\}}\| w_i\|_{H^2_z}+\sum_{j\in \{t,b\}} \|w_j\|_{H^2_x}.
    \end{equation*}
    It follows that $\rho \p_x W$ is a weak $L^2_x H^1_z$ solution to
    \begin{equation*}
        \begin{cases}
            z \p_x (\rho \p_x W) + \beta \p_z (\rho \p_x W) - \p_z^2 (\alpha \rho \p_x W) = g, \\
            \rho \p_x W \vert_{\Sigma_i} = 0, \\
            \rho \p_x W \vert_{z = z_j} = \rho \p_x w_j
        \end{cases}
    \end{equation*}
    where
    \begin{equation*}
        g := \rho \p_x h - \rho \p_x \beta \p_z W + \rho \p_{zz} (\p_x \alpha W) + (x_0+x_1-2x) z \p_x W.
    \end{equation*}
    Since $\alpha \in C^3(\overline{\Om})$ and $W \in Z^0$, $g \in L^2$.
    Hence, according to \cref{lem:WP-Z0-vorticity}, we obtain $\rho \p_x W \in Z^0$.
\end{proof}


\newpage 

\renewcommand{\nomname}{List of notations
	\label{sec:notations}}
    
\printnomenclature


\section*{Acknowledgements}

The authors thank Jérémie Szeftel and Frédéric Hérau for nice discussions about this problem.
We also warmly thank Felix Otto, Yann Brenier and an anonymous referee for suggesting in various ways to perform the nonlinear change of unknown of \cref{sec:Burgers-change} before performing the iterative scheme; an idea which greatly simplified the first version \cite{DMR-3} of our paper and allowed to handle the Prandtl system in \cref{sec:Prandtl}.

This project has received funding from the European Research Council (ERC) under the European Union's Horizon 2020 research and innovation program Grant agreement No 637653, project BLOC ``Mathematical Study of Boundary Layers in Oceanic Motion''. 
This work was supported by the SingFlows project, grant ANR-18-CE40-0027   and by the BOURGEONS project, grant ANR-23-CE40-0014-01 of the French National Research Agency (ANR).
A.-L.\ D.\ acknowledges the support of the Institut Universitaire de France.
This material is based upon work supported by the National Science Foundation under Grant No. DMS-1928930 while A.-L.\ D.\ participated in a program hosted by the Mathematical Sciences Research Institute in Berkeley, California, during the Spring 2021 semester.


\bibliographystyle{plain}
\bibliography{biblio}

\end{document}